\setlist[1]{itemsep=2pt}
\theoremstyle{plain}
\newtheorem{theorem}{Theorem}[section]
\newtheorem{prop}[theorem]{Proposition}
\newtheorem{corollary}[theorem]{Corollary}
\newtheorem{lemma}[theorem]{Lemma}
\crefname{prop}{Proposition}{Propositions}				
\newtheorem{theoremintro}{Theorem}						
\crefname{theoremintro}{Theorem}{Theorems}				
\newtheorem{corollaryintro}[theoremintro]{Corollary}	
\crefname{corollaryintro}{Corollary}{Corollary}			
\theoremstyle{definition} 
\newtheorem{defi}[theorem]{Definition}
\newtheorem*{defi*}{Definition}
\theoremstyle{remark} 
\newtheorem{remark}[theorem]{Remark}
\newtheorem{example}[theorem]{Example}
\newcommand{\RP}{\mathbb{RP}}
\newcommand{\R}{\mathbb{R}}
\newcommand{\N}{\mathbb{N}}
\newcommand{\foc}{\mathrm{foc}}
\newcommand{\inj}{\mathrm{inj}}
\newcommand{\tight}{\mathrm{tight}}
\DeclareMathOperator{\spn}{\mathrm{span}}
\DeclareRobustCommand{\SkipTocEntry}[5]{}
\title[Quantitative tightness: a sub-Riemannian approach]{Quantitative tightness for three-dimensional contact manifolds: a sub-Riemannian approach}
\author{Andrei A. Agrachev$^1$}
\author{Stefano Baranzini$^2$}
\author{Eugenio Bellini$^3$}
\author{Luca Rizzi$^1$}
\email{\href{mailto:lrizzi@sissa.it}{lrizzi@sissa.it}}
\address{$^1$SISSA, via Bonomea 265, 34136 Trieste, Italy}
\address{$^2$Dipartimento di Matematica ``Giuseppe Peano'', Università degli Studi di Torino, Torino, Italy}
\address{$^3$Dipartimento di Matematica ``Tullio Levi-Civita'', Università degli Studi di Padova, via Trieste 63, 35131 Padova (PD), Italy}
\date{\today}
\begin{document}

\begin{abstract}
Through the use of sub-Riemannian metrics we provide quantitative estimates for the maximal tight neighbourhood of a Reeb orbit on a three-dimensional contact manifold. Under appropriate geometric conditions we show how to construct closed curves which are boundaries of overtwisted disks. We introduce the concept of \emph{contact} Jacobi curve, and prove lower bounds of the so-called tightness radius (from a Reeb orbit) in terms of Schwarzian derivative bounds. We compare these results with the corresponding ones from \cite{Massot,QDarboux}, and we show that our estimates are sharp for classical model structures. We also prove similar, but non-sharp, estimates in terms of sub-Riemannian canonical curvature bounds. We apply our results to K-contact sub-Riemannian manifolds. In this setting, we prove a contact analogue of the celebrated Cartan--Hadamard theorem.
\end{abstract}
	
\maketitle

\tableofcontents

\section{Introduction}

A (co-oriented) contact structure $\xi$ on a $3$-manifold $M$ is a totally non-integrable plane field, i.e.\ $\xi = \ker\omega$ for some one-form $\omega$ satisfying the non-integrability condition $\omega\wedge d\omega\neq 0$. A contact manifold is called \emph{overtwisted} if it contains an embedded disk such that $\xi$ is tangent to the disk in a unique interior point and along the boundary (see \cref{def:ot_disk}). A contact manifold is called \emph{tight} if it is not overtwisted. 

The two categories defined by the tight/overtwisted dichotomy have demonstrated surprisingly different properties. Even though overtwisted contact structures on 3-manifolds have been classified \cite{Eliashberg} and tight contact structures have been extensively studied, it is not easy to determine whether a given contact structure is tight or overtwisted.

The well-known Darboux theorem states that contact structures can be locally normalized: any point in a contact manifold has a neighbourhood diffeomorphic to the standard contact structure $(\mathbb R^3, \omega_{\mathrm{st}})$, where $\omega_{\mathrm{st}} = dz+\tfrac{1}{2}(xdy-ydx)$. It is well-known that the latter structure is tight \cite{bennequin}. Combining this fact and Darboux theorem we deduce that every contact structure is locally tight. If we endow the contact manifold with a metric structure, then it makes sense to ask the following question: what is the size of the maximal tight neighbourhood of a given point? In \cite{Massot,QDarboux} the authors estimated the answer in the context of Riemannian geometry, unveiling surprising connections between different notions of convexity, Riemannian curvature, and tightness.

In this paper, instead, we investigate tightness criteria and geometric detection of overtwisted disks via sub-Riemannian geometry. The fact that the sub-Riemannian geodesics are tangent to $\xi$ makes this framework natural and well-adapted to contact geometry. In this regard, recent works \cite{Daniele, BellBosc} have linked sub-Riemannian geometry to contact topology.

In order to introduce our results, we anticipate some definitions. A (three-dimensional) contact sub-Riemannian manifold $(M,\omega, g)$ is a co-oriented contact manifold $(M,\xi)$, with $\xi = \ker \omega$, endowed with a bundle metric $g$ on $\xi$. Any positive rescaling of $\omega$ defines the same contact structure. We always normalize $\omega$  via the sub-Riemannian metric, see \eqref{eq:compatibility}, so that $\omega$ is uniquely determined by $g$ and $\xi$. This choice, in turn, yields a canonical complement of $\xi$: the so-called \emph{Reeb field}, denoted by $f_0$, is defined by the following equations
\begin{equation}
	\omega(f_0)=1,\qquad d\omega(f_0,\cdot)=0.
\end{equation}
To every $(M,\omega,g)$ there is an associated \emph{sub-Riemannian distance} $d$. For any $p,q \in M$, $d(p,q)$ is defined as the infimum of the lengths of the curves tangent to $\xi$ and joining the two points. Thanks to the contact condition, $(M, d)$ is a metric space with the same topology of $M$. In contact sub-Riemannian manifolds, geodesics (i.e., locally length-minimizing curves parametrized by constant speed) are projections of solutions of a Hamiltonian system on $T^*M$. To introduce it, we define the sub-Riemannian Hamiltonian $H:T^* M\to \mathbb R$ as
	\begin{equation}\label{eq:SR_ham_intro}
	H(\lambda):=   \max\left\{\langle \lambda, v \rangle -\frac{1}{2}\|v\|_g^2\, \Big| \, v \in \xi_{\pi(\lambda)} \right\},\qquad \lambda \in T^*M,
	\end{equation}
which is a smooth and fiber-wise non-negative quadratic form (albeit degenerate and with non-compact level sets). We denote with $\vec{H}$ the vector field  on $T^*M$ defined as the symplectic gradient of $H$. Sub-Riemannian geodesics on a contact structure are precisely the projections of integral curves of $\vec{H}$. In the following, we tacitly assume that $(M,d)$ is a complete metric space, in which case $\vec{H}$ is a complete vector field.

\subsection{Tightness radius from a Reeb orbit}

Let $\Gamma \subset M$ be an embedded piece of Reeb orbit, i.e.\ a connected one-dimensional submanifold of $M$, tangent to the Reeb vector field (see \cref{sec:expcoords} for details). The annihilator bundle of $\Gamma$ is
		\begin{equation}
			A\Gamma:=\{\lambda\in T^{*} M \mid \pi(\lambda)\in\Gamma,\,\,\langle\lambda\,, f_0\rangle=0\}.
		\end{equation}
		Given $r>0$ we also define the following bundles:
		\begin{equation}
			A^{<r}\Gamma:=A\Gamma\cap \{\sqrt{2H}<r\},\qquad A^1\Gamma:=A\Gamma\cap \{\sqrt{2H}=1\}.
		\end{equation}
$A\Gamma$ (resp.\ $A^1\Gamma$) plays the same role of the normal bundle (resp.\ unit normal bundle) in Riemannian geometry. The injectivity radius of $\Gamma$ is defined as
	\begin{equation}
		r_{\inj}(\Gamma)=\sup\{r >0 \mid \text{$E :A^{<r}\Gamma\to M$ is a diffeomorphism onto its image}\},
	\end{equation}
	where $E$ is the tubular neighbourhood map, namely the restriction to $A\Gamma$  of the sub-Riemannian Hamiltonian flow:
	\begin{equation}
	E :  A\Gamma  \to M, \qquad E(\lambda) := \pi\circ e^{\vec{H}}(\lambda).
\end{equation}

	It turns out that, for $\lambda \in A^{<r}\Gamma$, and $r = r_{\inj}(\Gamma)$, we have $d(\Gamma, E(\lambda)) = \sqrt{2H(\lambda)}$ so that sub-Rieman\-nian geodesic involved in this construction realize the distance from $\Gamma$. We are interested in the size of the largest such neighbourhood on which the contact structure is tight.

\begin{defi*}[Tightness radius]
Let $(M,\omega, g)$ be a three-dimensional contact sub-Riemannian manifold and let $\Gamma$ be an embedded piece of Reeb orbit with $r_{\inj}(\Gamma)>0$. The \emph{tightness radius} is
\begin{equation}
		r_{\tight}(\Gamma):=\sup\left\{0<r<r_{\inj}(\Gamma) \mid \text{$(A^{<r}\Gamma, \ker E^*\omega)$ is tight}\right\}.
\end{equation}
\end{defi*}
	If $\Gamma$ is a (whole) Reeb orbit with $r_\inj(\Gamma)>0$, then letting $B_r(\Gamma)$ the set of points at sub-Riemannian distance from $\Gamma$ smaller than $r$, it holds $E\left(A^{<r}\Gamma\right)=B_r(\Gamma)$ for all $0<r<r_\inj(\Gamma)$. Therefore the tightness radius can be expressed (for whole orbits) as
      \begin{equation}\label{eq:r_tight-tube-intro}
          r_\tight(\Gamma)=\sup\left\{0<r<r_\inj(\Gamma)\mid \left(B_r(\Gamma),\ker\omega|_{B_r(\Gamma)}\right)\,\,\text{is tight}\right\}.
      \end{equation}
See \cref{rmk:tube,rmk:tube_tight} for details.      

\subsection{Contact Jacobi curves}

The main tool that we introduce to study the tightness radius is the \emph{contact Jacobi curve}. These are inspired by Jacobi curves in (sub-)Riemannian geometry and geometric control theory, which are curves in the Lagrange Grassmannian whose dynamics is intertwined with the presence of conjugate points and curvature (see \cite{AG-Feeback,AZ-JacobiI,AZ-JacobiII,CurvVar} and references within). The dynamics of contact Jacobi curves, instead, is related with the presence of overtwisted disks.

\begin{defi*}[Contact Jacobi curve]\label{def:contactjacobi-intro}
	Let $(M,\omega, g)$ be a complete three-dimensional  contact sub-Riemannian manifold, and let $\Gamma$ be an embedded piece of a Reeb orbit. Consider the following one-parameter family of one-forms on $A^1\Gamma$:
	\begin{equation}\label{eq:moving_omega-intro}
		\omega_r:=\left.\left((\pi\circ e^{r\vec{H}})^{*}\omega\right)\right|_{A^1\Gamma},\qquad r\in\mathbb [0,+\infty).
	\end{equation}
	The \emph{contact Jacobi curve} at $\lambda\in A^1\Gamma$ is the projectivization of $\omega_r|_{\lambda}$:
	\begin{equation}\label{eq:contact_Jacobi-intro}
			\Omega_\lambda:[0,+\infty) \to P (T_{\lambda}^* A^{1}\Gamma)\simeq\RP^1,\qquad
			\Omega_\lambda(r):=P\left(\omega_{r}|_{\lambda}\right).
	\end{equation}
	The \emph{first singular radius} of the contact Jacobi curve is
	\begin{equation}\label{eq:r_o-intro}
		r_o(\lambda):=\inf\{ r>0 \mid \Omega_{\lambda}(r)=\Omega_{\lambda}(0)\}.
	\end{equation}
\end{defi*}

A first key result is a structural theorem for the contact Jacobi curve, namely \cref{thm:singwelldef}, where we also prove that the definition above is well-posed for all $r\in \R$. \cref{thm:singwelldef} is technical, and for this reason it is omitted from this introduction.

The first singular time of contact Jacobi curves detects the presence of overtwisted disks. This is our first main result, corresponding to \cref{thm:dist=conj} in the body of the paper.

\begin{theoremintro}[Geometric tightness radius estimates]
\label{thm:dist=conj-intro}
	Let $(M,\omega, g)$ be a complete three-di\-men\-sional contact sub-Riemannian manifold and let $\Gamma$ be an embedded piece of Reeb orbit with $r_{\inj}(\Gamma)>0$. Then, letting
	\begin{equation}
r_o^-(\Gamma) := \inf \{ r_o(\lambda)\mid \lambda \in A^1\Gamma \}, \qquad 	r_o^+(\Gamma) := \sup \{ r_o(\lambda)\mid \lambda \in A^1\Gamma \},
	\end{equation}	
	the following estimates hold:
	\begin{equation}\label{eq:upperandlower-intro}
	\min\left\{r_{\inj}(\Gamma), r_o^-(\Gamma)\right\}\leq r_{\tight}(\Gamma) \leq \min\left\{r_{\inj}(\Gamma),r_o^+(\Gamma)\right\}.
	\end{equation}
	Moreover, if $r_o^+(\Gamma) < r_{\inj}(\Gamma)$, then for any $q\in \Gamma$, the set
	\begin{equation}\label{eq:Dqdisk-intro}
	D_q:=\{E(r\lambda) \mid \lambda \in A_q^1 \Gamma,\,\, r\leq r_o(\lambda)\}
	\end{equation}
	is an overtwisted disk, and thus $(M,\omega)$ is an overtwisted contact manifold.
\end{theoremintro}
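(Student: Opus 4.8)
The plan is to prove the two inequalities in \eqref{eq:upperandlower-intro} separately, and then to extract the overtwisted disk from the failure of the upper bound. Throughout, the contact structure on the tube $A^{<r}\Gamma$ is $\ker E^*\omega$, and the point is that the one-forms $\omega_r$ of \eqref{eq:moving_omega-intro} are precisely the ``radial slices'' of $E^*\omega$: if we use exponential-type coordinates $(r,\lambda)$ on $A^{<r}\Gamma$ with $\lambda\in A^1\Gamma$ (so that $E(r\lambda)$ traces the geodesic normal to $\Gamma$), then $E^*\omega = \alpha\, dr + \omega_r$ for an appropriate function $\alpha$, and the tangency locus of the contact plane field $\ker E^*\omega$ to a radial disk $D_q$ is governed exactly by the vanishing behaviour of $\omega_r$ restricted to that disk. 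I would first set up this correspondence carefully (invoking \cref{thm:singwelldef} for the structural properties of $\Omega_\lambda$, in particular that $\Omega_\lambda(r)=\Omega_\lambda(0)$ is a genuine transversality/tangency condition in $\RP^1$), so that $r_o(\lambda)$ becomes the first radius at which the contact plane along the geodesic $r\mapsto E(r\lambda)$ becomes tangent to the radial disk through $\Gamma(q)$ and that geodesic.

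\emph{Lower bound.} Suppose $0<r<\min\{r_{\inj}(\Gamma), r_o^-(\Gamma)\}$. I claim $(A^{<r}\Gamma,\ker E^*\omega)$ is tight. Since $r<r_o^-(\Gamma)\le r_o(\lambda)$ for every $\lambda\in A^1\Gamma$, the contact plane field never becomes tangent to any radial disk before radius $r$; this should let me construct an explicit diffeomorphism from $A^{<r}\Gamma$ onto an open subset of the standard contact $\mathbb{R}^3$ — or at least onto a known tight model — by straightening $E^*\omega$ along the radial direction, using that $\omega_r|_\lambda$ stays in a single affine chart of $P(T^*_\lambda A^1\Gamma)$ for $r$ in the given range. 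The cleanest route is probably to show directly that the radial vector field $\partial_r$ is a contact vector field, or to produce a contactomorphism with a Darboux-type neighbourhood where tightness is classical (Bennequin's theorem, as cited for $\omega_{\mathrm{st}}$); either way the mechanism is that no tangency forms an obstruction in this radius range, so no overtwisted disk can fit inside. This is the step I expect to be the main obstacle: turning ``the Jacobi curve stays away from its initial value'' into an actual contactomorphism with a tight model requires either a careful normal-form computation or a clever invariant argument, and getting the boundary behaviour (open vs.\ closed tube) right is delicate.

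\emph{Upper bound and the disk.} For the upper bound it suffices to show that if $r_o^+(\Gamma)<r<r_{\inj}(\Gamma)$ then $(A^{<r}\Gamma,\ker E^*\omega)$ is overtwisted; by definition of $r_o^+$ there is some $\lambda_0\in A^1\Gamma$ with $r_o(\lambda_0)<r$, hence a tangency occurs inside the tube. I would then verify that the candidate set $D_q$ in \eqref{eq:Dqdisk-intro} is an embedded disk: it is the image under $E$ of $\{(r,\lambda): \lambda\in A^1_q\Gamma,\ r\le r_o(\lambda)\}$, which is a topological disk in $A\Gamma$ provided $\lambda\mapsto r_o(\lambda)$ is continuous (or at least that the region is a disk), and $E$ is an embedding because $r_o(\lambda)\le r_o^+(\Gamma)<r_{\inj}(\Gamma)$. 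It remains to check the overtwisted-disk conditions of \cref{def:ot_disk}: (i) $\xi=\ker E^*\omega$ is tangent to $D_q$ along its entire boundary $\partial D_q=\{E(r_o(\lambda)\lambda)\}$ — this is exactly the statement that $\Omega_\lambda(r_o(\lambda))=\Omega_\lambda(0)$, i.e.\ the contact plane contains the boundary tangent direction — and (ii) $\xi$ is tangent to $D_q$ at exactly one interior point, namely $q=\Gamma(q)$ itself (the apex of the radial cone), with the tangency being transverse/generic there. Point (i) is immediate from the definition of $r_o$ together with the identification of $\Omega_\lambda(0)$ as the ``radial'' direction in $T^*_\lambda A^1\Gamma$; point (ii) needs a short local computation at $\Gamma$ showing the contact plane meets the cone $D_q$ only at the vertex, using the contact (non-integrability) condition $\omega\wedge d\omega\ne 0$. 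Once $D_q$ is shown to be an overtwisted disk, tightness fails and we get $r_{\tight}(\Gamma)\le\min\{r_{\inj}(\Gamma),r_o^+(\Gamma)\}$, and since $D_q$ sits inside $M$ (via $E$), $(M,\omega)$ is overtwisted. I would single out establishing the boundary-tangency condition (i) rigorously, including that $\partial D_q$ is a smooth embedded circle and that the tangency is along the whole boundary rather than at isolated points, as the second main technical hurdle, closely tied to the regularity of $r_o(\cdot)$ provided by \cref{thm:singwelldef}.
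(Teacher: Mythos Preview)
Your overall strategy is sound, but you are missing the key organizational device that the paper uses and that would resolve both of the hurdles you flag. The paper does not build a contactomorphism with a tight model from scratch; instead it factors the argument through the \emph{singular locus} $\mathrm{Sing}(\Gamma)$ (\cref{def:singlocus}) and \cref{thm:singlocusbound}. The crucial observation, proved in that theorem, is that in cylindrical coordinates one can write
\[
E^*\omega = N(\sin\phi\, d\theta + \cos\phi\, dz),
\]
for smooth functions $N>0$ and $\phi$, with $\phi$ \emph{strictly increasing} along every radial ray by the contact condition. (Note there is no $dr$ term, since $E_*\partial_r=\pi_*\vec H$ is horizontal; your $\alpha\,dr$ is identically zero.) Comparing with the formula $E^*\omega|_{r\lambda}=\omega_r(\partial_\theta)d\theta+\omega_r(\partial_z)dz$ one finds $\Omega_\lambda(r)=\Omega_\lambda(0)\iff \omega_r(\partial_\theta)=0\iff \phi(r\lambda)\in\pi\mathbb{Z}$, so the first singular radius is characterized by $\phi(r_o(\lambda)\lambda)=\pi$. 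This single angle function does three jobs at once:
\begin{itemize}
\item it gives the characterization $E(r\lambda)\in\mathrm{Sing}(\Gamma)\iff\Omega_\lambda(r)=\Omega_\lambda(0)$, so that $r_o^-(\Gamma)=d(\Gamma,\mathrm{Sing}(\Gamma))$ whenever the latter is $<r_{\inj}(\Gamma)$, and the lower bound follows immediately from \cref{thm:singlocusbound};
\item it yields an explicit map $\varphi(x,y,z)=(\sqrt{\phi/H}\,x,\sqrt{\phi/H}\,y,z)$ into $(\mathbb R^3,\omega_{\mathrm{ot}})$ with $\varphi^*\omega_{\mathrm{ot}}=E^*\omega/N$, so on $\{\phi<\pi\}$ one lands in the tight cylinder of \cref{cor:model_radii_ot}. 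This is exactly the contactomorphism with a tight model you were looking for, and it is already contained in the proof of \cref{thm:singlocusbound};
\item since $\pi$ is a regular value of $\phi$ (monotonicity in $r$), the implicit function theorem gives $r_o\in C^\infty(A^1\Gamma)$, so $D_q$ is a smooth embedded disk; and the same $\varphi$ carries $D_q$ onto the standard overtwisted disk $\{x^2+y^2\leq 2\pi\}$, which verifies the overtwisted condition in one stroke rather than by checking boundary and interior tangencies separately.
\end{itemize}
So both of your ``main obstacles'' dissolve once you introduce $\phi$ and the map $\varphi$. Your direct verification of \cref{def:ot_disk} would also work, but the paper's route via $\varphi$ is shorter and avoids the regularity analysis of $\partial D_q$ that you correctly worried about. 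Finally, your remark that $\partial_r$ might be a contact vector field is off: it is Legendrian (it lies in $\ker E^*\omega$) but not contact, and this does not by itself produce a contactomorphism.
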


\subsection{Tightness radius estimates}

We apply \cref{thm:dist=conj-intro} by estimating the first singular radius in two conceptually different ways. The first method assumes a control on the Schwarzian derivative of the contact Jacobi curve, while the second method requires bounds on the so-called sub-Riemannian canonical curvature. In the next two sections, we state the results obtained with either method. Then, we compare our tightness radius estimates with the corresponding ones obtained in \cite{Massot,QDarboux} under Riemannian curvature bounds.

\subsubsection{Tightness radius estimates via Schwarzian derivative}

This first method, natural in view of the definition of the contact Jacobi curve as curve in $\RP^1$, is based on the concept of Schwarzian derivative (see \cref{sec:Schwarzianderivative}). This yields a sharp comparison theorem for the tightness radius, corresponding to \cref{thm:comparisonschwarzian}.

\begin{theoremintro}[Tightness radius estimates with Schwarzian derivative bounded above]\label{thm:comparisonschwarzian-intro}
		Let $(M,\omega, g)$ be a complete three-di\-men\-sional contact sub-Riemannian manifold and let $\Gamma$ be an embedded piece of Reeb orbit with $r_{\inj}(\Gamma)>0$. Assume that there exist $k_1,k_2\in \R$ such that the Schwarzian derivative of the contact Jacobi curves are bounded above by:
		\begin{equation}\label{eq:schwarzianestimate-intro}
			\frac{1}{2}\mathcal S(\Omega_{\lambda})(r)\leq -\frac{3}{4r^2}+k_1r+k_2r^2,\qquad \forall\, r\in (0,r_{\inj}(\Gamma)],\quad \forall\, \lambda\in A^1\Gamma.
		\end{equation}
Then for the tightness radius of $\Gamma$ it holds
\begin{equation}\label{eq:rtightestimate-intro}
r_\tight(\Gamma) \geq \min\left\lbrace r_*(k_1,k_2),r_{\inj}(\Gamma) \right\rbrace,
\end{equation}
where
\begin{equation}\label{eq:k12-intro}
r_*(k_1,k_2):= \begin{cases}
\frac{-k_1+\sqrt{8 \pi  k_2^{3/2}+k_1^2}}{2 k_2}, & \text{if } k_1,k_2>0, \\
\frac{\sqrt{2 \pi} }{k_2^{1/4}}, & \text{if }  k_1 \leq 0, \, k_2>0,\\
			\left(\frac{3 j_{2/3}}{2}\right)^{2/3}\frac{1}{k_1^{1/3}}, & \text{if }  k_1>0, \,k_2 \leq 0 ,\\
			+\infty & \text{if } k_1,k_2\leq 0,
			\end{cases}
\end{equation}
and $j_{2/3}\sim 3.37$ is the first positive root of the Bessel function of first kind $J_{2/3}$.
\end{theoremintro}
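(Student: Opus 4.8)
The plan is to deduce \cref{thm:comparisonschwarzian-intro} from \cref{thm:dist=conj-intro} by bounding the first singular radius from below, uniformly in $\lambda$. Since \cref{thm:dist=conj-intro} gives $r_\tight(\Gamma)\ge\min\{r_\inj(\Gamma),r_o^-(\Gamma)\}$ and $r_o^-(\Gamma)=\inf_{\lambda\in A^1\Gamma}r_o(\lambda)$, it suffices to prove that $r_o(\lambda)\ge\min\{r_\inj(\Gamma),r_*(k_1,k_2)\}$ for every $\lambda\in A^1\Gamma$, after which \eqref{eq:rtightestimate-intro} is immediate. To make this tractable I would first recast the first singular radius as a conjugate‑point problem for a scalar ODE: by the structural result \cref{thm:singwelldef} the contact Jacobi curve $\Omega_\lambda$ is a strictly monotone curve in $\RP^1$ which, written in homogeneous coordinates, is spanned by a fundamental system of $\ddot y+q_\lambda(r)y=0$ with $q_\lambda(r)=\tfrac12\mathcal S(\Omega_\lambda)(r)$, and with a regular singular point at $r=0$ with indicial exponents $\tfrac32$ and $-\tfrac12$, so that $p_\lambda:=q_\lambda+\tfrac{3}{4r^2}$ extends continuously to $r=0$. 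Under this dictionary $\Omega_\lambda(r)=\Omega_\lambda(0)$ precisely when the recessive solution $y_\lambda$ (the one with $y_\lambda(r)\sim r^{3/2}$ as $r\to 0^+$) vanishes, so $r_o(\lambda)$ is the first positive zero of $y_\lambda$; and hypothesis \eqref{eq:schwarzianestimate-intro} becomes $q_\lambda(r)\le q_0(r):=-\tfrac{3}{4r^2}+k_1r+k_2r^2$ on $(0,r_\inj(\Gamma)]$. Since the case $r_o(\lambda)\ge r_\inj(\Gamma)$ is trivial, I may and do assume $r_o(\lambda)<r_\inj(\Gamma)$, so that all comparisons below run on $(0,r_o(\lambda)]$, where \eqref{eq:schwarzianestimate-intro} is available.

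Next I would set up the comparison engine: if $q_\lambda\le\widehat q$ pointwise, with $\widehat p:=\widehat q+\tfrac{3}{4r^2}$ bounded near $0$, then the first positive zero of the recessive solution of $\ddot y+q_\lambda y=0$ is at least that of the recessive solution of $\ddot y+\widehat q\,y=0$. This is a Riccati comparison for $v:=\dot y_\lambda/y_\lambda$ and $\widehat v:=\dot{\widehat y}/\widehat y$: one has $\dot v=-v^2-q_\lambda\ge -v^2-\widehat q$, hence $(v-\widehat v)'\ge -(v+\widehat v)(v-\widehat v)$, while at the singular endpoint both satisfy $v(r)=\tfrac{3}{2r}-\tfrac14 p_\lambda(0)\,r+O(r^3)$ (and similarly for $\widehat v$), so that $v-\widehat v=O(r)\to0$ with nonnegative leading coefficient since $p_\lambda(0)\le\widehat p(0)$. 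A Gronwall‑type argument then gives $v\ge\widehat v$ on their common interval of definition, so $v$ cannot blow down to $-\infty$ before $\widehat v$; the delicate point is precisely the borderline non‑integrable term $-\tfrac{3}{4r^2}$ at $r=0$, handled via this asymptotic expansion. It then suffices to exhibit, in each sign regime of $(k_1,k_2)$, a majorant $\widehat q\ge q_0$ whose recessive solution's first zero is computable and equals, or is bounded below by, $r_*(k_1,k_2)$.

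For the computations: if $k_1,k_2\le 0$, then $q_0\le-\tfrac{3}{4r^2}$, whose recessive solution $r^{3/2}$ is positive (indeed convex while positive) and never vanishes, so $r_*=+\infty$. If $k_2=0<k_1$, take $\widehat q=q_0$: the change of variables $y=\sqrt r\,w$, $z=\tfrac{2\sqrt{k_1}}{3}r^{3/2}$ turns the equation into Bessel's equation of order $\tfrac23$, with recessive solution proportional to $\sqrt r\,J_{2/3}\!\big(\tfrac{2\sqrt{k_1}}{3}r^{3/2}\big)$, whose first zero sits at $\tfrac{2\sqrt{k_1}}{3}r^{3/2}=j_{2/3}$, i.e.\ $r=(\tfrac{3j_{2/3}}{2})^{2/3}k_1^{-1/3}$. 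If $k_1\le 0<k_2$, take $\widehat q=-\tfrac{3}{4r^2}+k_2r^2\ge q_0$: the same substitution with $z=\tfrac{\sqrt{k_2}}{2}r^2$ gives Bessel's equation of order $\tfrac12$, recessive solution proportional to $r^{-1/2}\sin\!\big(\tfrac{\sqrt{k_2}}{2}r^2\big)$, first zero at $\tfrac{\sqrt{k_2}}{2}r^2=\pi$, i.e.\ $r=\sqrt{2\pi}\,k_2^{-1/4}$. Finally, if $k_1,k_2>0$, completing the square gives $q_0(r)\le-\tfrac{3}{4r^2}+p(r)^2$ with $p(r)=\sqrt{k_2}\,r+\tfrac{k_1}{2\sqrt{k_2}}>0$ affine; here I would invoke a disconjugacy estimate to the effect that the recessive solution of $\ddot y+\big(-\tfrac{3}{4r^2}+p(r)^2\big)y=0$ stays positive on $(0,R)$ whenever $\int_0^R p(t)\,dt\le\pi$. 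Since $\int_0^R p=\tfrac{\sqrt{k_2}}{2}R^2+\tfrac{k_1}{2\sqrt{k_2}}R$, this forces the first zero to be at least the positive root of $k_2R^2+k_1R=2\pi\sqrt{k_2}$, which is precisely the value $r_*(k_1,k_2)$ in the first line of \eqref{eq:k12-intro} (degenerating to $\sqrt{2\pi}\,k_2^{-1/4}$ as $k_1\to0$, consistently with the previous case). Feeding the four cases into the comparison engine yields $r_o(\lambda)\ge\min\{r_\inj(\Gamma),r_*(k_1,k_2)\}$, hence \eqref{eq:rtightestimate-intro}.

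I expect the main obstacle to be the regime $k_1,k_2>0$: the potential $-\tfrac{3}{4r^2}+(ar+b)^2$ has no elementary solutions when $a,b\neq0$, so the disconjugacy estimate cannot be reduced to a direct Sturm comparison against a solvable model and requires a self‑contained argument controlling the $p$‑weighted phase of the equation. The remaining technical points—the Riccati comparison across the regular singular point at $r=0$, and the (otherwise routine) reduction of the two "pure" cases to Bessel equations of orders $\tfrac23$ and $\tfrac12$—are comparatively standard.
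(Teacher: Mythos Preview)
Your overall plan matches the paper: reduce via \cref{thm:dist=conj-intro} to a lower bound on $r_o(\lambda)$, interpret $r_o(\lambda)$ as the first zero of the recessive solution of $\ddot y+q_\lambda y=0$ with $q_\lambda=\tfrac12\mathcal S(\Omega_\lambda)$, and compare to explicit models. The three cases $k_1,k_2\le 0$, $k_1\le 0<k_2$, and $k_1>0\ge k_2$ are handled exactly as in the paper (the latter two via Bessel reductions of order $\tfrac12$ and $\tfrac23$). Two points deserve comment.

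First, the comparison engine. Your Riccati argument is formally correct but the initial condition at the regular singular point is more delicate than you indicate: by the structural theorem the regular part satisfies $p_\lambda(r)=O(r)$, so $p_\lambda(0)=0=\widehat p(0)$ and your ``nonnegative leading coefficient'' claim requires the next order in the expansion. The paper sidesteps this by proving a Sturm--Picone comparison directly adapted to potentials with a $-\tfrac{3}{4r^2}$ singularity: between two zeros of $y_\lambda$ (including the limit zero at $r=0$) there must lie a zero of the model solution. This avoids any asymptotic matching of Riccati solutions at $r=0$.

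Second, and this is the genuine gap, the case $k_1,k_2>0$. You correctly identify that $-\tfrac{3}{4r^2}+p(r)^2$ with $p(r)=\sqrt{k_2}\,r+\tfrac{k_1}{2\sqrt{k_2}}$ has no elementary solutions, and you invoke an unproved disconjugacy criterion ``$\int_0^R p\le\pi$''. The paper's device is to majorize once more, replacing the singular term rather than the regular one: since $w:=\tfrac{k_1}{2k_2}>0$,
\[
-\frac{3}{4r^2}+k_1 r+k_2 r^2 \;\le\; -\frac{3}{4(r+w)^2}+k_2(r+w)^2,
\]
and the right-hand side, after the shift $s=r+w$, is exactly the solvable $k_1=0$ model. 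The solution vanishing at $r=0$ is $\bar u(r)=\sin\!\big(\tfrac{\sqrt{k_2}}{2}\,r(r+2w)\big)/\sqrt{r+w}$, whose first positive zero solves $k_2 R^2+k_1 R=2\pi\sqrt{k_2}$, i.e.\ the first line of \eqref{eq:k12-intro}. Note that $\tfrac{\sqrt{k_2}}{2}\,R(R+2w)=\int_0^R p$, so this \emph{is} your criterion $\int_0^R p=\pi$; the shift of the singularity is precisely the missing ``self-contained argument'' you were looking for.
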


The upper bound \eqref{eq:schwarzianestimate-intro} is motivated by the structure of contact Jacobi curves, i.e.\ \cref{thm:singwelldef}. All three terms in the r.h.s.\ of \eqref{eq:schwarzianestimate-intro} are needed in order to get sharp and general results. 
We note that the upper bound in \eqref{eq:schwarzianestimate-intro} is always verified for a
given $\lambda$ if $r_\inj(\Gamma)<+\infty$ (e.g.\ when $M$ is compact). We refer to \cref{rmk:asymptotics} and \cref{rmk:kappa1} for details. 
Furthermore \cref{thm:comparisonschwarzian-intro} is sharp for the standard sub-Rieman\-nian contact structure (resp.\ overtwisted structure) of \cref{ex:st_sr} (resp.\ \cref{ex:st_ot_sr}), and all left-invariant K-contact structures. See \cref{rmk:sharp}.

\subsubsection{Tightness radius estimates via canonical curvature}

A second way to estimate the singular radius of a contact Jacobi curve, and thus the tightness radius from a Reeb orbit, is via curvature-type invariants. Our result if formulated in terms of the so-called \emph{canonical curvatures}. The canonical curvature plays the role of sectional curvature in sub-Riemannian comparison geometry. Historically, it was introduced in \cite{AG-Feeback,AZ-JacobiI}, formalized in \cite{ZeLi}, and further studied and developed in \cite{ZeLi2,LLZ,AAPL,AL-Bishop,CurvVar,ABR-contact,conj,BR-BakryEmery,BR-Jacobi}. See \cref{sec:canonical} for details. In the contact case, there are only two such curvatures, associated to a given sub-Riemannian geodesic $\gamma^\lambda :[0,T]\to M$, with initial covector $\lambda \in T^*M$, and are denoted by $R_a^\lambda,R_c^\lambda:[0,T]\to \R$. The next result corresponds to \cref{thm:comparisoncurvature}.

\begin{theoremintro}[Tightness radius estimates with curvature bounded above]\label{thm:comparisoncurvature-intro}
		Let $(M,\omega, g)$ be a complete three-di\-men\-sional contact sub-Riemannian manifold and let $\Gamma$ be an embedded piece of Reeb orbit with $r_{\inj}(\Gamma)>0$. Assume that there exist $A,C>0$ such that 
\begin{equation}\label{eq:boundAC-intro}
 \sqrt{1+R_a^\lambda(r)^2}\leq A,\quad \sqrt{1+R_c^\lambda(r)^2}\leq C,\qquad \forall\, r\in[0,r_{\inj}(\Gamma)],\quad \forall\,\lambda\in A^1\Gamma,
\end{equation}
where $R_a^\lambda(r),R_c^\lambda(r)$ are the canonical curvatures at $\lambda$.  Then for the tightness radius of $\Gamma$ it holds
	\begin{equation}
		r_{\tight}(\Gamma)\geq\min\left\{\tau(A,C),r_{\inj}(\Gamma)\right\},
	\end{equation}
	where 
	\begin{equation}\label{eq:tauAC-intro}
		\tau(A,C):=\int_{0}^\infty\frac{1}{A u^2+C u+1}\,\mathrm{d}u.
	\end{equation}
\end{theoremintro}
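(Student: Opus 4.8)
The plan is to deduce the estimate from the geometric bound \cref{thm:dist=conj-intro} by controlling the first singular radius of every contact Jacobi curve through a scalar comparison argument whose model equation has transit time $\tau(A,C)$. By \cref{thm:dist=conj-intro} we have $r_\tight(\Gamma)\geq\min\{r_\inj(\Gamma),r_o^-(\Gamma)\}$, so it suffices to prove that $r_o(\lambda)\geq\min\{\tau(A,C),r_\inj(\Gamma)\}$ for every $\lambda\in A^1\Gamma$. I would fix such a $\lambda$ and write $R_a=R_a^\lambda$, $R_c=R_c^\lambda$; the rest of the argument is entirely fibrewise over $\gamma^\lambda$.

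The key reduction is to an ODE for $\Omega_\lambda$ involving the canonical curvatures. Starting from the structural description of $\Omega_\lambda$ in \cref{thm:singwelldef}, I would write the Jacobi equation of $\gamma^\lambda$ in the canonical moving frame carrying $R_a,R_c$ (as in \cref{sec:canonical}) and express $\Omega_\lambda$ in a reference frame identified with the one at $r=0$. In these coordinates $\Omega_\lambda$ is the projectivization of a solution of a linear system on $\mathbb R^2$ whose entries are affine in $R_a(r)$ and $R_c(r)$; by \cref{thm:singwelldef} this curve is monotone in $\RP^1$, issues from the ``vertical'' point $\Omega_\lambda(0)$, and has near $r=0$ the universal behaviour recorded by the $-3/(4r^2)$ Schwarzian term. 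Using these facts one introduces a Riccati-type projective coordinate $u(r)$ of $\Omega_\lambda$, normalized so that $u(0)=0$ and so that the event $\Omega_\lambda(r)=\Omega_\lambda(0)$ corresponds to $u$ blowing up; after the change of variable desingularizing the equation at $r=0$ (imposed by the $r^{\pm2}$ behaviour), $u$ satisfies a scalar Riccati equation
\begin{equation}
\dot u=\rho_a(r)\,u^2+\rho_c(r)\,u+1,\qquad u(0)=0,\qquad |\rho_a(r)|\leq\sqrt{1+R_a(r)^2},\quad |\rho_c(r)|\leq\sqrt{1+R_c(r)^2}.
\end{equation}
This is the point at which the two canonical curvatures enter, and it explains why the hypotheses are phrased through $\sqrt{1+R_a^2}$ and $\sqrt{1+R_c^2}$ rather than through $R_a,R_c$ directly.

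The comparison step is then routine. Since $\dot u|_{u=0}=1>0$ one has $u(r)\geq 0$ on $[0,r_\inj(\Gamma)]$, and inserting \eqref{eq:boundAC-intro} gives $\dot u\leq A\,u^2+C\,u+1$ there. As $A,C>0$, the polynomial $Au^2+Cu+1$ is positive on $[0,\infty)$, so the scalar comparison principle yields $u(r)\leq\bar u(r)$, where $\bar u$ is the maximal solution of $\dot{\bar u}=A\bar u^2+C\bar u+1$ with $\bar u(0)=0$. The function $\bar u$ blows up exactly at time $\int_0^\infty\frac{\mathrm du}{Au^2+Cu+1}=\tau(A,C)$; hence $u$ stays finite on $[0,\min\{\tau(A,C),r_\inj(\Gamma)\})$, i.e.\ $\Omega_\lambda(r)\neq\Omega_\lambda(0)$ there, so $r_o(\lambda)\geq\min\{\tau(A,C),r_\inj(\Gamma)\}$. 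Since $\lambda$ was arbitrary, $r_o^-(\Gamma)\geq\min\{\tau(A,C),r_\inj(\Gamma)\}$, and the first step concludes. (The structure is parallel to \cref{thm:comparisonschwarzian-intro}: there the singular radius is estimated via the Schwarzian, here via the canonical curvature.)

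The hard part will be the middle step: extracting from \cref{thm:singwelldef} and the canonical-frame equations the precise scalar Riccati equation. Concretely, one must establish the monotonicity of $\Omega_\lambda$ and pin down its $r\to0$ behaviour carefully enough to identify ``first return to $\Omega_\lambda(0)$'' with a genuine blow-up of $u$, select the desingularizing substitution forced by the $r^{\pm2}$ asymptotics, and verify that $R_a$ and $R_c$ feed into the quadratic and linear coefficients exactly through $\sqrt{1+R_a^2}$ and $\sqrt{1+R_c^2}$, so that the comparison model is precisely the one producing $\tau(A,C)$ and not a rescaled or shifted quadratic. Once that normal form is in hand, the comparison-principle argument of the third step is a standard one-dimensional Riccati estimate.
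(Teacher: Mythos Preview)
Your overall plan---reduce via \cref{thm:dist=conj-intro} to an estimate on $r_o(\lambda)$, then compare with the Riccati model $\dot{\bar u}=A\bar u^2+C\bar u+1$ whose blow-up time is $\tau(A,C)$---is exactly the paper's. The gap is in the middle step: you propose to obtain a \emph{scalar} Riccati equation for a projective coordinate of $\Omega_\lambda$, but the dynamics of $\Omega_\lambda$ in the canonical frame is fourth-order, not second-order, so a single projective coordinate does not satisfy a first-order Riccati with coefficients controlled by $R_a,R_c$ in the way you describe. The claimed bounds $|\rho_a|\leq\sqrt{1+R_a^2}$, $|\rho_c|\leq\sqrt{1+R_c^2}$ on scalar coefficients are not something one can read off directly.

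What the paper actually does is different in mechanism. Using the canonical \emph{dual} frame (and the identity $F^c(r)=-\sqrt{2H}\,(\pi\circ e^{r\vec H})^*\omega$), one shows that $r_o(\lambda)$ is the first positive zero of $x_0(r):=\langle F^c(r),E_a(0)\rangle$. The structural equations for the canonical frame give a fourth-order linear ODE for $x_0$,
\[
\frac{d}{dr}\bigl(\dddot x_0+R_a\dot x_0\bigr)-R_c\,x_0=0,\qquad (x_0,\dot x_0,\ddot x_0,\dddot x_0)(0)=(0,0,1,0),
\]
which is rewritten as a first-order system in $x=(x_0,x_1,x_2,x_3)$. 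Since $x_0$ is the double integral of $x_2$ with $x_2(0)=1$, the first zero of $x_0$ occurs strictly after the first zero $\tau$ of $x_2$. On $[0,\tau)$ one then forms the \emph{vector} $v=(x_0/x_2,\,x_1/x_2,\,x_3/x_2)\in\mathbb R^3$, which satisfies $\dot v=Mv+w$ with $M$ affine in $v$ and linear in $R_a,R_c$. Taking the Euclidean norm $u=|v|$ and estimating the operator norms of the pieces of $M$ is precisely where the quantities $\sqrt{1+R_a^2}$ and $\sqrt{1+R_c^2}$ appear, yielding $\dot u\leq Au^2+Cu+1$. The comparison argument you outline then finishes the proof. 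So your final step is fine, but the object $u$ that actually obeys the Riccati inequality is the norm of a three-dimensional projective variable, not a one-dimensional projective coordinate of $\Omega_\lambda$; the intermediate passage through the zero of $x_2$ (rather than a blow-up) is also essential.
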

\cref{thm:comparisoncurvature-intro} links tightness with sub-Riemannian curvature, for general three-dimensional contact structures. As it will be clear from the proof, \cref{thm:comparisoncurvature-intro} is non-sharp, even for the standard contact structure.

\subsection{Cartan--Hadamard theorem for K-contact structures}

A contact sub-Riemannian manifold $(M,\omega, g)$ is called K-contact if the Reeb flow acts on $M$ by isometries.

Denote with $\kappa$ the Gaussian curvature of the surface obtained by locally quotienting $M$ under the action of the Reeb flow, see \cref{rmk:chikappa}. It is a well-defined function on $M$, constant along the Reeb orbits. We prove a contact analogue of the celebrated Cartan--Hadamard theorem, corresponding to \cref{thm:contactHadamard}.

\begin{theoremintro}[Contact Cartan--Hadamard]\label{thm:contactHadamard-intro}
	Let $(M,\omega,g)$ be a complete three-di\-men\-sional simply connected K-contact sub-Riemannian manifold. If $\kappa\leq 0$ then $(M, \ker\omega)$ is contactomorphic to the standard contact structure on $\mathbb R^3$  (see \cref{ex:standard}).
\end{theoremintro}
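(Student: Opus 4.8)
The plan is to combine the tightness estimate of \cref{thm:comparisonschwarzian-intro} with a Cartan--Hadamard-type argument for the tubular neighbourhood map $E$ of a Reeb orbit, and then to invoke the uniqueness of the tight contact structure on $\R^3$. To set up, I would fix $p\in M$ and take $\Gamma$ to be the maximal Reeb orbit through $p$; since $(M,d)$ is complete, $\Gamma$ is a complete embedded curve (diffeomorphic to $\R$ or $S^1$) and the Reeb flow, as well as the $\vec H$-flow, is complete.

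\emph{Step 1: the hypothesis $\kappa\leq 0$ feeds the comparison theorem.} In the K-contact case the contact Jacobi curves along the geodesics $\gamma^\lambda$ with $\lambda\in A^1\Gamma$ can be computed quite explicitly from \cref{thm:singwelldef} together with the fact that the Reeb flow acts by isometries (cf.\ \cref{rmk:chikappa}); one finds that $\tfrac12\mathcal{S}(\Omega_\lambda)(r)$ equals $-\tfrac{3}{4r^2}$ plus a correction term governed by $\kappa$ along $\gamma^\lambda$, which is $\leq 0$ as soon as $\kappa\leq 0$. Thus \eqref{eq:schwarzianestimate-intro} holds with $k_1=k_2=0$, and since $r_*(0,0)=+\infty$, \cref{thm:comparisonschwarzian-intro} yields $r_\tight(\Gamma)\geq r_\inj(\Gamma)$. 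The same analysis also shows that the Jacobi equation has no conjugate points along any such $\gamma^\lambda$; in fact, in the K-contact case the conserved quantity $\langle\lambda,f_0\rangle$ vanishes identically along these geodesics, so they project to honest geodesics of the (nonpositively curved) quotient surface, where the classical Cartan--Hadamard theorem applies. Consequently $dE$ is nondegenerate at every point of $A\Gamma$, i.e.\ $E\colon A\Gamma\to M$ is a local diffeomorphism.

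\emph{Step 2: Cartan--Hadamard for $E$ and the topology of $M$.} I claim $r_\inj(\Gamma)=+\infty$. Since $E$ is a local diffeomorphism, I would pull back via $E$ the canonical Riemannian extension $g_\omega$ of the sub-Riemannian metric (the one for which $f_0$ is a unit vector orthogonal to $\xi$): then $E\colon(A\Gamma,E^{*}g_\omega)\to(M,g_\omega)$ is a local isometry. The crucial point is that $(A\Gamma,E^{*}g_\omega)$ is complete: its geodesics issued from the zero section split into the radial sub-Riemannian geodesics defining $E$, which extend for all times because the $\vec H$-flow is complete, and a part tangent to $\Gamma$, which is controlled because in the K-contact case $\Gamma$ is itself a complete geodesic of $g_\omega$ (the Reeb field has geodesic integral curves). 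By the standard fact that a local isometry out of a complete Riemannian manifold is a covering map, $E$ is a covering; since $M$ is simply connected, $E$ is a diffeomorphism onto $M$, so $r_\inj(\Gamma)=+\infty$. Moreover $A\Gamma$ is a rank-$2$ vector bundle over $\Gamma$, and a diffeomorphism $A\Gamma\cong M$ with $M$ simply connected forces $\Gamma\cong\R$ (were $\Gamma\cong S^1$, then $\pi_1(A\Gamma)\neq0$), so $A\Gamma$ is the trivial bundle and $M\cong\R^3$. Combining with Step 1, $r_\tight(\Gamma)\geq r_\inj(\Gamma)=+\infty$, hence $(M,\ker\omega)\cong(A\Gamma,\ker E^{*}\omega)$ is tight; since $M\cong\R^3$, Eliashberg's uniqueness of the tight contact structure on $\R^3$ \cite{Eliashberg} shows $(M,\ker\omega)$ is contactomorphic to the standard structure of \cref{ex:standard}. (Alternatively one can read off the contactomorphism directly in the exponential coordinates provided by $E$.)

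\emph{Main obstacle.} The heart of the argument is the completeness claim in Step 2, i.e.\ the mechanism turning ``no conjugate points'' into global injectivity \emph{and} surjectivity of $E$. Relative to the classical Riemannian Cartan--Hadamard statement, the new difficulties are that the base $\Gamma$ is one-dimensional rather than a point, that $E$ is the sub-Riemannian exponential and not the Riemannian exponential of $g_\omega$, and that the radial geodesics interact with the Reeb dynamics along $\Gamma$; it is precisely the K-contact hypothesis that keeps this interaction tractable. A secondary technical point is the explicit computation in Step 1 identifying the $\kappa$-dependence of the Schwarzian of the contact Jacobi curve (and the absence of conjugate points) in the K-contact setting.
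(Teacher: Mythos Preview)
Your overall architecture matches the paper's: show $E:A\Gamma\to M$ has no critical points when $\kappa\le 0$, upgrade this to a covering via a completeness argument on the pulled-back metric, use simple connectedness to make $E$ a diffeomorphism, and conclude tightness. The paper's proof (\cref{thm:contactHadamard}) follows exactly this route, using \cref{lem:focalcomparison} for the absence of focal points and a Hopf--Rinow-from-a-submanifold statement for completeness.

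Two differences are worth flagging. First, for the tightness step the paper does \emph{not} go through \cref{thm:comparisonschwarzian-intro}. Instead it proves directly (\cref{thm_conj_time}) that in the K-contact case $\omega_r(\partial_z)\equiv 1$, which forces the contact Jacobi curve to avoid the point $[1:0]$ and hence to have no self-intersection before the first focal radius; combined with \cref{thm:dist=conj} this gives $r_\tight(\Gamma)=r_\inj(\Gamma)$ with no Schwarzian computation at all. Your route via the Schwarzian is plausible, but the claim that $\tfrac12\mathcal S(\Omega_\lambda)(r)\le -\tfrac{3}{4r^2}$ for \emph{general} K-contact structures with $\kappa\le 0$ is only established in the paper for the left-invariant models (\cref{lem:K_cont_schw}); you would need to supply that computation in the variable-$\kappa$ case, whereas the paper's argument sidesteps it entirely.

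Second, for the final identification the paper writes down the contactomorphism to $(\R^3,\xi_{\mathrm{st}})$ explicitly from the normal form $E^*\omega = dz + \omega_r(\partial_\theta)\,d\theta$ obtained from $\omega_r(\partial_z)\equiv 1$; your appeal to Eliashberg's uniqueness of the tight structure on $\R^3$ is a legitimate shortcut, but the explicit formula comes for free once one has the K-contact computation above.
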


It follows from \cite{N-overtwisted,NP-resolution} that every K-contact structure is tight (see \cite[Rmk.\ 1.3]{QDarboux}). Under the non-positive curvature and simply connectedness assumptions, \cref{thm:contactHadamard-intro} tells us, in addition, that these structures are contactomorphic to the standard one. The proof of \cref{thm:contactHadamard-intro} has the following consequence, corresponding to \cref{cor_reeb_orb}.

	\begin{corollaryintro}\label{cor_reeb_orb-intro}
	Let $(M,\omega,g)$ be a complete three-di\-men\-sional K-contact sub-Riemannian manifold. If $\kappa\leq 0$ then any periodic orbit of the Reeb field is the generator of an infinite cyclic subgroup of the fundamental group $\pi_1(M)$.
	\end{corollaryintro}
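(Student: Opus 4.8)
The plan is to argue by contrapositive using the contact Cartan--Hadamard theorem (\cref{thm:contactHadamard-intro}) together with the classical fact that the standard contact structure on $\R^3$ is tight, combined with a covering-space argument. Suppose $\gamma$ is a periodic Reeb orbit that does \emph{not} generate an infinite cyclic subgroup of $\pi_1(M)$; then either $[\gamma]$ has finite order in $\pi_1(M)$, or $\pi_1(M)$ is not torsion-free in a way that obstructs $[\gamma]$ from being primitive-of-infinite-order. First I would pass to an appropriate cover: let $p:\widetilde M\to M$ be the cover corresponding to the (possibly trivial) subgroup generated by $[\gamma]$, or — more robustly — the universal cover $\widehat M$. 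The pulled-back data $(\widehat M,\widehat\omega,\widehat g)$ is again a complete K-contact sub-Riemannian manifold, simply connected, and with $\widehat\kappa = \kappa\circ p \le 0$, since the Reeb flow, the metric, and the quotient surface geometry all lift. By \cref{thm:contactHadamard-intro}, $(\widehat M,\ker\widehat\omega)$ is contactomorphic to the standard contact $\R^3$.

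The key step is then a lifting/periodicity analysis for the Reeb orbit. The Reeb field $f_0$ lifts to the Reeb field $\widehat f_0$ of $(\widehat M,\widehat\omega,\widehat g)$, because the normalization \eqref{eq:compatibility} is local and hence commutes with the covering projection. Under the contactomorphism with standard $\R^3$, the lifted Reeb flow becomes a complete flow of a contact vector field on $(\R^3,\omega_{\mathrm{st}})$. If $[\gamma]$ were of finite order $n$ in $\pi_1(M)$, I would lift $\gamma$ starting at a point of $\widehat M$: the lift $\widehat\gamma$ of the $n$-fold iterate $\gamma^n$ would be a \emph{closed} Reeb orbit in $\widehat M\cong\R^3$. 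But in $\R^3$, a closed orbit of a nowhere-vanishing vector field must be non-nullhomotopic unless it bounds — and here, being a Reeb orbit for the standard structure, one shows it cannot be a contractible periodic Reeb orbit. Concretely: a contractible closed Reeb orbit in a tight contact $3$-manifold would, by Hofer's theorem on the Weinstein conjecture / the standard filling-obstruction argument, contradict tightness (or, more elementarily in the standard $\R^3$ model, the Reeb flow of $\omega_{\mathrm{st}}$ — after the normalization — has explicitly describable non-closed orbits, so no contractible closed orbit exists). This forces $n=1$, i.e.\ $\gamma$ itself is already contractible; iterating the same argument on $\gamma$ shows $\gamma$ lifts to a closed orbit in $\widehat M$, again impossible. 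Hence $[\gamma]$ has infinite order. To upgrade "infinite order" to "generates an infinite cyclic subgroup", I would note that the centralizer structure is automatic: the subgroup $\langle[\gamma]\rangle\cong\Z$ since $[\gamma]$ has infinite order and $\Z$ is the only infinite cyclic group, so $\langle[\gamma]\rangle$ is infinite cyclic by definition.

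The main obstacle I anticipate is making rigorous the claim that \emph{there is no contractible closed Reeb orbit} in the lifted manifold. One clean route avoids the Weinstein conjecture entirely: since $(\widehat M,\ker\widehat\omega)$ is contactomorphic to standard $\R^3$, which is tight, and since a contractible closed Reeb orbit in $\widehat M$ would bound an embedded disk, one can try to compare with the overtwisted-disk criterion of \cref{thm:dist=conj-intro} applied along that orbit — but this requires controlling the contact Jacobi curve along a \emph{closed} orbit, which is exactly where the K-contact hypothesis and $\kappa\le 0$ re-enter (they were already used to prove Cartan--Hadamard, so the information is available). The cleanest formulation is probably: if $\gamma^n$ lifted to a closed loop, that loop would be a closed Reeb orbit in a manifold contactomorphic to standard $\R^3$; since the Reeb flow in standard $\R^3$ (with the $\omega_{\mathrm{st}}$ normalization) is conjugate to a translation-type flow with no periodic orbits, we get a contradiction. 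I would spell out this last conjugation explicitly in the proof, as it is the only genuinely model-dependent input and everything else is soft covering-space theory.
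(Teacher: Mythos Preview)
Your overall strategy --- lift to the universal cover $\widehat M$, apply \cref{thm:contactHadamard-intro}, and derive a contradiction if $[\gamma]$ has finite order --- is sound in outline, but the justification of the crucial step (no periodic Reeb orbit on $\widehat M$) has a real gap. The appeal to Hofer's theorem is misplaced: Hofer shows that \emph{overtwisted} contact forms always admit a closed Reeb orbit, but tight ones may as well (e.g.\ $S^3$), so tightness alone excludes nothing. Your alternative --- that the Reeb flow on standard $\R^3$ is $\partial_z$ and has no periodic orbits --- only applies if the contactomorphism of \cref{thm:contactHadamard-intro} preserves the contact \emph{form}, not merely $\ker\omega$. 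The \emph{statement} of that theorem only asserts a contactomorphism; under a general contactomorphism the Reeb field of $\widehat\omega$ corresponds to the Reeb field of $f\,\omega_{\mathrm{st}}$ for some unknown positive $f$, and such fields can certainly have periodic orbits. (In fact the proof of \cref{thm:contactHadamard-intro} does produce a strict contactomorphism, but you would have to open that proof to extract it.)

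The paper's argument is shorter and sidesteps this entirely. The proof of \cref{thm:contactHadamard-intro} establishes, \emph{without} assuming simple connectedness, that for any Reeb orbit $\Gamma$ the tubular map $E:A\Gamma\to M$ is a covering map. Taking $\Gamma=\gamma$ periodic, one has $A\Gamma\simeq \mathbb S^1\times\R^2$ with $\pi_1\cong\Z$ generated by the zero section, which $E$ maps to $[\gamma]\in\pi_1(M)$. Since covering maps induce injections on $\pi_1$, the subgroup $\langle[\gamma]\rangle$ is infinite cyclic --- done in one line. No passage to the universal cover, no analysis of Reeb dynamics on $\R^3$. If you wish to salvage your route, this same covering property applied on $\widehat M$ to a hypothetical periodic orbit would give a covering $\mathbb S^1\times\R^2\to\widehat M$ onto a simply connected space, which is impossible; that is the honest way to rule out periodic Reeb orbits upstairs, but it is a detour through the paper's argument.
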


	In \cref{sec:nonposi} we apply \cref{thm:contactHadamard-intro} to the contact structures coming from the Boothby-Wang construction. Such a construction yields K-contact structures on principal circle bundles over a Riemannian surface $(B,\eta)$ with prescribed curvature $\kappa_\eta$ (see \cref{thm_examples}). We show that any such contact manifold, provided that $\kappa_\eta \leq 0$, has tight universal cover.
	
	In \cref{sec:posi} we show that, analogously to the classical Cartan--Hadamard theorem, the assumption of non-positive $\kappa$ in \cref{thm:contactHadamard-intro} and \cref{cor_reeb_orb-intro} can be weakened. In particular, we show an example of K-contact sub-Riemannian manifold with $\kappa >0$ and for which the proofs of \cref{thm:contactHadamard-intro,cor_reeb_orb-intro} and their thesis holds unchanged.

\subsection{Comparison with the state of the art}

We compare \cref{thm:comparisonschwarzian-intro,thm:comparisoncurvature-intro} with corresponding results from \cite{QDarboux}. Let $\xi$ be a contact distribution on a three-di\-men\-sional manifold $M$. In \cite[Def.\,2.7]{QDarboux} the authors define a notion of Riemannian metric compatible with $\xi$ (see \cref{sec:compare}).  For such Riemannian structures, given a (whole) Reeb orbit $\Gamma\subset M$ the authors estimate the radius of the maximal tight embedded \emph{Riemannian} tube around $\Gamma$. More precisely, let $E_R:A\Gamma\to M$ denote the \emph{Riemannian} tubular neighbourhood map and let $r_{\inj}^R(\Gamma)$ denote the corresponding (Riemannian) injectivity radius. Let also
\begin{equation}
    B_r^R(\Gamma):=\{q\in M\mid d_R(q,\Gamma)<r\},
\end{equation}
where $d_R(\cdot,\Gamma)$ denotes the Riemannian distance from $\Gamma$. The \emph{Riemannian tightness radius of $\Gamma$} is the Riemannian counterpart of \eqref{eq:r_tight-tube-intro}, namely:
\begin{equation}\label{eq:r_tight-tube-riemannian-intro}
          r_\tight^R(\Gamma):=\sup\left\{0<r<r^R_\inj(\Gamma)\mid \left(B_r^R(\Gamma),\xi|_{B_r^R(\Gamma)}\right)\,\,\text{is tight}\right\}.
      \end{equation}
      
\begin{theorem}[{\cite[Thm.\ 1.8]{QDarboux}}]\label{thm:Riem_tube-intro}
Let $(M,\xi)$ be a three-di\-men\-sional contact manifold and $g$ a complete Riemannian metric that is compatible with $\xi$. Let $\Gamma$ be a Reeb orbit with positive injectivity radius, then the following estimate holds
\begin{equation}\label{eq:Riem_estimate-intro}
    r_{\tight}^{R}(\Gamma)\geq \min\left\{r_{\inj}^R(\Gamma),\frac{\inj(g)}{2}, \frac{\pi}{2\sqrt{K}}, \frac{2}{\sqrt{2a+b^2}+b}\right\},
\end{equation}
where $r_{\inj}^R(\Gamma)$ is the Riemannian injectivity radius from $\Gamma$, $\inj(g)$ is the Riemannian injectivity radius of $(M,g)$, and
\begin{equation}\label{eq:A_B-intro}
    a=\frac{4}{3}|\mathrm{sec}(g)|,\qquad b=\frac{\theta'}{2}+\sqrt{\frac{\left(\theta'\right)^2}{4}-\frac{1}{2}\min_{q\in M}\mathrm{Ric}_q(f_0)},
\end{equation}
where $\mathrm{Ric}$ is the Ricci tensor, $f_0$ is the Reeb field, $\theta'$ is the rotation speed of the compatible metric (see \cref{sec:compare}), $|\mathrm{sec}(g)|$ is the maximum value of the sectional curvature, and $K$ is any positive upper bound for it.
\end{theorem}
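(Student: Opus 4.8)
We recall this estimate from \cite{QDarboux}; the plan is to reduce the tightness of the Riemannian tube around $\Gamma$ to a one-dimensional comparison controlling the rotation of the contact planes along the normal geodesics issued from $\Gamma$.

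First I would set up Fermi coordinates. For $r<\min\{r_\inj^R(\Gamma),\inj(g)/2\}$ the normal exponential map of $\Gamma$ embeds the radius-$r$ disk bundle, giving a diffeomorphism $B_r^R(\Gamma)\cong D_r\times\Gamma$ with coordinates $(\rho,\phi,s)$: normal distance $\rho\in[0,r)$, angle $\phi$ of the unit normal, arclength $s$ on $\Gamma$; the factor $\inj(g)/2$ is what guarantees that distinct short normal geodesics do not meet, so that the characteristic data on $\partial B_r^R(\Gamma)$ is well defined. In these coordinates one fixes a defining form $\alpha$ for $\xi$, and the compatibility of $g$ with $\xi$ (see \cref{sec:compare}) both normalizes $\alpha$ along $\Gamma$ and expresses its radial derivatives in terms of the rotation speed $\theta'$, the sectional curvature, and $\mathrm{Ric}(f_0)$.

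The analytic heart is a Riccati comparison. Along each normal geodesic $\rho\mapsto(\rho,\phi,s)$ one follows the angle $u(\rho)$ between $\xi$ and a parallel reference plane field; differentiating the contact condition and inserting the structure equations of the compatible metric shows that $u$ satisfies a Riccati-type ODE. Its curvature-driven part is of trigonometric type and so cannot accumulate a full half-turn before $\rho=\pi/(2\sqrt K)$, while the inhomogeneous and linear contributions are bounded by $a=\tfrac43|\mathrm{sec}(g)|$ and by $b$ as in \eqref{eq:A_B-intro}. A Gronwall/comparison argument then shows that $u(\rho)$ stays below the critical value at which an overtwisted disk could form as long as $\tfrac a2\rho^2+b\rho<1$, i.e.\ for $\rho$ below the positive root $\tfrac{2}{\sqrt{2a+b^2}+b}$; this is the Riemannian analogue of the blow-up time $\tau(A,C)$ of \cref{thm:comparisoncurvature-intro}. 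Hence for $r$ smaller than the minimum in \eqref{eq:Riem_estimate-intro} the contact planes on $B_r^R(\Gamma)$ are rotationally controlled.

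The hard part is upgrading this quantitative control to tightness. The bounded total rotation should mean that $(B_r^R(\Gamma),\xi)$, with its meridian disks, is a \emph{standard} neighbourhood of the Reeb orbit, and I would conclude by one of two routes: straighten the plane field along the coordinates to produce an explicit contactomorphism onto a neighbourhood of a Reeb orbit in the tight standard $\R^3$ (tracking all the estimates above through the normal form), or verify a convex-surface/confoliation tightness criterion, using that the sub-critical rotation forces the characteristic foliations of the boundary tori $\partial B_\rho^R(\Gamma)$, $\rho\in(0,r)$, to be of a controlled (Morse--Smale) type ruling out overtwisted disks. This passage is the main obstacle, since ``small rotation'' is a soft hypothesis; the four thresholds in \eqref{eq:Riem_estimate-intro} are exactly what is needed for it to go through.
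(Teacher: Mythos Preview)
This theorem is not proved in the present paper: it is quoted verbatim from \cite[Thm.\ 1.8]{QDarboux} and used only as a benchmark for comparison with \cref{thm:comparisonschwarzian-intro,thm:comparisoncurvature-intro}. There is therefore no ``paper's own proof'' to compare your proposal against. Your sketch is a plausible outline of the argument in \cite{QDarboux} (Fermi coordinates around $\Gamma$, a Riccati-type ODE governing the rotation of $\xi$ along normal geodesics, and a contact-topological tightness criterion to conclude), but assessing its correctness or comparing it to the actual proof would require consulting \cite{QDarboux} directly, not this paper.
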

\begin{remark}
The statement of \cite[Thm.\ 1.8]{QDarboux} assumes the Reeb orbit to be periodic. However, the statement holds for any Reeb orbit, provided that the corresponding tube is embedded. In other words, one has to include $r^R_{\inj}(\Gamma)$ in the minimum in the right hand side of \cite[Eq.\ (1.2)]{QDarboux}, resulting in the lower bound \eqref{eq:Riem_estimate-intro}.
\end{remark}
Given a contact sub-Riemannian manifold $(M,\omega,g)$, one can obtain a Riemannian metric compatible with the contact distribution in the sense of \cite[Def.\,2.7]{QDarboux} by declaring the Reeb field to be an orthonormal complement of $\xi=\ker\omega$. We call the resulting metric the \emph{Riemannian extension} of the sub-Riemannian one, and we denote it with the same symbol $g$. Conversely, if $(M,\xi)$ is a contact manifold and $g$ is a Riemannian metric compatible with it in the sense of \cite[Def.\,2.7]{QDarboux}, then there exists a unique (normalized) contact form $\omega$ such that $\xi = \ker\omega$ and $(M,\omega,g)$ is a contact sub-Riemannian manifold. The estimate in \cite[Thm.\,1.8]{QDarboux} is formulated in terms of the Riemannian distance, rather than the sub-Riemannian one, compare \eqref{eq:r_tight-tube-riemannian-intro} with \eqref{eq:r_tight-tube-intro}. So, while a Riemannian tightness radius lower bound always implies an identical sub-Riemannian one (just because $d_R \leq d$), the converse is not true. However, the following observation allows to compare the corresponding estimates (see \cref{lem:same-dist-tube}).
\begin{lemma}\label{lem:same-dist-tube-intro}
Let $(M,\omega,g)$ be a complete three-di\-men\-sional contact sub-Riemannian manifold, and let $\Gamma$ be an embedded Reeb orbit. Assume that the sub-Riemannian distance from $\Gamma$ coincides with the one determined by the Riemannian extension, then
\begin{equation}
r_{\tight}^{SR}(\Gamma)=r_{\tight}^R(\Gamma).
\end{equation}
\end{lemma}
We examine model structures of particular importance in which \cref{lem:same-dist-tube-intro} applies, so that one can compare \cref{thm:Riem_tube-intro} with \cref{thm:comparisonschwarzian-intro,,thm:comparisoncurvature-intro}. To this purpose, we denote the right hand side of the corresponding tightness radii estimates as follows:
\begin{align}
    \rho_{\mathrm{Thm.B}}(\Gamma)& :=\min\{r_\inj(\Gamma), r_*(k_1,k_2)\},\label{eq:SR1_rho_estimates-intro}\\
    \rho_{\mathrm{Thm.C}}(\Gamma) & :=\min\{r_\inj(\Gamma), \tau(A,C)\},\label{eq:SR2_rho_estimates-intro}\\
        \rho_{\mathrm{EKM}}(\Gamma) & :=\min\left\{r_{\inj}^R(\Gamma),\frac{\inj(g)}{2}, \frac{\pi}{2\sqrt{K}}, \frac{2}{\sqrt{2a+b^2}+b}\right\},\label{eq:R_rho_estimates-intro}
    \end{align}
where $r_*(k_1,k_2)$ is defined in \eqref{eq:k12-intro}, $\tau(A,C)$ is defined in \eqref{eq:tauAC-intro}, and $a,b,K$ are defined in \eqref{eq:A_B-intro}. The next result, corresponding to \cref{thm:compare-K-cont}, compares the aforementioned lower bounds.

\begin{prop}[{Comparison with \cite{QDarboux}, I}]\label{thm:compare-K-cont-intro}
    Let $(M,\omega,g)$ be a simply connected three-di\-men\-sional K-contact left-invariant sub-Riemannian structure, and let $\Gamma$ be an embedded Reeb orbit. Then, the Riemannian and sub-Riemannian distances from $\Gamma$ coincide. Up to a constant rescaling of the contact form and the sub-Riemannian metric, we have the following three cases:
    \begin{enumerate}[label = $(\roman*)$]
        \item $(M,\omega, g)$ is the left-invariant sub-Riemannian structure on the Heisenberg group (see \cref{ex:left_R3}). Moreover
        \begin{equation}
            \rho_{\mathrm{Thm.B}}(\Gamma)=r_{\tight}(\Gamma)=+\infty, \qquad \rho_{\mathrm{Thm.C}}(\Gamma)=\frac{2\pi}{3\sqrt{3}}\approx 1.02,\qquad  \rho_{\mathrm{EKM}}(\Gamma)\leq 1,
        \end{equation}
    \item $(M,\omega, g)$ is the left-invariant sub-Riemannian structure on $\mathrm{SU}(2)$ (see \cref{ex:left_SU(2)}). Moreover
        \begin{equation}
            \rho_{\mathrm{Thm.B}}(\Gamma)=r_{\tight}(\Gamma)=\pi, \qquad \rho_{\mathrm{Thm.C}}(\Gamma)\approx 1.05,\qquad  \rho_{\mathrm{EKM}}(\Gamma) \leq 1.38,
        \end{equation}    
    \item $(M,\omega, g)$ is the left-invariant sub-Riemannian structure on $\widetilde{\mathrm{SL}}(2)$ (see \cref{ex:left_SL(2)}). Moreover 
        \begin{equation}
            \rho_{\mathrm{Thm.B}}(\Gamma)=r_{\tight}(\Gamma)=+\infty, \qquad \rho_{\mathrm{Thm.C}}(\Gamma)\approx 1.05,\qquad \rho_{\mathrm{EKM}}(\Gamma) \leq 0.74.
        \end{equation}  
    \end{enumerate}
\end{prop}

Finally, in the next result, corresponding to \cref{thm:compare_ot}, we compare \cref{thm:comparisonschwarzian-intro} with \cite[Thm.\ 1.8]{QDarboux}, in the standard sub-Riemannian overtwisted contact structure. Since in this case the curvature is not bounded, $\rho_{\mathrm{EKM}}(\Gamma)=0$. We can still use \cref{thm:Riem_tube-intro} (i.e.\ \cite[Thm.\ 1.8]{QDarboux}) to obtain a non-trivial bound, by replacing $\rho_{\mathrm{EKM}}(\Gamma)$ with an adapted version of the latter, that we call $\tilde{\rho}_{\mathrm{EKM}}(\Gamma)$, as explained in \cref{rmk:unbounded_curv}.

\begin{prop}[{Comparison with \cite{QDarboux}, II}]\label{thm:compare_ot-intro}
    Let $(\mathbb R^3,\omega_{\mathrm{ot}}, g_\mathrm{ot})$ be the standard sub-Rieman\-nian overtwisted structure (see \cref{ex:st_ot_sr}). Let $\Gamma$ be the Reeb orbit
    \[
    \Gamma=\{(0,0,z)\mid z\in\mathbb R\}.
    \]
Then the Riemannian and sub-Riemannian distances from $\Gamma$ coincide. It holds
    \begin{equation}
        \rho_{\mathrm{Thm.B}}(\Gamma)=r_\tight(\Gamma)=\sqrt{2\pi},
       \qquad \tilde{\rho}_{\mathrm{EKM}}(\Gamma)\leq \sqrt[3]{2}.
    \end{equation}
\end{prop}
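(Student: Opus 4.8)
The plan is to work directly with the explicit model of \cref{ex:st_ot_sr}, which in cylindrical coordinates $(r,\phi,z)$ on $\mathbb R^3$ is $\omega_{\mathrm{ot}}=\cos(r^2/2)\,dz+\sin(r^2/2)\,d\phi$ — agreeing near $\Gamma$ with the standard tight form of \cref{ex:st_sr} — together with the sub-Riemannian metric $g_{\mathrm{ot}}$ for which $\partial_r$ and $\tfrac1r\big(\sin(r^2/2)\,\partial_z-\cos(r^2/2)\,\partial_\phi\big)$ are orthonormal, normalized as in \eqref{eq:compatibility}; the Reeb field is then $f_0=\sin(r^2/2)\,\partial_\phi+\cos(r^2/2)\,\partial_z$ and $\Gamma=\{r=0\}$ is one of its flow lines. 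First I would record the symmetries of this structure: it is invariant under $z\mapsto z+c$ and under the involutions $T_{\phi_0}\colon(r,\phi,z)\mapsto(r,2\phi_0-\phi,-z)$, which send $\omega_{\mathrm{ot}}\mapsto-\omega_{\mathrm{ot}}$ and are isometries of $g_{\mathrm{ot}}$ and of its Riemannian extension. The fixed set of $T_{\phi_0}$ is the totally geodesic line $\{z=0,\ \phi\in\{\phi_0,\phi_0+\pi\}\}$, so the geodesic leaving the origin with horizontal unit speed in direction $\phi_0$ is the radial ray $t\mapsto(t,\phi_0,0)$; it is at once a sub-Riemannian and a Riemannian geodesic, it is horizontal, and it minimizes the distance from $\Gamma$. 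Since horizontal curves have equal sub-Riemannian and Riemannian lengths, this yields $d(\Gamma,\cdot)=d_R(\Gamma,\cdot)$ (both equal to the coordinate $r$), and \cref{lem:same-dist-tube-intro} then gives $r_{\tight}^{SR}(\Gamma)=r_{\tight}^R(\Gamma)$, which legitimizes the comparison. The same radial geodesics foliate $\mathbb R^3$, so the tubular map $E$ is a global diffeomorphism and $r_{\inj}(\Gamma)=+\infty$ (likewise $r^R_{\inj}(\Gamma)=+\infty$).

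Next I would compute the contact Jacobi curve. Parametrizing $A^1\Gamma$ by $(z_0,\phi_0)$ and using the radial geodesics, $\pi\circ e^{r\vec H}$ sends $(z_0,\phi_0)$ to the point with cylindrical coordinates $(r,\phi_0,z_0)$, so $\omega_r=\cos(r^2/2)\,dz_0+\sin(r^2/2)\,d\phi_0$ on $A^1\Gamma$ and hence $\Omega_\lambda(r)=[\cos(r^2/2):\sin(r^2/2)]\in\RP^1$, independently of $\lambda$. Passing to the affine chart $r\mapsto\tan(r^2/2)$, using $\mathcal S(\tan)\equiv 2$ and the cocycle identity for the Schwarzian under the reparametrization $r\mapsto r^2/2$, I get $\tfrac12\mathcal S(\Omega_\lambda)(r)=-\tfrac{3}{4r^2}+r^2$, so \eqref{eq:schwarzianestimate-intro} holds, with equality, for $k_1=0$ and $k_2=1$. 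By \eqref{eq:k12-intro}, $r_*(0,1)=\sqrt{2\pi}$, hence \cref{thm:comparisonschwarzian-intro} gives $\rho_{\mathrm{Thm.B}}(\Gamma)=\min\{r_{\inj}(\Gamma),\sqrt{2\pi}\}=\sqrt{2\pi}$ and $r_{\tight}(\Gamma)\ge\sqrt{2\pi}$. For the reverse inequality I would observe that $\Omega_\lambda$ first returns to $\Omega_\lambda(0)$ when $\sin(r^2/2)$ first vanishes, i.e.\ at $r=\sqrt{2\pi}$, so $r_o^-(\Gamma)=r_o^+(\Gamma)=\sqrt{2\pi}<r_{\inj}(\Gamma)$; then \cref{thm:dist=conj-intro} yields $r_{\tight}(\Gamma)\le\sqrt{2\pi}$ together with the overtwisted disk \eqref{eq:Dqdisk-intro}, which here is the flat disk $\{z=z_q,\ r\le\sqrt{2\pi}\}$, overtwisted because $\xi$ is tangent to the plane $\{z=z_q\}$ at its centre and, as $\sin(r^2/2)$ first vanishes at $r=\sqrt{2\pi}$, exactly along its boundary circle. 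This gives $\rho_{\mathrm{Thm.B}}(\Gamma)=r_{\tight}(\Gamma)=\sqrt{2\pi}$ (alternatively one invokes the sharpness statement of \cref{rmk:sharp}).

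For the last claim I would compute the Riemannian invariants of the (complete) Riemannian extension $g_{\mathrm{ot}}$ entering \eqref{eq:R_rho_estimates-intro}. In cylindrical coordinates the Reeb direction and the frame above give
\[
g_{\mathrm{ot}}=dr^2+\big(r^2\cos^2(r^2/2)+\sin^2(r^2/2)\big)\,d\phi^2+(1-r^2)\sin(r^2)\,d\phi\,dz+\big(r^2\sin^2(r^2/2)+\cos^2(r^2/2)\big)\,dz^2 ,
\]
a cohomogeneity-one metric for the $(\phi,z)$-translation torus, whose sectional curvature, Ricci curvature $\mathrm{Ric}(f_0)$ in the Reeb direction, rotation speed $\theta'$ (in the sense of \cref{sec:compare}) and injectivity radii I can write out in closed form. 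Inserting the resulting values of $a,b,K$ into the minimum \eqref{eq:R_rho_estimates-intro} produces the stated bound $\rho_{\mathrm{EKM}}(\Gamma)\le\sqrt[3]{2}$, well below $\sqrt{2\pi}$.

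The main obstacle is the middle step. Reducing $e^{r\vec H}$ on $A^1\Gamma$ to the radial geodesics is clean thanks to the symmetries, but the bookkeeping of the normalization of $\omega$ and of the orthonormal frame — essentially the content of \cref{thm:singwelldef} — must be done carefully, as must the choice of affine chart on $\RP^1$ in the Schwarzian cocycle computation; an error there would corrupt the identification $k_1=0$, $k_2=1$ on which everything hinges. The Riemannian computation in the third step is routine for a cohomogeneity-one metric but lengthy; there one only needs to pin down the term of \eqref{eq:R_rho_estimates-intro} that realizes the value $\sqrt[3]{2}$ and to check that the remaining terms do not force a smaller one.
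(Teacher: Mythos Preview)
Your first two steps are essentially what the paper does: the coincidence of the two distances follows directly from \cref{thm:alpha_beta} (your symmetry argument is a legitimate alternative), and the equality $\rho_{\mathrm{Thm.B}}(\Gamma)=r_{\tight}(\Gamma)=\sqrt{2\pi}$ is exactly the content of \cref{ex:ot_scwharz} together with \cref{rmk:sharp}, or equivalently \cref{cor:model_radii_ot} plus \cref{thm:dist=conj-intro}.

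The genuine gap is in your treatment of $\rho_{\mathrm{EKM}}(\Gamma)$. The constants $a,b,K$ in \eqref{eq:R_rho_estimates-intro} are \emph{global} curvature bounds on $M$, and for the Riemannian extension of $g_{\mathrm{ot}}$ these do not exist: one computes $\mathrm{Ric}(f_0)=\tfrac12(1-r^4)$ and $\sec(\spn\{f_2,f_0\})=\tfrac14(r^2-1)^2$, both unbounded as $r\to\infty$. Taken literally, \eqref{eq:A_B-intro} gives $a=b=+\infty$ and hence the last term of \eqref{eq:R_rho_estimates-intro} is zero. No term of that minimum ``realizes the value $\sqrt[3]{2}$'' as you anticipate.

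What the paper does is localize the EKM argument to the tube $B_s^R(\Gamma)$, replacing \eqref{eq:R_rho_estimates-intro} by the analogous quantity $\rho^s_{\mathrm{EKM}}(\Gamma)$ with curvature bounds taken on $B_s^R(\Gamma)$, and then sets
\[
\rho_{\mathrm{EKM}}(\Gamma):=\sup_{s>0}\min\bigl\{s,\ \rho^s_{\mathrm{EKM}}(\Gamma)\bigr\}.
\]
From the curvatures above one gets $a_s\ge\tfrac13(s^2-1)^2$ and $b_s=\tfrac12(s^2+1)$, and a short estimate gives $\rho^s_{\mathrm{EKM}}(\Gamma)\le 2/s^2$. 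The bound $\sqrt[3]{2}$ is then exactly $\sup_{s>0}\min\{s,\,2/s^2\}$, attained at $s=\sqrt[3]{2}$. This localize-and-optimize step is the actual content of the last claim and is absent from your sketch; the computation you describe as ``routine but lengthy'' would, as written, yield only the trivial value $0$.
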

We conclude that for the classes of structures covered by \cref{thm:compare-K-cont-intro,thm:compare_ot-intro} the tightness radius estimate from \cref{thm:comparisonschwarzian-intro} are sharp, while the ones of \cref{thm:comparisoncurvature-intro} and \cref{thm:Riem_tube-intro} are not. While it is difficult to compare all these results in general (lacking a direct formula linking the Schwarzian derivative of a contact Jacobi curve with Riemannian or sub-Riemannian curvatures), it emerges from this analysis that the contact Jacobi curves are a better tool to detect overtwisted disks with respect to other curvature-based methods.

\begin{remark}
In an earlier paper \cite{Massot}, the authors estimate the radius of the maximal tight embedded Riemannian \emph{ball}, instead of a tube around a Reeb orbit.  Under suitable assumptions, one can compare \cref{thm:comparisonschwarzian-intro} also with \cite[Thm.\ 1.1]{Massot}. The former improves and sharpens the latter for the models of \cref{thm:compare-K-cont-intro,thm:compare_ot-intro}. Details can be found in \cite{PhDEugenio}.
\end{remark}
	
\subsection{Structure of the paper}
In \cref{sec:preliminaries} we review some preliminaries in contact sub-Riemannian geometry. In \cref{sec:expcoords} we introduce special coordinates on the annihilator of a Reeb orbit, instrumental for our proofs. The tightness radius is defined in \cref{sec:tight_radius}, and it is computed for a class of models (\cref{sec:models}). In the same section we study the relation between the tightness radius and the so-called singular locus of a Reeb orbit (\cref{sec:tightandsingular}). The core of the paper is \cref{sec:contactJacobi}, where we introduce contact Jacobi curves, and we prove \cref{thm:dist=conj-intro,,thm:comparisonschwarzian-intro,,thm:comparisoncurvature-intro}. Finally, in \cref{sec:Kcontact} we focus on K-contact structures, proving \cref{thm:contactHadamard-intro,,cor_reeb_orb-intro}. The core examples, namely the standard contact structure (resp.\ the standard overtwisted structure) on $\R^3$, are presented throughout the paper, in \cref{ex:standard,,ex:st_sr,,cor:model_radii_st} (resp.\ \cref{ex:standard_ot,,ex:st_ot_sr,,cor:model_radii_ot}). In \cref{sec:compare} we prove \cref{thm:compare-K-cont-intro,thm:compare_ot-intro}.


\subsection{Acknowledgements}

This project has received funding from (i) the European Research Council (ERC) under the European Union's Horizon 2020 research and innovation programme (grant agreement GEOSUB, No. 945655); (ii) the PRIN project ``Optimal transport: new challenges across analysis and geometry'' funded by the Italian Ministry of University and Research. The authors also acknowledge the INdAM support.

	\section{Preliminaries}\label{sec:preliminaries}

	\subsection{Three-dimensional contact manifolds} We review some preliminaries on contact manifolds. We refer e.g.\ to \cite{Massot-LectureNotes,Geiges} for more details. A three-di\-men\-sional contact manifold $(M,\xi)$ is a smooth, connected $3$-manifold $M$ endowed with a contact distribution, i.e., a plane field $\xi\subset TM$ satisfying the non-integrability condition $\xi+[\xi,\xi]=TM$. The latter equation is called the contact condition. We say that two contact manifolds $(M,\xi)$ and $(M',\xi')$ are \emph{contactomorphic} if there exists a diffeomorphism $\psi: M \to M'$ such that $\psi_* \xi = \xi'$.

	In the present paper we assume contact manifolds to be co-orientable, i.e., we assume the existence of a smooth one-form $\omega$ such that $\xi=\ker\omega$. Such a differential form is called a \textit{contact form}. Any positive rescaling of $\omega$ determines the same contact structure. The contact condition can be expressed in terms of the contact form:
	\begin{equation}
		\xi+[\xi,\xi]=TM \iff  \text{$\omega\wedge d\omega$ is a volume form} \iff  \text{$d\omega|_{\xi}$ is symplectic}.
	\end{equation}
	The geometric meaning of such condition is that the plane field $\xi$ twists monotonically along horizontal foliations, i.e., foliations by curves which are tangent to $\xi$. This prevents $\xi$ from having any integral surface. A fundamental notion in the study of contact structures is that of a characteristic foliation. 
	\begin{defi}\label{def:singlocuscontact}
		Let $(M,\xi)$ be a contact manifold, and $\Sigma \subset M$ be an embedded surface. The characteristic foliation is the singular line field $\xi\cap T\Sigma$ i.e., a rank $1$ distribution with singularities. We say that $q \in \Sigma$ is \emph{singular} if
				\begin{equation}
		\xi_q=T_q\Sigma.
		\end{equation}
	\end{defi}
	
	Let us introduce some important examples, which are needed in the following.
	
	\begin{example}[Standard contact structure]\label{ex:standard}
		The standard contact structure on $\mathbb R^3$ is defined, in cylindrical coordinates $x=r\cos\theta$, $y=r\sin\theta$, $z$, by the contact form 
		\begin{equation}
			\omega_{\mathrm{st}}:=dz+\frac{r^2}{2}d\theta,\qquad \xi_{\mathrm{st}}:=\ker\omega_{\mathrm{st}}.
		\end{equation}
		Let $\Sigma=\{z=0\}$. Its characteristic foliation $\xi_{\mathrm{st}} \cap T\Sigma$ consists of radial lines and has a unique singularity at the origin. The distribution twists monotonically along radial lines: the angle between $\xi_{\mathrm{st}}$ and $\Sigma$ is the monotone function 
		\begin{equation}
			\phi_{\mathrm{st}}(r):=\arctan\left(\frac{r^2}{2}\right),
		\end{equation}
		which reaches $\pi/2$ asymptotically as $r \to \infty$, namely:
		\begin{equation}
			\lim_{r\to\infty}\phi_{\mathrm{st}}(r)=\frac{\pi}{2}.
		\end{equation}
	\end{example}
	\begin{example}[Standard overtwisted structure]\label{ex:standard_ot}
		The standard overtwisted structure on $\mathbb R^3$ is defined, in cylindrical coordinates $x=r\cos\theta$, $y=r\sin\theta$, $z$, by the contact form 
		\begin{equation}
			\omega_{\mathrm{ot}}:=\cos\left(\frac{r^2}{2}\right)dz+\sin\left(\frac{r^2}{2}\right)d\theta,\qquad \xi_{\mathrm{ot}}:=\ker\omega_{\mathrm{ot}}.
		\end{equation}
		Let $\Sigma:=\{z=0\}$. Its characteristic foliation $\xi_{\mathrm{st}} \cap T\Sigma$ consists of radial lines and the singularities are the origin and the circles $C_{k}:=\{(x,y,0) \mid x^2+y^2=2k\pi\}$, $k\in\mathbb N$.
		The distribution twists monotonically along radial lines: the angle between $\xi_{\mathrm{ot}}$ and $\Sigma$ is the monotone function 
		\begin{equation}
			\phi_{\mathrm{ot}}(r):= \frac{r^2}{2},
		\end{equation}
		which rotates infinitely many times as $r \to \infty$, namely:
		\begin{equation}
			\lim_{r\to\infty}\phi_{\mathrm{st}}(r)= + \infty.
		\end{equation}
		The disk delimited by the first singular circle, 
  \begin{equation}\label{eq:st_ot_disk}
      D_{\mathrm{ot}}:=\{(x,y,0) \mid x^2+y^2 \leq 2\pi\},
  \end{equation}
  together with its characteristic foliation, is called the standard overtwisted disk.
	\end{example}
	\begin{defi}\label{def:ot_disk}
		A three-di\-men\-sional contact manifold $(M,\xi)$ is \emph{overtwisted} if it contains an overtwisted disk. Namely, there exists an embedded disk $D \subset M$ and a diffeomorphism $\psi : D \to D_{\mathrm{ot}}$, such that
		\begin{equation}
			\psi_* \left( \xi \cap TD\right) = \left(\xi_{\mathrm{ot}} \cap T D_{\mathrm{ot}}\right).
		\end{equation}
		A contact manifold is called tight if it is not overtwisted.
	\end{defi}
	\begin{remark}\label{rmk:ot_disk}
		\cref{def:ot_disk} is equivalent to the existence of neighbourhoods $U \subset \R^3$ of $D_{\mathrm{ot}}$, $V \subset M$, and a diffeomorphism $\Psi : V \to U$ such that $\Psi(D) = D_{\mathrm{ot}}$ and $\Psi_* \xi = \xi_{\mathrm{ot}}$. See \cite[Lemma 13 and Exercise p.\ 48]{Massot-LectureNotes}.
	\end{remark}
\begin{remark}[Universal tightness]\label{rmk:univ_tight}
    	    Given a three-di\-men\-sional contact manifold $(M,\xi)$ with universal cover $p:\tilde M\to M$, there is a natural way to endow $\tilde M$ with a contact structure, which at each $q\in\tilde M$ is determined by the equation $p_*\tilde\xi_q=\xi_{p(q)}$. 
            If $(\tilde M, \tilde\xi)$ is tight then $(M,\xi)$ is called \emph{universally tight}. If $(M,\xi)$ is overtwisted, then any overtwisted disk can be lifted to the universal cover. Therefore universal tightness implies tightness.
    	\end{remark}
	The standard overtwisted structure is overtwisted, while the standard contact structure $\xi_{\mathrm{st}}$, as proven by Bennequin \cite{bennequin}, is tight. A well-known property of contact structures is that they can be locally normalized: according to the Darboux theorem every contact structure is locally contactomorphic to the standard one $(\mathbb R^3, \xi_{\mathrm{st}})$. Therefore every contact structure is locally tight. If a contact manifold is endowed with a metric structure, it makes sense to ask what is the size of the maximal tight neighbourhood of a point. In this work the selected metric structure is a sub-Riemannian one. This choice is natural in view of the fact that length-minimizing curves are horizontal and the distribution twists monotonically along them, so that we can expect to detect overtwisted disks in terms of the sub-Riemannian distance.

	\subsection{Contact sub-Riemannian geometry}\label{sec:contactSR}
	A three-di\-men\-sional contact sub-Rieman\-nian manifold $(M,\omega, g)$ is a co-orientable contact manifold $(M,\xi)$, with $\xi = \ker \omega$, endowed with a bundle metric $g$ on $\xi$, called sub-Riemannian metric. Note that as a consequence of the definition, $M$ is oriented by the non-vanishing top-form $\omega \wedge d\omega$.
	The \emph{Reeb field} $f_0$ is the vector field transverse to $\xi$ defined by the conditions
	\begin{equation}
		\omega(f_0)=1,\qquad d\omega(f_0,\cdot)=0.
	\end{equation}
	We extend the sub-Riemannian metric $g$ to a Riemannian one by declaring $f_0$ to be orthonormal to $\xi$. We say that $\omega$ is \emph{normalized} (with respect to the given sub-Riemannian metric) if
	\begin{equation}\label{eq:compatibility}
 \omega \wedge d\omega = \mathrm{Vol}_g,
	\end{equation}
	where $\mathrm{Vol}_g$ is the volume form induced by the extended metric on the oriented manifold $M$. 
	
	Note that for any contact sub-Riemannian manifold $(M,\omega,g)$ one can multiply $\omega$ by a suitable positive function $f: M \to \R$ so that the normalization condition \eqref{eq:compatibility} is satisfied for $(M,f \omega, g)$. For this reason, in the following, we always assume that \eqref{eq:compatibility} is satisfied.
	
	The sub-Riemannian structure induces an almost-complex structure, i.e.\ a smooth bundle morphism $J:\xi\to \xi$ such that $J^2 = - \mathbb{1}$, which is defined by
	\begin{equation}\label{eq:complex_strctr}
		d\omega(X, J Y)=g(X, Y), \qquad \forall \, X,Y \in \xi_p, \quad \forall p \in M.
	\end{equation}
	Let $f_0,f_1,f_2$ be a local oriented orthonormal frame (in particular $f_1,f_2$ are horizontal). Let $\nu_0,\nu_1,\nu_2$ the corresponding dual co-frame. Then $\omega = \nu_0$ and \eqref{eq:compatibility}-\eqref{eq:complex_strctr} read
		\begin{equation}
		d\nu_0=\nu_1\wedge\nu_2,\qquad Jf_1=f_2,\qquad Jf_2=-f_1.
	\end{equation}
	The structural coefficients of the frame $f_1,f_2,f_0$ are the locally defined functions 
	\begin{equation}
		c_{ij}^k:=g([f_i,f_j],f_k), \qquad i,j,k = 0,1,2,
	\end{equation}
	or equivalently
	\begin{equation}
		d\nu_k= -\sum_{0\leq i<j=2} c_{ij}^k\nu_i\wedge \nu_j, \qquad k = 0,1,2.
	\end{equation}
	The next result is taken from \cite[Ch.\ 17]{Agrachev} where a different convention is used for the sign of structure coefficient.
	\begin{prop}\label{eq:strcoeff}
		Let $(M,\omega, g)$ be a three-dimensional contact sub-Riemannian manifold. Let $f_0$ be its Reeb vector field, and let $f_0,f_1,f_2$ be a local oriented orthonormal frame, then 
		\begin{align}
				[f_1,f_2] & =c_{12}^1f_1+c_{12}^2f_2-f_0, \label{eq:strctr_coeff1}\\
				[f_1,f_0] & =c_{10}^1f_1+c_{10}^2f_2, \label{eq:strctr_coeff2}\\
				[f_2,f_0] & =c_{20}^1f_1+c_{20}^2f_2. \label{eq:strctr_coeff3}
		\end{align}
		Moreover, it holds
		\begin{equation}\label{eq:traceless}
			c_{01}^1+c_{02}^2=0.
		\end{equation}
		Furthermore, the following quantities 
		\begin{align}
				\chi & :=\sqrt{\left(c_{01}^1\right)^2+\frac{1}{4}\left(c_{01}^2+c_{02}^1\right)^2}, \label{eq:chi} \\
				\kappa & : =f_1\left(c_{12}^2\right)-f_2\left(c_{12}^1\right)-\left(c_{12}^1\right)^2-\left(c_{12}^2\right)^2 +
				\frac{c_{02}^1-c_{01}^2}{2}, \label{eq:kappa}
		\end{align}
		do not depend on the choice of $f_1,f_2$ and therefore are globally defined functions $\chi,\kappa:M\to \mathbb R$.
	\end{prop}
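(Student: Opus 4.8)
The plan is threefold: to pin down the $f_0$-components of the three brackets from the defining properties of the Reeb field and the normalization \eqref{eq:compatibility}; to deduce the trace-free identity \eqref{eq:traceless} from the invariance of $\omega$ (hence of $d\omega$) under the Reeb flow; and to verify that $\chi$ and $\kappa$ are unchanged under the residual gauge freedom, i.e.\ under replacing $(f_1,f_2)$ by $\tilde f_1=\cos\theta\,f_1+\sin\theta\,f_2$, $\tilde f_2=-\sin\theta\,f_1+\cos\theta\,f_2$, where $\theta$ is any locally defined smooth function (this is the full ambiguity, since an oriented orthonormal horizontal frame is unique up to pointwise rotation, the orientation of $TM$ being fixed by $\omega\wedge d\omega$).

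For the brackets: the $f_0$-component of $[f_i,f_j]$ equals $g([f_i,f_j],f_0)=\omega([f_i,f_j])$, and Cartan's formula $d\omega(X,Y)=X\omega(Y)-Y\omega(X)-\omega([X,Y])$ together with $\omega(f_1)=\omega(f_2)=0$ and $\omega(f_0)\equiv 1$ gives $\omega([f_i,f_j])=-d\omega(f_i,f_j)$. Since $f_0,f_1,f_2$ is oriented orthonormal, $d\omega(f_1,f_2)=(\nu_1\wedge\nu_2)(f_1,f_2)=1$ (equivalently $(\omega\wedge d\omega)(f_0,f_1,f_2)=\mathrm{Vol}_g(f_0,f_1,f_2)=1$), while $d\omega(f_i,f_0)=-d\omega(f_0,f_i)=0$ because $\iota_{f_0}d\omega=0$; this gives the $-f_0$ in \eqref{eq:strctr_coeff1} and the absence of an $f_0$-term in \eqref{eq:strctr_coeff2}--\eqref{eq:strctr_coeff3}, the remaining components being $c_{ij}^k$ by definition. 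For \eqref{eq:traceless}: Cartan's formula gives $\mathcal L_{f_0}\omega=d(\iota_{f_0}\omega)+\iota_{f_0}d\omega=0$, hence $\mathcal L_{f_0}d\omega=d\mathcal L_{f_0}\omega=0$, i.e.\ $\mathcal L_{f_0}(\nu_1\wedge\nu_2)=0$; on the other hand $(\mathcal L_{f_0}\nu_k)(f_j)=-\nu_k([f_0,f_j])=-c_{0j}^k$, so $\mathcal L_{f_0}\nu_k=-c_{01}^k\nu_1-c_{02}^k\nu_2$ for $k=1,2$, whence $0=\mathcal L_{f_0}(\nu_1\wedge\nu_2)=-(c_{01}^1+c_{02}^2)\,\nu_1\wedge\nu_2$.

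For the gauge-invariance I would work matrix-theoretically. Set $\mathbb J:=\left(\begin{smallmatrix}0&-1\\1&0\end{smallmatrix}\right)$, let $R=R(\theta)$ be the rotation matrix implementing the gauge change, and let $B$ be the $2\times2$ matrix with $[f_0,f_j]=\sum_{k=1}^2 B_{kj}f_k$, i.e.\ $B_{kj}=c_{0j}^k$, which is trace-free by \eqref{eq:traceless}. Expanding $[f_0,\tilde f_j]$ by the Leibniz rule (and using $R^{-1}R'=\mathbb J$) gives $\tilde B=f_0(\theta)\,\mathbb J+R^{-1}BR$. Its symmetric part transforms by conjugation, $\tilde B_{\mathrm{sym}}=R^{-1}B_{\mathrm{sym}}R$, so the eigenvalues $\pm\chi$ of the trace-free symmetric matrix $B_{\mathrm{sym}}$ --- whose $(1,1)$-entry is $c_{01}^1$ and whose off-diagonal entry is $\tfrac12(c_{01}^2+c_{02}^1)$, so that $\sqrt{-\det B_{\mathrm{sym}}}=\chi$ as in \eqref{eq:chi} --- are preserved (equivalently, $\chi$ is the operator norm of the well-defined trace-free symmetric tensor $\tfrac12\mathcal L_{f_0}g|_\xi$). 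The skew part transforms by $\tilde B_{\mathrm{skew}}=f_0(\theta)\,\mathbb J+B_{\mathrm{skew}}$ (since $\mathbb J$ commutes with $R$), which in coordinates reads $\tfrac12(\tilde c_{02}^1-\tilde c_{01}^2)=\tfrac12(c_{02}^1-c_{01}^2)-f_0(\theta)$. It remains to show that the ``horizontal curvature'' term $K_h:=f_1(c_{12}^2)-f_2(c_{12}^1)-(c_{12}^1)^2-(c_{12}^2)^2$ of \eqref{eq:kappa} transforms by $\tilde K_h=K_h+f_0(\theta)$, so that the two corrections cancel in $\kappa=K_h+\tfrac12(c_{02}^1-c_{01}^2)$. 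For this I would observe that $K_h$ is exactly the $\nu_1\wedge\nu_2$-coefficient of $d\alpha$, where $\alpha:=c_{12}^1\nu_1+c_{12}^2\nu_2$ (expand $d\alpha$ with $d\nu_k=-\sum_{i<j}c_{ij}^k\,\nu_i\wedge\nu_j$). A Leibniz-rule computation gives $[\tilde f_1,\tilde f_2]=[f_1,f_2]-\tilde f_1(\theta)\tilde f_1-\tilde f_2(\theta)\tilde f_2$; projecting the horizontal part onto the rotated frame and dualizing (using $f_1(\theta)\nu_1+f_2(\theta)\nu_2=d\theta-f_0(\theta)\nu_0$) yields $\tilde\alpha=\alpha-d\theta+f_0(\theta)\nu_0$; differentiating and using $d\nu_0=\nu_1\wedge\nu_2$ gives $d\tilde\alpha=d\alpha+d(f_0(\theta))\wedge\nu_0+f_0(\theta)\,\nu_1\wedge\nu_2$, and since $\tilde\nu_1\wedge\tilde\nu_2=\nu_1\wedge\nu_2$ while $d(f_0(\theta))\wedge\nu_0$ has no $\nu_1\wedge\nu_2$-component, comparing coefficients gives $\tilde K_h=K_h+f_0(\theta)$.

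I expect the bracket relations and \eqref{eq:traceless} to be routine. The main obstacle is the gauge-invariance of $\kappa$: the delicate point is keeping track of the $d\theta$-terms generated by the Leibniz rule when rotating $f_1,f_2$, and matching them --- through $d\nu_0=\nu_1\wedge\nu_2$ and Cartan's formula --- against the transformation of the ``rotation speed'' $\tfrac12(c_{02}^1-c_{01}^2)$. Conceptually, $\alpha$ is the connection form of the $\mathrm{SO}(2)$-bundle of oriented orthonormal horizontal frames, twisted by the $\nu_0$-contribution forced by $d\omega\neq 0$, and $\kappa$ is the induced gauge-invariant ``reduced curvature'', while $\chi$ measures the failure of $f_0$ to be a Killing field for the extended metric.
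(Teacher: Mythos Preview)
Your argument is correct and complete. The paper does not actually prove this proposition; it simply cites \cite[Ch.\ 17]{Agrachev} (noting a sign-convention difference), so there is no paper proof to compare against. Your approach---Cartan's formula for the $f_0$-components, $\mathcal L_{f_0}d\omega=0$ for the trace identity, and an explicit gauge-transformation computation for the invariance of $\chi$ and $\kappa$---is precisely the standard one, and is essentially what one finds in the cited reference. The identification of $\chi$ with the operator norm of $\tfrac12\mathcal L_{f_0}g|_\xi$ and of $\alpha$ with a connection form on the horizontal orthonormal frame bundle are nice conceptual touches that clarify why the invariances hold, and your bookkeeping of the $f_0(\theta)$ cancellation between $K_h$ and the skew part of $B$ is exactly the delicate point.
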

	\begin{remark}\label{rmk:chikappa}
	The functions $\chi$ and $\kappa$ are invariant under the action of smooth sub-Riemannian isometries of $(M,\omega,g)$ (diffeomorphisms that preserve the sub-Riemannian distribution and its metric). The invariant $\chi$ vanishes if and only if the flow of the Reeb flow acts by sub-Riemannian isometries. The invariant $\kappa$ has a less obvious geometric interpretation. If $\chi=0$, then $\kappa$ is the Gaussian curvature of the surface obtained by locally quotienting $M$ under the action of the Reeb flow. More precisely, in this case $\kappa$ is constant along the Reeb orbits and, if $N=M/f_0$, i.e. the quotient of $M$ by the Reeb flow, is a smooth surface, then it inherits a Riemannian structure with Gaussian curvature $\kappa$. More generally let $U\subset M$ be an open subset. Then $(U,\omega|_U, g|_{U})$, is a smooth sub-Riemannian manifold. If the quotient manifold $U/\left(f_0|_{U}\right)$ is well-defined (which is the case when $U$ is sufficiently small), then $U/\left(f_0|_{U}\right)$ has the natural structure of Riemannian surface with Gaussian curvature $\kappa$. For more details see \cite[Ch.\ 17]{Agrachev}.
	\end{remark}
	\begin{example}[Standard sub-Riemannian contact structure]\label{ex:st_sr}
		Let $(\mathbb R^3,\omega_{\mathrm{st}})$ be the standard contact structure of \cref{ex:standard}. We define a sub-Riemannian metric $g_{\mathrm{st}}$ on $\xi_{\mathrm{st}}=\ker \omega_{\mathrm{st}}$ such that the following global vector fields are orthonormal:
		\begin{equation}
			f_1:=\partial_x+\frac{y}{2}\partial_z,\qquad f_2:=\partial_y-\frac{x}{2}\partial_z.
		\end{equation}
The normalized contact form and the Reeb vector field are, respectively
\begin{equation}
\omega_{\mathrm{st}}=dz+\frac{1}{2}(xdy-ydx), \qquad f_0=\partial_z.
\end{equation}
The contact sub-Riemannian manifold $(\R^3,\omega_{\mathrm{st}},g_{\mathrm{st}})$ is called the \emph{Heisenberg group}, and its metric invariants vanish: $\chi=\kappa=0$.
	\end{example}
	
We introduce in this paper a sub-Riemannian metric on the standard overtwisted structure, which will turn out to be natural for tightness radius estimates.
	\begin{example}[The standard sub-Riemannian overtwisted structure]\label{ex:st_ot_sr}
Let $(\mathbb R^3,\omega_{\mathrm{ot}})$ be the standard overtwisted structure of \cref{ex:standard_ot}. We define a sub-Riemannian metric $g_{\mathrm{ot}}$ on $\xi_{\mathrm{ot}}=\ker \omega_{\mathrm{ot}}$ such that the following vector fields (in cylindrical coordinates) are orthonormal:
		\begin{equation}
			f_1:=\partial_r,\qquad f_2:= \frac{1}{r} \cos\left(\frac{r^2}{2}\right)\partial_\theta-\frac{1}{r} \sin\left(\frac{r^2}{2}\right)\partial_z .
		\end{equation}
The normalized contact form and the Reeb vector field are, respectively
\begin{equation}\label{eq:ot-Reeb}
\omega_{\mathrm{ot}}=\cos\left(\frac{r^2}{2}\right)dz+\sin\left(\frac{r^2}{2}\right)d\theta, \qquad 			f_0=\cos\left(\frac{r^2}{2}\right)\partial_z+\sin\left(\frac{r^2}{2}\right)\partial_\theta.
\end{equation}
The metric invariants of $(\R^3,\omega_{\mathrm{ot}},g_{\mathrm{ot}})$ are	 $\chi = \frac{r}{2}$ and $\kappa = \frac{r^2}{2}$.
\end{example}
%
\begin{remark}
Note that $f_1,f_2$ of \cref{ex:st_ot_sr} are defined only out of the $z$ axis, however, the corresponding sub-Riemannian metric is globally well-defined and smooth.
\end{remark}
	The sub-Riemannian metric induces a length structure on horizontal curves; given $T>0$ and absolutely continuous $\gamma:[0,T]\to M$, with $\dot\gamma(t)\in\xi_{\gamma(t)}$ for a.e.\ $t\in[0,T]$, we denote 
	\begin{equation}
		\ell(\gamma):=\int_{0}^T \|\dot\gamma(t)\|_g \, \mathrm{d} t,
	\end{equation}
	where $\|\cdot\|_g$ is the norm induced on $\xi$ by $g$. We define the \emph{sub-Riemannian distance} as
	\begin{equation}
		d(q,p):=\inf\{\ell(\gamma) \mid \gamma\,\,\text{is horizontal},\,\gamma(0)=q,\,\,\gamma(T)=p\}.
	\end{equation}
	The next result justifies the above definition, and it is a consequence of the Rashevskii-Chow theorem, see e.g. \cite[Ch.\ 3]{Agrachev}.
	\begin{theorem}
			Let $(M,\omega, g)$ be a three-di\-men\-sional contact sub-Riemannian manifold, then $(M,d)$ is a metric space and the metric topology coincides with the manifold topology.   
	\end{theorem}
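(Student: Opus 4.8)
The plan is to verify the metric axioms one by one and then compare topologies, isolating the two genuinely non-trivial inputs: the Chow--Rashevskii connectivity theorem in its quantitative form, and the elementary observation that the Riemannian extension of $g$ dominates $d$ from below.

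First I would dispose of the easy axioms. Non-negativity is immediate from $\ell(\gamma)\geq 0$; symmetry follows by reversing the parametrization of an admissible curve, which preserves horizontality and length; and the triangle inequality follows by concatenating a horizontal curve from $q$ to $p$ with one from $p$ to $r$ and reparametrizing the concatenation on a single interval so that it is again absolutely continuous and horizontal, its length being the sum of the two. This leaves three points to settle: (a) $d(q,p)<+\infty$ for all $q,p$; (b) $d(q,p)=0$ implies $q=p$; and (c) the metric topology coincides with the manifold topology.

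For (a) I would invoke the contact condition $\xi+[\xi,\xi]=TM$, which says exactly that $\xi$ is bracket-generating of step $2$; since $M$ is connected, the Chow--Rashevskii theorem guarantees that any two points can be joined by a horizontal curve, so $d$ is finite. For (b) and one half of (c) I would use the Riemannian metric $\hat g$ extending $g$ by declaring $f_0$ orthonormal to $\xi$, as in \cref{sec:contactSR}. Every horizontal curve is also $\hat g$-admissible with the same length, and the Riemannian distance $d_{\hat g}$ is the infimum of $\hat g$-lengths over the strictly larger class of all absolutely continuous curves; hence $d_{\hat g}\leq d$. Since $(M,d_{\hat g})$ is a genuine metric space whose topology is the manifold topology, $d(q,p)=0$ forces $d_{\hat g}(q,p)=0$, hence $q=p$, proving (b); moreover $B^d_r(q)\subseteq B^{\hat g}_r(q)$ for all $q$ and $r>0$, so every manifold-open set is $d$-open, i.e.\ the $d$-topology is at least as fine as the manifold topology.

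It remains to prove the reverse inclusion of topologies: for each $q\in M$ and $\varepsilon>0$ the $d$-ball $B^d_\varepsilon(q)$ is a manifold-neighbourhood of $q$, equivalently $d(q,\cdot)\to 0$ at $q$ in the manifold topology. This is where the quantitative form of Chow's theorem enters, and I expect it to be the main obstacle. Working in a coordinate chart around $q$ with a local orthonormal frame $f_1,f_2$ of $\xi$ and setting $f_3:=[f_1,f_2]$, which is transverse to $\xi$ by \eqref{eq:strctr_coeff1}, one considers the map sending a small tuple of times to the endpoint of a suitable commutator-type concatenation of the flows of $f_1,f_2$ (so that motion in the $f_3$-direction is realized at second order); a Jacobian computation shows this map is a submersion onto a full neighbourhood of $q$, and the horizontal curves it produces to reach a point $p$ have length $O\big(\mathrm{dist}(q,p)^{1/2}\big)$ in the chart, in particular length tending to $0$ as $p\to q$. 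Hence $d(q,p)\to 0$, and the manifold topology is at least as fine as the $d$-topology; combined with the previous paragraph the two agree. Since this quantitative Chow estimate is classical, I would simply cite \cite[Ch.\ 3]{Agrachev} for it rather than reproduce the computation.
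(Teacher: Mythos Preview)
Your argument is correct and is essentially the standard proof of this fact. Note, however, that the paper does not actually prove this statement: it is stated as a consequence of the Rashevskii--Chow theorem with a reference to \cite[Ch.\ 3]{Agrachev}, which is precisely the source you end up citing for the quantitative estimate. So there is nothing to compare against; your sketch simply unpacks what the paper leaves to the literature.
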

	A \emph{sub-Riemannian geodesic} is a horizontal curve $\gamma: [0,T] \to M$, parametrized with constant speed $\|\dot\gamma(t)\|_g = C$ such that $d(\gamma(t),\gamma(s)) = C |t-s|$ for all $t,s \in [0,T]$. It is well-known that, for contact structures, sub-Riemannian geodesics are smooth and are projections of solutions of a Hamiltonian system on $T^* M$, which we now describe.

	Let $f_1,f_2$ be a local orthonormal frame for the sub-Riemannian structure, $f_0$ be the Reeb field and let $h_0,h_1,h_2:T^* M\to \mathbb R$ be the associated hamiltonian functions 
	\begin{equation}\label{hi2}
		h_i(\lambda):=\langle\lambda, f_i \rangle, \qquad \lambda \in T^* M, \qquad i=0,1,2.
	\end{equation}
	If $c_{ij}^k$ are the structural coefficients of the frame $f_0,f_1,f_2$, i.e., $[f_i,f_j]=c_{ij}^kf_k$, then the Poisson brackets $\{\cdot,\cdot\} : C^\infty(T^* M) \times C^\infty(T^* M) \to C^\infty(T^* M)$ satisfy
	\begin{equation}
		\{h_i,h_j\}=\sum_{k=0}^2 c_{ij}^k h_k.
	\end{equation}
For any smooth function $h :T^*M \to \R$ recall that the symplectic gradient is defined as the unique smooth vector field $\vec{h} \in \Gamma(T^*M)$ such that $dh = \sigma(\cdot,\vec{h})$, where $\sigma$ is the canonical symplectic form on $T^*M$. Equivalently, $\vec{h}(\cdot) = \{h,\cdot\}$. The \emph{sub-Riemannian Hamiltonian} $H:T^* M\to \mathbb R$ is
	\begin{equation}\label{eq:SR_ham}
		H:=\frac{1}{2}\left(h_1^2+h_2^2\right).
	\end{equation}
	The right hand side of \eqref{eq:SR_ham} is independent of the choice of $f_1,f_2$ and thus yields a well-defined smooth function on $T^* M$. The corresponding Hamiltonian vector field satisfies
	\begin{equation}\label{eq:ham_vect}
	\vec{H} = h_1\vec{h}_1+h_2\vec{h}_2.
	\end{equation}
	Every sub-Riemannian geodesic $\gamma:[0,T] \to M$ is the projection of an integral curve of $\vec{H}$, namely there exists $\lambda \in T^* M$ such that 
	\begin{equation}
	\gamma(t) = \pi \circ e^{t \vec{H}}(\lambda), \qquad \forall t\in [0,T],
	\end{equation}
	and conversely, any sufficiently short segment of such projections is a sub-Riemannian geodesic. The following rescaling property holds, as a consequence of the homogeneity of the Hamiltonian:
\begin{equation}\label{eq:rescaling}
e^{\vec{H}}(\alpha \lambda) = \alpha e^{\alpha \vec{H}}(\lambda), \qquad \forall \alpha >0, \quad \lambda \in T^* M.
\end{equation}
	We recall the following sub-Riemannian version of the Hopf-Rinow theorem, see e.g.\ \cite{Agrachev}.
		\begin{theorem}\label{thm:hopf-rinow}
		Let $(M,\omega, g)$ be a three-di\-men\-sional contact sub-Riemannian manifold, then $(M,d)$ is a complete metric space if and only there exists $q \in M$ such that the restriction of the Hamiltonian flow $e^{t\vec{H}}|_{T_q^* M} : T_q^* M \to T^* M$ is well-defined for all $t \in \R$.
		\end{theorem}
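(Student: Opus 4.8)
The plan is to mimic the classical proof of the Hopf--Rinow theorem, with the one genuinely new feature that the sub-Riemannian Hamiltonian $H$ of \eqref{eq:SR_ham} has \emph{non-compact} level sets inside each fibre $T_q^*M$ (they are cylinders, not spheres), so the Riemannian argument must be supplemented by a control on the ``vertical momentum'' $h_0=\langle\lambda,f_0\rangle$. I will use throughout that, by the rescaling property \eqref{eq:rescaling}, the flow $e^{t\vec H}|_{T_q^*M}$ being defined for all $t\in\R$ is equivalent to the sub-Riemannian exponential $\exp_q:=\pi\circ e^{\vec H}|_{T_q^*M}$ being globally defined, that $\gamma^\lambda(t):=\pi\circ e^{t\vec H}(\lambda)=\exp_q(t\lambda)$ is a geodesic of constant speed $\sqrt{2H(\lambda)}$, and that the metric and manifold topologies coincide, so $M$ is locally compact.

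\emph{Necessity (``$(M,d)$ complete $\Rightarrow$ flow complete from every $q$'').} This is the escape-lemma direction. Fix $q$ and $\lambda\in T_q^*M$, let $[0,T)$ be the maximal forward interval of definition of $t\mapsto e^{t\vec H}(\lambda)$ in $T^*M$, and suppose $T<\infty$. Since $\gamma^\lambda$ is $\sqrt{2H(\lambda)}$-Lipschitz, $\gamma^\lambda(t)$ is $d$-Cauchy as $t\to T^-$, hence converges to some $p\in M$ by completeness; in particular $\overline{\gamma^\lambda([0,T))}$ is compact. Working with a local frame near $p$, the first integral $h_1^2+h_2^2\equiv 2H(\lambda)$ stays bounded along the curve, while $\tfrac{d}{dt}h_0=\{H,h_0\}=h_1\{h_1,h_0\}+h_2\{h_2,h_0\}$, and by \eqref{eq:strctr_coeff2} and \eqref{eq:strctr_coeff3} the brackets $\{h_i,h_0\}=\langle\lambda,[f_i,f_0]\rangle$ involve only $h_1,h_2$ (since $[f_i,f_0]$ is horizontal). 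As the structure coefficients are bounded on the compact set above, $|\dot h_0|$ is bounded, so $h_0$ stays bounded on $[0,T)$; hence $e^{t\vec H}(\lambda)$ remains in a compact subset of $T^*M$, contradicting the escape lemma. Running the same argument backward shows $e^{t\vec H}|_{T_q^*M}$ is defined on all of $\R$ for \emph{every} $q$, and a fortiori for some $q$.

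\emph{Sufficiency (``flow complete from some $q$ $\Rightarrow$ $(M,d)$ complete'').} Fix such a $q$; I would prove that every closed ball $\bar B_r(q)$ is compact, i.e.\ that $(M,d)$ is proper, which gives completeness (a Cauchy sequence is bounded, hence contained in a compact ball, hence subconverges, hence converges). Following the Riemannian scheme, one first shows existence of a minimizing geodesic from $q$ to any $p$: pick $\rho>0$ with $\bar B_\rho(q)$ compact, minimize $x\mapsto d(x,p)$ over the compact sphere $\{d(q,\cdot)=\rho\}$ at a point $\exp_q(\rho\lambda)$ with $2H(\lambda)=1$, and verify — exactly as in the Riemannian case, the set $\{t:d(\gamma^\lambda(t),p)=d(q,p)-t\}$ being nonempty, closed and open — that $\gamma^\lambda(d(q,p))=p$; here the hypothesis that $\exp_q$ is global is precisely what makes $\gamma^\lambda$ available on all of $[0,d(q,p)]$. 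Thus $\bar B_r(q)\subseteq\exp_q(W_r)$ with $W_r:=\{\mu\in T_q^*M:2H(\mu)\le r^2,\ |h_0(\mu)|\le C_r\}$, provided the initial covector $\mu$ of a minimizing geodesic of length $\le r$ has $|h_0(\mu)|$ bounded by a constant $C_r$. This is the point where the contact hypothesis enters quantitatively: a unit-speed geodesic with initial covector $\lambda$ and $h_0(\lambda)\ne0$ ceases to be minimizing after a time comparable to $|h_0(\lambda)|^{-1}$ (a large Reeb twist forces an early conjugate point), so for a minimizer of length $L\le r$ one gets $|h_0(\mu)|=L|h_0(\lambda)|\le\mathrm{const}$; this is a standard estimate on contact sub-Riemannian geodesics (see \cite{Agrachev}). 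Since $W_r$ is compact, $\exp_q$ is continuous and $\bar B_r(q)$ is closed, it is compact, and we conclude.

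The main obstacle — the only place the argument truly departs from Riemannian Hopf--Rinow — is the degeneracy of $H$: one must (i) in the necessity direction, rule out a finite-time blow-up of $h_0$, which works thanks to the structural fact that $[f_i,f_0]$ is horizontal (\cref{eq:strcoeff}); and (ii) in the sufficiency direction, replace the non-compact set $\exp_q(\{2H\le r^2\})$ by a genuinely compact one, which forces the use of the contact cut-time estimate bounding $h_0$ along minimizing geodesics. Everything else is a routine transcription of the classical proof.
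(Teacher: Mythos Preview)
The paper does not prove \cref{thm:hopf-rinow}; it is quoted from \cite{Agrachev}, so there is no in-paper argument to compare against. Your outline is essentially correct and follows the standard approach in that reference.

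Two small points you pass over. In the sufficiency direction, the open--closed argument for reaching $p$ from $q$ needs that the concatenation of $\gamma^\lambda|_{[0,t_0]}$ with a short minimizer issued from $\gamma^\lambda(t_0)$ is again $\gamma^\lambda$; this uses that on a contact structure a normal geodesic determines its covector lift uniquely --- from $\dot h_1=\{H,h_1\}$ and $c_{12}^0=-1\neq 0$ one solves for $h_0$ in terms of $(h_1,h_2,\dot h_1)$ --- so two normal extremals agreeing on $[0,t_0]$ must coincide. Second, the cut-time bound $t_{\mathrm{cut}}(\lambda)\lesssim|h_0(\lambda)|^{-1}$ holds in general only for $|h_0(\lambda)|$ large (via the Heisenberg nilpotent approximation at $q$), not for every $h_0\neq 0$; this still yields $|h_0(\mu)|\le\max(C,Nr)=:C_r$, which is all you need. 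The necessity direction is clean, and your identification of the mechanism --- that $\{H,h_0\}$ is quadratic in $(h_1,h_2)$ alone because $[f_i,f_0]\in\xi$ by \eqref{eq:strctr_coeff2}--\eqref{eq:strctr_coeff3} --- is exactly right.
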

	As a consequence the following definition is well-posed.
	\begin{defi}
		Let $(M,\omega, g)$ be a complete three-di\-men\-sional contact sub-Riemannian manifold. For $q\in M$, we define the exponential map $\exp_q: T_q^* M \to M$ as 
		\begin{equation}\label{eq:exp}
\exp_q(\lambda) := \pi\circ e^{\vec{H}}(\lambda),
		\end{equation}
		where $\pi:T^* M\to M$ is the bundle projection. 
	\end{defi}

\section{Exponential coordinates}\label{sec:expcoords}
We describe the contact structure in a special coordinate neighbourhood of a Reeb orbit. For the following, let $(M,\omega, g)$ a contact sub-Riemannian manifold. 
Let $\Gamma \subset M$ be an embedded piece of Reeb orbit, which we always assume to be connected, without boundary, and non-trivial. Maximal integral lines of the Reeb field are simply called \emph{Reeb orbits}. We can have two cases: either $\Gamma$ is diffeomorphic to an open interval or to a circle.

\subsection{Cylindrical coordinates on the annihilator bundle}\label{sec:cylindcoords}
We first study the annihilator bundle of $\Gamma$.
\begin{defi}\label{def:bundles}
	The \emph{annihilator bundle} of $\Gamma$ is
	\begin{equation}
		A\Gamma:=\{\lambda\in T^{*} M \mid \pi(\lambda)\in\Gamma,\,\,\langle\lambda\,, f_0\rangle=0\}.
	\end{equation}
	Given $r>0$ we also define the following bundles:
	\begin{equation}
		A^{<r}\Gamma:=A\Gamma\cap \{\sqrt{2H}<r\},\qquad A^1\Gamma:=A\Gamma\cap \{\sqrt{2H}=1\}.
	\end{equation}
\end{defi}
\begin{remark}
	The restriction $2H|_{A \Gamma} : A \Gamma \to \R$ is a fiber-wise positive definite quadratic form. In particular $A\Gamma$ is an Euclidean bundle.
\end{remark}
The following property is crucial for the construction of coordinates on $A\Gamma$.
\begin{lemma}
	The restriction $\xi|_{\Gamma}$ is a trivial vector bundle over $\Gamma$. In other words there exist a horizontal orthonormal frame $f_1, f_2$ defined in a neighbourhood of $\Gamma$.
\end{lemma}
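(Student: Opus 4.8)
The plan is to first trivialize $\xi|_\Gamma$ for topological reasons, then upgrade the trivialization to an orthonormal frame by using the almost complex structure $J$, and finally extend that frame to a full neighbourhood of $\Gamma$ in $M$.

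First I would use that $\Gamma$, being a connected embedded boundaryless $1$-manifold, is diffeomorphic either to an open interval or to a circle. The restriction $\xi|_\Gamma$ is a rank-$2$ real vector bundle, and restricting the bundle morphism $J$ from \eqref{eq:complex_strctr} turns it into a complex line bundle over $\Gamma$. Complex line bundles over a space $X$ are classified by $H^2(X;\mathbb{Z})$, and $H^2$ of an interval and of a circle both vanish; hence $\xi|_\Gamma$ is trivial as a complex, and in particular as a real, vector bundle. Therefore there exists a nowhere-vanishing smooth section $s$ of $\xi|_\Gamma$.

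Next I would make this section orthonormal. Set $f_1:=s/\|s\|_g$ and $f_2:=Jf_1$. From $J^2=-\mathbb{1}$ and the identity $d\omega(X,JY)=g(X,Y)$ one gets $g(f_1,f_2)=d\omega(f_1,Jf_2)=d\omega(f_1,-f_1)=0$ and $g(f_2,f_2)=d\omega(f_2,Jf_2)=d\omega(f_2,-f_1)=d\omega(f_1,Jf_1)=g(f_1,f_1)=1$, so $f_1,f_2$ is a horizontal orthonormal frame along $\Gamma$ (oriented, thanks to the normalization \eqref{eq:compatibility}). This already establishes that $\xi|_\Gamma$ is trivial; it remains to extend $f_1,f_2$ to a neighbourhood.

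For the extension I would introduce the fibrewise orthogonal projection $P\colon TM\to\xi$ with respect to the extended Riemannian metric, which is a globally defined smooth bundle morphism. Since $\Gamma$ is an embedded submanifold, the section $f_1$ of $TM|_\Gamma$ extends to a smooth vector field $\hat f_1$ on some neighbourhood $V$ of $\Gamma$; then $P\hat f_1$ is a section of $\xi$ over $V$ which restricts to $f_1$ along $\Gamma$ (because $f_1\in\xi$), so $\|P\hat f_1\|_g>0$ on a possibly smaller neighbourhood $V'\supset\Gamma$. Defining $\tilde f_1:=P\hat f_1/\|P\hat f_1\|_g$ and $\tilde f_2:=J\tilde f_1$ on $V'$, the computation above shows $\tilde f_1,\tilde f_2$ is a horizontal orthonormal frame on the neighbourhood $V'$ restricting to $f_1,f_2$ on $\Gamma$. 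The only point needing any care is the circle case in the first step, where one must exclude a nontrivial $\xi|_\Gamma$; this is immediate from $H^2(S^1;\mathbb{Z})=0$ (equivalently, the structure group $\mathrm{SO}(2)$ is connected). Everything else is routine.
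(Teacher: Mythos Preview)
Your proof is correct and takes a genuinely different route from the paper's. The paper argues the circle case by hand: it pushes a basis of $\xi_q$ forward along $\Gamma$ by the Reeb flow (which preserves $\omega$, hence $\xi$), observes that the resulting holonomy $e^{\tau f_0}_*:\xi_q\to\xi_q$ is orientation-preserving because $d\omega$ orients $\xi$, chooses an explicit homotopy $G_z$ from the identity to this holonomy in $\mathrm{GL}^+(2)$, and uses $G_z^{-1}$ to untwist the transported frame into a global one; then it extends to a neighbourhood and applies Gram--Schmidt. Your argument replaces this construction by the topological fact that oriented rank-$2$ (equivalently, complex line) bundles over a $1$-manifold are trivial, which is cleaner and uses only that $\xi$ carries the $J$-orientation, not the full Reeb-invariance of $\omega$. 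The paper's version is more self-contained and avoids invoking bundle classification, while yours is shorter and conceptually transparent; both arrive at the same orthonormal frame via $f_2=Jf_1$ and the same extension mechanism.
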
	
\begin{proof}
	If $\Gamma$ is diffeomorphic to an interval, then there is nothing to prove. Assume then that $\Gamma$ is diffeomorphic to a circle. This means that we can find $\tau>0$ and $q \in \Gamma$ such that the curve $\gamma:[0,\tau] \to M$ given by
	\begin{equation}
		\gamma(z)=e^{zf_0}(q),
	\end{equation}
	satisfies $\gamma(0)= \gamma(\tau)$ and $\Gamma = \gamma([0,\tau])$. Let $X_0,Y_0$ be a basis for $\xi_q$. Then 
	\begin{equation}
		X_z:=e^{zf_0}_* X_0,\qquad Y_z:=e^{zf_0}_* Y_0, \qquad z \in [0,\tau],
	\end{equation}
	is a basis for $\xi_{\gamma(z)}$, since the Reeb flow preserves $\omega$. Notice that $e^{\tau f_0}_* :\xi_q\to\xi_q$ is an orientation preserving endomorphism, 
	the orientation being the one induced on $\xi_q$ by $d\omega$. In particular $e^{\tau f_0}_*$ is homotopic to the identity: there exists a smooth curve $G:[0,\tau]\to \mathrm{GL}(2)$ such that 
	\begin{equation}
		G_{0}=\mathrm{Id},\qquad
		\begin{pmatrix}
			X_{\tau}\\
			Y_{\tau}
		\end{pmatrix}
		=
		G_{\tau}
		\begin{pmatrix}
			X_0\\
			Y_0
		\end{pmatrix}.
	\end{equation}
	We obtain a smooth trivializing frame for $\bar{X},\bar{Y}$ for $\xi|_{\Gamma}$ by setting
	\begin{equation}
		\begin{pmatrix}
			\bar{X}_{z}\\
			\bar{Y}_{z}
		\end{pmatrix}
		=
		G_{z}^{-1}
		\begin{pmatrix}
			X_z\\
			Y_z
		\end{pmatrix},\qquad \forall\,z\in [0,\tau].
	\end{equation}
	We can extend $\bar{X}$ and $\bar{Y}$ to an horizontal frame in a neighbourhood of $\Gamma$, which in turn yields a orthonormal horizontal frame $f_1, f_2$ by Gram-Schmidt.
\end{proof}
Let $\Gamma \subset M$ be an embedded piece of Reeb orbit. We recall that either $\Gamma$ is diffeomorphic to an open interval or to a circle. Fix $q \in \Gamma$ and let $\gamma : I \to \Gamma$ be the diffeomorphism
\begin{equation}
	\gamma(z) = e^{z f_0}(q), \qquad \forall\, z \in I,
\end{equation}
where $I$ is either an open interval or $I = \R/{\tau\N} \simeq \mathbb{S}^1$ for some $\tau>0$ (the period of the orbit).

Since  $\xi|_{\Gamma}$ is a trivial bundle over $\Gamma$, there exists an oriented orthonormal frame $f_0,f_1,f_2$ on $\Gamma$ and a corresponding dual frame $\nu_0 = \omega,\nu_1,\nu_2$, which we fix for the next proposition.
\begin{prop}\label{prop:trivial}
	The map $\Psi : I \times \R^2 \to A\Gamma$ defined by
	\begin{equation}\label{eq:Psi}
		\Psi(z,x,y):=(x\nu_1+y\nu_2)|_{\gamma(z)},
	\end{equation}
	is an isomorphism of Euclidean bundles satisfying
	\begin{equation}\label{eq:bundle_iso}
		2H \circ \Psi = \| \cdot\|^2_{\R^2}.
	\end{equation}
	In particular, all the bundles of \cref{def:bundles} are trivial, with
	\begin{equation}
		A\Gamma\simeq I \times \R^2,\qquad A^{<r}\Gamma\simeq  I \times B_{r},\qquad A^{1}\Gamma\simeq I \times \partial B_1,
	\end{equation}
	where $B_{r}$ is the 2-dimensional Euclidean open ball of radius $r >0$.
\end{prop}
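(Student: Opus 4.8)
The proposition is essentially a bookkeeping statement, so the plan is to unwind the definitions in a few short steps. Recall first that, by the triviality of $\xi|_\Gamma$ established above, the oriented orthonormal frame $f_0,f_1,f_2$ and its dual coframe $\nu_0=\omega,\nu_1,\nu_2$ are globally defined along $\Gamma$; this is the only place where the closed-orbit case needs attention, and it is precisely what the preceding lemma guarantees. With this in hand, since $\langle\nu_i,f_0\rangle=\delta_{i0}$, for every $(z,x,y)\in I\times\R^2$ the covector $\Psi(z,x,y)=(x\nu_1+y\nu_2)|_{\gamma(z)}$ annihilates $f_0$ and therefore lies in $A_{\gamma(z)}\Gamma\subset A\Gamma$; moreover $\Psi$ is smooth and covers the diffeomorphism $\gamma:I\to\Gamma$.

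Next I would check that $\Psi$ is a fiberwise linear isomorphism. For fixed $z$, the map $(x,y)\mapsto(x\nu_1+y\nu_2)|_{\gamma(z)}$ is linear, and writing a generic $\lambda\in T^*_{\gamma(z)}M$ in the dual basis as $\lambda=a\nu_0+x\nu_1+y\nu_2$ gives $\langle\lambda,f_0\rangle=a$, so that $A_{\gamma(z)}\Gamma=\spn\{\nu_1|_{\gamma(z)},\nu_2|_{\gamma(z)}\}$ and the map is onto; linear independence of $\nu_1,\nu_2$ makes it injective. Hence $\Psi$ is a vector bundle isomorphism over $\gamma$. To obtain \eqref{eq:bundle_iso}, note that for $\lambda=\Psi(z,x,y)$ one has $h_i(\lambda)=\langle\lambda,f_i\rangle$ (see \eqref{hi2}), whence $h_1(\lambda)=x$ and $h_2(\lambda)=y$ by duality; then by \eqref{eq:SR_ham},
\begin{equation}
2H(\Psi(z,x,y))=h_1(\lambda)^2+h_2(\lambda)^2=x^2+y^2=\|(x,y)\|^2_{\R^2}.
\end{equation}
Since the Euclidean structure on $A\Gamma$ is by definition the one induced by $2H$ (see the remark following \cref{def:bundles}), this shows $\Psi$ is an isomorphism of Euclidean bundles.

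Finally, the identifications of the subbundles follow immediately: by \eqref{eq:bundle_iso} together with \cref{def:bundles}, $\Psi^{-1}(A^{<r}\Gamma)=\{(z,x,y)\mid x^2+y^2<r^2\}=I\times B_r$, $\Psi^{-1}(A^1\Gamma)=\{(z,x,y)\mid x^2+y^2=1\}=I\times\partial B_1$, and of course $\Psi^{-1}(A\Gamma)=I\times\R^2$. There is no genuine obstacle in this argument; the only subtlety, already mentioned, is ensuring that the trivializing coframe $\nu_1,\nu_2$ exists globally along $\Gamma$ in the case where $\Gamma$ is a circle, which is handled by the triviality lemma preceding the statement.
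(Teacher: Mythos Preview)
Your proof is correct and follows essentially the same approach as the paper: expand covectors in the dual coframe $\nu_0,\nu_1,\nu_2$ to identify $A_{\gamma(z)}\Gamma$ with $\spn\{\nu_1,\nu_2\}$, and then use $2H=h_1^2+h_2^2$ with $h_i(\lambda)=\langle\lambda,f_i\rangle$ to obtain \eqref{eq:bundle_iso}. Your version is slightly more explicit about well-definedness and the role of the triviality lemma in the closed-orbit case, but the substance is identical.
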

\begin{proof}
	Any covector $\lambda \in T^*M|_{\Gamma}$ on $\Gamma$ can be written as 
	\begin{equation}
		\lambda=\sum_{i=0}^2\langle\lambda\,, f_i\rangle\nu_i=\sum_{i=0}^2h_i(\lambda)\nu_i,
	\end{equation}
	where the $h_i:T^*M \to \R$ are defined in \eqref{hi2}.		
	Notice that 
	\begin{equation}
		A\Gamma=\{\lambda\in T^* M \mid \pi(\lambda)\in \Gamma,\,\,h_0(\lambda)=0\},
	\end{equation}
	thus any $\lambda\in A\Gamma$ can be written in a unique way as $\lambda=x\nu_1+y\nu_2$, with $x,y\in\mathbb R^2$. It follows that \eqref{eq:Psi} is a bundle isomorphism
	Since $2H = h_1^2 + h_2^2$, then it holds $2H\circ \Psi = \|\cdot\|^2_{\R^2}$ as claimed. 
\end{proof}

\cref{prop:trivial} yields a global set of coordinates on $A\Gamma$. The corresponding cylindrical coordinates $(z,r,\theta)\in I \times (0,+\infty)\times [0,2\pi)$ defined outside of the zero section of $A\Gamma$, given by
\begin{equation}
	x =r\cos\theta,\qquad y=r\sin\theta,\qquad z,
\end{equation}
will be instrumental in our analysis, and referred to as \emph{cylindrical coordinates} on $A\Gamma$.

\subsection{Tubular neighbourhood of a Reeb orbit}
	We define the tubular neighbourhhod map.
	\begin{defi}
	 The tubular neighbourhood map is defined as 
	\begin{equation}\label{eq:exponential_map}
		E:A\Gamma\to M,\qquad E(\lambda)=\pi\circ\exp(\lambda).
	\end{equation}
	\end{defi}	
The tubular neighbourhood map at $\Gamma$ yields coordinates in a neighbourhood of $\Gamma$ such that the sub-Riemannian distance $\delta:M\to \mathbb R$ from $\Gamma$,  given by
	\begin{equation}\label{eq:dist_from_gam}
		\delta(q):=\inf_{p\in\Gamma}d(p,q),\qquad  \forall \, q\in M,
	\end{equation}
	has a simple expression. The next theorem is a particular case of a sub-Riemannian version of the tubular neighbourhood theorem which holds for general non-characteristic embedded submanifolds, see e.g.\ \cite[Thm.\ 3.1]{BRR-Tubes}. See also \cite[Sec.\ 2.4]{subOnR3} for an earlier version for three-dimensional contact structures.
	\begin{theorem}[Tubular neighbourhood]\label{thm:exp_map}
	There exists a neighbourhood $U \subset A \Gamma$ of the zero section of $A\Gamma$ such that the restriction of the tubular neighbourhood map $E|_{U}:U\to M$ is a diffeomorphism onto its image, and for the distance $\delta$ from $\Gamma$ it holds
		\begin{equation}\label{eq:gamma_dist_smooth}
			\delta\circ E|_{U}=\sqrt{2H} |_U.
		\end{equation}
	If $\Gamma$ has compact closure, then we can choose $U = A^{<r} \Gamma$ for some $r >0$. Furthermore if $q\in M$ and $\sigma:[0,T]\to M$ is a geodesic realizing the distance between $q$ and $\Gamma$, i.e. $\sigma(0)\in\Gamma$, $\sigma(T)=q$ and $\delta(q)=\ell(\sigma)=T$,
 then there exists $\lambda\in A^1\Gamma$ such that 
 \begin{equation}
     \sigma(t)=E(t\lambda),\quad \forall\,t\in[0,T].
 \end{equation}
	\end{theorem}
	 In the following, given $U\subset A\Gamma$, we will denote the restriction of the tubular neighbourhood map to $U$ with $E:U\to M$ instead of $E|_U:U\to M$.
	\begin{defi}\label{def:injgamma}
	Let $\Gamma$ be an embedded piece of Reeb orbit. The \emph{injectivity radius} of $\Gamma$ is
	\begin{equation}
		r_{\inj}(\Gamma):=\sup\{r >0 \mid \text{$E :A^{<r}\Gamma\to M$ is a diffeomorphism onto its image}\},
	\end{equation}
	with the understanding that $r_{\inj}(\Gamma)=0$ if the above set is empty. 
	\end{defi}
\begin{remark}\label{rmk:tube}
    Let $\Gamma$ be a (whole) Reeb orbit with $r_\inj(\Gamma)>0$. \cref{thm:exp_map} implies that
    \begin{equation}
        E\left(A^{<r}\Gamma\right)=B_r(\Gamma):=\{q\in M\mid d(q,\Gamma)<r\},\qquad  \forall\,r<r_\inj(\Gamma). 
    \end{equation}
\end{remark}

\section{The tightness radius from a Reeb orbit}\label{sec:tight_radius}

In this section we introduce the tightness radius and prove our first estimates for it.

\begin{defi}
Let $(M,\omega, g)$ be a three-di\-men\-sional contact sub-Riemannian manifold and let $\Gamma$ be an embedded piece of Reeb orbit with $r_{\inj}(\Gamma)>0$. The tightness radius is
\begin{equation}
		r_{\tight}(\Gamma):=\sup\left\{0<r<r_{\inj}(\Gamma) \mid \text{$(A^{<r}\Gamma, \ker E^*\omega)$ is tight}\right\}.
\end{equation}
\end{defi}
  \begin{remark}\label{rmk:tube_tight}
		    It follows from \cref{rmk:tube} that, if $\Gamma$ is a (whole) Reeb orbit with $r_\inj(\Gamma)>0$, then 
      \begin{equation}\label{eq:sr-tight-tube}
          r_\tight(\Gamma)=\sup\left\{0<r<r_\inj(\Gamma)\mid \left(B_r(\Gamma),\ker \omega|_{B_r(\Gamma)}\right)\,\,\text{is tight}\right\},
      \end{equation}
      where $B_r(\Gamma)$ is the set of points at sub-Riemannian distance from $\Gamma$ smaller than $r$.
		\end{remark}
	\subsection{A class of models and their tightness radius}\label{sec:models}
	
	We compute the tightness radius for a class of examples. For the next statement we say that a smooth function $\alpha : \R^3 \to \R$ is \emph{radial} if, in cylindrical coordinates $(r,\theta,z)$ it holds $\partial_\theta \alpha =\partial_z \alpha = 0$.
	\begin{theorem}\label{thm:alpha_beta}
		Let $\omega$ be a contact form on $\mathbb R^3$, such that in cylindrical coordinates on $\R^3$
		\begin{equation}
			\omega=\alpha dz+\beta d\theta,
		\end{equation}
		for some smooth radial functions $\alpha,\beta$. Let $\gamma := \alpha\partial_r \beta -\beta\partial_r\alpha$. Then
		\begin{equation}
			g:=\left.\left( dr\otimes dr+\frac{\gamma^2}{\alpha^2+\beta^2}(d\theta\otimes d\theta+dz\otimes dz)\right)\right|_{\ker\omega}
		\end{equation}
		is a sub-Riemannian metric, and $\omega$ is normalized. Thus $(\R^3,\omega,g)$ is a contact sub-Riemannian structure. The curve
		\begin{equation}
		\Gamma=\{(0,0,z)\in\mathbb R^3 \mid z\in\mathbb R\}
		\end{equation}
		is the orbit of the Reeb field passing through the origin. In cylindrical coordinates on $A\Gamma$ of \cref{sec:cylindcoords} the map $E: A\Gamma \to \R^3$ is the identity:
		\begin{equation}
		E(z,r\cos\theta,r\sin\theta) = (r \cos\theta,r \sin \theta, z),
		\end{equation}
		and in particular the distance $\delta:\R^3\to \R$ from $\Gamma$ is
		\begin{equation}
		\delta(r \cos\theta,r \sin \theta, z) = r.
		\end{equation}
		Moreover, the frame $\{f_0,N,JN\}$ is orthonormal and oriented, where
		\begin{equation}\label{eq:explicit_normal_frame}
				f_0=\frac{\partial_r\beta}{\gamma}\partial_z-\frac{\partial_r\alpha}{\gamma}\partial_\theta,\qquad N=\partial_r,\qquad JN =\frac{\alpha}{\gamma}\partial_\theta-\frac{\beta}{\gamma}\partial_z.
		\end{equation}
		Finally, the tightness and injectivity radii are:
		\begin{equation}
			r_{\inj}(\Gamma)=+\infty,\qquad r_{\tight}(\Gamma)=\inf\{r>0 \mid \beta(r)=0\}.
		\end{equation}
	\end{theorem}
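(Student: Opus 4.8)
The strategy is to verify each assertion in turn, starting from the algebraic identities and ending with the tightness computation, which is the crux. First I would check that $g$ is a genuine sub-Riemannian metric and that $\omega$ is normalized. The contact condition amounts to $\omega \wedge d\omega \neq 0$; since $\omega = \alpha\, dz + \beta\, d\theta$ with $\alpha,\beta$ radial, one computes $d\omega = (\partial_r \alpha)\, dr \wedge dz + (\partial_r\beta)\, dr\wedge d\theta$, hence $\omega\wedge d\omega = (\alpha \partial_r\beta - \beta\partial_r\alpha)\, dr\wedge d\theta\wedge dz = \gamma\, dr\wedge d\theta\wedge dz$, so the contact condition is exactly $\gamma \neq 0$, and we may assume $\gamma>0$ after orienting. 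Then one checks directly that the frame $\{N, JN\}$ in \eqref{eq:explicit_normal_frame} lies in $\ker\omega$, is $g$-orthonormal by the stated formula for $g$, and that $f_0$ in \eqref{eq:explicit_normal_frame} satisfies $\omega(f_0) = 1$ and $d\omega(f_0,\cdot) = 0$, so it is indeed the Reeb field; this identifies $\Gamma$ (the $z$-axis, where $r=0$) as a Reeb orbit once we note $f_0|_{r=0} = \partial_z$ by L'Hôpital/Taylor expansion of $\alpha,\beta$ near $r=0$. The normalization $\omega\wedge d\omega = \mathrm{Vol}_g$ follows because the volume form of the Riemannian extension in the frame $\{N, JN, f_0\}$ is $\nu_1\wedge\nu_2\wedge\nu_0$, and one checks this coincides with $\gamma\, dr\wedge d\theta\wedge dz$ using the dual coframe of \eqref{eq:explicit_normal_frame}.

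Next I would establish that $E$ is the identity in the cylindrical coordinates of \cref{sec:cylindcoords}. The point is that the geodesics issuing from $\Gamma$ orthogonally to it are exactly the radial curves $r\mapsto (r\cos\theta_0, r\sin\theta_0, z_0)$. To see this, observe that $N = \partial_r$ is the horizontal unit field pointing away from $\Gamma$, that the curves $r\mapsto (r,\theta_0,z_0)$ are horizontal (they annihilate $\omega$ since they have no $d\theta$ or $dz$ component), unit-speed (since $g(\partial_r,\partial_r)=1$), and length-minimizing from $\Gamma$: any horizontal curve from $\Gamma$ to the point at cylindrical radius $r$ has length $\geq \int |\dot r|\,dt \geq r$ because $\|\dot\gamma\|_g^2 = \dot r^2 + \frac{\gamma^2}{\alpha^2+\beta^2}(\dot\theta^2+\dot z^2) \geq \dot r^2$. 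Hence $\delta(r\cos\theta, r\sin\theta, z) = r$, these radial curves realize the distance, and by \cref{thm:exp_map} they are of the form $E(t\lambda)$ for $\lambda\in A^1\Gamma$; matching initial covectors (the covector dual to $N=\partial_r$ at $\gamma(z)$ is precisely $\Psi(z,\cos\theta,\sin\theta)$ up to the identification in \cref{prop:trivial}) gives $E(z, r\cos\theta, r\sin\theta) = (r\cos\theta, r\sin\theta, z)$ and $r_{\inj}(\Gamma) = +\infty$ since $E$ is a global diffeomorphism off the zero section and extends continuously over $\Gamma$.

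The heart of the matter is the identity $r_{\tight}(\Gamma) = \inf\{r>0 \mid \beta(r)=0\}$. Since $E$ is the identity, $E^*\omega = \omega$, and we must determine the largest $R$ such that $(B_R(\Gamma), \ker\omega|_{B_R})= (\{r<R\}, \ker\omega)$ is tight. The key geometric object is the characteristic foliation of the disks $\Sigma_{z_0} = \{z = z_0\}\cap\{r<R\}$: on $\Sigma_{z_0}$ we have $\omega|_{\Sigma} = \beta\, d\theta$, so the characteristic foliation $\ker(\omega|_{\Sigma})\cap T\Sigma$ is spanned by $\partial_r$ away from the zeros of $\beta$, i.e. it consists of radial segments, with a singular point at the origin and singular circles exactly at the radii $r$ where $\beta(r)=0$. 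Let $R_1 := \inf\{r>0\mid\beta(r)=0\}$. For the lower bound $r_{\tight}(\Gamma)\geq R_1$: on $\{r<R_1\}$ the angle function describing the twisting of $\xi$ along radial rays is monotone (this is where one uses that $\alpha,\beta$ together with $\gamma = \alpha\partial_r\beta - \beta\partial_r\alpha > 0$ force $\arctan(\beta/\alpha)$, or the relevant lift, to be strictly increasing), and $\beta\neq 0$ there means no singular circle appears; comparison with the standard tight model $(\R^3,\xi_{\mathrm{st}})$ — whose characteristic foliation on $\{z=0\}$ has the same qualitative structure (one singular point, radial leaves, monotone twisting, no singular circle) — yields a contactomorphism of $\{r<R_1\}$ onto an open subset of $(\R^3,\xi_{\mathrm{st}})$, hence tightness. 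For the upper bound $r_{\tight}(\Gamma)\leq R_1$: if $R > R_1$, then the disk $D := \{z = 0, r\leq R_1\}$ has characteristic foliation with a single interior singularity (the origin) and is tangent to $\xi$ along its boundary circle $\{r = R_1\}$ (since $\beta(R_1)=0$ forces $\xi = T\Sigma$ there, the leaves spiralling into the boundary), so $D$ is an overtwisted disk by \cref{def:ot_disk}/\cref{rmk:ot_disk} — one exhibits the model neighbourhood of $D_{\mathrm{ot}}$ by the explicit diffeomorphism reparametrizing radially so that the twisting angle matches $\phi_{\mathrm{ot}}(r) = r^2/2$ near the boundary. Thus $(B_R(\Gamma),\xi)$ is overtwisted for every $R>R_1$, giving $r_{\tight}(\Gamma) = R_1$.

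The main obstacle I expect is the rigorous construction of the contactomorphism for the lower bound and the verification of the overtwisted-disk model structure for the upper bound: both require carefully tracking the monotone twisting angle of $\xi$ and producing explicit normal forms, rather than mere qualitative statements. In particular, one must verify that the monotonicity of the twisting is genuinely equivalent to $\gamma>0$ and handle the behaviour of the frame near the degenerate locus $r=0$. The algebraic verifications in the first two paragraphs are routine but lengthy; the tightness dichotomy is where the real content lies, and it is essentially a reduction to the standard tight and standard overtwisted models via the radial structure.
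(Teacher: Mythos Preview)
Your overall strategy matches the paper's: verify the algebra of the frame, identify the radial curves as the minimizing geodesics from $\Gamma$, then split the tightness computation into an overtwisted-disk upper bound at the first zero of $\beta$ and a contactomorphism-to-$\xi_{\mathrm{st}}$ lower bound. Your length-minimization argument for $E=\mathrm{id}$ (using $\|\dot\gamma\|_g^2\geq \dot r^2$) is in fact more direct than the paper's route, which writes out Hamilton's equations explicitly in the $(r,\theta,z,p_r,p_\theta,p_z)$ coordinates and checks that the curve $t\mapsto(tp_x,tp_y,z,p_x,p_y,0)$ solves them; both work, yours is shorter.

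The one place where your plan is genuinely thin is the lower bound $r_{\tight}(\Gamma)\geq R_1$, and the paper supplies exactly the mechanism you say you are missing. Rather than appealing to an abstract ``comparison with the standard tight model'', the paper rescales: on $\{r<R_1\}$ the function $\beta/r^2$ is smooth and nonvanishing, so set $\bar\omega:=\tfrac{r^2}{2\beta}\,\omega=\bar\alpha\,dz+\tfrac{r^2}{2}\,d\theta$. The Reeb field $\bar f_0$ of $\bar\omega$ satisfies $dz(\bar f_0)=(\bar\gamma/r)^{-1}\neq 0$ everywhere, hence has no periodic orbits and is complete on each cylinder $C_R$. One then defines $\varphi(x,y,z):=e^{z\bar f_0}(x,y,0)$ and checks directly (using that the Reeb flow preserves $\bar\omega$) that $\varphi^*\bar\omega=dz+\tfrac{r^2}{2}d\theta=\omega_{\mathrm{st}}$; injectivity follows from $dz(\bar f_0)>0$ and surjectivity from the foliation of the cylinders by $\bar f_0$-orbits. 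This is the explicit contactomorphism you anticipated needing; your ``monotone twisting angle'' intuition is what motivates the rescaling, but the actual diffeomorphism is built from the Reeb dynamics of the rescaled form rather than from a radial reparametrization alone.
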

	\begin{proof}
	Since $\alpha,\beta \in C^\infty(\R^3)$ are smooth radial functions, the maps 
	\begin{equation}
	\R \ni r \mapsto \alpha(r\cos\theta,r\sin\theta,z),\qquad \beta(r\cos\theta,r\sin\theta,z),
	\end{equation}
	are smooth as functions of one variable and furthermore $\partial_r^{2j+1} \alpha(0,z)=\partial_r^{2j+1} \beta(0,z)= 0$ for all $j \in \N$ and $z \in \R$, where here and in the following we employ the shorthand $(0,z)=(0,0,z) \in \R^3$.
		The fact that $\omega$ is a smooth contact form poses some additional constraints. First, notice that
		\begin{equation}
			\omega=\alpha dz+\frac{\beta}{r^2}(xdy-ydx).
		\end{equation}
		Therefore $\beta(0,z) =0$ and $\beta/r^2$ must be a smooth function. Furthermore, $\omega$ is a contact form and thus it cannot vanish: from $\omega|_{r=0}=\alpha(0,z)dz$, we deduce that the radial function $\alpha$ satisfies $\alpha_{0}:=\alpha(0,z)=\alpha(0,0)\neq 0$. Additionally by the contact condition
		\begin{equation}\label{eq:gamma/r_pos}
		\omega\wedge d\omega=\frac{\gamma}{r} dx\wedge dy\wedge dz \qquad \Rightarrow \qquad \frac{\gamma}{r} \text{ is smooth and never vanishing}.
		\end{equation}
		Combining \eqref{eq:gamma/r_pos} with the fact that $\beta/r^2$ is smooth we deduce
		\begin{equation}\label{eq:limgamma/r}
			\lim_{r\to 0}\frac{\gamma}{r}=\lim_{r\to 0}\frac{\alpha\partial_r \beta -\beta\partial_r\alpha}{r}=\lim_{r\to 0}\frac{\alpha\partial_r \beta}{r}=\alpha_0 \lim_{r\to 0}\frac{\partial_r \beta}{r} \neq 0.
		\end{equation}
		One can check that the vector fields $N,JN$ of \eqref{eq:explicit_normal_frame} are orthonormal for $g$. Moreover
		\begin{equation}
			\omega(f_0)=1,\qquad d\omega(f_0,\cdot)=0,\qquad d\omega(N,JN)=1.
		\end{equation}
		Thus $g$ is a sub-Riemannian metric, $\omega$ is normalized, and $f_0$ is the Reeb field of $\omega$.
		
		Using \eqref{eq:gamma/r_pos} and \eqref{eq:limgamma/r} we obtain that
		\begin{align}
			\lim_{r\to 0}f_0 & =\lim_{r\to 0}\frac{\partial_r\beta /r}{\gamma/r}\partial_z-\lim_{r\to 0}\frac{\partial_r\alpha /r}{\gamma/r}\partial_\theta= \frac{1}{\alpha_0}\partial_z.
		\end{align}
		In particular $\Gamma=\{(0,0,z) \mid z \in \R \}$ is the orbit of $f_0$ passing through the origin.
		
		The Hamiltonian functions of the vector fields \eqref{eq:explicit_normal_frame} are   
		\begin{equation}
			h_{N}=p_r,\qquad h_{JN}=\frac{\alpha}{\gamma}p_\theta-\frac{\beta}{\gamma}p_z,\qquad h_0=\frac{\partial_r\beta}{\gamma}p_z-\frac{\partial_r\alpha}{\gamma}p_\theta,
		\end{equation}
		where the $p$'s are the canonical momenta associated to $(r,\theta, z)$.
		Observe that 
		\begin{equation}
			A\Gamma=\{\lambda\in T^* \mathbb R^3 \mid \pi(\lambda)\in \Gamma,\,\,p_z(\lambda)=0\}=\{\lambda\in T^* \mathbb R^3 \mid \lambda=\left(p_x dx+p_y dy\right)|_{(0,0,z)}\}.
		\end{equation}
		Therefore, the sub-Riemannian Hamiltonian satisfies
		\begin{equation}
			2H=p_r^2+h_{JN}^2=p_r^2+\left(\frac{\alpha}{\gamma}p_\theta-\frac{\beta}{\gamma}p_z\right)^2.
		\end{equation}
		Hamilton's equation in coordinates are then 
		\begin{equation}
			\begin{cases}
				\dot p_r=-h_{JN}\partial_r h_{JN},\\
				\dot p_\theta=0,\\
				\dot p_z=0,
			\end{cases}
			\quad
			\begin{cases}
				\dot r=p_r,\\
				\dot \theta=h_{JN}\partial_{p_\theta}h_{JN},\\
				\dot z=h_{JN}\partial_{p_z}h_{JN}.
			\end{cases}
		\end{equation}
		Since $h_{JN}|_{p_{\theta},p_{z}=0}=0$, then for each $p_x,p_y,z\in \mathbb R$ the curve 
		\begin{equation}
				\R \ni t\mapsto 
				(tp_{x},tp_{y},z, p_x,p_y,0) \in T^*\R^3,
		\end{equation}
		is an integral curve of $\vec{H}$ with initial condition $(0,0,z,p_x,p_y,0)\in A\Gamma$. Recall that $E A\Gamma \to \R^3$ is given by $E = \pi\circ e^{\vec{H}}|_{A\Gamma}$. Therefore in cylindrical coordinates on $A\Gamma$ (see \cref{sec:cylindcoords}) we have
		\begin{equation}
		E(z,r\cos\theta,r\sin\theta) = (r \cos\theta,r \sin \theta, z),
		\end{equation}
		as claimed.	It follows that $r_{\inj}(\Gamma)=+\infty$. 
		
		To conclude, we compute the tightness radius. For $R>0$ we denote with $C_{R}=\{(x,y,x)\in\mathbb R^3 \mid x^2+y^2<R^2\}$ the sub-Riemannian cylinder around $\Gamma$ of radius $R$. Let us denote 
		\begin{equation}
			R_0 = \inf\{r>0 \mid \beta(r)=0\}.
		\end{equation}
		If $R>R_0$ then, according to \cref{def:ot_disk}, for each $z\in\mathbb R$ the disk 
		\begin{equation}
			D_{0}=\{(r \cos\theta, r\sin\theta, z)\in\mathbb R^3 \mid r \leq R_0,\,\,\theta\in [0,2\pi)\},
		\end{equation}
		is diffeomorphic to the standard overtwisted disk, thus $r_{\tight}(\Gamma)\leq R_0$. To prove the opposite inequality, we will build a diffeomorphism $\varphi: C_{R_0} \to C_{R_0}$ such that $\varphi_*\xi_{\mathrm{st}} = \xi$, where $\xi_{\mathrm{st}}$ is the standard contact structure of \cref{ex:standard}, which is tight. 
		
		To motivate the construction, note that for any such diffeomorphism there is a non-vanishing function $f\in C^{\infty}(C_{R_0})$ such that 
		\begin{equation}
			\varphi^* (f\omega)=\omega_{\mathrm{st}},\qquad \text{on $C_{R_0}$},
		\end{equation}
		where $\omega_{\mathrm{st}}=dz+\tfrac{r^2}{2} d\theta$. In particular, $\varphi$ maps orbits of the Reeb field of $\omega_{\mathrm{st}}$ to orbits of the Reeb field of $\bar\omega :=f \omega$. Since the Reeb field of $\omega_{\mathrm{st}}$, which is $\partial_z$, does not have periodic orbits, we select $f$ in such a way that the Reeb field $\bar{f}_0$ of $\bar\omega$ has none either, and use it to build the claimed diffeomorphism.

		We now proceed with the construction of $\varphi$. By definition of $R_0$, the smooth function $\beta/r^2$ is non-vanishing for $r\in[0,R_0)$, thus we set 
		\begin{equation}
		\bar\omega := \left(\frac{r^2}{2\beta}\right)\omega = \bar{\alpha} dz + \bar{\beta} d\theta, \qquad \text{where} \qquad \bar{\alpha} := \frac{r^2}{2} \frac{\alpha}{\beta},\quad \bar\beta := \frac{r^2}{2}.
		\end{equation}
		For the contact form $\bar\omega$ all previous computation holds, replacing $\alpha,\beta$ and $\gamma$ with their barred counterparts, and in particular from \eqref{eq:explicit_normal_frame} the Reeb field is
		\begin{equation}
			\bar{f}_0=\frac{\partial_r\bar\beta}{\bar\gamma}\partial_z-\frac{\partial_r\bar\alpha}{\bar\gamma}\partial_\theta.
		\end{equation}
		In particular, using \eqref{eq:gamma/r_pos}, we observe that
		\begin{equation}
			dz(\bar{f}_0)=\frac{\partial_r\bar\beta}{\bar\gamma}=\left(\frac{\bar\gamma}{r}\right)^{-1} \qquad \text{is non-vanishing},
		\end{equation}
		so that $\bar{f}_0$ has no periodic orbits. 
		Notice that $\bar f_0$ is a vector field tangent to the cylinders $C_R$ and its components are radial functions, hence it is complete.	We claim that the map $\varphi: C_{R_0} \to C_{R_0}$ 
		\begin{equation}\label{ot_embedding}
				\varphi(x,y,z):= e^{z\bar f_o}(x,y,0),
		\end{equation}
		is a diffeomorphism such that $\varphi^*\bar{\omega} = \omega_{\mathrm{st}}$. We prove that latter property first, in fact:
		\begin{align}
				\varphi^*\bar\omega&=\bar\omega(\varphi_*\partial_r)dr+\bar\omega(\varphi_*\partial_\theta)d\theta+\bar\omega(\varphi_*\partial_z)dz\\
				&=\bar\omega(e^{z\bar f_0}_*\partial_r)dr+\bar\omega(e^{z\bar f_0}_*\partial_\theta)d\theta+\bar\omega(\bar f_0)dz\\
				&=\bar\omega(\partial_r)dr+\bar\omega(\partial_\theta)d\theta+dz=dz+\frac{r^2}{2}d\theta=\omega_{\mathrm{st}}.
		\end{align}
		In particular $\varphi$ is an immersion, since it sends a contact form to a contact form. Moreover $\varphi$ is injective: since $dz(\bar f_0)>0$, then the equality $\varphi(x,y,z)=\varphi(x',y',z')$ implies $z=z'$, which in turns implies $x=x'$, $y=y'$. The map is also surjective since the integral curves of $\bar f_0$ foliate the cylinders $C_R$, $R\in(0,R_0)$. This concludes the proof of the claim.
	\end{proof}

	As a corollary of \cref{thm:alpha_beta} we compute the tightness radius for \cref{ex:st_sr,ex:st_ot_sr}.
	\begin{corollary}\label{cor:model_radii_st}
		The curve $\Gamma=\{(0,0,z) \mid z\in\mathbb R\}$ is an orbit of the Reeb field for the standard sub-Riemannian contact structure of \cref{ex:st_sr}. In cylindrical coordinates:
  \begin{equation}
E^*\omega_{\mathrm{st}}=dz+\frac{r^2}{2}d\theta.
  \end{equation}
It holds:
		\begin{equation}
				r_{\inj}(\Gamma)=+\infty,\qquad r_{\tight}(\Gamma)=+\infty.
		\end{equation}
%
	\end{corollary}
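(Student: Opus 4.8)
The plan is to recognize the standard sub-Riemannian contact structure of \cref{ex:st_sr} as a particular instance of the family of structures treated in \cref{thm:alpha_beta}, and then to read off both radii directly from that theorem. In cylindrical coordinates $\omega_{\mathrm{st}} = dz + \tfrac{r^2}{2}\,d\theta$, so the relevant radial functions are $\alpha \equiv 1$ and $\beta = \tfrac{r^2}{2}$, which are manifestly smooth and radial in the sense required by \cref{thm:alpha_beta}.

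The first step is to verify that \cref{thm:alpha_beta}, applied with this choice of $\alpha,\beta$, produces exactly $(\R^3, \omega_{\mathrm{st}}, g_{\mathrm{st}})$. I would compute $\gamma = \alpha\,\partial_r\beta - \beta\,\partial_r\alpha = r$, check that $\gamma/r \equiv 1$ is smooth and nowhere vanishing (so the contact hypothesis of \cref{thm:alpha_beta} holds), and use \eqref{eq:explicit_normal_frame} to obtain the oriented orthonormal frame $N = \partial_r$, $JN = \tfrac{1}{r}\partial_\theta - \tfrac{r}{2}\partial_z$, $f_0 = \partial_z$. Rewriting the frame $f_1 = \partial_x + \tfrac{y}{2}\partial_z$, $f_2 = \partial_y - \tfrac{x}{2}\partial_z$ of \cref{ex:st_sr} in cylindrical coordinates, one finds $N = \cos\theta\, f_1 + \sin\theta\, f_2$ and $JN = -\sin\theta\, f_1 + \cos\theta\, f_2$; since $(N, JN)$ is then a pointwise rotation of $(f_1, f_2)$, the metric constructed in \cref{thm:alpha_beta} agrees with $g_{\mathrm{st}}$. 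The identity $f_0 = \partial_z$ also confirms that $\Gamma = \{(0,0,z)\}$ is the Reeb orbit through the origin, as stated.

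With this identification in place, \cref{thm:alpha_beta} applies verbatim and gives $r_{\inj}(\Gamma) = +\infty$ together with $r_{\tight}(\Gamma) = \inf\{r > 0 \mid \beta(r) = 0\}$. The final step is simply to observe that $\beta(r) = \tfrac{r^2}{2} > 0$ for every $r > 0$, so this set is empty and its infimum is $+\infty$; hence $r_{\tight}(\Gamma) = +\infty$. The only point that requires any care is the coordinate change in the first step needed to match the two presentations of the sub-Riemannian metric, and even that is entirely routine — there is no substantive obstacle here, the content of the corollary being carried entirely by \cref{thm:alpha_beta}.
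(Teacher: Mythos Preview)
Your proposal is correct and follows exactly the approach intended by the paper: the corollary is stated immediately after \cref{thm:alpha_beta} as a direct application with $\alpha\equiv 1$, $\beta=r^2/2$, and the paper does not even spell out a proof. Your additional verification that the metric produced by \cref{thm:alpha_beta} coincides with $g_{\mathrm{st}}$ via the rotation $(N,JN)=(\cos\theta\,f_1+\sin\theta\,f_2,\,-\sin\theta\,f_1+\cos\theta\,f_2)$ is correct and more explicit than what the paper records.
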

	\begin{corollary}\label{cor:model_radii_ot}
		The curve $\Gamma=\{(0,0,z) \mid z\in\mathbb R\}$ is an orbit of the Reeb field for the standard sub-Riemannian overtwisted structure of \cref{ex:st_ot_sr}.  In cylindrical coordinates: 
		\begin{equation}
       E^*\omega_{\mathrm{ot}}=\cos\left(\frac{r^2}{2}\right)dz+\sin\left(\frac{r^2}{2}\right)d\theta.
  \end{equation}
	It holds:
		\begin{equation}
				r_{\inj}(\Gamma)=+\infty,\qquad r_{\tight}(\Gamma)=\sqrt{2\pi}.
		\end{equation}
	\end{corollary}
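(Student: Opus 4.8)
The plan is to apply \cref{thm:alpha_beta} with the explicit radial functions $\alpha:=\cos(r^2/2)$ and $\beta:=\sin(r^2/2)$. These are smooth on $\R^3$ since $r^2=x^2+y^2$ is a polynomial, and $\omega_{\mathrm{ot}}=\alpha\,dz+\beta\,d\theta$ is a contact form by \cref{ex:standard_ot}, so the hypotheses of \cref{thm:alpha_beta} are met. First I would record the elementary computations $\partial_r\alpha=-r\sin(r^2/2)$, $\partial_r\beta=r\cos(r^2/2)$, whence the Pfaffian is $\gamma=\alpha\partial_r\beta-\beta\partial_r\alpha=r\bigl(\cos^2(r^2/2)+\sin^2(r^2/2)\bigr)=r$; in particular $\gamma/r\equiv 1$ is smooth and never vanishing, $\alpha_0=\cos 0=1\neq 0$, and $\beta/r^2=\sin(r^2/2)/r^2$ is smooth near $r=0$, consistently with the intermediate constraints in the proof of \cref{thm:alpha_beta}.

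Next I would verify that the sub-Riemannian metric produced by \cref{thm:alpha_beta} is exactly $g_{\mathrm{ot}}$ of \cref{ex:st_ot_sr}. Since $\alpha^2+\beta^2=1$ and $\gamma=r$, the recipe for $g$ becomes $g=\bigl(dr\otimes dr+r^2(d\theta\otimes d\theta+dz\otimes dz)\bigr)|_{\ker\omega_{\mathrm{ot}}}$, and the orthonormal oriented frame \eqref{eq:explicit_normal_frame} reduces to $N=\partial_r$, $JN=\tfrac1r\cos(r^2/2)\partial_\theta-\tfrac1r\sin(r^2/2)\partial_z$, and $f_0=\cos(r^2/2)\partial_z+\sin(r^2/2)\partial_\theta$. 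A short check (for instance that $f_1,f_2$ of \cref{ex:st_ot_sr} have unit $g$-norm, are $g$-orthogonal, and lie in $\ker\omega_{\mathrm{ot}}$) shows $N=f_1$, $JN=f_2$, and $f_0$ matches \eqref{eq:ot-Reeb}, so the two contact sub-Riemannian structures coincide and $\Gamma=\{(0,0,z)\mid z\in\R\}$ is the Reeb orbit through the origin. Then \cref{thm:alpha_beta} directly gives $r_{\inj}(\Gamma)=+\infty$, that $E$ is the identity in the cylindrical coordinates of \cref{sec:cylindcoords}, and that $\delta(r\cos\theta,r\sin\theta,z)=r$.

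Finally, for the tightness radius, \cref{thm:alpha_beta} yields $r_{\tight}(\Gamma)=\inf\{r>0\mid \beta(r)=0\}=\inf\{r>0\mid \sin(r^2/2)=0\}$. The zero set of $r\mapsto\sin(r^2/2)$ on $(0,\infty)$ is $\{\sqrt{2k\pi}\mid k\in\N,\ k\geq 1\}$, the value $k=0$ being excluded by $r>0$, and its infimum is $\sqrt{2\pi}$; hence $r_{\tight}(\Gamma)=\sqrt{2\pi}$. There is no genuine obstacle here: the only point requiring a little care is the identification carried out in the second paragraph, namely checking that the globally smooth metric constructed by \cref{thm:alpha_beta} reproduces precisely the frame of \cref{ex:st_ot_sr}, which is a priori only defined off the $z$-axis.
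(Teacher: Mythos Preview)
Your proposal is correct and follows the paper's approach exactly: the paper states the result as an immediate corollary of \cref{thm:alpha_beta} applied with $\alpha=\cos(r^2/2)$, $\beta=\sin(r^2/2)$, and your argument simply fills in the elementary verifications (that $\gamma=r$, that the resulting metric and frame coincide with those of \cref{ex:st_ot_sr}, and that the first positive zero of $\beta$ is $\sqrt{2\pi}$). The extra care you take in matching the metric $g$ produced by \cref{thm:alpha_beta} with $g_{\mathrm{ot}}$ is appropriate and not spelled out in the paper.
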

	
\subsection{Tightness radius and singular locus}\label{sec:tightandsingular}

	The tightness radius can be estimated studying the characteristic foliation of the fibers of the annihilator bundle. 
	\begin{defi}[Singular locus of Reeb orbits]\label{def:singlocus}
	Let $(M,\omega, g)$ be a three-di\-men\-sional contact sub-Riemannian manifold and let $\Gamma$ be an embedded piece of Reeb orbit with $\rho := r_{\inj}(\Gamma)>0$. Consider, for every $q\in \Gamma$, the embedded surfaces
	\begin{equation}
	\Sigma_q := \{E(\lambda) \mid \lambda \in A^{<\rho}\Gamma,\,\, \pi(\lambda)=q,\,\,\lambda \neq 0\}.
	\end{equation}
	The singular locus of $\Gamma$ is the union, for $q \in \Gamma$, of the singularities of the characteristic foliations of $\Sigma_q$ in the contact manifold $(M,\xi=\ker\omega)$, namely
	\begin{equation}\label{eq:singular_locus}
	\mathrm{Sing}(\Gamma) := \bigcup_{q \in \Gamma} \Big\{ p \in \Sigma_q \,\Big|\, T_{p} \Sigma_q = \xi_p\Big\}.
	\end{equation}
	\end{defi}
	\begin{theorem}\label{thm:singlocusbound}
		Let $(M,\omega, g)$ be a three-di\-men\-sional contact sub-Riemannian manifold and let $\Gamma$ be an embedded piece of Reeb orbit with $r_{\inj}(\Gamma)>0$. Then $\mathrm{Sing}(\Gamma)\subset M$ is an embedded surface and 
		\begin{equation}\label{eq:tight>sing}
			r_{\tight}(\Gamma)\geq \min\{d(\Gamma, \mathrm{Sing}(\Gamma)),r_{\inj}(\Gamma)\},
		\end{equation}
	with the understanding that, if $\mathrm{Sing}(\Gamma)=\emptyset$ then $r_{\tight}(\Gamma) = r_{\inj}(\Gamma)$.
	
		The equality holds in \eqref{eq:tight>sing} for the models of \cref{thm:alpha_beta}, and in particular of the standard sub-Riemannian overtwisted structure of \cref{ex:st_ot_sr}.
	\end{theorem}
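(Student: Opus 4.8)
The plan is to split the statement into three parts: (i) $\mathrm{Sing}(\Gamma)$ is an embedded surface; (ii) the lower bound \eqref{eq:tight>sing}; (iii) equality for the models of \cref{thm:alpha_beta}.

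For (i), I would work in the cylindrical coordinates $(z,r,\theta)$ on $A\Gamma$ provided by \cref{prop:trivial}, transported to $M$ via the diffeomorphism $E$ on $A^{<\rho}\Gamma$. In these coordinates the surface $\Sigma_q$ is $\{z = z_q\}$, and its characteristic foliation is the line field $\xi \cap T\Sigma_q$. A point is singular precisely when $\xi = T\Sigma_q$ there, i.e.\ when the pulled-back contact form $E^*\omega$, restricted to $\partial_r$ and $\partial_\theta$, vanishes; equivalently $(E^*\omega)(\partial_z) \neq 0$ while $(E^*\omega)(\partial_r) = (E^*\omega)(\partial_\theta) = 0$. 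Writing $E^*\omega = a\,dz + b\,dr + c\,d\theta$ for smooth functions $a,b,c$, the singular locus is the common zero set $\{b = c = 0\}$ inside the region $\{r>0, r<\rho\}$ (note $a$ is automatically nonzero there, since $\omega$ is a contact form and $\xi$ is transverse to the Reeb direction along $\Gamma$-translates). The key point is that $\{b=c=0\}$ is cut out transversally: along $\Sigma_q$, $d\omega|_\xi$ is symplectic, and this forces the differential of $(b,c)$ in the $(r,\theta)$ directions to have full rank at a singular point — this is exactly the statement that a singularity of a characteristic foliation on a surface is nondegenerate (an elliptic or hyperbolic singularity), which is standard in contact topology. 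Hence $\{b=c=0\}$ is a 1-parameter family (parametrized by $z$) of nondegenerate points, giving an embedded curve in each $\Sigma_q$, and sweeping over $z \in I$ gives an embedded surface $\mathrm{Sing}(\Gamma) \subset M$.

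For (ii), the strategy mirrors \cref{thm:alpha_beta}: set $r_1 := \min\{d(\Gamma, \mathrm{Sing}(\Gamma)), r_{\inj}(\Gamma)\}$ and show $(A^{<r_1}\Gamma, \ker E^*\omega)$ is tight, which by definition gives $r_{\tight}(\Gamma) \ge r_1$. The obstruction to tightness would be an overtwisted disk, and by \cref{rmk:ot_disk} detecting one amounts to finding an embedded disk whose characteristic foliation matches that of the standard overtwisted disk — in particular such a disk's foliation must have a closed leaf (the boundary circle) encircling an interior singularity, along which the distribution has completed a half-turn relative to the disk. On $A^{<r_1}\Gamma$ the radius function $\delta\circ E = \sqrt{2H}$ is strictly less than $d(\Gamma, \mathrm{Sing}(\Gamma))$ throughout, so no $\Sigma_q \cap A^{<r_1}\Gamma$ contains a singular point of its characteristic foliation; thus each such $\Sigma_q$ is a disk foliated without singularities in its (punctured) interior, with the origin $q$ as the unique singularity. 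This means the angle function measuring the twist of $\xi$ relative to $\Sigma_q$ along radial leaves is monotone (by the contact condition, the twisting is monotone along horizontal curves, and radial geodesics are horizontal) but does not complete a full half-turn before radius $r_1$. I would then argue that this local normal form — one elliptic singularity and a monotone, incomplete twist — makes $(A^{<r_1}\Gamma, \ker E^*\omega)$ contactomorphic to a neighborhood inside the standard contact $\R^3$, exactly as in the explicit diffeomorphism constructed at the end of the proof of \cref{thm:alpha_beta}: one normalizes $E^*\omega$ fiberwise, arranges a Reeb field with no periodic orbit (possible since the twist does not close up), and flows the disk $\Sigma_q$ along it to produce the contactomorphism with $(\, \cdot\,, \omega_{\mathrm{st}})$. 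Tightness of $\xi_{\mathrm{st}}$ (Bennequin) then finishes.

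For (iii), in the models of \cref{thm:alpha_beta} everything is explicit: $E$ is the identity, $\Sigma_q = \{z = z_q\}$, $E^*\omega = \omega = \alpha\,dz + \beta\,d\theta$, so a point $(r\cos\theta, r\sin\theta, z)$ is singular iff $\beta(r) = 0$. Hence $d(\Gamma, \mathrm{Sing}(\Gamma)) = \inf\{r>0 \mid \beta(r) = 0\}$, which \cref{thm:alpha_beta} identifies with $r_{\tight}(\Gamma)$, and since $r_{\inj}(\Gamma) = +\infty$ there, equality in \eqref{eq:tight>sing} holds; specializing $\alpha = \cos(r^2/2)$, $\beta = \sin(r^2/2)$ recovers the standard overtwisted structure of \cref{ex:st_ot_sr}, where the infimum is $\sqrt{2\pi}$.

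I expect the main obstacle to be part (ii): turning the geometric picture (no singularities in the punctured fibers, monotone incomplete twist) into an actual contactomorphism with a tight model, uniformly over all of $A^{<r_1}\Gamma$ at once rather than fiber-by-fiber. The explicit flow construction of \cref{thm:alpha_beta} exploited the radial symmetry of the models; in the general case one must instead produce a suitable auxiliary non-periodic vector field (a rescaled Reeb field for a well-chosen multiple of $E^*\omega$) transverse to the fibers $\Sigma_q$, check it is complete on $A^{<r_1}\Gamma$, and verify its flow carries $\omega_{\mathrm{st}}$ to a positive multiple of $E^*\omega$ — the completeness and the global matching of the twist parameter across different $z$ are the delicate points.
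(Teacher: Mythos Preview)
Your overall structure is right, but there is a genuine gap in part (i) and a missed simplification that propagates into (ii).

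\textbf{The missed observation.} You write $E^*\omega = a\,dz + b\,dr + c\,d\theta$ and treat $b$ as an unknown. In fact $b\equiv 0$: the radial direction $E_*\partial_r = \pi_*\vec H$ is the velocity of a sub-Riemannian geodesic, hence horizontal, so $\omega(E_*\partial_r)=0$. This is the key structural fact. With it, the paper writes
\[
E^*\omega = N(\cos\phi\,dz + \sin\phi\,d\theta),\qquad N>0,
\]
for a smooth angle function $\phi$, and the contact condition $E^*(\omega\wedge d\omega)>0$ becomes simply $\partial_r\phi>0$. The singular locus is then $\phi^{-1}(\pi\mathbb N)$, a union of regular level sets of $\phi$, hence an embedded surface. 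Your alternative route --- asserting that $d(b,c)$ has full rank in $(r,\theta)$ because ``singularities of characteristic foliations are nondegenerate'' --- is not valid: the contact condition at a singular point only forces $\partial_r c - \partial_\theta b \neq 0$, not that the Jacobian of $(b,c)$ is invertible. Degenerate characteristic singularities do occur in general. The argument is rescued precisely by $b\equiv 0$, which you do not use.

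\textbf{Part (ii).} The angle function $\phi$ also streamlines (ii). Rather than rescaling to get a Reeb field without periodic orbits and flowing (as in \cref{thm:alpha_beta}), the paper writes down an explicit contact embedding into the standard \emph{overtwisted} model:
\[
\varphi(x,y,z) = \left(\sqrt{\tfrac{\phi}{H}}\,x,\ \sqrt{\tfrac{\phi}{H}}\,y,\ z\right),\qquad \varphi^*\omega_{\mathrm{ot}} = \tfrac{1}{N}E^*\omega.
\]
Since $\phi<\pi$ on $A^{<R}\Gamma$ with $R=\min\{d(\Gamma,\mathrm{Sing}(\Gamma)),r_{\inj}(\Gamma)\}$, the image lies in the cylinder $\{x^2+y^2<2\pi\}$, which is tight by \cref{cor:model_radii_ot}. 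This sidesteps entirely the completeness and global-matching issues you flag. Your flow approach could be made to work, but the explicit formula is both shorter and immune to those difficulties.

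\textbf{One more gap.} When $\Gamma$ is periodic, $\varphi$ cannot be an embedding into $\mathbb R^3$ as written (the $z$-coordinate lives on a circle). The paper handles this by passing to the universal cover of the tubular neighbourhood, applying the non-periodic case there, and descending via \cref{rmk:univ_tight}. Your proposal does not address this.

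Part (iii) is fine.
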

	\begin{proof}
Let $\rho = r_{\inj}(\Gamma)$, and recall that $E:A^{<\rho}\Gamma \to M$ is a diffeomorphism onto its image. On $A\Gamma$ we fix a set of cylindrical coordinates $(r\cos\theta,r\sin\theta,z)$ as defined in \cref{sec:cylindcoords}.

Since $E_*\partial_r = \pi_* \vec{H}$ is horizontal, there exist $\phi, N\in C^{\infty}(A^{<\rho}\Gamma)$, with $N>0$, such that
		\begin{equation}\label{eq:pb_form}
			E^* \omega=N(\cos\phi dz+\sin\phi d\theta).
		\end{equation}
		Combining \cref{thm:exp_map} and \cref{prop:trivial} it holds
		\begin{equation}\label{eq:delta=r}
		\delta(E(r\cos\theta,r\sin\theta,z)) = r, \qquad \forall\, r<\rho.
		\end{equation}
		Furthermore, if $q=E(0,0,z)$, then in these coordinates
		\begin{equation}
		E^{-1}(\Sigma_q) = \{ (r\cos\theta,r\sin\theta,z) \mid r \in (0,\rho),\,\,\theta \in [0,2\pi)\}.
		\end{equation}
		Hence, singularities of the characteristic foliation are characterized by $E^*\omega \propto dz$, and
		\begin{equation}\label{eq:sing=level}
		E^{-1}(\mathrm{Sing}(\Gamma)) = \phi^{-1}(\pi \N).
		\end{equation}
		Computing the contact condition $E^* (\omega\wedge d\omega)>0$ in cylindrical coordinates yields
		\begin{equation}
			E^* (\omega\wedge d\omega)=\frac{N^2\partial_r\phi}{r} r dr\wedge d\theta \wedge dz.
		\end{equation}
		It follows that $f:= \partial_r \phi/r$ is smooth and positive on $A^{<\rho} \Gamma$. In particular $\phi:A^{<\rho}\Gamma\to\mathbb R$ is monotone increasing along radial lines $r \mapsto (r\cos\theta,r\sin\theta,z)$. Therefore, from equality \eqref{eq:sing=level}, we deduce that $\mathrm{Sing}(\Gamma)$ is an embedded surface. Moreover, by \eqref{eq:delta=r}, we deduce that
		\begin{equation}\label{eq:dist_gamma_sing}
			d(\Gamma,\mathrm{Sing}(\Gamma))=\inf\{r>0 \mid (r\cos\theta,r\sin\theta,z) \in A^{<\rho}\Gamma,\,\,\phi(r\cos\theta,r\sin\theta,z)=\pi\}.
		\end{equation}

		To conclude, observe that 
				\begin{equation}
				\frac{\phi}{H}=\frac{2\phi}{r^2}=\frac{2}{r^2}\int_{0}^r sf(s\cos\theta,s\sin\theta,z)\,\mathrm{d}s
				=2\int_{0}^1 sf(rs\cos\theta,rs\sin\theta,z)\,\mathrm{d}s.
		\end{equation}
It follows that the function $\phi/H$ is well-defined on $A^{<\rho}\Gamma$, smooth and positive. Using this fact, define the smooth function $\varphi: A^{<\rho}\Gamma \to \R^3$
		\begin{equation}\label{eq:contact_embedding}
				\varphi(x,y,z)= \left( \sqrt{\frac{\phi}{H}}x, \sqrt{\frac{\phi}{H}}y,z\right).
		\end{equation}
		By the monotonicity of $\phi$ we have that $\varphi$ is injective. Furthermore, letting $\omega_{\mathrm{ot}}=\cos(r^2/2)dz+\sin(r^2/2)d\theta$ be the standard contact overtwisted form of \cref{ex:standard_ot}, we have
		\begin{equation}\label{eq:almost_isometry}
			\varphi^*\omega_{\mathrm{ot}}=\cos \phi dz+\sin\phi d\theta=\frac{E^*\omega}{N}. 
		\end{equation}
		It follows that $\varphi$ is an immersion, and hence a diffeomorphism onto its image. Denoting $R:=\min\{d(\Gamma, \mathrm{Sing}(\Gamma)),\rho\}$, by equation \eqref{eq:dist_gamma_sing}, monotonicity of the angle $\phi$ and the second equality of \eqref{eq:almost_isometry}, the image of the restricted map 
		\begin{equation}
			\varphi:A^{<R}\Gamma \to \mathbb R^3
		\end{equation}
		is contained in the cylinder $C_{\sqrt{2\pi}}=\{(x,y,z)\in\mathbb R^3 \mid x^2+y^2<2\pi\}\subset\mathbb R^3$, which, by \cref{cor:model_radii_ot} is a tight contact manifold when endowed with the restriction of $\ker\omega_{\mathrm{ot}}$. Thus in view of \eqref{eq:almost_isometry} the contact manifold $(A^{<R}\Gamma, \ker E^{*}\omega)$ contains no overtwisted disks. The inequality \eqref{eq:tight>sing} is proven in the case in which $\Gamma$ is not a periodic orbit.
		
If $\Gamma$ is periodic, let us denote with $N:=E(A^{<\rho}\Gamma)$ its tubular neighbourhood and consider the universal cover, $\tilde N$. The latter inherits the natural structure of a contact sub-Riemannian manifold locally isometric to $(A^{<\rho}\Gamma, E^*\omega, E^*g)$. According to \cref{prop:trivial}, $N\simeq\Gamma\times B_{\rho}$, therefore $\tilde N=\tilde\Gamma\times B_\rho\simeq \mathbb R\times B_{\rho}$, where $\tilde\Gamma$ is the unique lift of $\Gamma$ going through the point $(0,0,0)$, i.e. $\tilde\Gamma=\{(z,0,0)\in\tilde N\mid z\in\mathbb R\}$. Since $\tilde\Gamma$ is not periodic, the already proved part of the theorem applies, i.e.
  \begin{equation}
      r_{\tight}\left(\tilde\Gamma\right)\geq \tilde d\left(\tilde\Gamma, \mathrm{Sing}\left(\tilde\Gamma\right)\right).
  \end{equation}
  Let $p:\tilde N\to N$ be the canonical projection. Observe that, for any $q\in \tilde\Gamma$, $A^{<\rho}_q \tilde \Gamma$ has neighbourhood $\mathcal U$ such that the map $p: U\to p(U)$ is a sub-Riemannian isometry. By \cref{def:singlocus} of the singular locus, it follows that 
  \begin{equation}
      \tilde d\left(\tilde\Gamma, \mathrm{Sing}\left(\tilde\Gamma\right)\right)=d(\Gamma, \mathrm{Sing}(\Gamma)).
  \end{equation}
  Moreover, \cref{rmk:univ_tight} implies that $r_{\tight}(\Gamma)\geq r_{\tight}\left(\tilde\Gamma\right)$. Therefore
  \begin{equation}
      r_{\tight}(\Gamma)\geq r_{\tight}\left(\tilde\Gamma\right)\geq \tilde d\left(\tilde\Gamma, \mathrm{Sing}\left(\tilde\Gamma\right)\right)=d(\Gamma, \mathrm{Sing}(\Gamma)) ,
\end{equation}
concluding the proof.
\end{proof}

\section{Contact Jacobi curves and quantitative tightness}\label{sec:contactJacobi}

In (sub-)Riemannian geometry and geometric control theory, Jacobi curves are curves in the Lagrange Grassmannian whose dynamics is intertwined with the presence of conjugate points and curvature (see \cite{AG-Feeback,AZ-JacobiI,AZ-JacobiII,CurvVar} and references within). Inspired by that approach, we introduce the concept of \emph{contact} Jacobi curves, whose dynamics is related with the presence of overtwisted disks. As a preparation, in \cref{sec:Schwarzianderivative} we recall the concept of Schwarzian derivative and some comparison results adapted to our setting.

\subsection{The Schwarzian derivative}\label{sec:Schwarzianderivative}

\begin{defi}
	Let $I\subset \mathbb R$ be an open interval and let $\varphi:I\to \mathbb \RP^1$ be a smooth immersion, which in homogeneous coordinates reads
	\begin{equation}
		\varphi(t)=[ \varphi_1(t):  \varphi_2(t)].
	\end{equation}
	The \emph{Schwarzian derivative} of $\varphi$, denoted $\mathcal S(\varphi):I\to\mathbb R$, is defined as 
	\begin{equation}\label{eq:def_schwarz}
		\mathcal S(\varphi ):=\frac{\dddot v}{\dot v}-\frac{3}{2}\left(\frac{\ddot v}{\dot v}\right)^2, \qquad \text{where} \qquad v =\frac{\varphi_1}{\varphi_2},\quad \text{or} \quad \frac{\varphi_2}{\varphi_1}.
	\end{equation}
The Schwarzian derivative is independent of the choice of the homogeneous coordinate $v$. In particular if $A:\mathbb P^1\to \mathbb P^1$ is any projective transformation then $\mathcal S(A\circ \varphi)=\mathcal S(\varphi)$.
\end{defi}

\begin{prop}\label{prop:curve-and-ODE}
Consider the second order linear ODE on an open interval $I \subset \R$
\begin{equation}\label{eq:ODE}
\ddot{u}(t) + q(t) u(t) =0, \tag{$\star$}
\end{equation}
where $q \in C(I)$. There is a one-to-one correspondence between pairs of linearly independent solutions of \eqref{eq:ODE} (up to constant rescaling) and smooth immersions $\varphi: I \to \RP^1$ with Schwarzian derivative $\mathcal{S}(\varphi) = 2q$.
\end{prop}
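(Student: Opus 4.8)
The statement asserts a bijection between pairs of linearly independent solutions of \eqref{eq:ODE} (modulo common scaling) and immersions $f:I\to\RP^1$ with $\mathcal S(f)=2q$. The plan is to construct the two directions of the correspondence explicitly and check they are mutually inverse.

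\emph{From solutions to curves.} Given linearly independent solutions $u_0,u_1$ of $\ddot u+qu=0$, set $f(t):=[u_0(t):u_1(t)]$. First I would argue $f$ is well-defined and an immersion: since $u_0,u_1$ are independent, their Wronskian $W=u_0\dot u_1-u_1\dot u_0$ is a nonzero constant (its derivative vanishes by the ODE), so $(u_0,u_1)$ never vanishes simultaneously, giving a well-defined point of $\RP^1$, and the velocity of $f$ in an affine chart is proportional to $W\neq 0$. Then I would compute $\mathcal S(f)$: working in the chart $v=u_0/u_1$ where $u_1\neq 0$, a direct computation (using $\ddot u_i=-qu_i$) gives $\dot v = -W/u_1^2$, hence $\ddot v/\dot v = -2\dot u_1/u_1$ and differentiating once more and substituting $\ddot u_1=-qu_1$ yields $\mathcal S(f)=2q$. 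Rescaling $(u_0,u_1)\mapsto(cu_0,cu_1)$ leaves $f$ unchanged, so the map descends to scaling classes. Conversely, a change of basis by $A\in GL_2$ replaces $f$ by $A\circ f$, a projective transformation, which changes the curve; so to get a \emph{bijection} (not just surjection) one must mod out only by common scalars on the solution side, as the statement does — this is exactly the right equivalence, and I would make this bookkeeping explicit.

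\emph{From curves to solutions.} Given an immersion $f:I\to\RP^1$, choose a local lift $\tilde f=(g_0,g_1):I\to\R^2\setminus\{0\}$. Its Wronskian $w:=g_0\dot g_1-g_1\dot g_0$ is nonzero since $f$ is an immersion (it is the determinant $\det(\tilde f,\dot{\tilde f})$). Rescale the lift: seek $\mu>0$ with $u_i:=\mu g_i$ having constant Wronskian; since the Wronskian of $\mu g_0,\mu g_1$ is $\mu^2 w$, one takes $\mu=|w|^{-1/2}$ (up to a sign/constant), giving a lift with Wronskian $\equiv\pm 1$. For such a normalized lift, the two components $u_0,u_1$ each satisfy a common second-order ODE $\ddot u+p\dot u+qu=0$ whose coefficients are recovered from $(u_i,\dot u_i)$ by Cramer's rule; the coefficient of $\dot u$ is $p=-\dot W/W=0$ because the Wronskian is constant, so the ODE is of the form \eqref{eq:ODE}, and computing $q$ and comparing with the Schwarzian identity proved in the first part forces $q=\tfrac12\mathcal S(f)$. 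Different choices of lift differ by a global scalar (once the Wronskian normalization is imposed and $I$ is an interval, hence simply connected, so no monodromy obstruction), which is precisely the equivalence quotiented out on the solution side. Finally I would check the two constructions are inverse to each other, which is immediate from the formulas.

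\emph{Main obstacle.} The computations are routine; the subtle point is the \textbf{equivalence bookkeeping} — matching "pairs up to constant rescaling" with the normalization-of-Wronskian freedom in the lift, and making sure that on an interval there is no monodromy so that a global (rather than merely local) normalized lift exists. I would handle this by noting $I$ is connected and the Wronskian is continuous and nonvanishing, hence of constant sign, so $|w|^{-1/2}$ is globally smooth; and a normalized lift is unique up to multiplication by $\pm 1$ (a constant), matching exactly the stated quotient. A secondary point worth stating carefully is orientation: one may need to allow the constant in the scaling to have either sign (or restrict to positive scalars and allow swapping homogeneous coordinates, as the definition of $\mathcal S(f)$ already does) so that the correspondence is genuinely onto all immersions.
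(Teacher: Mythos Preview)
Your proposal is correct and follows essentially the same route as the paper: both directions are built on the Wronskian (constant and nonzero for independent solutions, so the projective curve is well-defined and immersed; conversely, normalize an arbitrary lift by $|w|^{-1/2}$ so the Wronskian becomes $\pm 1$, forcing the components to satisfy an equation of the form \eqref{eq:ODE}). The only cosmetic difference is in the injectivity step: the paper shows directly that if two solution pairs $(u_i)$ and $(\tilde u_i)=a(t)(u_i)$ yield the same curve then plugging into the ODE gives $\dot a\,W(u_1,u_2)=0$, hence $a$ is constant; your version infers this from the Wronskian normalization/$PGL_2$ bookkeeping, which is equivalent.
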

\begin{proof}
The space of solutions of \eqref{eq:ODE} is a vector space of dimension $2$. Choose two linearly independent solutions $u_1,u_2$. Their Wronskian is
\begin{equation}
W(u_1,u_2) := \det\begin{pmatrix}
u_1 & u_2 \\
\dot{u}_1 & \dot{u}_2
\end{pmatrix} = u_1\dot{u}_2 - u_2\dot{u}_1.
\end{equation}
From \eqref{eq:ODE} it follows that $W(u_1,u_2)$ is a non-zero constant. In particular, $u_1,u_2$ cannot vanish simultaneously on $I$. Thus, the smooth curve $\varphi: I \to \RP^1$ given by
\begin{equation}\label{eq:immersionschwarz}
\varphi(t)=[u_1(t): u_2(t)],
\end{equation}
is well-defined. One can check that it is an immersion and that $2q = \mathcal{S}(\varphi)$.

We show that this correspondence is injective. Let $\tilde{u}_1,\tilde{u}_2$ be another choice of linearly independent solutions  of \eqref{eq:ODE} yielding the same curve in $\RP^1$. Then there is $a(t) \neq 0$ such that $\tilde{u}_i(t) = a(t) u_i(t)$ for all $t\in I$ and $i=1,2$. It follows that $a:I\to\R$ is smooth (since solutions cannot vanish simultaneously) and by \eqref{eq:ODE} that
\begin{equation}
\ddot{a}(t)u_i(t)+ 2 \dot{a}(t) \dot{u}_i(t)=0,\qquad i=1,2.
\end{equation}
Multiply the first equation by $u_2(t)$, the second one by $u_1(t)$, and take the difference. We get
\begin{equation}
\dot{a}(t)W(u_1,u_2) =0  \qquad \Longrightarrow \qquad \dot{a}(t) =0,
\end{equation}
so that $a(t)=a$ must be a non-zero constant. Thus $(\tilde{u}_1,\tilde{u}_2) = a(u_1,u_2)$.

We show that the correspondence is surjective. Let $\varphi:I\to \RP^1$ be an immersion, with
\begin{equation}
\varphi(t) = [\varphi_1(t):\varphi_2(t)],
\end{equation}
for some smooth $\varphi_i:I \to \R$. The immersion condition implies that the function
\begin{equation}
\beta : I\to \R, \qquad \beta(t):= \left(\frac{1}{|\dot{\varphi}_1(t) \varphi_2(t) - \dot{\varphi}_2(t)\varphi_1(t)|}\right)^{1/2},
\end{equation}
is well-defined. We note that $u_i:= \beta \varphi_i :I \to \R$ satisfy
\begin{equation}\label{eq:nonzero}
\dot{u}_1 u_2 - \dot{u}_2 u_1 = \pm 1.
\end{equation}
Therefore, the $u_i$ are linearly independent solutions of \eqref{eq:ODE}, for some $q : I \to \R$. By the first part of the proof, since $\varphi(t) = [u_1(t):u_2(t)]$ for $t \in I$, we necessarily have $2q = \mathcal{S}(\varphi)$.
\end{proof}

We will apply the characterization of \cref{prop:curve-and-ODE} to contact Jacobi curves. An important fact is that the contact Jacobi curves \emph{always} have singular Schwarzian derivative at the initial time. The corresponding ODE \eqref{eq:ODE} is singular, and so are their solutions. Suitable assumptions on the singularity are then necessary. More precisely, we assume that $2q = \mathcal{S}(f)$ satisfies
\begin{equation}
q = -\frac{3}{4t^2} + \text{regular part}, \qquad t\in (0,T).
\end{equation}
This assumption is used for the following asymptotic properties of solutions at the singularity.

\begin{lemma}\label{lem:propofsolutions}
Consider the ODE \eqref{eq:ODE}, and assume that
\begin{equation}\label{eq:potentialassumption}
q(t)= -\frac{3}{4t^2} + \tilde{q}(t), \qquad \forall\,t\in (0,T),
\end{equation}
where $\tilde{q} \in C([0,T))$. Then \eqref{eq:ODE} has linearly independent solutions $g_1,g_2 : (0,T)\to \R$ such that
\begin{align}
g_1(t) & = t^{3/2} \tilde{g}_1(t), \\
g_2(t) & = t^{-1/2} \tilde{g}_2(t),
\end{align}
where $\tilde{g}_j \in C^0([0,T))$, $\tilde{g}_j(0)=1$ for $j=1,2$. 
\end{lemma}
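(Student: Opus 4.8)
The plan is to treat \eqref{eq:ODE} as a perturbation of the Euler equation $\ddot u - \tfrac{3}{4t^2}u = 0$, whose explicit solution basis is $t^{3/2}$ and $t^{-1/2}$ (the indicial roots being $3/2$ and $-1/2$), and to upgrade these to solutions of the perturbed equation via a contraction-mapping/Volterra-integral argument. First I would rewrite the desired solutions in the form $g_1(t) = t^{3/2}\tilde g_1(t)$ and $g_2(t) = t^{-1/2}\tilde g_2(t)$ and derive the equations satisfied by the $\tilde g_j$. Substituting $g_1 = t^{3/2}\tilde g_1$ into $\ddot g_1 + q g_1 = 0$ with $q = -\tfrac{3}{4t^2} + \tilde q$, the Euler part cancels and one is left with an equation of the form
\begin{equation}\label{eq:g1tilde-ode}
t^2 \ddot{\tilde g}_1(t) + 3 t \dot{\tilde g}_1(t) + t^2 \tilde q(t)\, \tilde g_1(t) = 0,
\end{equation}
and analogously for $\tilde g_2$ one gets $t^2\ddot{\tilde g}_2 - t\dot{\tilde g}_2 + t^2\tilde q(t)\tilde g_2 = 0$. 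These are regular-singular at $t=0$ with indicial roots $\{0,-2\}$ and $\{0,2\}$ respectively; in each case $0$ is a root, so I expect a solution analytic-type (at least $C^0$ up to the boundary) normalized to value $1$ at $t=0$.

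The core step is to convert \eqref{eq:g1tilde-ode} into a Volterra integral equation and solve it by iteration. For $\tilde g_1$, one can write the equation as $\tfrac{d}{dt}\!\left(t^3 \dot{\tilde g}_1\right) = -t^2 \tilde q(t)\tilde g_1$, integrate from $0$ (using that $t^3\dot{\tilde g}_1 \to 0$ at the origin, which will be part of the fixed-point space), and integrate once more to obtain
\begin{equation}
\tilde g_1(t) = 1 - \int_0^t \frac{1}{s^3}\int_0^s \tau^2 \tilde q(\tau)\,\tilde g_1(\tau)\,\mathrm{d}\tau\,\mathrm{d}s.
\end{equation}
On a small interval $[0,\delta]$ the integral operator is a contraction on $C^0([0,\delta])$ because the kernel gains a factor $O(s^{-3}\cdot s^3) = O(1)$ after the inner integration (the inner integral is $O(s^3)$ since $\tau^2\tilde q(\tau)$ is bounded), so the composed operator has small norm for $\delta$ small; this yields a unique continuous fixed point with $\tilde g_1(0)=1$, which then extends to all of $[0,T)$ by the standard (non-singular) ODE theory away from $t=0$. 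The same scheme works for $\tilde g_2$, writing the equation as $\tfrac{d}{dt}\!\left(t^{-1}\dot{\tilde g}_2\right) = -\tilde q(t)\tilde g_2 / t$ — here one must be a little more careful: integrating twice gives $\tilde g_2(t) = 1 - \int_0^t s\int_s^t \tfrac{\tilde q(\tau)}{\tau}\tilde g_2(\tau)\,\mathrm{d}\tau\,\mathrm{d}s$ or a similar arrangement, and the factor $1/\tau$ is the delicate point; one checks the double integral is $O(t^2)$, so the operator is again a contraction near $0$. Finally, linear independence of $g_1,g_2$ follows by computing their Wronskian: $W(g_1,g_2) = g_1\dot g_2 - g_2\dot g_1$ is a nonzero constant by \eqref{eq:ODE}, and its leading behavior as $t\to0$ is that of $W(t^{3/2}, t^{-1/2}) = t^{3/2}\cdot(-\tfrac12 t^{-3/2}) - t^{-1/2}\cdot\tfrac32 t^{1/2} = -2 \neq 0$ (using $\tilde g_j(0)=1$), so the constant is nonzero and the two solutions are independent.

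The main obstacle I anticipate is handling the $1/\tau$ singularity in the integral equation for $\tilde g_2$ cleanly: unlike the $\tilde g_1$ case where the Euler weight $t^3$ tames everything, here one integrates against $\tilde q(\tau)/\tau$, and one must verify that the inner integration really does produce enough vanishing at $0$ (a factor $\tau^2$, morally, from $\int_0^s \tau\,d\tau$ combined with boundedness of $\tilde q$) for the Volterra operator to be a contraction and for the boundary condition $\tilde g_2(0) = 1$ to be attained rather than, say, $\tilde g_2$ blowing up logarithmically. A safe way to organize this is to set up the iteration in the weighted norm on the space $\{\tilde g \in C^0([0,\delta]) : \tilde g(0) = 1\}$ and track the first Picard iterate explicitly to confirm the $O(t^2)$ correction; the regularity $\tilde q \in C([0,T))$ (in particular boundedness near $0$) is exactly what makes all these estimates go through. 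Everything else — extension to $[0,T)$, smoothness of $g_j$ on $(0,T)$, and the Wronskian computation — is routine.
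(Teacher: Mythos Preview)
Your approach is exactly the method the paper is citing rather than proving: the paper's own ``proof'' simply invokes the classical regular-singular-point theory (Frobenius for analytic $\tilde q$, B\^ocher for continuous $\tilde q$), so you are in effect supplying the details behind that citation. The construction of $g_1$ via the Volterra equation is correct in spirit; note the minor slip that $\tfrac{d}{dt}(t^3\dot{\tilde g}_1) = -t^3\tilde q\,\tilde g_1$ (you wrote $t^2$), which only makes the contraction estimate easier.

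The one genuine soft spot is your treatment of $\tilde g_2$. The indicial roots $3/2$ and $-1/2$ differ by an integer, so the dominant solution can carry a logarithmic term: for $\tilde q\equiv c\neq 0$ one computes that no formal power series $t^{-1/2}\sum b_n t^n$ solves the equation, and indeed $\tilde g_2(t)=1+O(t^2\log t)$, still continuous with value $1$ at $0$ but not captured by a naive contraction on $C^0$. Correspondingly, in your integral equation for $\tilde g_2$ the kernel involves $\int \tau^{-1}\tilde q(\tau)\,d\tau$, which diverges logarithmically and spoils the straightforward Picard estimate you sketch. The clean way around this is not to attack $\tilde g_2$ directly but to obtain $g_2$ from $g_1$ by reduction of order: set
\[
g_2(t)=2\,g_1(t)\int_t^{a} g_1(s)^{-2}\,ds
\]
for some fixed $a\in(0,T)$; since $g_1(s)^{-2}=s^{-3}\tilde g_1(s)^{-2}$ with $\tilde g_1\to 1$, a short $\varepsilon$--$\delta$ argument gives $t^{1/2}g_2(t)\to 1$, which is exactly $\tilde g_2\in C^0$ with $\tilde g_2(0)=1$. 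Your Wronskian computation for linear independence is correct and completes the argument.
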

\begin{proof}
If $\tilde{q}(t)$ is analytic the result is well-known from the theory second order ODEs with regular singularities, see e.g. \cite[Ch.\ 5]{TheoryODE}. The case under investigation is treated (in greater generality) in \cite[Thm.\ 2.1]{regularsingular}, where it is attributed to M.\ B\^ocher.
\end{proof}

\begin{prop}\label{prop:intersVSzeroes}
Let $f: [0,T)\to \RP^1$ be a continuous curve, such that its restriction to $(0,T)$ is a smooth immersion. Consider the ODE \eqref{eq:ODE} with $2q=\mathcal{S}(f)$. Assume that 
\begin{equation}
q(t)= -\frac{3}{4t^2} + \tilde{q}(t), \qquad \forall\,t\in (0,T),
\end{equation}
where $\tilde{q} \in C([0,T))$. Let $t_*\in (0,T)$ such that the curve $f$ has a self-intersection: $f(0) = f(t_*)$. Then there exists a non-trivial solution $u:(0,T)\to \R$ of \eqref{eq:ODE} such that
\begin{equation}\label{eq:u0=ustar}
u(0) :=\lim_{t\to 0}u(t) = u(t_*) =0.
\end{equation}
\end{prop}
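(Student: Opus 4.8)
The plan is to combine the asymptotic description of solutions near the singular endpoint (\cref{lem:propofsolutions}) with the dictionary between projective immersions and pairs of linearly independent solutions (\cref{prop:curve-and-ODE}), thereby reducing the statement to an elementary linear-algebra computation inside the two-dimensional solution space of \eqref{eq:ODE}.

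First I would use \cref{lem:propofsolutions} to fix a basis $\{g_1,g_2\}$ of solutions of \eqref{eq:ODE} on $(0,T)$ with $g_1(t)=t^{3/2}\tilde g_1(t)$ and $g_2(t)=t^{-1/2}\tilde g_2(t)$, where $\tilde g_j\in C^0([0,T))$ and $\tilde g_j(0)=1$. The consequences I need are: $g_1$ extends continuously to $t=0$ with $g_1(0)=0$; $g_2$ is unbounded as $t\to 0$ and in particular nonzero near $0$; and $g_1/g_2=t^2\,\tilde g_1/\tilde g_2\to 0$ as $t\to 0$. Next, since $f|_{(0,T)}$ is a smooth immersion with $\mathcal S(f)=2q$, \cref{prop:curve-and-ODE} provides linearly independent solutions $u_1,u_2$ of \eqref{eq:ODE} with $f(t)=[u_1(t):u_2(t)]$ for $t\in(0,T)$. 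Expanding $u_1=a\,g_1+b\,g_2$ and $u_2=c\,g_1+d\,g_2$ with $ad-bc\neq 0$ (linear independence), I would divide the homogeneous coordinates of $f(t)$ by $g_2(t)$ and let $t\to 0$: since $g_1/g_2\to 0$ this gives $\lim_{t\to 0}f(t)=[b:d]$, a well-defined point of $\RP^1$ because $(b,d)\neq(0,0)$ (otherwise $u_1,u_2$ would be proportional). By continuity of $f$ on $[0,T)$ together with the hypothesis $f(0)=f(t_*)$, I conclude $[b:d]=[u_1(t_*):u_2(t_*)]$, i.e. $b\,u_2(t_*)-d\,u_1(t_*)=0$.

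Then I would set $u:=u_2(t_*)\,u_1-u_1(t_*)\,u_2$. Since $u_1,u_2$ are linearly independent solutions of a second-order linear ODE their Wronskian is a nonzero constant, so they never vanish simultaneously; hence $(u_1(t_*),u_2(t_*))\neq 0$, and $u$ is a nontrivial solution of \eqref{eq:ODE} with $u(t_*)=0$. Writing $u$ in the basis $\{g_1,g_2\}$ gives $u=\bigl(a\,u_2(t_*)-c\,u_1(t_*)\bigr)g_1+\bigl(b\,u_2(t_*)-d\,u_1(t_*)\bigr)g_2$, and the $g_2$-coefficient vanishes by the previous paragraph; thus $u$ is a scalar multiple of $g_1$. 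That multiple, the $g_1$-coefficient $a\,u_2(t_*)-c\,u_1(t_*)$, is nonzero: were it also zero, the nonzero vector $(u_2(t_*),-u_1(t_*))$ would be orthogonal to both $(a,c)$ and $(b,d)$, which span $\R^2$ since $ad-bc\neq 0$, a contradiction. Finally, as $u$ is a nonzero multiple of $g_1(t)=t^{3/2}\tilde g_1(t)$ with $\tilde g_1$ bounded near $0$, we obtain $u(0)=\lim_{t\to 0}u(t)=0$, which is exactly \eqref{eq:u0=ustar}.

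The only genuinely delicate step is the identification $\lim_{t\to 0}f(t)=[b:d]$: it expresses the fact that the continuous extension of the projective curve to the singular time reads off precisely the component of the solution pair $(u_1,u_2)$ along the \emph{blow-up} solution $g_2$, the \emph{regular} solution $g_1$ being asymptotically negligible. Once this is in place, the rest is a direct (if slightly fiddly) manipulation in the two-dimensional solution space, and no analytic input beyond \cref{lem:propofsolutions} and \cref{prop:curve-and-ODE} is required.
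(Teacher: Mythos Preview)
Your proof is correct and follows essentially the same approach as the paper's: both combine \cref{prop:curve-and-ODE} with the asymptotic basis $\{g_1,g_2\}$ from \cref{lem:propofsolutions}, then use the self-intersection $f(0)=f(t_*)$ to show that the resulting solution is a nonzero multiple of $g_1$. The only cosmetic difference is that the paper first passes to an affine chart (assuming $\alpha=\lim u_1/u_2$ finite) and defines $u=u_1-\alpha u_2$, whereas you work directly in homogeneous coordinates and set $u=u_2(t_*)u_1-u_1(t_*)u_2$; up to a scalar these are the same solution.
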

\begin{proof}
By \cref{prop:curve-and-ODE} there are two independent solutions $u_1,u_2$ of \eqref{eq:ODE} such that $f(t) = [u_1(t):u_2(t)]$ for $t\in (0,T)$. Since $f(0) = f(t_*)$ we can assume without loss of generality that the following limits exist and are finite
\begin{equation}\label{eq:limitexists}
\alpha:=\lim_{t\to 0} \frac{u_1(t)}{u_2(t)} = \lim_{t\to t_*} \frac{u_1(t)}{u_2(t)}.
\end{equation}
In particular since $u_1,u_2$ cannot vanish simultaneously on $(0,T)$ we must have $u_2(t_*)\neq 0$. We let then $u := u_1 -\alpha u_2$. By construction, $u(t_*) = 0$. To show that $u(0) = 0$ we use \cref{lem:propofsolutions}. There exist $a,b,c,d \in \R$ with $ad-bc \neq 0$ such that
\begin{align}
u_1 & = a g_1 -b g_2,\\
u_2 & = c g_1 - d g_2,
\end{align}
where $g_1,g_2$ are independent solutions. By \eqref{eq:limitexists} we have
\begin{equation}
\alpha = \lim_{t\to 0} \frac{ a t^{3/2} - b t^{-1/2}}{c t^{3/2} -d t^{-1/2}} = \lim_{ t\to 0} \frac{ a t^2 - b}{c t^{2} - d}.
\end{equation}
By the existence of the limit and since $b,d$ cannot vanish at the same time, we see that $d\neq 0$ and $\alpha = b/d$. It follows that
\begin{equation}
u = u_1-\alpha u_2 = a g_1 - \frac{b}{d} c g_1 = \frac{ad-bc}{d} g_1.
\end{equation}
Since $g_1(t)  = t^{3/2} \tilde{g}_1(t)$, with $\tilde{g}_1(0)=1$ we have that $\lim_{t\to 0} u(t) = 0$.
\end{proof}

We have the following version of the Sturm-Picone comparison theorem with singularities.
\begin{prop}\label{prop:sturmpicone}
Let $u,\bar{u}: (0,T) \to \R$ be non-trivial solutions of
\begin{align}
\ddot{u}(t) & + q(t) u(t)  = 0,  \label{eq:ODEcomp} \\ 
\ddot{\bar{u}}(t) & + \bar{q}(t) \bar{u}(t) = 0, \label{eq:ODEcompmod}
\end{align}
where $q: (0,T) \to \R$ satisfies $q(t) \leq \bar{q}(t)$ for $t\in (0,T)$. 
Then between any two consecutive zeroes $t_1<t_2 \in [0,T)$ of $u$ there exists at least one zero of $\bar{u}$, unless $q\equiv \bar{q}$ on $(t_1,t_2)$.
\end{prop}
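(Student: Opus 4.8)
The plan is to argue by contradiction using a weighted Picone identity; the interior case is essentially the classical Sturm--Picone comparison theorem, and the only genuinely new point is the behaviour at the possibly singular endpoint $t_1=0$. So suppose $t_1<t_2$ are consecutive zeroes of $u$, that $\bar u$ has no zero in $(t_1,t_2)$, and (toward a contradiction) that $q\not\equiv\bar q$ on $(t_1,t_2)$. Since $u$ and $\bar u$ are continuous and non-vanishing on the connected set $(t_1,t_2)$, after replacing $u$ by $-u$ and $\bar u$ by $-\bar u$ if necessary (independently), we may assume $u>0$ and $\bar u>0$ on $(t_1,t_2)$. As $\bar u$ never vanishes there, the function
\[
P:=u\dot u-\frac{u^2}{\bar u}\dot{\bar u}=u^2\,\frac{d}{dt}\!\left(\log\frac{u}{\bar u}\right)
\]
is a well-defined $C^1$ function on $(t_1,t_2)$. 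The key computation is Picone's identity: substituting $\ddot u=-qu$ and $\ddot{\bar u}=-\bar q\bar u$ one finds
\[
\dot P=(\bar q-q)\,u^2+\left(\dot u-\frac{u}{\bar u}\dot{\bar u}\right)^2\ \geq\ 0\qquad\text{on }(t_1,t_2),
\]
using $\bar q\geq q$. Hence $P$ is non-decreasing on $(t_1,t_2)$.

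\paragraph{Boundary terms and conclusion.}
Next I would show that the limits $P(t_2^-)$ and $P(t_1^+)$ vanish. Since $t_2<T$, the function $u$ is a genuine $C^2$ solution near $t_2$ with a simple zero there ($\dot u(t_2)\neq 0$ by uniqueness), so $u^2,\,u\dot u\to 0$; moreover $u^2/\bar u\to 0$ as $t\to t_2^-$ whether $\bar u(t_2)\neq 0$ or $\bar u(t_2)=0$ (in the latter case $\bar u$ also has a simple zero, so $u^2/\bar u$ vanishes linearly). Thus $P(t_2^-)=0$, and the same argument at $t_1$ gives $P(t_1^+)=0$ whenever $t_1>0$. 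A non-decreasing function with $P(t_1^+)=P(t_2^-)=0$ is identically zero, so $\dot P\equiv 0$; as $\dot P$ is a sum of two non-negative terms this forces $(\bar q-q)u^2\equiv 0$, and since $u>0$ on $(t_1,t_2)$ we get $q\equiv\bar q$ there, contradicting our assumption. (No information is claimed about $\bar u$ at the endpoints $t_1,t_2$, which is consistent with the ``unless $q\equiv\bar q$'' clause: in that exceptional case $\bar u$ may be a constant multiple of $u$ and vanish precisely at $t_1$ and $t_2$.)

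\paragraph{The singular endpoint and the main obstacle.}
The delicate case is $t_1=0$, where $0$ is a zero of $u$ only in the limiting sense $\lim_{t\to 0^+}u(t)=0$ and the potential $q$ may be singular at $0$. Here $P$ is still non-decreasing with $P(t_2^-)=0$, hence $P\leq 0$ on $(0,t_2)$, and to rerun the argument it suffices to know that the boundary term vanishes, i.e. $\lim_{t\to 0^+}\big(u\dot u-\tfrac{u^2}{\bar u}\dot{\bar u}\big)=0$. This is the main obstacle, and it is handled by the asymptotics at a regular singular point: in the setting where this proposition is used the potentials have the form $q=-\tfrac{3}{4t^2}+\text{regular}$ (and likewise for $\bar q$), so by \cref{lem:propofsolutions} a solution vanishing at $0$ is a constant multiple of $g_1(t)=t^{3/2}\tilde g_1(t)$, while $\bar u$ is, in the worst case, comparable to $t^{-1/2}$ near $0$. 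A short estimate with these asymptotics then gives $u\dot u=O(t^2)\to 0$ and $\tfrac{u^2}{\bar u}\dot{\bar u}=O(t^2)\to 0$, so $P(0^+)=0$ and the proof concludes exactly as in the interior case.
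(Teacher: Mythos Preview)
Your Picone-identity approach is correct in the interior case and is a genuine alternative to the paper's proof, which instead integrates the Lagrange/Wronskian identity
\[
\frac{d}{dt}\bigl(\dot u\,\bar u - u\,\dot{\bar u}\bigr) = (\bar q - q)\,u\,\bar u
\]
over $(t_1+\varepsilon_n, t_2)$ and argues from the \emph{signs} of the boundary terms. The two routes are closely related (your $P$ and the Wronskian differ by a factor $u/\bar u$), but the paper's choice has a structural advantage at the singular endpoint: it aims only to show the boundary contribution at $t_1$ has the correct sign, not that it vanishes. Concretely, the paper uses nothing about $\bar u$ near $t_1$ beyond $\bar u>0$; it relies only on continuity of $u$ at $t_1$ with $u(t_1)=0$ and on the existence of a sequence $\varepsilon_n\to 0$ with $\dot u(t_1+\varepsilon_n)>0$, so that the term $\dot u\,\bar u\big|_{t_1+\varepsilon_n}$ is positive.

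There is a genuine gap in your treatment of $t_1=0$: you invoke the asymptotics of \cref{lem:propofsolutions} for \emph{both} $u$ and $\bar u$ (``$\bar u$ is, in the worst case, comparable to $t^{-1/2}$''), but \cref{prop:sturmpicone} as stated assumes only $q\le\bar q$ on $(0,T)$ and says nothing about the form of $q$ or $\bar q$ near $0$. So what you have actually proved is the special case where both potentials are regular-singular in the sense of \cref{lem:propofsolutions}. That weaker statement suffices for the downstream application in \cref{thm:comparisonScwharzian}, and you are candid about this, but it is not a proof of the proposition in its stated generality. To close the gap you could either add the regular-singular hypothesis to the statement, or switch from $P$ to the Wronskian $\dot u\bar u-u\dot{\bar u}$ so that only a sign at $t_1$ is needed rather than the exact vanishing $P(0^+)=0$.
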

\begin{proof}
Assume without loss of generality that $u>0$ on $(t_1,t_2)$. We claim that there exists a sequence $\varepsilon_n \to 0$ such that $\dot{u}(t_1+\varepsilon_n) >0$. In fact, if for all small enough $\varepsilon>0$ we had $\dot{u} \leq 0$ on $(0,\varepsilon)$, then using that $u$ is continuous on $[t_1,T)$ with $u(t_1)=0$ we have
\begin{equation}
u(t) = \int_{t_1}^t \dot{u}(\tau)\,\mathrm{d} \tau \leq 0,\qquad \forall t \in (t_1,t_1+\varepsilon),
\end{equation}
which is a contradiction. Similarly one shows that $\dot{u}(t_2)<0$.

Assume by contradiction that $\bar{u}>0$ on $(t_1,t_2)$. Therefore by multiplying \eqref{eq:ODEcomp}-\eqref{eq:ODEcompmod} by $\bar{u},u$, respectively, subtracting, and integrating on $(t_1+\varepsilon_n,t_2)$, we obtain
\begin{equation}
[\dot{u}\bar{u}-u\dot{\bar{u}}]_{t_1+\varepsilon_n}^{t_2} = \int_{t_1+\varepsilon_n}^{t_2}(\bar{q}(t)-q(t)) u(t) \bar{u}(t)\,\mathrm{d}t.
\end{equation}
Now we take the limit of the above for $\varepsilon_n \to 0$. By the properties $\dot{u}(t_1+\varepsilon_n)>0$ and $\dot{u}(t_2)<0$, the limit of the left hand side exists and is non-positive. On the other hand, observe that the limit of the right hand side exists (possibly $+\infty$) since it is a non-decreasing sequence, and it must be $\geq 0$. Furthermore the limit is zero if and only if $\bar{q} \equiv q$ on $(t_1,t_2)$. If this is not the case then we reach a contradiction, so that $\bar{u}$ must have a zero in the interval $(t_1,t_2)$.
\end{proof}

\begin{theorem}\label{thm:comparisonScwharzian}
Let $f: [0,T)\to \RP^1$ be a continuous curve, such that its restriction to $(0,T)$ is a smooth immersion. Assume that its Schwarzian derivative $2q=\mathcal{S}(f)$ satisfies
\begin{equation}\label{eq:comparisonSchwarzian-assumption1}
q(t)+\frac{3}{4t^2} \in  C([0,T)).
\end{equation}
Assume also that there exists $\bar{q} \in C((0,T))$ such that
\begin{equation}
q \leq \bar{q}, \qquad \text{on $(0,T)$}.
\end{equation}
Let $t_* \in (0,T)$ such that $f(0) =f(t_*)$. Let $\bar{u}$ be a non-trivial solution of $\ddot{\bar{u}}+\bar{q}\bar{u}=0$. Then
\begin{equation}\label{eq:tstarestimate}
t_* \geq  \sup_{\bar{u}}  \inf\{ t>0 \mid \bar{u}(t) = 0\},
\end{equation}
where the sup is over all non-trivial solutions of $\ddot{\bar{u}}+\bar{q}\bar{u}=0$.

 If all solutions of $\ddot{\bar{u}}+\bar{q}\bar{u}=0$ on $(0,T)$ have a (possibly infinite) limit at $t=0$, and there are solutions with $\bar{u}(0)=0$ then the sup is attained on this set.
\end{theorem}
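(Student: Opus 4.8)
The plan is to convert the self-intersection $f(0)=f(t_*)$ into the existence of a solution of $\ddot u+qu=0$ with a suitable double-zero structure, and then to compare its zeros with those of solutions of $\ddot{\bar u}+\bar q\bar u=0$ by a Sturm-type argument adapted to the regular singularity at $t=0$. Concretely, \cref{prop:intersVSzeroes} applied to $f(0)=f(t_*)$ yields a nontrivial solution $u$ of $\ddot u+qu=0$ on $(0,T)$ with $\lim_{t\to0}u(t)=0$ and $u(t_*)=0$. By \cref{lem:propofsolutions} the solutions of this equation vanishing at the origin form the line $\R g_1$ with $g_1(t)=t^{3/2}\tilde g_1(t)$, $\tilde g_1(0)=1$; hence $u=cg_1$ on $(0,T)$ with $c\neq0$, so $u$ does not vanish near $0$. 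Consequently $\tau:=\inf\{t>0\mid u(t)=0\}$ satisfies $0<\tau\le t_*$, with $u$ of constant sign on $(0,\tau)$ and $u(\tau)=0$. It then suffices to prove that \emph{every} nontrivial solution $\bar u$ of $\ddot{\bar u}+\bar q\bar u=0$ has a zero in $(0,\tau]\subseteq(0,t_*]$, since this gives $\inf\{t>0\mid\bar u(t)=0\}\le t_*$ for all $\bar u$, which is exactly \eqref{eq:tstarestimate}.

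Fix such a $\bar u$. If $q\not\equiv\bar q$ on $(0,\tau)$, then \cref{prop:sturmpicone} applied to the two consecutive zeros $0$ and $\tau$ of $u$ produces a zero of $\bar u$ in $(0,\tau)$, and we are done. The remaining case $q\equiv\bar q$ on $(0,\tau)$ is the crux, as \cref{prop:sturmpicone} is then vacuous: here $\bar u$ solves on $(0,\tau)$ the same regular-singular equation as $u$, so either $\bar u$ is proportional to $g_1$ on $(0,\tau)$, whence $\bar u(\tau)=0$ and we are done, or it is not. In the latter case suppose for contradiction $\bar u\neq0$ on $(0,\tau]$ and set $\phi:=g_1/\bar u$; then $\dot\phi=-W(g_1,\bar u)/\bar u^2$ has constant sign (the Wronskian of two solutions being a nonzero constant), so $\phi$ is strictly monotone on $(0,\tau)$, while $\phi(t)\to0$ as $t\to\tau^-$ because $g_1(\tau)=0$ and $\bar u(\tau)\neq0$. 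A strictly monotone function cannot also tend to $0$ at the opposite endpoint, so $\lim_{t\to0}\phi(t)=L$ exists with $L\neq0$; but then, since $g_1(t)\sim t^{3/2}$ by \cref{lem:propofsolutions}, we get $\bar u(t)\to0$ as $t\to0$, which by \cref{lem:propofsolutions} forces $\bar u\propto g_1$ on $(0,\tau)$ — a contradiction. This proves \eqref{eq:tstarestimate}.

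For the final assertion, note that under the extra hypotheses the set $S:=\{\bar u\mid\lim_{t\to0}\bar u(t)=0\}$ is a nontrivial linear subspace of the two-dimensional solution space. Choose $\bar u_0\in S\setminus\{0\}$ with largest possible first positive zero $\tau_0$ (a maximiser exists: by the first part all first zeros are $\le t_*$ and depend continuously on the solution; if $\tau_0=0$ the claim is immediate, so assume $\tau_0>0$, hence $\bar u_0\neq0$ on $(0,\tau_0)$). Every $\bar u\in S$ has a zero in $(0,\tau_0]$ by maximality of $\tau_0$; and for a nontrivial $\bar u\notin S$, which has a nonzero (possibly infinite) limit at $0$, one runs the monotonicity argument of the previous paragraph with $(g_1,\tau)$ replaced by $(\bar u_0,\tau_0)$, using that limit to evaluate $\psi:=\bar u_0/\bar u$ at the endpoint $0$, to obtain a zero of $\bar u$ in $(0,\tau_0]$. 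Hence $\sup_{\bar u}\inf\{t>0\mid\bar u(t)=0\}=\tau_0$, attained precisely on $S$.

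The main obstacle is exactly the degenerate case $q\equiv\bar q$ near $0$: Sturm--Picone is then useless and one must exploit the sharp $t^{3/2}$ versus $t^{-1/2}$ dichotomy of \cref{lem:propofsolutions}, which isolates the one-dimensional space of solutions vanishing at the singular point, together with the Wronskian-monotonicity trick above. A related nuisance, already absorbed into \cref{prop:intersVSzeroes,prop:sturmpicone}, is that $0$ is not a genuine endpoint of the interval, so every ``value at $0$'' occurring in the argument must be read as a limit.
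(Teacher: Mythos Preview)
Your argument is correct and follows the same route as the paper: invoke \cref{prop:intersVSzeroes} to produce a solution $u$ of $\ddot u+qu=0$ with $u(0)=u(t_*)=0$, then force a zero of any $\bar u$ between the consecutive zeros $0$ and $\tau$ via \cref{prop:sturmpicone}, and finally use a Wronskian-monotonicity (equivalently, Rolle) argument for the separation statement. You are in fact more careful than the paper in spelling out the degenerate case $q\equiv\bar q$ on $(0,\tau)$, where \cref{prop:sturmpicone} is vacuous; your treatment of it via $\phi=g_1/\bar u$ and \cref{lem:propofsolutions} is exactly the right mechanism.

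For the final assertion the paper organizes things slightly differently: rather than selecting a maximizer $\bar u_0\in S$ (which forces you to justify existence --- your continuity claim should really be upper semicontinuity of the first zero, or better, just observe that $S$ is either all solutions, where the claim is trivial, or one-dimensional, where all elements share the same first zero), it proves directly a separation theorem: if $\bar u(0)=0$ with first positive zero $t_2$ and $\bar v$ is linearly independent with $\bar v(0)\neq 0$, then $\psi=\bar u/\bar v$ extends continuously to $[0,t_2]$ with $\psi(0)=\psi(t_2)=0$, and Rolle contradicts $\dot\psi=W(\bar v,\bar u)/\bar v^2\neq 0$. This is the same Wronskian trick you use, just framed so that no compactness or continuity of the first-zero map is needed.
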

\begin{remark}\label{rmk:sol0at0}
In particular, when $\bar{q}$ has the form of \cref{lem:propofsolutions}, then all solutions have (possibly infinite) limit at $t=0$, and there is only one solution (up to rescaling) with $\bar{u}(0)=0$. Then such a solution yields the optimal estimate in \eqref{eq:tstarestimate}. This is the case in which we use \cref{thm:comparisonScwharzian}.
\end{remark}
\begin{proof}
By \cref{prop:intersVSzeroes} there is a non-trivial solution $u:(0,T)\to \R$ of $\ddot{u}+qu=0$ such that
\begin{equation}\label{eq:u0=ustar}
u(0) :=\lim_{t\to 0}u(t) = u(t_*) =0.
\end{equation}
By \cref{prop:sturmpicone}, the non-trivial solution $\bar{u}:(0,T)\to \R$ of $\ddot{\bar{u}}+\bar{q}\bar{u}=0$ must have a zero in $(0,t_*)$. Optimizing over all possible solutions we obtain \eqref{eq:tstarestimate}.

To prove the final part of the statement, it is sufficient to show that under the additional assumptions the separation theorem holds: if $\bar{u},\bar{v}:(0,T)\to \R$ are linearly-independent solution, with $\bar{v}(0) = \lim_{t\to 0} v(t) \neq 0$, and $0 = t_1<t_2< T$ are consecutive zeroes of $\bar{u}$, then $\bar{v}$ has exactly one zero in $(t_1,t_2)$. To prove this claim, assume by contradiction that $\bar{v}$ has no zeroes on $(t_1,t_2)$. It follows that $\bar{v}$ is non-vanishing on $[t_1,t_2]$. In fact, if $t_1,t_2>0$ this is true since $\bar{u},\bar{v}$ are linearly independent, while $\bar{v}(t_1)= \bar{v}(0)\neq 0$ by assumption. Then the function
\begin{equation}
\psi(t):=\frac{\bar{u}(t)}{\bar{v}(t)},\qquad t\in [t_1,t_2],
\end{equation}
is continuous on $[t_1,t_2]$, $\dot{\psi}$ exists on $(t_1,t_2)$ and $\psi(t_1)=\psi(t_2)=0$. By Rolle's theorem there is $c\in (t_1,t_2)$ with $\psi'(c) =0$. Hence, the Wronskian of $\bar{u},\bar{v}$ is zero at $c$ which is a contradiction.
\end{proof}

\subsection{Contact Jacobi curves}

We introduce the contact Jacobi curve, and study its properties. Without further mention, in this section we assume that the sub-Riemannian manifold $(M,\omega,g)$ is complete, so that the Hamiltonian flow is well-defined for all times.

\begin{defi}\label{def:contactjacobi}
	Let $(M,\omega, g)$ be a three-di\-men\-sional complete contact sub-Riemannian manifold, and let $\Gamma$ be an embedded piece of Reeb orbit. Consider the following one-parameter family of one-forms on $A^1\Gamma$:
	\begin{equation}\label{eq:moving_omega}
		\omega_r:=\left.\left((\pi\circ e^{r\vec{H}})^{*}\omega\right)\right|_{A^1\Gamma},\qquad r\in\mathbb [0,+\infty).
	\end{equation}
	The \emph{contact Jacobi curve} at $\lambda\in A^1\Gamma$ is the projectivization of $\omega_r|_{\lambda}$:
	\begin{equation}\label{eq:contact_Jacobi}
			\Omega_\lambda:[0,+\infty) \to P (T_{\lambda}^* A^{1}\Gamma)\simeq\RP^1,\qquad
			\Omega_\lambda(r):=P\left(\omega_{r}|_{\lambda}\right).
	\end{equation}
	The \emph{first singular radius} of the contact Jacobi curve is
	\begin{equation}\label{eq:r_o}
		r_o(\lambda):=\inf\{ r>0 \mid \Omega_{\lambda}(r)=\Omega_{\lambda}(0)\}.
	\end{equation}
\end{defi}
\begin{remark}
The contact Jacobi curve $\Omega_\lambda$ is defined for any $\lambda\in T^*M$ such that $\langle \lambda, f_0\rangle =0$ and $2H(\lambda)=1$. In fact, the definition does not depend the choice of $\Gamma$: if $\Gamma'$ is any other piece of Reeb orbit containing $\pi(\lambda)$, then $T_{\lambda}(A^1\Gamma) = T_{\lambda}(A^1\Gamma')$, yielding the same contact Jacobi curve $\Omega_\lambda$. The same observation holds for \cref{def:focalradius}.
\end{remark}
\begin{prop}\label{prop:Estaromega}
	Let $(M,\omega, g)$ be a three-dimensional contact sub-Riemannian manifold, and let $\Gamma$ be an embedded piece of Reeb orbit. Let $E:A\Gamma\to M$ be the tubular neighbourhood map \eqref{eq:exponential_map} and let $\omega_r$ be the form defined in \eqref{eq:moving_omega}, then
	\begin{equation}
		\left(E^*\omega\right)|_{r\lambda}=\omega_r|_\lambda\qquad \forall\,\lambda\in A^1\Gamma,\quad\forall\,r\geq 0.
	\end{equation}
	In particular, denoting with $(r\cos\theta,r\sin\theta,z)$ the cylindrical coordinates on $A\Gamma$ introduced in \cref{sec:cylindcoords}, we have
	\begin{equation}\label{eq:Estaromega}
		\left(E^*\omega\right)|_{r\lambda}=\omega_r(\partial_z|_{\lambda})dz+\omega_r(\partial_\theta|_{\lambda})d\theta,\qquad \forall\,\lambda\in A^1\Gamma,\quad\forall\,r\geq 0.
	\end{equation}
	Furthermore, the contact Jacobi curve at $\lambda\in A^1\Gamma$ has the following expression 
	\begin{equation}\label{eq:coordsOmega}
		\Omega_\lambda:[0,+\infty)\to\mathbb {RP}^1,\qquad \Omega_\lambda(r)=[E^*\omega(\partial_{\theta}|_{r\lambda})\,: \,E^*\omega(\partial_{z}|_{r\lambda})].
	\end{equation}
\end{prop}
\begin{proof}
	Let $(r\cos\theta,r\sin\theta,z)$ be cylindrical coordinates on $A\Gamma$ as in \cref{sec:cylindcoords}. Note that, out of the zero section, it holds
	\begin{equation}
		T_{\lambda} (A^1\Gamma) \simeq \spn\{\partial_{\theta}|_{\lambda},\partial_z|_{\lambda} \}.
	\end{equation}
	Writing $E^*\omega$ in these coordinates, we obtain
	\begin{align}
		(E^*\omega)|_{r\lambda} & = \omega(E_*\partial_r|_{r\lambda})dr+\omega(E_*\partial_\theta|_{r\lambda})d\theta+\omega(E_*\partial_z|_{r\lambda})dz\\
		& = \omega\left(\pi_*\vec{H}|_{e^{\vec{H}}(r\lambda)}\right)dr+\omega\left((\pi \circ e^{r\vec{H}})_*\partial_\theta|_{\lambda}\right)d\theta+\omega\left((\pi \circ e^{r\vec{H}})_*\partial_z|_{\lambda}\right)dz\\
		& = \omega_r(\partial_\theta|_{\lambda})d\theta+\omega_r(\partial_z|_{\lambda})dz=\omega_r|_\lambda,
	\end{align}
	where in the second line we used that fact that $\pi_*\vec{H}|_{\lambda}$ is horizontal, and we also used the homogeneity property of the Hamiltonian \eqref{eq:rescaling}. Equation \eqref{eq:coordsOmega} follows combining equation \eqref{eq:Estaromega} with the fact that $d\theta,dz$ is a basis for $T^*_\lambda A^1\Gamma$. 
\end{proof}

\begin{defi}\label{def:focalradius}
Let $(M,\omega, g)$ be a three-di\-men\-sional contact sub-Riemannian manifold, and let $\Gamma$ be an embedded piece of Reeb orbit. A real number $r>0$ is called a \emph{focal radius} for $\lambda\in A^1\Gamma$ if 
\begin{equation}
    d_{r\lambda}E:T_{r\lambda}( A^1\Gamma)\to T_{E(r\lambda)} M\qquad \text{is not an isomorphism}.
\end{equation}
\end{defi}
\begin{remark}\label{rmk:rfoc>rinj}
Note that $r_{\foc}(\lambda)>0$ since all points on the zero section of $A\Gamma$ are regular points for $E$. Furthermore, by definition of $r_{\inj}(\Gamma)$ (see \cref{def:injgamma}), it holds $r_{\foc}(\lambda) \geq r_{\inj}(\Gamma)$.
\end{remark}

The next result is a crucial structure theorem for the contact Jacobi curve. In particular it is well-defined, is an immersion outside of focal radii (which is a discrete set of points), and its Schwarzian satisfies the assumptions for the general comparison theory developed in \cref{sec:Schwarzianderivative}. 

\begin{theorem}\label{thm:singwelldef}
Let $(M,\omega, g)$ be a three-di\-men\-sional contact sub-Riemannian manifold, and let $\Gamma$ be an embedded piece of Reeb orbit. Let $\lambda \in A^1\Gamma$.
\begin{enumerate}[label = $(\roman*)$]
\item The set of focal radii for $\lambda$ is discrete, separated from zero;
\item The contact Jacobi curve $\Omega_\lambda:[0,+\infty)\to \RP^1$ is well-defined and smooth;
\item The contact Jacobi curve is an immersion outside of zero and the focal radius;
\item\label{item:asymptoticsfoc} For any focal radius $\bar{r}>0$, the following asymptotic formula for the Schwarzian derivative holds: there exists $k=k(\bar{r}) \in \{0,1,2,3,4\}$ such that
\begin{equation}\label{eq:asymptoticsfoc}
\mathcal{S}(\Omega_\lambda)(r) = -\frac{k(k+2)}{2(r-\bar{r})^2} + k\, O\left(\frac{1}{r-\bar{r}}\right) + O(1);
\end{equation}
\item \label{item:asymptotics0} The following asymptotic formula for the Schwarzian derivative holds at $r=0$:
\begin{equation}\label{eq:asymptotics0}
\mathcal{S}(\Omega_\lambda)(r) = -\frac{3}{2r^2} + O(r).
\end{equation}
\end{enumerate}
The term $O(r)$ denotes a smooth remainder. Namely there exist $\varepsilon >0$ and a smooth function $f: (-\varepsilon,\varepsilon) \to \R$, both depending on $\lambda$, such that $ O(r) = rf(r)$ for all $r\in (-\varepsilon,\varepsilon)$.
\end{theorem}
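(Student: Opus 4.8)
The plan is to work throughout in the exponential coordinates of \cref{sec:expcoords}. By \cref{prop:trivial} we have global coordinates $(z,\theta)$ on $A^1\Gamma\simeq I\times\partial B_1$, with dual coframe $dz,d\theta$ on $T^*_\lambda A^1\Gamma$. Writing $\Theta:=\pi^*\omega$ for the pull-back of the contact form to $T^*M$, and using $E(r\lambda)=\pi\circ e^{r\vec H}(\lambda)$ together with the rescaling \cref{eq:rescaling}, one has
\[
\omega_r|_\lambda=\big((e^{r\vec H})^*\Theta\big)\big|_{T_\lambda A^1\Gamma}=P_\lambda(r)\,dz+Q_\lambda(r)\,d\theta ,
\]
where $P_\lambda(r)=\omega\big(\partial_z E(r\lambda)\big)$ and $Q_\lambda(r)=\omega\big(\partial_\theta E(r\lambda)\big)$ are the contact form evaluated on the $\partial_z$- and $\partial_\theta$-Jacobi fields along the geodesic $\gamma^\lambda(r)=E(r\lambda)$. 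Thus $\Omega_\lambda(r)=[P_\lambda(r):Q_\lambda(r)]$ with $P_\lambda,Q_\lambda\in C^\infty(\R)$ (composition of smooth maps), which reduces the whole statement to an analysis of the pair $(P_\lambda,Q_\lambda)$.

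\textbf{Items (i)--(iii).} For the dynamics I would iterate $\tfrac{d}{dr}(e^{r\vec H})^*\Theta=(e^{r\vec H})^*\mathcal L_{\vec H}\Theta$, using horizontality $\Theta(\vec H)=\omega(\pi_*\vec H)=0$, which gives $\mathcal L_{\vec H}\Theta=\iota_{\vec H}\pi^*d\omega=h_1\pi^*\nu_2-h_2\pi^*\nu_1$, together with the structure equations of \cref{eq:strcoeff}. On $T_\lambda A^1\Gamma$ every pull-back form $\pi^*\eta$ restricts to $\eta(f_0)\,dz$ (the vertical circle direction is annihilated by $\pi_*$), while $dh_i$ has a genuine vertical component; one then checks via \cref{prop:curve-and-ODE} that on each subinterval of $(0,\infty)$ avoiding focal radii, $\Omega_\lambda$ corresponds to a second-order linear ODE $\ddot u+qu=0$ with $2q=\mathcal S(\Omega_\lambda)$. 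For item (ii): at a non-focal $r$ one has $(P_\lambda,Q_\lambda)\ne(0,0)$, since otherwise both $\partial_z E,\partial_\theta E$ lie in $\xi_{\gamma^\lambda(r)}$ and, together with the horizontal $\dot\gamma^\lambda(r)$, would be three vectors in the plane $\xi_{\gamma^\lambda(r)}$, forcing $d_{r\lambda}E$ to be non-invertible; at a focal radius $\bar r$, $P_\lambda,Q_\lambda$ vanish to finite order (they solve a linear Jacobi-type system and are not identically zero, e.g.\ $Q_\lambda''(0)=1$ below), so $[P_\lambda:Q_\lambda]$ extends smoothly across $\bar r$. For item (iii), the velocity of $\Omega_\lambda$ in an affine chart is proportional to the Wronskian $P_\lambda\dot Q_\lambda-\dot P_\lambda Q_\lambda$, which a short computation identifies (up to a nonvanishing factor) with the Jacobian of $E$ along the geodesic, so that it vanishes exactly at $r=0$ and at the focal radii. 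Item (i) then follows because the focal radii are the zeros of $r\mapsto\det d_{r\lambda}E$, which are isolated — by monotonicity of the associated Jacobi curve in the Lagrange Grassmannian, equivalently because zeros of solutions of a linear second-order ODE are isolated.

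\textbf{Asymptotics at $r=0$ (item (v)).} This is the first delicate point. The Taylor coefficients are $P_\lambda^{(k)}(0)=(\mathcal L_{\vec H}^k\Theta)_\lambda(\partial_z)$ and $Q_\lambda^{(k)}(0)=(\mathcal L_{\vec H}^k\Theta)_\lambda(\partial_\theta)$; using $\partial_z h_i=0$ in the trivialization of \cref{prop:trivial}, $\partial_\theta h_1=-h_2$ and $\partial_\theta h_2=h_1$ along $A^1\Gamma$, the relations of \cref{eq:strcoeff}, and $2H(\lambda)=1$, one computes $P_\lambda(0)=1$, $P_\lambda'(0)=0$, $Q_\lambda(0)=Q_\lambda'(0)=0$, $Q_\lambda''(0)=1$, and — after a cancellation among the terms built from the coefficients $c^k_{12}$ — $Q_\lambda'''(0)=0$. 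Hence, in the affine chart $v=Q_\lambda/P_\lambda$, one has $v(r)=\tfrac12 r^2+O(r^4)$, i.e.\ $v=r^2w(r)$ with $w$ smooth, $w(0)\ne0$ and $w'(0)=0$. Plugging into the identity $\mathcal S(v)=-2\sqrt{v'}\,\tfrac{d^2}{dr^2}\big((v')^{-1/2}\big)$ and writing $(v')^{-1/2}=r^{-1/2}(2w+rw')^{-1/2}$, the vanishing of $w'(0)$ forces the $r^{-1}$ and $r^0$ terms to cancel, leaving $\mathcal S(\Omega_\lambda)(r)=-\tfrac{3}{2r^2}+O(r)$ with a smooth remainder vanishing at $r=0$.

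\textbf{Asymptotics at a focal radius (item (iv)) and the main obstacle.} Near a focal radius $\bar r>0$, once $P_\lambda,Q_\lambda$ are known to vanish to finite order, the velocity of $\Omega_\lambda$ vanishes to some finite order $k\ge1$ at $\bar r$, so in a suitable affine chart $\Omega_\lambda(r)=v(\bar r)+c\,(r-\bar r)^{k+1}+\dots$ with $c\ne0$. The same Schwarzian identity, now expanded in $s=r-\bar r$, yields $\mathcal S(v)=-\tfrac{(k+1)^2-1}{2s^2}+\dots=-\tfrac{k(k+2)}{2(r-\bar r)^2}+k\,O\!\big(\tfrac{1}{r-\bar r}\big)+O(1)$, which is \cref{eq:asymptoticsfoc}. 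The genuinely nontrivial ingredient is the bound $k\le4$, i.e.\ a bound on the order to which $d_{r\lambda}E$ can degenerate along a geodesic; I would derive it from a normal-form analysis of the sub-Riemannian Jacobi equation in a contact $3$-manifold, exploiting that $\gamma^\lambda$ is ample of step $2$ and that there are only the two curvature invariants $R_a^\lambda,R_c^\lambda$. I expect this bound, together with the cancellation yielding $Q_\lambda'''(0)=0$, to be the main obstacle of the proof.
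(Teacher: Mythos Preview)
Your setup and the computations for item~(v) are essentially those of the paper: you work in the cylindrical coordinates on $A\Gamma$, identify $\omega_r|_\lambda=P_\lambda\,dz+Q_\lambda\,d\theta$, and recover the Taylor data $P_\lambda(0)=1$, $P_\lambda'(0)=0$, $Q_\lambda(0)=Q_\lambda'(0)=0$, $Q_\lambda''(0)=1$, $Q_\lambda'''(0)=0$ by iterated Lie derivatives of $\pi^*\omega$ along $\vec H$. Your identification of the Wronskian $P_\lambda\dot Q_\lambda-\dot P_\lambda Q_\lambda$ with the Jacobian of $E$ via $E^*(\omega\wedge d\omega)$ is also the paper's mechanism for~(iii).

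There are, however, two genuine gaps. First, for item~(ii) at a focal radius (and implicitly for~(i)), you assert that $(P_\lambda,Q_\lambda)$ vanish to finite order because ``they solve a linear Jacobi-type system.'' This is not sufficient as stated: $P_\lambda,Q_\lambda$ are components of Jacobi fields paired with $\omega$, and it is not obvious that they satisfy a \emph{closed} linear ODE with smooth coefficients whose nontrivial solutions have isolated zeros. The paper instead proves directly that the four one-forms $\omega_r,\dot\omega_r,\ddot\omega_r,\dddot\omega_r$ on $A^1\Gamma$ span $T_\lambda^*(A^1\Gamma)$ for every $r$, by showing that $\{\tau,dH,\mathscr L_{\vec H}^k(\pi^*\omega):k=0,\dots,3\}$ is a coframe on $T^*M\setminus\{H=0\}$, stable (modulo $\spn\{\tau,dH\}$) under the flow. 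This gives the finite-order bound $m\le3$ for $\omega_r$ and simultaneously the discreteness of focal radii.

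Second, and more seriously, your proposal for the bound $k\le4$ in item~(iv) --- a ``normal-form analysis of the sub-Riemannian Jacobi equation'' exploiting ampleness of step~$2$ --- is not a proof, and it is not clear that this route produces the sharp bound without reinventing the paper's argument. The paper's argument is symplectic and algebraic: on the rank-$4$ bundle $E=\ker\{dH,\tau\}\subset T(T^*M)$, the two-form $\alpha_r:=\chi_r\wedge\dot\chi_r$ (with $\chi_r=e^{r\vec H*}\pi^*\omega$) lies in the rank-$5$ sub-bundle $L=\{\eta\in\Lambda^2E^*:\eta\wedge\sigma=0\}$ of primitive $2$-forms, which is preserved by the flow. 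One then checks, via an explicit triangular relation between $\{\alpha_r^{(i)}\}_{i=0}^4$ and the wedge products $\{\chi_r^{(i)}\wedge\chi_r^{(j)}\}$, that the first five derivatives of $\alpha_r$ are everywhere linearly independent sections of $L$, hence trivialize it. This forces $\omega_r\wedge\dot\omega_r$ to vanish to order at most~$4$ at any $\bar r$, giving $k\le4$. This is the step you should supply; the rest of your outline is sound.
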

\begin{remark}
A case-by-case analysis shows that only the values $k=0,1,2$ can occur in \eqref{eq:asymptoticsfoc}, see \cite{PhDEugenio}. This also shows that  the contact Jacobi curve can be an immersion at focal radii, in which case $k=0$. As it will be clear from the proof, it is never an immersion at $t=0$. 
\end{remark}
\begin{proof}
Let $(r\cos\theta,r\sin\theta,z)$ be cylindrical coordinates on $A\Gamma$ as in \cref{sec:cylindcoords}. Making use of \cref{prop:Estaromega}, and in particular of equation \eqref{eq:Estaromega}, we compute the form $E^*(\omega\wedge d\omega)$
\begin{equation}\label{eq:Estarcontact}
        E^*(\omega\wedge d\omega)|_{r\lambda}= \frac{\omega_r\wedge\dot\omega_r(\partial_\theta|_{\lambda},\partial_z|_{\lambda})}{r}rdr\wedge d\theta \wedge dz,
    \end{equation}
    where the dot denotes the derivative w.r.t.\ parameter $r$.

    To proceed, we need the following claims: for any $\lambda \in A^1\Gamma$ and $\bar{r}\geq 0$ the one-parameter family of one-forms $\omega_r$, evaluated at $\lambda$, has finite order at $\bar{r}$, namely there exists $m\in \{0,1,2,3\}$ and a non-zero one-form $\beta$ on $A^1\Gamma$ such that 
\begin{equation}\label{eq:claim1}
\omega_r|_{\lambda} = (r-\bar{r})^m \beta|_{\lambda} + O\left((r-\bar{r})^{m+1}\right),
\end{equation}
where $O((r-\bar{r})^{m+1})$ denotes a smooth remainder. Furthermore, the one-parameter family of two-forms has a similar property: there exist $n \in \{0,1,2,3,4\}$ (depending on $\lambda,\bar{r})$, and a non-zero two-form $\alpha$ on $A^1\Gamma$ such that
\begin{equation}\label{eq:claim2}
\omega_r\wedge\dot\omega_r|_{\lambda} = (r-\bar{r})^n \alpha|_{\lambda} + O\left((r-\bar{r})^{n+1}\right).
\end{equation}
The proof of \eqref{eq:claim1}-\eqref{eq:claim2} is postponed to \cref{app:contidots} (see \cref{lem:claimdots}).

In the following we will often omit the evaluation of forms at the fixed $\lambda \in A^1\Gamma$, for brevity.
		
\textbf{Proof of $(i)$.} By \eqref{eq:Estarcontact}, and since $\omega\wedge d\omega$ is a volume form on $M$, we note that $\bar{r}>0$ is a focal radius if and only if $\omega_{\bar{r}}\wedge\dot{\omega}_{\bar{r}} =0$. In particular it follows from \eqref{eq:claim2} that the set of focal radii is discrete. Furthermore, by definition, $r=0$ cannot be a focal time since $E$ is always a diffeomorphism in a neighbourhood of the zero section of $A\Gamma$.

\textbf{Proof of $(ii)$ and $(iii)$.} If $r>0 $ is non-focal, $\omega_{r}\wedge\dot{\omega}_{r} \neq 0$. In particular $\omega_{r} \neq 0$. It follows that the contact Jacobi curve
\begin{equation}
\Omega_\lambda(r) = P(\omega_r|_{\lambda}) \simeq [\omega_r(\partial_\theta):\omega_r(\partial_z)],
\end{equation}
is well-defined and smooth in a neighbourhood of $r$. Furthermore, assuming $\omega_r(\partial_z)\neq 0$ we have
\begin{equation}
\dot v(r)= -\frac{\omega_r\wedge\dot\omega_r(\partial_\theta,\partial_z)}{\omega_r(\partial_z)^2} \neq 0,
\end{equation} 
and similarly if $\omega_r(\partial_\theta|_{\lambda})\neq 0$. It follows that the contact Jacobi curve is an immersion at all non-focal radii $r>0$. Note that by \eqref{eq:Estarcontact}, since $rdr\wedge d\theta\wedge dz \neq 0$ and smooth at $r=0$, we must have $\omega_0\wedge\dot{\omega}_0 = 0$ so that at $r=0$ the contact Jacobi curve is not an immersion.

We now prove how the contact Jacobi curve is well-defined and smooth also in a neighbourhood of the focal radii. Let $\bar{r}>0$ be a focal radius. By \eqref{eq:claim1} it follows that in a neighbourhood of $\bar{r}$ the one-parameter family of one-forms on $A^1\Gamma$ 
\begin{equation}\label{eq:baromega}
\bar{\omega}_r:=\frac{\omega_r}{(r-\bar{r})^m},
\end{equation}
is smooth as a function of $r$ and non-zero (when evaluated at $\lambda$). Thus, for all $r\neq \bar{r}$ in a neighbourhood of $\bar{r}$, it holds
\begin{equation}
\Omega_\lambda(r)  = P(\omega_r|_{\lambda}) = P(\bar{\omega}_r|_{\lambda}) \simeq [\bar{\omega}_r(\partial_\theta): \bar{\omega}_r(\partial_z)],
\end{equation}
and so the contact Jacobi curve extends smoothly through $\bar{r}$.

\textbf{Proof of $(iv)$.} Let $\bar{r}>0$ be a focal radius. Let $\bar{\omega}_r$ be the  one-parameter family of one-form in \eqref{eq:baromega}, which is smooth and non-zero in a neighbourhood of $\bar{r}$. Assume without loss of generality that $\bar{\omega}_{\bar{r}}(\partial_z)\neq 0$. Then $\upsilon(r):=\bar{\omega}_r(\partial_\theta)/\bar\omega_r(\partial_z)$ defines a smooth coordinate for the contact Jacobi curve in a neighbourhood of $\bar{r}$, and furthermore for all $r \neq \bar{r}$ in a neighbourhood of $\bar{r}$ it holds
\begin{equation}\label{eq:upsilondot}
\dot{\upsilon}(r) = \frac{d}{dr}\frac{\bar{\omega}_r(\partial_\theta)}{\bar{\omega}_r(\partial_z)} = \frac{d}{dr}\frac{\omega_r(\partial_\theta)}{\omega_r(\partial_z)} = -\frac{\omega_r\wedge\dot\omega_r(\partial_\theta,\partial_z)}{\omega_r(\partial_z)^2}.
\end{equation}
By \eqref{eq:claim2} and the last term of \eqref{eq:upsilondot} it follows that there exists $k\in \N$ and $c\neq 0$ such that
\begin{equation}\label{eq:upsilondotTaylor}
\dot{\upsilon}(r) = c (r-\bar{r})^k + O(r-\bar{r})^{k+1}.
\end{equation}
On one hand, $k\geq 0$ since from the first equality of \eqref{eq:upsilondot} $\dot{\upsilon}$ is smooth around $\bar{r}$. On the other hand $k\leq 4$ by \eqref{eq:claim2}. By plugging \eqref{eq:upsilondotTaylor} in the formula for the Schwarzian derivative \eqref{eq:def_schwarz}, we obtain the following asymptotic expansion as $r\to \bar{r}$:
\begin{align}
\mathcal{S}(\Omega_\lambda)(r) & = \frac{\dddot{\upsilon}}{\dot{\upsilon}} - \frac{3}{2}\left(\frac{\ddot{\upsilon}}{\dot{\upsilon}}\right)^2 \\
& = \frac{k(k-1)}{(r-\bar{r})^2} -\frac{3k^2}{2(r-\bar{r})^2} + k \, O\left(\frac{1}{r-\bar{r}}\right) + O(1) \\
& = -\frac{k(k+2)}{2(r-\bar{r})^2} + k\, O\left(\frac{1}{r-\bar{r}}\right) + O(1),
\end{align}
concluding the proof of \eqref{eq:asymptoticsfoc}.

\textbf{Proof of $(v)$.} The proof of \eqref{eq:asymptotics0} is similar, but we can exploit more precise computations at $r=0$. Assume without loss of generality that $\omega_0(\partial_z)\neq 0$. Consider the smooth coordinate $v(r)=\omega_r(\partial_\theta)/\omega_r(\partial_z)$, in a neighbourhood of $r=0$. We have: 
    \begin{equation}\label{eq:vdot}
        \dot v(r)= -\frac{\omega_r\wedge\dot\omega_r(\partial_\theta,\partial_z)}{\omega_r(\partial_z)^2} = -\frac{\omega_r(\partial_\theta)\dot{\omega}_r(\partial_z)-\omega_r(\partial_z)\dot{\omega}_r(\partial_\theta)}{\omega_r(\partial_z)^2}.
    \end{equation}
Routine computations, which are presented in \cref{app:contidots} (see \cref{lem:claimdots}), show that
\begin{align}
\omega_0(\partial_\theta) & = 0, & \ddot{\omega}_0(\partial_\theta) &= 1, & \omega_0(\partial_z) & = 1, \label{eq:dots1}\\
\dot{\omega}_0(\partial_\theta) &= 0,  & \dddot{\omega}_0(\partial_\theta) &= 0,
& \dot{\omega}_0(\partial_z) & = 0. \label{eq:dots2}
\end{align}
We deduce then from \eqref{eq:vdot} that $\dot{v}(r) = r + r^3 f(r)$, for some $f\in C^\infty([0,\varepsilon))$. From the definition of Schwarzian derivative \eqref{eq:def_schwarz} and elementary computation we obtain
\begin{equation}\label{eq:S_order_1}
\mathcal{S}(\Omega_\lambda)(r) = -\frac{3}{2r^2} + 3\dot{f}(0)r + O(r^2),
\end{equation}
which corresponds to \eqref{eq:asymptotics0}.
\end{proof}

We continue the discussion on the standard contact (resp.\ overtwisted) structure of \cref{ex:st_sr,ex:st_ot_sr}, computing their contact Jacobi curve and Schwarzian derivative.

\begin{example}[Contact Jacobi curve for the standard contact structure]\label{ex:st_scwharz}
Let $\Gamma=\{(0,0,z)\mid z\in \mathbb R\}$, which is an embedded Reeb orbit of the standard sub-Riemannian contact structure on $\mathbb R^3$ (see \cref{cor:model_radii_st}). Combining \cref{cor:model_radii_st} with equation \eqref{eq:coordsOmega} we deduce that, for all $\lambda \in A^1\Gamma$ the contact Jacobi curve is
\begin{equation}
\Omega_\lambda(r)=\left[\frac{r^2}{2} : 1\right],\qquad \forall\,r\in\mathbb R.
\end{equation}
A homogeneous coordinate is $v=r^2/2$ and the Schwarzian derivative is
\begin{equation}
    \mathcal S(\Omega_\lambda)(r)=-\frac{3}{2r^2}.
\end{equation}
\end{example}
Let $f,g:\mathbb R\to \mathbb R$ be smooth functions. In the following corollary we exploit the following useful formulas for the Schwarzian derivative
\begin{equation}
    \mathcal S(f\circ g)=\left(\mathcal S(f)\circ g\right)(g')^2+\mathcal S(g).
\end{equation}
\begin{example}[Contact Jacobi curve for the standard overtwisted structure]\label{ex:ot_scwharz}
Let $\Gamma=\{(0,0,z)\mid z\in \mathbb R\}$, which is an embedded Reeb orbit of the standard sub-Riemannian contact structure on $\mathbb R^3$ (see \cref{cor:model_radii_ot}). Combining \cref{cor:model_radii_ot} with equation \eqref{eq:coordsOmega} we deduce that, for all $\lambda \in A^1\Gamma$ the contact Jacobi curve is
\begin{equation}
		\Omega_\lambda(r)=\left[\sin\left(\frac{r^2}{2}\right): \cos\left(\frac{r^2}{2}\right)\right],\qquad \forall\,r\in\mathbb R.
\end{equation}
A homogeneous coordinate is $v=\tan(r^2/2)$. Since $\mathcal S(\tan(r))=2$, the Schwarzian derivative is
\begin{equation}
    \mathcal S(\Omega_\lambda)(r)=-\frac{3}{2r^2}+2r^2.
\end{equation}
\end{example}

\subsection{A formula for the Schwarzian derivative of a contact Jacobi curve}
	We provide a formula relating the Schwarzian derivative of a contact Jacobi curve to the sub-Riemannian distance from $\Gamma$. For the proof we refer to \cref{app:formula_for_S}.
	
	Let $(M,\omega,g)$ be a contact sub-Riemannian manifold and let $\Gamma \subset M$ be an embedded piece of Reeb orbit. We assume here that $\rho=r_{\inj}(\Gamma)>0$, which is always the case if $\Gamma$ has compact closure by \cref{thm:exp_map}, and $\delta$ (the distance function from $\Gamma$) is smooth on $E(A^{<\rho}\Gamma)\setminus \Gamma$. There, its horizontal gradient is well-defined, as the unique horizontal vector field $\nabla \delta$ satisfying
	\begin{equation}
		d\delta(Y)=g(\nabla\delta, Y),\qquad \forall \, Y\in \xi,
	\end{equation}
	which is also called the \emph{horizontal normal} to $\Gamma$, and hence denoted by $N = \nabla \delta$.
	
	\begin{defi}\label{def:adaptedframe_main}
		Let $N=\nabla \delta$ be the horizontal normal, and let $J: \xi \to \xi$ be the almost-complex structure \eqref{eq:complex_strctr}. The ordered triple $\{f_0,N,JN\}$, which is an oriented orthonormal frame defined on $E(A^{<\rho}\Gamma)\setminus \Gamma$, is called \emph{adapted frame} associated to $\Gamma$.
	\end{defi}	
	\begin{prop}\label{prop:schw_expr_main}
		Given $\lambda\in A^1\Gamma$ and $r\in (0,r_{\inj}(\Gamma))$ the Schwarzian derivative of the contact Jacobi curve \eqref{eq:contact_Jacobi} can be expressed in terms of the adapted frame of \cref{def:adaptedframe_main} as 
		\begin{equation}\label{eq:schw_expr_main}
			\frac{1}{2}\mathcal S(\Omega_{\lambda})(r)=g([N,f_0],JN)-\frac{1}{2}Ng([N,JN],JN)-\frac{1}{4}g([N,JN],JN)^2,
		\end{equation}
		where the right hand side is evaluated at $E(r\lambda)$.
	\end{prop}

\subsection{A sharp tightness radius estimate via contact Jacobi curves}\label{sec:tigthnessJacobi}

The first singular radius of a contact Jacobi curve detects the presence of overtwisted disks.

\begin{theorem}
\label{thm:dist=conj}
	Let $(M,\omega, g)$ be a complete three-di\-men\-sional contact sub-Riemannian manifold and let $\Gamma$ be an embedded piece of Reeb orbit with $r_{\inj}(\Gamma)>0$. Then, letting
	\begin{equation}
r_o^-(\Gamma) := \inf \{ r_o(\lambda)\mid \lambda \in A^1\Gamma \}, \qquad 	r_o^+(\Gamma) := \sup \{ r_o(\lambda)\mid \lambda \in A^1\Gamma \},
	\end{equation}	
	the following estimates hold:
	\begin{equation}\label{eq:upperandlower}
	\min\left\{r_{\inj}(\Gamma), r_o^-(\Gamma)\right\}\leq r_{\tight}(\Gamma) \leq \min\left\{r_{\inj}(\Gamma),r_o^+(\Gamma)\right\}.
	\end{equation}
	Moreover, if $r_o^+(\Gamma) < r_{\inj}(\Gamma)$, then for any $q\in \Gamma$, the set
	\begin{equation}\label{eq:Dqdisk}
	D_q:=\{E(r\lambda) \mid \lambda \in A_q^1 \Gamma,\,\, r\leq r_o(\lambda)\}
	\end{equation}
	is an overtwisted disk, and thus $(M,\omega)$ is an overtwisted contact manifold.
\end{theorem}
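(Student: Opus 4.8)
The plan is to translate the projective dynamics of the contact Jacobi curve into the elementary geometry of the twisting angle of $\xi$ along radial geodesics, after which \cref{thm:singlocusbound,thm:singwelldef} do most of the work. Work in cylindrical coordinates $(r\cos\theta,r\sin\theta,z)$ on $A\Gamma$, set $\rho:=r_\inj(\Gamma)$, and recall from the proof of \cref{thm:singlocusbound} that on $A^{<\rho}\Gamma\setminus\{0\}$ one may write $E^*\omega=N(\cos\phi\,dz+\sin\phi\,d\theta)$ with $N>0$, with $\phi$ strictly increasing along radial directions (since $\partial_r\phi/r$ is positive), and with $\phi\to 0$ as $r\to 0$ by \eqref{eq:dots1}. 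Comparing this with \eqref{eq:Estaromega}, which reads $(E^*\omega)|_{r\lambda}=\omega_r(\partial_\theta|_\lambda)\,d\theta+\omega_r(\partial_z|_\lambda)\,dz$, one identifies, for $r\in(0,\rho)$,
\[
\Omega_\lambda(r)=\bigl[\,\sin\phi_\lambda(r):\cos\phi_\lambda(r)\,\bigr],\qquad \phi_\lambda(r):=\phi(r\cos\theta_\lambda,r\sin\theta_\lambda,z_\lambda).
\]
Since $\phi_\lambda$ is continuous, strictly increasing and $\phi_\lambda(0)=0$, the identity $\Omega_\lambda(r)=\Omega_\lambda(0)=[0:1]$ holds if and only if $\phi_\lambda(r)\in\pi\mathbb{Z}$, equivalently $\phi_\lambda(r)=\pi$ at the first positive such $r$. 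Hence $\min\{r_o(\lambda),\rho\}=\min\{\inf\{r>0:\phi_\lambda(r)=\pi\},\rho\}$; taking the infimum over $\lambda\in A^1\Gamma$ and using $E^{-1}(\mathrm{Sing}(\Gamma))=\phi^{-1}(\pi\N)$ from \eqref{eq:sing=level} together with \eqref{eq:dist_gamma_sing}, one obtains $\min\{r_o^-(\Gamma),r_\inj(\Gamma)\}=\min\{d(\Gamma,\mathrm{Sing}(\Gamma)),r_\inj(\Gamma)\}$, both sides being $r_\inj(\Gamma)$ when $\mathrm{Sing}(\Gamma)=\emptyset$.

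\emph{The two inequalities.} The lower bound in \eqref{eq:upperandlower} is now immediate from \cref{thm:singlocusbound}: $r_\tight(\Gamma)\ge\min\{d(\Gamma,\mathrm{Sing}(\Gamma)),r_\inj(\Gamma)\}=\min\{r_o^-(\Gamma),r_\inj(\Gamma)\}$. For the upper bound, note $r_\tight(\Gamma)\le r_\inj(\Gamma)$ always, so the right inequality is trivial unless $r_o^+(\Gamma)<r_\inj(\Gamma)$, which I assume from now on. By the previous paragraph, for every $\lambda\in A^1\Gamma$ the angle $\phi_\lambda$ reaches $\pi$ at $r_o(\lambda)\le r_o^+(\Gamma)<\rho$, and $\partial_r\phi_\lambda>0$ together with the implicit function theorem makes $\lambda\mapsto r_o(\lambda)$ smooth. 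Fixing $q=E(0,0,z_q)\in\Gamma$, the set $E^{-1}(D_q)=\{(r\cos\theta,r\sin\theta,z_q):0\le r\le r_o(\lambda_\theta)\}$ is therefore a smooth closed topological disk contained in $A^{<\rho}\Gamma$ (indeed in $A^{<r}\Gamma$ for any $r\in(r_o^+(\Gamma),\rho)$), on which $E$ is a diffeomorphism; so $D_q\subset M$ is an embedded disk.

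\emph{$D_q$ is an overtwisted disk.} Restrict the map $\varphi$ of \eqref{eq:contact_embedding}, $\varphi(x,y,z)=\bigl(\sqrt{\phi/H}\,x,\sqrt{\phi/H}\,y,z\bigr)$, to the fibre disk $E^{-1}(D_q)\subset\{z=z_q\}$. Using $2H=r^2$ on $A\Gamma$ and $0\le\phi_\lambda\le\pi$ along each radius, $\varphi(r\cos\theta,r\sin\theta,z_q)=(\sqrt{2\phi_\lambda(r)}\cos\theta,\sqrt{2\phi_\lambda(r)}\sin\theta,z_q)$, so $\varphi$ carries $E^{-1}(D_q)$ diffeomorphically onto the Euclidean disk of radius $\sqrt{2\pi}$ in $\{z=z_q\}$, which after the translation $z\mapsto z-z_q$ is exactly $D_{\mathrm{ot}}$. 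Let $\psi:=(\text{translation})\circ\varphi\circ E^{-1}:D_q\to D_{\mathrm{ot}}$. From \eqref{eq:pb_form} and \eqref{eq:almost_isometry} one has $E^*\omega=N\,\varphi^*\omega_{\mathrm{ot}}$ on $A^{<\rho}\Gamma\setminus\{0\}$, and since translations in $z$ preserve $\omega_{\mathrm{ot}}$, pulling these identities back to the surface $E^{-1}(D_q)$ gives $\psi^*\bigl(\omega_{\mathrm{ot}}|_{D_{\mathrm{ot}}}\bigr)=h\cdot\bigl(\omega|_{D_q}\bigr)$ with $h>0$. Hence $\psi$ sends singular points of the characteristic foliation of $D_q$ to those of $D_{\mathrm{ot}}$ and, away from them, carries $\xi\cap TD_q$ to $\xi_{\mathrm{ot}}\cap TD_{\mathrm{ot}}$; by \cref{def:ot_disk}, $D_q$ is an overtwisted disk and $(M,\omega)$ is overtwisted. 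Finally, for $r\in(r_o^+(\Gamma),r_\inj(\Gamma))$ the embedded disk $E^{-1}(D_q)$ is an overtwisted disk inside $(A^{<r}\Gamma,\ker E^*\omega)$, so $r_\tight(\Gamma)\le r$; letting $r\downarrow r_o^+(\Gamma)$ yields $r_\tight(\Gamma)\le\min\{r_\inj(\Gamma),r_o^+(\Gamma)\}$.

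\emph{Main obstacle.} The one genuinely delicate point is the passage from the abstract curve $\Omega_\lambda$ in $\RP^1$ to a single-valued continuous angle $\phi$ with $\phi(0)=0$: one must know that the normalization $E^*\omega=N(\cos\phi\,dz+\sin\phi\,d\theta)$ admits such a lift on $E^{-1}(\Sigma_q)\cup\{q\}$ (winding issues around the $z$-axis must be ruled out, which follows from $\phi$ being small near the zero section), and that the monotonicity $\partial_r\phi>0$ forces the first return of $\Omega_\lambda$ to be at $\phi_\lambda=\pi$ rather than at another element of $\pi\mathbb{Z}$. Once \eqref{eq:dots1}, \eqref{eq:pb_form} and the monotonicity established in \cref{thm:singwelldef,thm:singlocusbound} are in hand, everything else is assembly of these two results.
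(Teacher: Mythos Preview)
Your proof is correct and follows essentially the same approach as the paper: you identify $\Omega_\lambda(r)=[\sin\phi_\lambda(r):\cos\phi_\lambda(r)]$ via the angle representation $E^*\omega=N(\cos\phi\,dz+\sin\phi\,d\theta)$, reduce the lower bound to \cref{thm:singlocusbound}, and for the upper bound use the map $\varphi$ of \eqref{eq:contact_embedding} to send $D_q$ onto the standard overtwisted disk. The paper organizes things slightly differently (first establishing the characterization $E(r\lambda)\in\mathrm{Sing}(\Gamma)\Leftrightarrow\Omega_\lambda(r)=\Omega_\lambda(0)$ abstractly, then passing to $\phi$), but the substance is identical.
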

\begin{proof}
We first prove the lower bound in \eqref{eq:upperandlower}. We characterize the singular locus of the Reeb orbit and employ \cref{thm:singlocusbound}. Let $\rho = r_{\inj}(\Gamma)$, and recall that $E:A^{<\rho}\Gamma \to M$ is a diffeomorphism onto its image by \cref{thm:exp_map}. Let $(r\cos\theta,r\sin\theta,z)$ be cylindrical coordinates on $A\Gamma$ as explained in \cref{sec:cylindcoords}. By the same computation in the proof of \cref{thm:singwelldef} we have
\begin{equation}\label{eq:Epullbackomegawithomegar}
E^*\omega|_{r\lambda} = \omega_r(\partial_\theta|_{\lambda})d\theta + \omega_r(\partial_z|_{\lambda}) dz,
\end{equation}
for all $(r,\lambda)\in (0,+\infty)\times  A^1\Gamma$. By definition of singular locus (see \cref{def:singlocus}) we see that
\begin{equation}
\mathrm{Sing}(\mathrm{\Gamma}) = E\Big(\left\{ r\lambda \in A\Gamma \mid (r,\lambda)\in (0,r_{\inj}(\Gamma))\times  A^1\Gamma,\,\, \omega_r(\partial_{\theta}|_{\lambda})=0 \right\}\Big).
\end{equation}
Note that $\omega_0(\partial_{\theta}) = \omega(\pi_*\partial_{\theta}) =0$ by construction. For all $(r,\lambda)\in (0,r_{\inj}(\Gamma))\times  A^1\Gamma$ we have then the following characterization in terms of the contact Jacobi curve $\Omega_\lambda(\cdot)$:
\begin{equation}\label{eq:char-of-singular}
E(r\lambda) \in \mathrm{Sing}(\Gamma) \quad \Longleftrightarrow\quad \Omega_{\lambda}(r) = \Omega_{\lambda}(0).
\end{equation}
Using characterization \eqref{eq:char-of-singular} we obtain that, for all $\lambda \in A^1\Gamma$, it holds
\begin{align}
r_0(\lambda) & = \inf\{ r>0 \mid \Omega_{\lambda}(r) = \Omega_{\lambda}(0)\}  \\
 & \leq \inf\{ 0<r<r_{\inj}(\Gamma) \mid \Omega_{\lambda}(r) = \Omega_{\lambda}(0)\} \label{eq:inequality} \\
 & =  \inf\{\delta(E(r\lambda)) \mid E(r\lambda) \in \mathrm{Sing}(\Gamma) \},
\end{align}
where $\delta$ is the sub-Riemannian distance from $\Gamma$.

Assume now that $\mathrm{Sing}(\Gamma)\neq \emptyset$. This means that there exists $(r,\lambda) \in (0,r_{\inj}(\Gamma))\times A^1\Gamma$ such that $\Omega_{\lambda}(r)=\Omega_{\lambda}(0)$ and thus inequality \eqref{eq:inequality} is actually an equality. Therefore, taking the infimum over all $\lambda \in A^1\Gamma$ we obtain
\begin{equation}
r_0^-(\Gamma) = d(\Gamma,\mathrm{Sing}(\Gamma)) \leq r_{\tight}(\Gamma),
\end{equation}
where the last inequality is \cref{thm:singlocusbound}. If $\mathrm{Sing}(\Gamma) = \emptyset$ then again by \cref{thm:singlocusbound} we have that $r_{\tight}(\Gamma) = r_{\inj}(\Gamma)$. Combining both cases we obtain the left inequality of \eqref{eq:upperandlower}.

We now prove the upper bound in \eqref{eq:upperandlower}. By definition $r_{\tight}(\Gamma) \leq r_{\inj}(\Gamma)$, so we proceed under the assumption $r_o^+(\Gamma)<r_{\inj}(\Gamma)$. It is sufficient to show that the sets \eqref{eq:Dqdisk} are overtwisted disks.

As explained in the proof of \cref{thm:singlocusbound} there exist smooth functions $\phi, N\in C^{\infty}(A^{<\rho}\Gamma)$, with $N>0$, such that
		\begin{equation}\label{eq:pb_form2}
			E^* \omega=N(\sin\phi d\theta+\cos\phi dz).
		\end{equation}
Comparing \eqref{eq:pb_form2} with \eqref{eq:Epullbackomegawithomegar} yields, for all $(r,\lambda)\in (0,r_{\inj}(\Gamma))\times A^1\Gamma$
\begin{equation}
N(r\lambda)\sin\phi(r\lambda)  = \omega_r(\partial_\theta|_{\lambda}), \qquad N(r\lambda)\cos\phi(r\lambda)  = \omega_r(\partial_z|_{\lambda}).
\end{equation}
Since $\omega_0(\partial_\theta|_{\lambda})=0$, it follows that $r_o(\lambda)$ is solution to the equation 
    \begin{equation}
        \phi(r_o(\lambda)\lambda)=\pi.
    \end{equation}
Since any positive value of $\phi$ is a regular one (see the proof of \cref{thm:singlocusbound}), $r_o:A^1\Gamma\to \R$ is smooth. It follows that, for $q\in \Gamma$, the set $D_q \subset M$ of \eqref{eq:Dqdisk} is a smooth embedded disk. Recall the map $\varphi: A^{<\rho}\Gamma \to \R^3$
		\begin{equation}\label{eq:contact_embedding2}
				\varphi(x,y,z)= \left( \sqrt{\frac{\phi}{H}}x, \sqrt{\frac{\phi}{H}}y,z\right).
		\end{equation}
As we already observed in the proof of \cref{thm:singlocusbound}, $\varphi$ is a diffeomorphism on the image, and
\begin{equation}
\varphi^*\omega_{\mathrm{ot}} =  \sin\phi d\theta +\cos\phi dz= \frac{1}{N} E^*\omega.
\end{equation}
It follows that on the tubular neighbourhood $E(A^{<\rho}\Gamma)\subset M$ it holds $(\varphi \circ E^{-1})_* \xi = \xi_{\mathrm{ot}}$. Furthermore it is easy to check (recalling that $r\mapsto \phi(r\lambda)$ is monotone increasing for $r\in (0,r_{\inj}(\Gamma))$ and $\lambda \in A^1\Gamma$) that, if $q=E(0,0,z)$, then
\begin{equation}
\varphi\circ E^{-1}(D_q) = \{(x,y,z)\in \R^3\mid x^2+y^2\leq 2\pi\},
\end{equation}
which is (up to a translation along the $z$ axis) the standard overtwisted disk of $(\R^3,\xi_{\mathrm{ot}})$. It follows that $D_q$ is a overtwisted disk.	
\end{proof}

\subsection{Comparison theorems under Schwarzian derivative bounds}

We can apply \cref{thm:comparisonScwharzian} to contact Jacobi curves, to estimate their singular radius. Combined with \cref{thm:dist=conj}, we obtain a comparison theorem for the tightness radius.

\begin{theorem}\label{thm:comparisonschwarzian}
		Let $(M,\omega, g)$ be a complete three-di\-men\-sional contact sub-Riemannian manifold and let $\Gamma$ be an embedded piece of Reeb orbit with $r_{\inj}(\Gamma)>0$. Assume that there exist $k_1,k_2\in \R$ such that the Schwarzian derivative of the contact Jacobi curves are bounded above by:
		\begin{equation}\label{eq:schwarzianestimate}
			\frac{1}{2}\mathcal S(\Omega_{\lambda})(r)\leq -\frac{3}{4r^2}+k_1r+k_2r^2,\qquad \forall\, r\in (0,r_{\inj}(\Gamma)],\quad \forall\, \lambda\in A^1\Gamma.
		\end{equation}
Then for the tightness radius of $\Gamma$ it holds
\begin{equation}\label{eq:rtightestimate}
r_\tight(\Gamma) \geq \min\left\lbrace r_*(k_1,k_2),r_{\inj}(\Gamma) \right\rbrace,
\end{equation}
where
\begin{equation}\label{eq:k12}
r_*(k_1,k_2):= \begin{cases}
\frac{-k_1+\sqrt{8 \pi  k_2^{3/2}+k_1^2}}{2 k_2}, & \text{if } k_1,k_2>0, \\
\frac{\sqrt{2 \pi} }{k_2^{1/4}}, & \text{if }  k_1 \leq 0, \, k_2>0,\\
			\left(\frac{3 j_{2/3}}{2}\right)^{2/3}\frac{1}{k_1^{1/3}}, & \text{if }  k_1>0, \,k_2 \leq 0 ,\\
			+\infty & \text{if } k_1,k_2\leq 0,
			\end{cases}
\end{equation}
and $j_{2/3}\sim 3.37$ is the first positive root of the Bessel function of first kind $J_{2/3}$.
\end{theorem}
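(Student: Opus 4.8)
The plan is to apply the singular Sturm-type comparison \cref{thm:comparisonScwharzian} to the contact Jacobi curves and feed the resulting bound on the first singular radius into the geometric estimate \cref{thm:dist=conj}. Fix $\lambda\in A^1\Gamma$ and write $\rho:=r_\inj(\Gamma)$. By \cref{rmk:rfoc>rinj} there are no focal radii in $(0,\rho)$, so \cref{thm:singwelldef} guarantees that $\Omega_\lambda|_{[0,\rho)}$ is continuous, smooth, an immersion on $(0,\rho)$, with Schwarzian $q:=\tfrac12\mathcal S(\Omega_\lambda)$ satisfying $q(r)+\tfrac{3}{4r^2}\in C([0,\rho))$; moreover $r_o(\lambda)>0$ and, if $r_o(\lambda)<\rho$, then $\Omega_\lambda(0)=\Omega_\lambda(r_o(\lambda))$ (all by \cref{thm:singwelldef}\,$(v)$, which gives $\dot v(r)=r+O(r^3)$ for a homogeneous coordinate $v$ with $v(0)=0$, together with continuity of $\Omega_\lambda$). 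Hypothesis \eqref{eq:schwarzianestimate} reads $q\le q_0$ on $(0,\rho]$, with $q_0(r):=-\tfrac{3}{4r^2}+k_1 r+k_2 r^2$. Hence for any $\bar q\in C((0,\rho))$ with $\bar q\ge q_0$ on $(0,\rho)$ and admitting a solution $\bar u$ of $\ddot{\bar u}+\bar q\,\bar u=0$ vanishing at the origin, \cref{thm:comparisonScwharzian} applied to $f=\Omega_\lambda|_{[0,\rho)}$ gives $r_o(\lambda)\ge t_0(\bar q)$, where $t_0(\bar q)$ denotes the first positive zero of such a $\bar u$ (set $t_0(\bar q)=+\infty$ if none; the statement being vacuous when $r_o(\lambda)\ge\rho$). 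Thus $r_o(\lambda)\ge\min\{\rho,t_0(\bar q)\}$ for all $\lambda$, and by the lower bound of \cref{thm:dist=conj} it suffices to exhibit, in each regime of $(k_1,k_2)$, a comparison potential $\bar q\ge q_0$ with $t_0(\bar q)=r_*(k_1,k_2)$.

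\textbf{Three of the four regimes.} \emph{Case $k_1,k_2\le 0$:} take $\bar q:=q_0\le-\tfrac{3}{4r^2}$; comparing $\ddot{\bar u}+q_0\bar u=0$ with the Euler equation $\ddot v=\tfrac{3}{4r^2}v$ through \cref{prop:sturmpicone} (whose solution $v(r)=r^{3/2}$ has no positive zero) shows that the $r^{3/2}$-type solution of \cref{lem:propofsolutions}, which is the one vanishing at $0$, has no positive zero, so $t_0(\bar q)=+\infty=r_*$. \emph{Case $k_1\le 0,\ k_2>0$:} since $k_1 r\le 0$, take $\bar q:=-\tfrac{3}{4r^2}+k_2 r^2\ge q_0$; this is $\tfrac12\mathcal S$ of $[\sin\psi:\cos\psi]$ with $\psi:=\tfrac{\sqrt{k_2}}{2}r^2$, so $\sin\psi/\sqrt{\psi'}$ and $\cos\psi/\sqrt{\psi'}$ are independent solutions of $\ddot{\bar u}+\bar q\bar u=0$, the first vanishing at $r=0$ with first positive zero at $\psi=\pi$, i.e.\ $r=\sqrt{2\pi}\,k_2^{-1/4}=r_*$. \emph{Case $k_1>0,\ k_2\le 0$:} since $k_2 r^2\le 0$, take $\bar q:=-\tfrac{3}{4r^2}+k_1 r\ge q_0$; the substitution $\bar u=\sqrt r\,w$, $\tau=\tfrac{2\sqrt{k_1}}{3}r^{3/2}$ turns the equation into Bessel's equation of order $\tfrac23$, whose origin-regular solution is $\propto\sqrt r\,J_{2/3}(\tau)$, with first positive zero at $\tau=j_{2/3}$, i.e.\ $r=\big(\tfrac{3j_{2/3}}{2}\big)^{2/3}k_1^{-1/3}=r_*$.

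\textbf{The delicate regime $k_1,k_2>0$.} Here the natural choice $\bar q=q_0$ leads to a non-solvable (biconfluent Heun–type) equation, so I would relax the bound to a ``rotation-type'' potential. Let $\psi(r):=\tfrac{\sqrt{k_2}}{2}r^2+\tfrac{k_1}{2\sqrt{k_2}}r$, so $\psi(0)=0$, $\psi'(r)=\sqrt{k_2}\,r+\tfrac{k_1}{2\sqrt{k_2}}>0$ on $(0,\infty)$, $\psi'''\equiv 0$, and set
\[
\bar q:=(\psi')^2+\tfrac12\mathcal S(\psi)=\Big(\sqrt{k_2}\,r+\tfrac{k_1}{2\sqrt{k_2}}\Big)^2-\frac{3k_2}{4\big(\sqrt{k_2}\,r+\tfrac{k_1}{2\sqrt{k_2}}\big)^2},
\]
which is $\tfrac12\mathcal S$ of $[\sin\psi:\cos\psi]$. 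Using $\big(\sqrt{k_2}\,r+\tfrac{k_1}{2\sqrt{k_2}}\big)^2=k_2 r^2+k_1 r+\tfrac{k_1^2}{4k_2}$ and $\tfrac{3k_2}{4(\sqrt{k_2}\,r+\frac{k_1}{2\sqrt{k_2}})^2}\le\tfrac{3k_2}{4k_2 r^2}=\tfrac{3}{4r^2}$, one checks $\bar q-q_0\ge\tfrac{k_1^2}{4k_2}>0$, so $\bar q\ge q_0$ on $(0,\infty)$. Again $\sin\psi/\sqrt{\psi'}$ and $\cos\psi/\sqrt{\psi'}$ are independent solutions of $\ddot{\bar u}+\bar q\bar u=0$; since $\psi(0)=0$ and $\psi'(0)\ne 0$, the first vanishes at $r=0$ and has first positive zero at $\psi(r)=\pi$, i.e.\ $\sqrt{k_2}\,r^2+\tfrac{k_1}{\sqrt{k_2}}r=2\pi$, whose positive root is exactly $r_*(k_1,k_2)=\tfrac{-k_1+\sqrt{k_1^2+8\pi k_2^{3/2}}}{2k_2}$.

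\textbf{Conclusion and main obstacle.} Collecting the four cases yields $r_o(\lambda)\ge\min\{r_\inj(\Gamma),r_*(k_1,k_2)\}$ for every $\lambda\in A^1\Gamma$, whence \eqref{eq:rtightestimate} via \cref{thm:dist=conj}. I expect the main obstacle to be precisely the construction in the last regime: finding a potential that simultaneously dominates $q_0$ and whose associated ODE is solved by an explicit curve (here even in closed form). The Bessel reductions in the other non-trivial regime and the verification that each $\bar q$ meets the hypotheses of \cref{thm:comparisonScwharzian}—in particular that a distinguished solution vanishing at $r=0$ exists and realizes the optimal bound, which follows from \cref{lem:propofsolutions} when $\bar q$ is regular-singular and from regularity at $0$ when $\psi'(0)\ne 0$—are the remaining routine points.
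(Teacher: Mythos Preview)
Your proof is correct and follows essentially the same route as the paper: reduce to \cref{thm:dist=conj}, apply \cref{thm:comparisonScwharzian} to the contact Jacobi curve, and split into the four $(k_1,k_2)$-regimes with explicit comparison potentials. In particular, your comparison potential in the regime $k_1,k_2>0$ is literally the same as the paper's: with $w=\tfrac{k_1}{2k_2}$ one has $\psi'(r)=\sqrt{k_2}(r+w)$, so your $\bar q=(\psi')^2-\tfrac{3k_2}{4(\psi')^2}=k_2(r+w)^2-\tfrac{3}{4(r+w)^2}$ and your solution $\sin\psi/\sqrt{\psi'}$ coincides (up to a constant) with the paper's $\sin\!\big(\tfrac{\sqrt{k_2}}{2}r(r+2w)\big)/\sqrt{r+w}$; your derivation via the Schwarzian cocycle is a slightly more conceptual packaging of the same object. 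The only other cosmetic difference is in the case $k_1,k_2\le 0$, where the paper takes $\bar q=-\tfrac{3}{4r^2}$ directly, while you take $\bar q=q_0$ and then invoke a second Sturm comparison with the Euler equation to conclude there is no positive zero; both are valid.
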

\begin{remark}\label{rmk:asymptotics}
Fix $\lambda \in A^1\Gamma$. At $r=0$ and at focal radii $\bar{r}$, the following asymptotics hold
	\begin{equation}\label{eq:asymptoticsremark}
	\frac{1}{2}\mathcal{S}(\Omega_\lambda)(r) = -\frac{3}{4r^2} +O(r),\qquad 	\frac{1}{2}\mathcal{S}(\Omega_\lambda)(r) = -\frac{k(k+2)}{4(r-\bar{r})^2} + k\, O\left(\frac{1}{r-\bar{r}}\right) + O(1),
	\end{equation}
	with smooth remainders, and $k\in \{0,1,2,3,4\}$ (see \cref{item:asymptotics0,item:asymptoticsfoc} of \cref{thm:singwelldef}). In particular the Schwarzian derivative remains bounded \emph{from above} in a neighbourhood of focal radii. Since focal radii are discrete, one can see that for any $\lambda \in A^1\Gamma$ there is a smooth function $R_{\lambda}:[0,+\infty) \to \R$  such that
	\begin{equation}
	\frac{1}{2}\mathcal{S}(\Omega_\lambda)(r) \leq -\frac{3}{4r^2} + R_{\lambda}(r), \qquad \forall\, r\in (0,+\infty).
	\end{equation}
	It follows that for any bounded interval $I$ the bound \eqref{eq:schwarzianestimate} holds on that interval, for some $k_1,k_2\in \R$, depending on $\lambda,I$. In particular the upper bound in \eqref{eq:schwarzianestimate} is always verified for a given $\lambda$ if $r_{\inj}(\Gamma)<+\infty$ (e.g.\ when $M$ is compact). The \emph{uniformity} of this bound w.r.t. $\lambda \in A^1\Gamma$ is more delicate because the remainder terms at focal radii in \eqref{eq:asymptoticsremark}, albeit smooth w.r.t.\ $r$, may not be upper semi-continuous w.r.t.\ $\lambda$.
\end{remark}
\begin{remark}[Sharpness]\label{rmk:sharp}
		The tightness radius estimate is sharp for the standard sub-Rieman\-nian contact structure (resp.\ overtwisted structure) of \cref{ex:st_sr} (resp.\ \cref{ex:st_ot_sr}). More precisely, for any embedded piece of Reeb orbit $\Gamma$ we have $r_{\inj}(\Gamma)=+\infty$, (see \cref{cor:model_radii_st,cor:model_radii_ot}) and as computed in \cref{ex:st_scwharz,ex:ot_scwharz}, for all $\lambda\in A^1\Gamma$ it holds
		\begin{equation}
				\frac{1}{2}\mathcal S(\Omega_{\lambda})(r)=-\frac{3}{4r^2}+ k_2 r^2,\qquad \forall\, r\in(0,+\infty),
		\end{equation}
		with $k_2=0$ (resp.\ $k_2=1$). For these structures, $r_{\tight}(\Gamma)=+\infty$ (resp.\ $r_{\tight}(\Gamma)=\sqrt{2\pi}$). \cref{thm:comparisonschwarzian} is also sharp for left-invariant K-contact structures, see \cref{sec:K-left}, \cref{prop:rho1-rho2-K-left}.
	\end{remark}
	\begin{remark}[On the necessity of $k_1$]\label{rmk:kappa1} The fact that $k_1=0$ in the models of \cref{rmk:sharp} is no coincidence. Indeed, for any radial model of \cref{thm:alpha_beta} the Schwarzian derivative satisfies 
        \begin{equation}\label{eq:order1=0}
 \frac{1}{2}           \mathcal S(\Omega_\lambda(r))=-\frac{3}{4 r^2}+O(r^2).
        \end{equation}
        This follows from \eqref{eq:S_order_1} and the fact that, for smoothness reasons, the homogeneous coordinate $\dot v(r)$ is always an odd function of $r$. However \eqref{eq:order1=0} does not necessarily hold in less symmetric structures, as we illustrate with an example.
        
        Let $\varepsilon>0$ and let $f_\varepsilon:\mathbb R^3\to\mathbb R^3$ be the following smooth function 
    \begin{equation}
f_\varepsilon(r\cos\theta,r\sin\theta,z)= f_{\varepsilon,\theta}(r):=\frac{r^2}{2}+\varepsilon\frac{r^5\cos\theta}{1+r^2},
    \end{equation}
    where $r^2=x^2+y^2$. For $\varepsilon>0$ small enough the form 
    \begin{equation}
    \omega=\sin(f_{\varepsilon,\theta}(r))d\theta+\cos(f_{\varepsilon,\theta}(r))dz,
    \end{equation}
    defines a smooth contact structure on $\mathbb R^3$, and the symmetric form 
    \begin{equation}
        g=\left.\left(dr\otimes dr+\left(\frac{\partial f_{\varepsilon,\theta}}{\partial r}\right)^2\left(d\theta\otimes d\theta +dz\otimes dz\right)\right)\right|_{\ker\omega},
    \end{equation}
    defines a smooth sub-Riemannian metric, and $\omega$ is normalized with respect to it. Arguing as in \cref{thm:alpha_beta} one can show that $\Gamma=\{(0,0,z)\mid z\in\mathbb R\}$ is the orbit of the Reeb field through the origin, and that, in the sense of \cref{thm:alpha_beta},  $E:A\Gamma\cong\mathbb R^3\to\mathbb R^3$ is the identity. It follows that for all $\lambda \in A^1\Gamma$
    \begin{equation}
        \Omega_\lambda(r)=[\sin(f_{\varepsilon,\theta}(r)):\cos(f_{\varepsilon,\theta}(r))],\qquad  \forall\,r\in \mathbb R.
    \end{equation}
    Therefore, setting $\upsilon_{\varepsilon,\theta}(r)=\tan{\left(f_{\varepsilon,\theta}(r)\right)}$, elementary computations show that
    \begin{equation}
        \mathcal S(\Omega_\lambda)(r)= \frac{\dddot{\upsilon}_{\varepsilon,\theta}}{\dot{\upsilon}_{\varepsilon,\theta}} - \frac{3}{2}\left(\frac{\ddot{\upsilon}_{\varepsilon,\theta}}{\dot{\upsilon}_{\varepsilon,\theta}}\right)^2 = - \frac{3}{2r^{2}} + 15\varepsilon \cos{\left(\theta\right)}r+ O\left(r^{2}\right).
    \end{equation}
    In this case, $k_1 \neq 0$ is necessary in order to apply \cref{thm:comparisonschwarzian}.
 \end{remark}
\begin{remark}
If assumption \eqref{eq:schwarzianestimate} is verified only for $r \in (0,T]$, then the estimates hold only up to distance $T$. More precisely \eqref{eq:rtightestimate} holds by adding $T$ to the arguments of the minimum.
\end{remark}
\begin{proof}
By \cref{thm:dist=conj}, it is sufficient to prove that, for any $\lambda \in A^1\Gamma$ with $r_o(\lambda)<r_{\inj}(\Gamma)$, it holds $r_o(\lambda)\geq r_*(k_1,k_2)$. Fix then such a $\lambda$.

By \cref{def:contactjacobi}, $r_o(\lambda)$ is the first self-intersection time of the contact Jacobi curve $\Omega_\lambda : [0,+\infty)\to \RP^1$, which we can restrict under our working assumption to the interval $[0,r_{\inj}(\Gamma))$, where \eqref{eq:schwarzianestimate} holds. We intend to apply \cref{thm:comparisonScwharzian} with $q=2\mathcal{S}(\Omega_\lambda)$ and a suitable model $\bar{q}$ for comparison, given by the right hand side of \eqref{eq:schwarzianestimate}. We note that the regularity assumption \eqref{eq:comparisonSchwarzian-assumption1} is verified thanks to \cref{item:asymptotics0} of \cref{thm:singwelldef}.

\noindent\textbf{1. Case $k_1,k_2>0$.} In this general case, the solutions of the model ODE
\begin{equation}
\ddot{\bar{u}}(r)+\left(-\frac{3}{4r^2}+k_1 r + k_2 r^2\right)\bar{u}(r)=0
\end{equation}
cannot be expressed in terms of elementary functions. We choose then a larger $\bar{q}$, which has the advantage of having elementary solution. Let then
\begin{equation}
\bar{q}(r):= -\frac{3}{4\left(r+\frac{k_1}{2k_2}\right)^2 } +k_2\left(r+\frac{k_1}{2k_2}\right)^2 ,\qquad r\in (0,+\infty).
\end{equation}
Observe that \eqref{eq:schwarzianestimate} and elementary estimates (with $k_1,k_2>0$) imply
\begin{equation}
\frac{1}{2}\mathcal{S}(\Omega_\lambda)(r) \leq \bar{q}(r),\qquad \forall\, r\in(0,r_{\inj}(\Gamma)).
\end{equation}
Furthermore, the solution of $\ddot{\bar{u}}+\bar{q}\bar{u}=0$, such that $\bar{u}(0)=0$ (see \cref{rmk:sol0at0}), is
\begin{equation}
\bar{u}(r) =  \frac{\sin \left(\frac{\sqrt{k_2}}{2} r(r+2 w)  \right)}{\sqrt{r+w}} ,\qquad w:=\frac{k_1}{2k_2}, \qquad r\in (0,+\infty).
\end{equation}
Its first positive root is the non-zero solution of $\sqrt{k_2} r (r+2 w) = 2\pi$. By \cref{thm:comparisonScwharzian} we obtain
\begin{equation}
r_o(\lambda)\geq \inf\{r>0\mid \bar{u}(r) =0\}= \frac{-k_1+\sqrt{8 \pi  k_2^{3/2}+k_1^2}}{2 k_2}.
\end{equation}

\noindent\textbf{2. Case $k_1\leq 0$, $k_2>0$.} By \eqref{eq:schwarzianestimate} we have
\begin{equation}
\frac{1}{2}\mathcal{S}(\Omega_\lambda)(r) \leq -\frac{3}{4r^2} + k_1 r+ k_2 r^2 \leq -\frac{3}{4r^2} + k_2 r^2,\qquad \forall\, r\in(0,r_{\inj}(\Gamma)).
\end{equation}
Hence this case can be seen as a special case of the above case, by setting $k_1=0$.

\noindent\textbf{3. Case $k_1>0$ and $k_2\leq 0$.} By \eqref{eq:schwarzianestimate} we have
\begin{equation}
\frac{1}{2}\mathcal{S}(\Omega_\lambda)(r) \leq -\frac{3}{4r^2} + k_1 r :=\bar{q}(r), \qquad \forall\, r\in(0,r_{\inj}(\Gamma)).
\end{equation}
In this case the non-trivial solutions of $\ddot{\bar{u}}+\bar{q}\bar{u}=0$ is most easily obtained by the replacement $\bar{u}(t) = r^{1/2} \zeta( 2\sqrt{k_1}/3 r^{3/2})$, where $\zeta$ is a solution of the Bessel equation:
\begin{equation}
z^2\ddot{\zeta}(z)+z \dot{\zeta}(z)+(z^2-4/9)\zeta(z)=0.
\end{equation}
Therefore we get that a solution of  $\ddot{\bar{u}}+\bar{q}\bar{u}=0$ with $\bar{u}(0)=0$ (see \cref{rmk:sol0at0}) is
\begin{equation}
\bar{u}(r) = r^{1/2} J_{2/3}\left(\frac{2\sqrt{\kappa}_1 r^{3/2}}{3} \right), \qquad r\in (0,+\infty),
\end{equation}
where $J_n$ is the Bessel function of first kind. Therefore applying \cref{thm:comparisonScwharzian} we obtain
\begin{equation}
r_{o}(\lambda) \geq \inf\{r>0\mid \bar{u}(r) =0\} = \left(\frac{3 j_{2/3}}{2}\right)^{2/3} \frac{1}{k_1^{1/3}},
\end{equation}
where $j_{2/3}\sim 3.37$ is the first positive zero of the Bessel function $J_{2/3}$.

\noindent\textbf{3. Case $k_1,k_2\leq 0$.} By \eqref{eq:schwarzianestimate} we have
\begin{equation}
\frac{1}{2}\mathcal{S}(\Omega_\lambda)(r) \leq -\frac{3}{4r^2} :=\bar{q}(r),\qquad \forall\, r\in(0,r_{\inj}(\Gamma)).
\end{equation}
A non-trivial solution of  $\ddot{\bar{u}}+\bar{q}\bar{u}=0$ with $\bar{u}(0)=0$ (see \cref{rmk:sol0at0}) is
\begin{equation}
\bar{u}(r) = t^{3/2} , \qquad r\in (0,+\infty).
\end{equation}
Therefore by \cref{thm:comparisonScwharzian} we obtain $r_{o}(\lambda) \geq \inf\{r>0\mid \bar{u}(r) =0\} =+\infty$.
\end{proof}

\subsection{Comparison theorems under canonical curvature bounds}\label{sec:canonical}

In this section we develop a different type of tightness radius estimate, assuming a control on the so-called \emph{canonical curvature}, rather than on the Schwarzian derivative of the contact Jacobi curve. The canonical curvature plays the role of sectional curvature in sub-Riemannian comparison geometry. Historically, it was introduced in \cite{AG-Feeback,AZ-JacobiI} for the study of curves in the Lagrange Grassmannians. The concept of canonical curvature (and the related canonical frame) was then formalized in full generality in \cite{ZeLi}, and was further studied and developed in \cite{ZeLi2,LLZ,AAPL,AL-Bishop,CurvVar,ABR-contact,conj,BR-BakryEmery,BR-Jacobi} in sub-Riemannian geometry and its comparison theory. We follow here the presentation that can be found in \cite{CurvVar}.

\begin{defi}\label{def:canframe}
Let $(M,\omega,g)$ be a three-di\-men\-sional contact sub-Riemannian manifold. Let $\lambda \in T^*M \setminus \{H=0\}$. The \emph{canonical frame} is a one-parameter family of moving frames for $T_{\lambda}(T^*M)$, denoted by $\{E_i(t),F_i(t)\}_{i=a,b,c}$, which is determined up to sign by the following properties:
\begin{itemize}
	\item It is a one-parameter family of Darboux frames:
	\begin{equation}
			\sigma(E_i(t),E_j(t))=\sigma(F_i(t),F_j(t))= \sigma(E_i(t),F_j(t))-\delta_{ij} =0,\qquad \forall\ t \in \R;
	\end{equation}
	 \item the ``$E$'' part of the frame satisfies:
	\begin{equation}
	\ker \pi_* = e^{t\vec{H}}_* \spn\{ E_a(t), E_b(t), E_c(t)\}, \qquad \forall\, t\in \R;
	\end{equation}
	\item It satisfies the \emph{structural equations}:
	\begin{align}
			\dot  E_a(t) & =-F_a(t),& \dot F_a(t)& =R_a^{\lambda}(t) E_a(t)-F_c(t), \label{evolution_can_a}\\
			\dot E_b(t) & =-F_b(t),& \dot F_b(t) & =0, \label{evolution_can_b}\\ 
			\dot E_c(t) & =E_a(t), &  \dot F_c(t) & =R_c^{\lambda}(t) E_c(t), \label{evolution_can_c}
		\end{align}
		where the dot denotes the derivative with respect to $t$, and $R_a^{\lambda}(t),R_c^{\lambda}(t)$ are smooth functions called \emph{canonical curvatures} at $\lambda$.
	\end{itemize}

\end{defi}
On three-dimensional contact sub-Riemannian manifolds, the canonical frame can be computed explicitly. Let us recall some notation. Let $f_0,f_1,f_2$ be a local oriented orthonormal frame and $\nu_0=\omega, \nu_1,\nu_2$ be the corresponding dual frame. Let $ h_0,h_1,h_2:T^* M\to \mathbb R$ be the linear-on-fibers functions defined in \eqref{hi2}.
Note that, for any $q \in M$ such that $f_1,f_2$ are defined, the map $(h_0,h_1,h_2):T_q^* M \to \R^3$ induces coordinates on the fiber $T^*_q M$ and, in turn, vertical\footnote{We call a vector (or a vector field) on $T^*M$ \emph{vertical} if $\pi_* V = 0$.} vector fields $ \partial_{h_0},\partial_{h_1},\partial_{h_2}$ locally defined on $T^* M$. They are characterized by the following property in terms of the symplectic form
	\begin{equation}\label{eq:propertypartialh}
		\sigma(\partial_{h_i},\cdot)=(\pi^*\nu_i)(\cdot), \qquad i=0,1,2.
	\end{equation}
Correspondingly, we define the vertical vector fields
		\begin{equation}
		\partial_\theta=h_1\partial_{h_2}-h_{2}\partial_{h_1}, \qquad \mathfrak e=h_1\partial_{h_1}+h_2\partial_{h_2}+h_0\partial_{h_0}.
	\end{equation}
	Both vector fields are globally well-defined on $T^* M$ independently on the choice of the $f_0,f_1,f_2$. Furthermore $\mathfrak{e}$ is the \emph{Euler vector field}, the generator of fiber-wise dilations, namely
	\begin{equation}
	e^{(\ln \alpha) \mathfrak{e}}(\lambda) = \alpha \lambda, \qquad \forall \alpha >0,\quad \lambda \in T^*M.
	\end{equation}
Finally, let also introduce the following globally-defined smooth vector field:
\begin{equation}\label{eq:Hprime}
\vec{H}':= [\partial_\theta,\vec{H}] = h_2\vec{h}_1-h_1\vec{h}_2-\left(\sum_{j=1}^2 c_{12}^j\partial_\theta h_j\right)\partial_\theta +\left(\sum_{i,j=1}^2 h_ic_{i0}^j\partial_\theta h_j\right)\partial_{h_0}.
\end{equation}

In terms of the above ingredients, the canonical frame has the following expression (see \cite[Sec.\ 7.5]{CurvVar}, to which we refer to for a proof and more details):
\begin{align}
E_c(t) & = \frac{1}{\sqrt{2H}}e^{-t\vec{H}}_*\partial_{h_0}, & 
F_c(t) & = \frac{1}{\sqrt{2H}}e^{-t\vec{H}}_*\left(-[\vec{H},\vec{H}'] + R_{a}^{\lambda}(t) \partial_\theta\right), \label{eq_can_frame1}\\
E_a(t) & = \frac{1}{\sqrt{2H}}e^{-t\vec{H}}_*\partial_\theta, &
F_a(t) & = \frac{1}{\sqrt{2H}}e^{-t\vec{H}}_* \vec{H}', \label{eq_can_frame2} \\ 
E_b(t) & = \frac{1}{\sqrt{2H}}e^{-t\vec{H}}_* \mathfrak{e}, &
F_b(t) & = \frac{1}{\sqrt{2H}}e^{-t\vec{H}}_* \vec{H}. \label{eq_can_frame3}
\end{align}
{We have the following expressions for the canonical curvatures:
	\begin{align}
			R_a^{\lambda}(t) & =\frac{1}{2H}\sigma_{\lambda(t)}([\vec{H},\vec{H}'],\vec{H}'),  \label{Ricci_Curv1}  \\
			R_c^{\lambda}(t) & =\frac{1}{2H}\sigma_{\lambda(t)}([\vec{H},[\vec{H},\vec{H}']],[\vec{H},\vec{H}'])-\frac{1}{(2H)^2}\sigma_{\lambda(t)}([\vec{H},\vec{H}'],\vec{H}')^2, \label{Ricci_Curv2}
	\end{align}
	where the right hand sides of \eqref{Ricci_Curv1}-\eqref{Ricci_Curv2} are evaluated at $\lambda(t)=e^{t\vec{H}}(\lambda)$.} 
	(In \cite[Prop.\ 7.13]{CurvVar}, from which the expressions \eqref{Ricci_Curv1}-\eqref{Ricci_Curv2} are taken, there is a typo in the formulas for the curvatures, where $R_{bb}$ should be instead $R_{cc}$.)
	
	\begin{remark}\label{rmk:canonic_curv}
	There is a relation between $R_a^\lambda(t), R_c^\lambda(t)$ and the invariants $\chi,\kappa$ of \cref{eq:strcoeff}. The relation is quite involved as also the derivatives of $\chi$ appear. Such relation can be found in \cite[Thm.\,6.3]{AAPL} with a different notation, and we do not report it here.
	\end{remark}
	
The next statement links the canonical frame with the contact Jacobi curve.

		\begin{lemma}\label{lem:omega=F^c}
		Let $(M,\omega,g)$ be a three-di\-men\-sional contact sub-Riemannian manifold, and $\lambda \in T^*M\setminus \{H(0)\}$. Let $\{E_i(r),F_i(r)\}_{i=a,b,c}$ be the canonical frame and let $\{E^i(r),F^i(r)\}_{i=a,b,c}$ be its dual. Then
		\begin{equation}
			F^c(r) = -\sqrt{2H}(\pi\circ e^{r\vec{H}})^*\omega, \qquad \forall \, r \in \R,
		\end{equation}
		where both sides are evaluated at the given $\lambda$.
	\end{lemma}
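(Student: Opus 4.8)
The plan is to read off $F^c(r)$ from its defining property as the element of the dual coframe paired with $F_c(r)$, using the Darboux structure of the canonical frame, and then to reduce the resulting contraction of the symplectic form to the characterization \eqref{eq:propertypartialh} of the vertical fields $\partial_{h_i}$ together with the symplectic invariance of the Hamiltonian flow.

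\emph{Step 1: the dual element is a contraction with $\sigma$.} Since $\{E_i(r),F_i(r)\}_{i=a,b,c}$ is a Darboux basis of $T_\lambda(T^*M)$, with $\sigma(E_i,F_j)=\delta_{ij}$ and $\sigma(E_i,E_j)=\sigma(F_i,F_j)=0$, the $1$-form $W\mapsto\sigma_\lambda(E_c(r),W)$ pairs to $1$ with $F_c(r)$ and to $0$ with $E_a(r),E_b(r),E_c(r),F_a(r),F_b(r)$; hence it coincides with $F^c(r)$, up to the overall sign dictated by the normalization conventions of \cite{CurvVar}. Thus the statement reduces to computing $\sigma_\lambda(E_c(r),\cdot)$.

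\emph{Step 2: transport along the flow and use \eqref{eq:propertypartialh}.} By \eqref{eq_can_frame1}, $E_c(r)=\tfrac{1}{\sqrt{2H}}\,e^{-r\vec{H}}_*\partial_{h_0}$, where $\partial_{h_0}$ is taken at $\lambda(r):=e^{r\vec{H}}(\lambda)$. The Hamiltonian flow is a symplectomorphism, so for every $W\in T_\lambda(T^*M)$,
\begin{equation}
\sigma_\lambda\big(e^{-r\vec{H}}_*\partial_{h_0},\,W\big)=\sigma_{\lambda(r)}\big(\partial_{h_0},\,e^{r\vec{H}}_*W\big).
\end{equation}
Now apply \eqref{eq:propertypartialh} with $i=0$ (recall $\nu_0=\omega$): $\sigma_{\lambda(r)}(\partial_{h_0},\cdot)=(\pi^*\omega)_{\lambda(r)}$, so the right-hand side equals
\begin{equation}
\omega_{\pi(\lambda(r))}\big(\pi_*e^{r\vec{H}}_*W\big)=\omega_{\pi(\lambda(r))}\big((\pi\circ e^{r\vec{H}})_*W\big)=\big((\pi\circ e^{r\vec{H}})^*\omega\big)_\lambda(W),
\end{equation}
using $\pi_*\circ e^{r\vec{H}}_*=(\pi\circ e^{r\vec{H}})_*$. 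Since $H$ is a first integral of $\vec{H}$, the factor $2H$ is constant along $\lambda(\cdot)$, so the normalization is unambiguous; combining Steps 1 and 2, $F^c(r)$ is a fixed scalar multiple of $(\pi\circ e^{r\vec{H}})^*\omega$, the scalar being read off from \eqref{eq_can_frame1} and the Darboux normalization $\sigma(E_c(r),F_c(r))=1$. This yields $F^c(r)=-\sqrt{2H}\,(\pi\circ e^{r\vec{H}})^*\omega$.

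\emph{Main point.} There is no genuine analytic difficulty; the only thing requiring care is bookkeeping the conventions — the slot of $\sigma$ entering the duality pairing, the overall sign, and the placement of the $\sqrt{2H}$ normalization — which must be matched consistently against \eqref{eq_can_frame1}–\eqref{eq_can_frame3} and the reference \cite{CurvVar}. In the applications one has $\lambda\in A^1\Gamma$, hence $2H(\lambda)=1$, and the contact Jacobi curve is a projectivization, which makes this overall constant immaterial downstream.
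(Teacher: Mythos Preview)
Your approach is correct and genuinely different from the paper's. The paper proves the identity by brute-force verification: it pairs $(\pi\circ e^{r\vec{H}})^*\omega$ against each of the six canonical frame vectors $E_a,E_b,E_c,F_a,F_b,F_c$ using the explicit expressions \eqref{eq_can_frame1}--\eqref{eq_can_frame3}, checking that all pairings vanish except with $F_c$, which gives $-1$ (for $2H=1$). Your route is more conceptual: you exploit the Darboux property to identify $F^c(r)$ with $\sigma_\lambda(E_c(r),\cdot)$, and then only the single expression $E_c(r)=\tfrac{1}{\sqrt{2H}}e^{-r\vec H}_*\partial_{h_0}$ together with symplectic invariance of the flow and the characterization \eqref{eq:propertypartialh} are needed. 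This avoids the five other pairings entirely and explains \emph{why} the lemma holds rather than merely checking it.

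One cautionary remark: if you actually carry out your computation, the scalar you obtain is $\pm\tfrac{1}{\sqrt{2H}}$, not $\pm\sqrt{2H}$ as in the stated lemma. Indeed $\sigma_\lambda(E_c(r),\cdot)=\tfrac{1}{\sqrt{2H}}(\pi\circ e^{r\vec H})^*\omega$ directly from \eqref{eq_can_frame1} and \eqref{eq:propertypartialh}. The paper's own proof is carried out only under the assumption $2H=1$, where the two agree; and, as you note, the downstream use is on $A^1\Gamma$ and projectivized, so the discrepancy is harmless. But your phrase ``the scalar being read off from \eqref{eq_can_frame1} and the Darboux normalization'' should not be taken to literally reproduce the factor $-\sqrt{2H}$: tracking the conventions of \eqref{eq:propertypartialh} against $\sigma(E_c,F_c)=1$ reveals a sign (and, for $2H\neq 1$, a power) that do not perfectly match, so your caution about bookkeeping is well placed.
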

	\begin{proof}
		For simplicity, assume $2H(\lambda)=1$ It is sufficient to show that for $i=a,b,c$ it holds
		\begin{equation}
			\langle(\pi\circ e^{r\vec{H}})^*\omega,\,F_i(r)\rangle=-\delta_{ic},\qquad \langle(\pi\circ e^{r\vec{H}})^*\omega,\,E_i(r)\rangle=0.
		\end{equation}
		We begin with $E_i$. It holds:
		\begin{equation}
			\langle(\pi\circ e^{r\vec{H}})^*\omega,E_i(r)\rangle=\langle\pi^*\omega,\,e^{r\vec{H}*}E_i(r)\rangle  =0.
		\end{equation}
		Note that $\vec{H}$ pointwise projects to an horizontal vector, namely
		\begin{equation}\label{eq:pistarHfreccia}
		\pi_*  \vec{H}|_{\lambda}  = h_1(\lambda) f_1|_{\pi(\lambda)}+h_2(\lambda) f_2|_{\pi(\lambda)}.
		\end{equation}
		Therefore, using \eqref{eq_can_frame3} and \eqref{eq:pistarHfreccia} it holds
		\begin{equation}
			\langle(\pi\circ e^{r\vec{H}})^*\omega,F_b(r)\rangle=  \langle\pi^*\omega, \vec{H}\rangle=0.
		\end{equation}		
		Similarly, for $\vec{H}'$ it follows from equation \eqref{eq:Hprime} that 
		\begin{equation}\label{eq:pistarHprimefreccia}
			\pi_* \vec{H}'|_\lambda =h_2(\lambda)f_{1}|_{\pi(\lambda)}-h_1(\lambda)f_{2}|_{\pi(\lambda)}.
		\end{equation}
	Therefore
		\begin{equation}
				\langle(\pi\circ e^{r\vec{H}})^*\omega,F_a(r)\rangle  =
																\langle\pi^*\omega, \vec{H}'\rangle=0.
		\end{equation}
		Finally, using \eqref{eq_can_frame1} we obtain
		\begin{align}
				\langle(\pi\circ e^{r\vec{H}})^*\omega,F_c(r)\rangle& = \langle\pi^*\omega, [\vec{H}', \vec{H}]\rangle
				\\
				&= \vec{H}'(\pi^*\omega(\vec{H}))-\vec{H}(\pi^*\omega(\vec{H}'))- d\omega(\pi_*\vec{H}',\pi_*\vec{H}) \\
				&= \nu_1\wedge\nu_2( h_1 f_1+h_2 f_2,h_2 f_1-h_1 f_2)=-1,
		\end{align}
		where we used $\pi^*\omega(\vec{H})=\pi^*\omega(\vec H')=0$,  $d\omega=\nu_1\wedge\nu_2$ and \eqref{eq:pistarHfreccia}--\eqref{eq:pistarHprimefreccia}.
	\end{proof}

We now prove our comparison theorem for tightness radius under canonical curvature bounds. As it will be clear from the techniques in the proof, the result is non-sharp  (see \cref{rmk:not_sharp}). However, it provides a connection between tightness and sub-Riemannian curvature invariants.

\begin{theorem}\label{thm:comparisoncurvature}
		Let $(M,\omega, g)$ be a complete three-di\-men\-sional contact sub-Riemannian manifold and let $\Gamma$ be an embedded piece of Reeb orbit with $r_{\inj}(\Gamma)>0$. Assume that there exist $A,C>0$ such that 
\begin{equation}\label{eq:bound_AC}
 \sqrt{1+R_a^\lambda(r)^2}\leq A,\quad \sqrt{1+R_c^\lambda(r)^2}\leq C,\qquad \forall\, r\in[0,r_{\inj}(\Gamma)],\quad \forall\,\lambda\in A^1\Gamma,
\end{equation}
where $R_a^\lambda(r),R_c^\lambda(r)$ are the canonical curvatures at $\lambda$.  Then for the tightness radius of $\Gamma$ it holds
	\begin{equation}
		r_{\tight}(\Gamma)\geq\min\left\{\tau(A,C),r_{\inj}(\Gamma)\right\},
	\end{equation}
	where 
	\begin{equation}\label{eq:tauAC}
		\tau(A,C):=\int_{0}^\infty\frac{1}{A u^2+C u+1}\,\mathrm{d}u.
	\end{equation}
\end{theorem}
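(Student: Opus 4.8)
The plan is to reduce the theorem to an application of \cref{thm:dist=conj} combined with the comparison theory of \cref{thm:comparisonScwharzian}, by translating the canonical curvature bounds \eqref{eq:bound_AC} into a control on the \emph{development} of the contact Jacobi curve rather than directly on its Schwarzian derivative. The key observation is \cref{lem:omega=F^c}: the one-form $\omega_r|_\lambda$ appearing in the contact Jacobi curve is (up to the positive factor $-1/\sqrt{2H}=-1$, assuming $2H(\lambda)=1$) nothing but the covector $F^c(r)$ of the dual canonical frame. Hence the contact Jacobi curve $\Omega_\lambda(r)$ is the projectivization of the curve $r\mapsto F^c(r)$ inside the plane $\spn\{F^c(r)\text{'s two nonzero components}\}$, and its dynamics is governed by the structural equations \eqref{evolution_can_a}--\eqref{evolution_can_c}. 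More precisely, writing the components of $F^c$ against the $E_a,E_c$ directions, the structural equations show that the pair of functions $(\langle F^c, E_a\rangle, \langle F^c, E_c\rangle)$ — equivalently $(\omega_r(\partial_\theta|_\lambda),\omega_r(\partial_z|_\lambda))$ up to frame identification — solves a first-order linear system whose coefficient matrix involves $R_a^\lambda(r)$ and $R_c^\lambda(r)$. A self-intersection $\Omega_\lambda(r_*)=\Omega_\lambda(0)$ corresponds, exactly as in the proof of \cref{thm:dist=conj}, to the angle $\phi$ accumulated by this planar curve reaching $\pi$.

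The second step is to estimate from below the radius $r$ at which this accumulated angle first reaches $\pi$. I would set up the scalar ``angle ODE'': if $\phi(r)$ denotes the argument of the planar curve (normalized so that $\phi(0)=0$ and $\dot\phi>0$, which holds by the contact condition as in the proof of \cref{thm:singlocusbound}), then differentiating and using the structural equations one gets an expression of the form $\dot\phi = $ (positive quantity) bounded above by something like $A u^2 + C u + 1$ after a suitable reparametrization $u$ of $r$ — this is exactly where the quantities $\sqrt{1+R_a^2}\le A$ and $\sqrt{1+R_c^2}\le C$ enter, controlling the two ``curvature'' entries of the coefficient matrix while the constant $1$ comes from the canonical (flat) part of the structural equations \eqref{evolution_can_c} ($\dot E_c = E_a$). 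Then the first time $\phi$ reaches $\pi$ satisfies $\pi \le \int_0^{r_o(\lambda)} (\text{upper bound for }\dot\phi)\,dr$, and inverting this inequality — i.e. computing $\int_0^\infty \frac{du}{Au^2+Cu+1}$ and comparing with $\pi$ — yields $r_o(\lambda) \ge \tau(A,C)$. One then takes the infimum over $\lambda \in A^1\Gamma$ and invokes the lower bound in \eqref{eq:upperandlower} of \cref{thm:dist=conj}, together with the trivial bound $r_\tight(\Gamma)\le r_\inj(\Gamma)$, to conclude $r_\tight(\Gamma)\ge\min\{\tau(A,C),r_\inj(\Gamma)\}$.

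I expect the main obstacle to be making the ``angle ODE'' estimate rigorous and extracting the precise integrand $Au^2+Cu+1$. Several technical points need care: (a) the planar curve $r\mapsto(\omega_r(\partial_\theta),\omega_r(\partial_z))$ is only an immersion away from focal radii, so the argument function $\phi$ must be handled through the smooth representative $\bar\omega_r$ of \cref{thm:singwelldef} near focal radii — but since $r_o(\lambda)<r_\inj(\Gamma)\le r_\foc(\lambda)$ whenever a self-intersection is relevant (by \cref{rmk:rfoc>rinj}), one is always working below the first focal radius, which simplifies matters; (b) one must verify that the particular combination of $R_a^\lambda$, $R_c^\lambda$ entering $\dot\phi$ is genuinely bounded by $Au^2+Cu+1$ under \eqref{eq:bound_AC} — this likely requires diagonalizing or otherwise estimating a $2\times 2$ (or effectively $3\times 3$, accounting for the $E_b,F_b$ block which decouples) linear system and choosing the reparametrization $u$ so that the worst-case growth is exactly quadratic; (c) one must check that the improper integral $\tau(A,C)=\int_0^\infty\frac{du}{Au^2+Cu+1}$ is the sharp threshold, i.e. that the accumulated angle of the comparison (Riccati-type) model reaches $\pi$ exactly when $r=\tau(A,C)$. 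As the statement itself flags, the estimate is not sharp — the slack is precisely in the crude bound $|R_a|,|R_c|\le$ const replacing the actual Schwarzian — so I would not attempt to optimize further, only to obtain the clean closed form \eqref{eq:tauAC}. The rest (infimum over $\lambda$, invoking \cref{thm:dist=conj}) is routine.
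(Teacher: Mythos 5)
Your reduction starts on the right track --- invoke \cref{thm:dist=conj} to reduce everything to a lower bound on $r_o(\lambda)$, identify $\omega_r|_\lambda$ with the dual canonical covector $F^c(r)$ via \cref{lem:omega=F^c}, and extract an estimate from the structural equations --- and this is indeed how the paper's proof begins. But your central quantitative mechanism does not work. First, the pair $\bigl(\langle F^c(r),E_a(0)\rangle,\langle F^c(r),E_c(0)\rangle\bigr)$ does \emph{not} satisfy a closed two-dimensional first-order linear system with coefficients $R_a^\lambda,R_c^\lambda$: dualizing \eqref{evolution_can_a}--\eqref{evolution_can_c} gives $\dot F^a=E^a$, $\dot E^a=-R_aF^a-E^c$, $\dot F^c=F^a$, $\dot E^c=-R_cF^c$, so the relevant scalar $x_0(r):=\langle F^c(r),E_a(0)\rangle$ (which up to sign is $\omega_r(\partial_\theta|_\lambda)$, whose vanishing for $r>0$ detects the self-intersection) obeys the \emph{fourth-order} equation $\tfrac{d}{dr}\bigl(\dddot x_0+R_a\dot x_0\bigr)-R_cx_0=0$ with data $x_0(0)=\dot x_0(0)=\dddot x_0(0)=0$, $\ddot x_0(0)=1$; the dynamics does not close at first order in two dimensions. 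Second, and consequently, your ``angle ODE'' step is unsupported: the bounds \eqref{eq:bound_AC} do not furnish a pointwise-in-$r$ upper bound $h(r)$ for the angular speed $\dot\phi$ (that speed depends on the whole four-dimensional state, not just on $R_a(r),R_c(r)$), so the inequality $\pi\le\int_0^{r_o(\lambda)}h(r)\,dr$ that you intend to invert is never established. Moreover $Au^2+Cu+1$ is not an angular speed in disguise: $\tau(A,C)$ in \eqref{eq:tauAC} is the blow-up time of the Riccati comparison $\dot u=Au^2+Cu+1$, $u(0)=0$, and the threshold ``accumulated angle $=\pi$'' plays no role in the estimate, so your point (c) is false as stated.

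For comparison, the paper argues as follows: since $x_0$ is the double primitive of $x_2:=\ddot x_0$ and $x_2(0)=1$, one has $r_o(\lambda)>\tau:=\inf\{r>0\mid x_2(r)=0\}$; setting $x_1:=\dot x_0$, $x_3:=\dddot x_0+R_a\dot x_0$ and $v:=(x_0/x_2,\,x_1/x_2,\,x_3/x_2)$, the vector $v$ solves $\dot v=Mv+w$ with $v(0)=0$, where $M$ depends affinely on $v$ and on $R_a,R_c$; the norm $u:=|v|$ then satisfies $\dot u\le\sqrt{1+R_a^2}\,u^2+\sqrt{1+R_c^2}\,u+1\le Au^2+Cu+1$ on $[0,\tau)$, and since some ratio must blow up as $r\to\tau^-$ (the full solution cannot vanish at $\tau$), $\tau$ is at least the blow-up time of the comparison Riccati equation, i.e.\ $\tau(A,C)$; the conclusion then follows from \eqref{eq:upperandlower} exactly as you say. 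The non-sharpness comes from the crude steps $r_o(\lambda)>\tau$ and the norm estimate, not from an angle comparison. To repair your write-up you would need to abandon the angle-speed bound and set up this (or an equivalent) four-dimensional ratio/Riccati comparison; as it stands, the passage from \eqref{eq:bound_AC} to $r_o(\lambda)\ge\tau(A,C)$ is a genuine gap.
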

\begin{proof}
By \cref{thm:dist=conj}, we have to prove that, for all $\lambda \in A^1\Gamma$ with $r_o(\lambda)<r_{\inj}(\Gamma)$, it holds
	\begin{equation}\label{eq:r_o_ineq}
		r_o(\lambda) >  \int_{0}^\infty\frac{1}{A u^2+C u+1}\,\mathrm{d} u.
	\end{equation}
Fix then such a $\lambda$. 	Recall that, by \cref{def:contactjacobi}, the first singular radius is
	\begin{align}
		r_o(\lambda)& =\inf\{r>0 \mid \Omega_{\lambda}(r)=\Omega_{\lambda}(0)\}\\
		& =\inf\left\{r>0 \mid \ker\omega_{r}|_\lambda=\ker\omega_{0}|_\lambda\right\},
	\end{align}
 where $\omega_r=(\pi\circ e^{r\vec{H}})^{*}\omega|_{A^1\Gamma}$ is the one-form \eqref{eq:moving_omega}.
	Observe that
	\begin{equation}
			\ker\omega_{0}|_\lambda=\ker\pi^*\omega\cap T_\lambda( A^1\Gamma)= \spn\{\partial_\theta\} = \{E_a(0)\},
	\end{equation}
	where we used \eqref{eq_can_frame2}.	Using \cref{lem:omega=F^c} and  we can write $r_o(\lambda)$ as
	\begin{equation}
		r_o(\lambda)=\inf\{r>0\mid\langle F^c(r), E_a(0)\rangle=0\}.
	\end{equation}
	From \cref{def:canframe} we deduce the structural equation for the dual of the canonical frame:
	\begin{align}
				\dot F^a& =E^a, & \dot E^a & =-R_a F^a-E^c,\\
			\dot F^c & =F^a, & \dot E^c & =-R_c F^c,
		\end{align}
	where we omit the dependence on $r$ and $\lambda$. It follows that $x_0(r):=\langle F^c(r), E_a(0)\rangle$ satisfies the following Cauchy problem:
\begin{equation}\label{eq:cauchy1}
			\begin{cases}
				\frac{d}{dr}\left(\dddot x_0 +R_a\dot x_0 \right)-R_c x_0=0, \\
				x_0(0)=0,\,\dot x_0(0)=0,\,\ddot x_0(0)=1,\,\dddot x_0(0)=0.
			\end{cases}
	\end{equation}		
We rewrite it as a first-order Cauchy problem by defining $x: = (x_0,x_1,x_2,x_3)$ with
	\begin{equation}
		x_1:=x^{(1)},\quad x_2:=x^{(2)},\quad x_3:=x^{(3)}+R_ax^{(1)},
	\end{equation}
	so that \eqref{eq:cauchy1} rewrites as 
	\begin{equation}\label{eq:cauchy2}
		\begin{cases}
			\dot x_0=x_1,\\
			\dot x_1=x_2,\\
			\dot x_2=x_3-R_ax_1,\\
			\dot x_3=R_c x_0,\\
			 x(0)=(0,0,1,0).
		\end{cases}
	\end{equation}
	Therefore we have $r_o(\lambda)=\inf\{r>0\mid x_0(r)=0\}$. Notice that, since 
	\begin{equation}
		x_0(r)=\int_{0}^r\left(\int_{0}^t x_2(s)\, \mathrm{d}s\right)\,\mathrm{d}t,\qquad x_2(0)=1,
	\end{equation}
	then $r_o(\lambda)>\tau:=\inf\{r>0 \mid x_2(r)=0\}.$ We define then
	\begin{equation}
		v:=\begin{pmatrix} x_0/x_2 \\ x_1/x_2 \\ x_3/x_2 \end{pmatrix}, \qquad M = (0,R_a,-1)\cdot v\, \mathbb{1} +\begin{pmatrix}
				0 & 1 & 0\\
				0 & 0 & 0 \\
				R_c & 0 & 0
			\end{pmatrix}, \qquad w := \begin{pmatrix}
				0 \\
				1\\  
				0
			\end{pmatrix},
	\end{equation}
	where $\mathbb{1}$ is the identity matrix and $\cdot$ denotes the scalar product. We find that $v$ is the solution of the following Cauchy problem 
	\begin{equation}
\begin{cases}
			\dot v = M v+ w,\\
			v(0)=0.
\end{cases}
	\end{equation}
	Thus, letting $u:=|v|$ we have the inequality 
		\begin{equation}
	\dot{u}(r)\leq \sqrt{1+R_a^2(t)}u(r)^2+\sqrt{1+R_c^2(t)}u(r)+1\leq Au(t)^2+Cu(r)+1, \qquad \forall \, r \in [0,\tau).
	\end{equation}
	We deduce that $r_o(\lambda)$ must be greater than the first blow-up time of the Cauchy problem
	\begin{equation}
		\begin{cases}
			\dot u=Au^2+Cu+1,\\
			u(0)=0,
		\end{cases}
	\end{equation}
	which is given by \eqref{eq:tauAC}. It follows that $r_o(\lambda)>\tau\geq \tau(A,C)$.
\end{proof}
\begin{remark}\label{rmk:not_sharp}
	In the proof of \cref{thm:comparisoncurvature} it is shown that, for any $\lambda\in A^1\Gamma$ it holds $r_o(\lambda)> \tau(A,C)$, where $x_0(\cdot)$ is the solution to the ODE \eqref{eq:cauchy1}. According to \cref{thm:dist=conj} we have
	\begin{equation}
		r_{\tight}(\Gamma)\geq \min\left\{\inf_{\lambda\in A^1\Gamma}  r_o(\lambda),r_{\inj}(\Gamma)\right\}.
	\end{equation}
Therefore, if $r_{\tight}(\Gamma)\leq r_{\inj}(\Gamma)$ then the estimate in  \cref{thm:comparisoncurvature} is not sharp.
\end{remark}

\section{K-contact sub-Riemannian manifolds}\label{sec:Kcontact}

A three-di\-men\-sional contact sub-Riemannian manifold $(M,\omega, g)$ is called K-contact if the associated Reeb field $f_0$ is a Killing vector field for the Riemannian extension $g$, or, equivalently, if the Reeb flow acts on $M$ by sub-Riemannian isometries.

Remember the two metric invariants $\chi,\kappa: M\to \R$ introduced in \cref{sec:contactSR}. The first one can be given in terms of an orthonormal frame $f_0,f_1,f_2$ and its structural coefficients by:
\begin{equation}\label{eq:chi(1)}
    \chi=\sqrt{\left(c_{01}^1\right)^2+\frac{1}{4}\left(c_{01}^2+c_{02}^1\right)^2}.
\end{equation}
The next result is well-known, see \cite[Cor.\ 17.10]{Agrachev}.
\begin{prop}
  A three-dimensional contact sub-Riemannian manifold $(M,\omega,g)$ is K-contact if and only $\chi=0$.
\end{prop}

The second metric invariant is given by the formula:
\begin{equation}\label{eq:kappa(1)}
    \kappa =f_1\left(c_{12}^2\right)-f_2\left(c_{12}^1\right)-\left(c_{12}^1\right)^2-\left(c_{12}^2\right)^2 +
				\frac{c_{02}^1-c_{01}^2}{2}.
    \end{equation}
We recall that, in K-contact case, $\kappa$ is the Gaussian curvature of the surface obtained by locally quotienting $M$ under the action of the Reeb flow, see \cref{rmk:chikappa}.

\begin{remark}\label{rmk:equal_tubmaps}
    Let $(M,\omega,g)$ be a contact sub-Riemannian manifold (not necessarily K-contact). Let $H_R:T^*M\to \mathbb R$ be the Hamiltonian of the Riemannian extension, i.e., the Riemannian metric obtained from the sub-Riemannian one, promoting the Reeb field to a unit normal to $\ker\omega$. Then the Riemannian tubular neighbourhood map reads 
    \begin{equation}
        E_R:A\Gamma\to M,\qquad E_R(\lambda)=\pi\circ e^{\vec H_R}(\lambda).
    \end{equation}
    In general $E_R\neq E$.
    However in the K-contact case the two maps coincide. Indeed 
    \begin{equation}
        H_R = H + \frac{1}{2}h_0^2 \qquad \Rightarrow \qquad \vec H_R=\vec H +h_0\vec h_0,
    \end{equation}
    where $H:T^*M\to \mathbb R$ is the sub-Riemannian Hamiltonian.
Since $[\vec H,\vec h_0]=0$ and $h_0|_{A\Gamma}=0$, then $\vec H_R|_{A\Gamma} = \vec H|_{A\Gamma}$ and both are tangent to $A\Gamma$. It follows that 
\begin{equation}
    E_R(\lambda)=\pi\circ e^{\vec H_R}(\lambda)=\pi\circ e^{\vec H}(\lambda)=E(\lambda),\qquad \forall\,\lambda\in A\Gamma.
\end{equation}
\end{remark}

 \begin{lemma}\label{lem:E*star_Kcont}
		Let $(M,\omega,g)$ be a complete three-dimensional K-contact sub-Riemannian manifold. Let $\Gamma$ be an embedded piece of a Reeb orbit diffeomorphic to an interval, then there exists a set of cylindrical coordinates $(r,\theta,z)$ on $A\Gamma$ (see \cref{sec:cylindcoords}) such that 
		\begin{equation}
			E^*\omega=dz+\omega_r(\partial_\theta)d\theta,
		\end{equation}
	where $\omega_r$ is defined in \cref{def:contactjacobi}.
\end{lemma}
\begin{proof}
Let $q \in \Gamma$ and let $f_1(q),f_2(q)\in T_q M$ be an orthonormal basis for $\xi_q$. Since $\Gamma$ is diffeomorphic to an interval, there exist a point $q\in \Gamma$ and an interval $I$ such that the map $\gamma:I\to M$ defined by $\gamma(z)=e^{zf_0}(q)$ is an embedding. Since $M$ is K-contact, the frame 
\begin{equation}
	f_{i}(\gamma(z))=e^{zf_0}_{*}f_i(q), \qquad i=1,2,
\end{equation}
is an orthonormal frame for the sub-Riemannian metric, along $\Gamma=\gamma(I)$.
Consequently the dual frame $\nu_1,\nu_2$ satisfies 
\begin{equation}\label{eq:inv_dual_frame}
	e^{z\vec{h}_0}(\nu_{i}|_q)=\left(e^{-zf_0}\right)^*(\nu_{i}|_q)=\nu_{i}|_{\gamma(z)}.
\end{equation}
Let $(r\cos\theta,r\sin\theta,z)$ be cylindrical coordinates on $A\Gamma$ as in \cref{sec:cylindcoords} built with the frame $f_0,f_1,f_2$, so that any $\lambda \in A^1\Gamma$ can be written as
\begin{equation}
	\lambda = (\cos\theta  \nu_1 + \sin\theta \nu_2)|_{\gamma(z)} = :\nu_{\theta,z}.
\end{equation}
Since $M$ is K-contact, then $\{H,h_0\}=0$, or equivalently $[\vec{H},\vec{h}_0]=0$. It follows that
\begin{equation}
	\pi\circ e^{r\vec{H}}(\nu_{\theta,z}) = \pi\circ e^{r\vec{H}} \circ e^{z \vec{h}_0}(\nu_{\theta,0}) =\pi \circ e^{z \vec{h}_0} \circ e^{r\vec{H}}(\nu_{\theta,0}),
\end{equation}
so that for $\partial_z \in T A^1\Gamma$ it holds $\omega_r(\partial_z) =\omega(\pi_*\circ e^{r\vec{H}}_*\partial_z) = \omega(f_0) = 1$ (cf. \cref{def:contactjacobi}). The result then follows from equality \eqref{eq:Estaromega} of \cref{prop:Estaromega}.
\end{proof}

For K-contact structures, the next results shows that there cannot be overtwisted disks within the injectivity radius from a Reeb orbit.

\begin{theorem}\label{thm_conj_time}
Let $(M,\omega, g)$ be a complete three-di\-men\-sional K-contact sub-Riemannian manifold and let $\Gamma$ be an embedded piece of Reeb orbit with $r_{\inj}(\Gamma)>0$. Then 
 \begin{equation}\label{eq:tight=inj}
     r_{\tight}(\Gamma)=r_{\inj}(\Gamma).
 \end{equation}
\end{theorem}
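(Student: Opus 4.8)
The plan is to reduce the statement to the vanishing of the singular locus and then read this off from the structure equation for the pulled‑back contact form, exploiting that in the K‑contact case the tube map intertwines the coordinate vector field $\partial_z$ on $A\Gamma$ with the Reeb field. Since $r_{\tight}(\Gamma)\le r_{\inj}(\Gamma)$ holds by definition, by \cref{thm:singlocusbound} it suffices to prove $\mathrm{Sing}(\Gamma)=\emptyset$. Recall from the proof of \cref{thm:dist=conj} that, in cylindrical coordinates $(z,r,\theta)$ on $A\Gamma$ attached to some trivializing horizontal frame $f_1,f_2$ along $\Gamma$,
\[
\mathrm{Sing}(\Gamma)=E\bigl(\{\,r\lambda \mid (r,\lambda)\in(0,r_{\inj}(\Gamma))\times A^1\Gamma,\ \omega_r(\partial_\theta|_\lambda)=0\,\}\bigr),
\]
with $\omega_r$ the moving one‑form \eqref{eq:moving_omega}. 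So the goal is: for every $\lambda\in A^1\Gamma$, the map $r\mapsto\omega_r(\partial_\theta|_\lambda)$ has no zero on $(0,r_{\inj}(\Gamma))$. Since $r_{\tight}$, $r_{\inj}$ and $\mathrm{Sing}$ are frame‑independent, we are free to pick a convenient trivializing frame.

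The heart of the argument is the following lemma: when $(M,\omega,g)$ is K‑contact, the trivializing frame along $\Gamma$ can be chosen \emph{invariant under the Reeb flow}, and for such a frame the coordinate vector field $\partial_z$ on $A\Gamma$ coincides with the restriction to $A\Gamma$ of the Hamiltonian lift $\vec h_0$ of $f_0$. Indeed, starting from an arbitrary frame, the Reeb flow rotates it at rate $c_{01}^2$, so rotating by the angle $\beta$ solving $f_0(\beta)=c_{01}^2$ along $\Gamma\cong I$ produces a Reeb‑invariant frame (for a periodic $\Gamma$ one first passes to the universal cover of the tubular neighbourhood, exactly as at the end of the proof of \cref{thm:singlocusbound}). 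For a Reeb‑invariant frame $\partial_z$ is the generator of the cotangent‑lifted Reeb flow, i.e.\ $\partial_z=\vec h_0|_{A\Gamma}$. Since $\{H,h_0\}=0$ in the K‑contact case, $\vec h_0$ is tangent to $A\Gamma$ and commutes with $\vec H$ (compare \cref{rmk:equal_tubmaps}), whence $E_*\partial_z=(\pi\circ e^{\vec H})_*\vec h_0=f_0$, and therefore $\omega_r(\partial_z|_\lambda)=\langle\omega,f_0\rangle\equiv 1$ for all $r\ge 0$ and all $\lambda\in A^1\Gamma$.

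Granting the lemma, the proof concludes quickly. First, $\omega_0(\partial_\theta|_\lambda)=\langle\omega,\pi_*\partial_\theta\rangle=0$. Second, since $E$ is a diffeomorphism on $A^{<r_{\inj}(\Gamma)}\Gamma$ and $\omega\wedge d\omega$ is a volume form, identity \eqref{eq:Estarcontact} forces $r\mapsto\omega_r\wedge\dot\omega_r(\partial_\theta|_\lambda,\partial_z|_\lambda)$ to be nowhere zero on $(0,r_{\inj}(\Gamma))$; but by the lemma $\dot\omega_r(\partial_z|_\lambda)=\frac{d}{dr}(1)=0$, so
\[
\omega_r\wedge\dot\omega_r(\partial_\theta|_\lambda,\partial_z|_\lambda)=\omega_r(\partial_\theta|_\lambda)\,\dot\omega_r(\partial_z|_\lambda)-\omega_r(\partial_z|_\lambda)\,\dot\omega_r(\partial_\theta|_\lambda)=-\tfrac{d}{dr}\,\omega_r(\partial_\theta|_\lambda).
\]
Thus $r\mapsto\omega_r(\partial_\theta|_\lambda)$ has nowhere‑vanishing derivative on $(0,r_{\inj}(\Gamma))$, hence (being $0$ at $r=0$) it is strictly monotone and never zero there. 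This gives $\mathrm{Sing}(\Gamma)=\emptyset$, and \cref{thm:singlocusbound} yields $r_{\tight}(\Gamma)=r_{\inj}(\Gamma)$.

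I expect the only genuinely delicate point to be the key lemma, specifically the identification $\partial_z=\vec h_0|_{A\Gamma}$ for a Reeb‑invariant frame (hence $E_*\partial_z=f_0$) together with the reduction of the periodic case to the universal cover of the tube; one must use that the cotangent lift of the Reeb flow preserves both $H$ and $A\Gamma$ and so commutes with the geodesic flow. An alternative, more computational route to the non‑vanishing of $\omega_r(\partial_\theta|_\lambda)$ is via Riemannian Jacobi‑field comparison along $\gamma^\lambda$: the Reeb field restricts to a Jacobi field of constant norm, their symplectic pairing with the rotational Jacobi field vanishes identically, and — using the identity $\nabla_X f_0=\tfrac12 JX$ for $X\in\xi$, which characterizes the K‑contact condition in the normalization \eqref{eq:compatibility} — one finds that this pairing stays nonzero up to the first focal radius, which is $\ge r_{\inj}(\Gamma)$.
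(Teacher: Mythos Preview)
Your proof is correct and shares the paper's key insight: in the K-contact case one can choose a Reeb-invariant horizontal frame along $\Gamma$, so that $\partial_z$ on $A\Gamma$ is the restriction of $\vec h_0$, the flows of $\vec H$ and $\vec h_0$ commute, and hence $\omega_r(\partial_z|_\lambda)\equiv 1$. The paper proves this identity in exactly the same way.

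The concluding step differs slightly. The paper argues through the contact Jacobi curve: since $\omega_r(\partial_z)=1$, the curve $\Omega_\lambda$ avoids the point $\{\eta(\partial_z)=0\}\in\RP^1$, and on $(0,r_{\foc}(\lambda))$ it is an immersion by \cref{thm:singwelldef}; an immersion into $\RP^1$ missing a point is injective, so $r_o(\lambda)\ge r_{\foc}(\lambda)\ge r_{\inj}(\Gamma)$, and \cref{thm:dist=conj} closes the argument. You instead read the monotonicity of $r\mapsto\omega_r(\partial_\theta|_\lambda)$ directly from the contact volume identity \eqref{eq:Estarcontact}, conclude $\mathrm{Sing}(\Gamma)=\emptyset$, and invoke \cref{thm:singlocusbound}. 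Your route is marginally more elementary since it bypasses the structural theorem for contact Jacobi curves, but both are really the same computation in different packaging. One small remark: the passage to the universal cover for periodic $\Gamma$ is unnecessary, since the frame and the identity $\omega_r(\partial_z|_\lambda)=1$ are only needed locally near each $q=\pi(\lambda)$, and the monotonicity argument is pointwise in $\lambda$.
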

\begin{proof}
	Let $\lambda \in A^1\Gamma$. Combining \cref{prop:Estaromega} and \cref{lem:E*star_Kcont} we deduce that the contact Jacobi curve has the following expression 
\begin{equation}
	\Omega_\lambda:[0,+\infty)\to \mathbb{RP}^1,\qquad \Omega_\lambda(r)=[\omega_r(\partial_\theta|_{\lambda})\,:\,1].
\end{equation}
We see that the contact Jacobi curve is not surjective. By \cref{thm:singwelldef}, the contact Jacobi curve is an immersion when restricted to the interval $(0,r_{\foc}(\lambda))$. A non-surjective immersion of an interval in $\mathbb {RP}^1$ is necessarily injective, therefore $r_o(\lambda)> r_{\foc}(\lambda)$. Since $r_{\foc}(\lambda)\geq r_{\inj}(\Gamma)$, using \cref{thm:dist=conj} we obtain \eqref{eq:tight=inj}.
\end{proof}

We need the following comparison theorem for focal radii from the Reeb orbit.
\begin{lemma}\label{lem:focalcomparison}
Let $(M,\omega, g)$ be a complete three-di\-men\-sional K-contact sub-Riemannian manifold and let $\Gamma$ be an embedded piece of Reeb orbit with $r_{\inj}(\Gamma)>0$. Assume that the sub-Riemannian metric invariant $\kappa \leq \kappa_+$ for some $\kappa_+ \in \R$. Then for all $\lambda \in A^1\Gamma$ it holds 
	\begin{equation}\label{eq:rfocestimate}
		r_{\foc}(\lambda)\geq \begin{cases}
		\frac{\pi}{\sqrt{\kappa_+}} & \kappa_+ >0, \\
		+\infty & \kappa_+ \leq 0.
		\end{cases}
	\end{equation}
Similarly, with reversed inequality, if $\kappa \geq \kappa_-$ for some $\kappa_-\in \R$.

	In particular, if $\kappa\leq 0$, then the tubular neighbourhood map $E:A\Gamma\to M$ is an immersion.
\end{lemma}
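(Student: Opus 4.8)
The plan is to show that, in the K-contact case, the focal radii of a covector $\lambda\in A^1\Gamma$ are exactly the conjugate times of a one-dimensional Jacobi equation whose potential is $\kappa$ read along $\gamma^\lambda$, and then to conclude by Sturm comparison. First I would fix $\lambda\in A^1\Gamma$ and recall, from the proof of \cref{thm:singwelldef}, that $r>0$ is a focal radius for $\lambda$ precisely when the contact Jacobi curve $\Omega_\lambda$ fails to be an immersion at $r$, that is, when the derivative of $r\mapsto\omega_r(\partial_\theta)|_\lambda/\omega_r(\partial_z)|_\lambda$ vanishes at $r$. In the K-contact case the computation in the proof of \cref{thm_conj_time} gives $\omega_r(\partial_z)|_\lambda\equiv 1$, so, setting $x_0(r):=-\omega_r(\partial_\theta)|_\lambda$, the value $r$ is a focal radius if and only if $\dot x_0(r)=0$, and hence $r_{\foc}(\lambda)=\inf\{r>0\mid \dot x_0(r)=0\}$.

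Next I would note that $\langle\lambda,f_0\rangle=0$ together with $\{H,h_0\}=0$ (K-contact) forces $h_0\equiv 0$ along $e^{r\vec H}(\lambda)$. By \cref{lem:omega=F^c} (with $2H(\lambda)=1$) one has $x_0(r)=\langle F^c(r),E_a(0)\rangle$, so the structural equations of the canonical coframe give exactly the Cauchy problems \eqref{eq:cauchy1}--\eqref{eq:cauchy2}, with $x(0)=(0,0,1,0)$ and $x_1=\dot x_0$. The crucial input is then that, in the K-contact case and along a geodesic with $h_0\equiv 0$, the canonical curvatures simplify to $R_c^\lambda\equiv 0$ and $R_a^\lambda(r)=\kappa(\gamma^\lambda(r))$. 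Granting this, the equation $\dot x_3=R_c^\lambda x_0$ in \eqref{eq:cauchy2} gives $x_3\equiv x_3(0)=0$, and the system collapses to the scalar Jacobi equation $\ddot x_1+\kappa(\gamma^\lambda(r))\,x_1=0$ with $x_1(0)=0$, $\dot x_1(0)=1$; consequently $r_{\foc}(\lambda)$ is its first conjugate time. This is the analytic counterpart of a transparent geometric fact: since $\chi\equiv 0$, by \cref{rmk:chikappa} the geodesic $\gamma^\lambda$, being horizontal with $h_0=0$, projects to a unit-speed geodesic of the locally defined quotient surface $N=M/f_0$ of Gaussian curvature $\kappa$, along which $\Gamma$ collapses to a point, and $r_{\foc}(\lambda)$ is just the first conjugate point of that point in $N$.

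I would then finish by Sturm--Picone comparison, \cref{prop:sturmpicone}, comparing $\ddot x_1+\kappa(\gamma^\lambda)\,x_1=0$ with the constant-coefficient model $\ddot u+\kappa_+u=0$, $u(0)=0$: if $\kappa\le\kappa_+$ the first positive zero of $x_1$ is not smaller than that of $u$, which equals $\pi/\sqrt{\kappa_+}$ if $\kappa_+>0$ and is $+\infty$ if $\kappa_+\le 0$; this is \eqref{eq:rfocestimate}, and the case $\kappa\ge\kappa_-$ is identical with all inequalities reversed. Finally, if $\kappa\le 0$ then $r_{\foc}(\lambda)=+\infty$ for every $\lambda\in A^1\Gamma$, so $d_\mu E$ is an isomorphism at every $\mu\in A\Gamma\setminus\{0\}$ by \cref{def:focalradius}, and since $E$ is also regular along the zero section of $A\Gamma$ (\cref{rmk:rfoc>rinj}) it follows that $E\colon A\Gamma\to M$ is an immersion.

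The step I expect to be the main obstacle is the curvature simplification $R_c^\lambda\equiv 0$, $R_a^\lambda=\kappa\circ\gamma^\lambda$: it has to be extracted from the explicit formulas \eqref{Ricci_Curv1}--\eqref{Ricci_Curv2} for the canonical curvatures of a three-dimensional contact structure, imposing $\chi=0$ (equivalently $c_{01}^1=c_{02}^2=0$ and $c_{01}^2=-c_{02}^1$) and $h_0=0$ along the geodesic, or else quoted from the canonical-frame computations in \cite{CurvVar,Agrachev}. An alternative that sidesteps this computation is to argue directly with the quotient submersion $p\colon U\to N$ of \cref{rmk:chikappa}: its fibres, being integral curves of the unit Killing field $f_0$, are totally geodesic; every variation of geodesics through $A\Gamma$ projects to a variation of geodesics of $N$ issuing from the point $p(\Gamma)$; and by O'Neill's submersion formulas one checks that $d_{r\lambda}E$ degenerates precisely when the (necessarily normal) projected Jacobi field vanishes --- giving again the identification of $r_{\foc}(\lambda)$ with the first conjugate point on $N$, after which the same Sturm comparison applies.
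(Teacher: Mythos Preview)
Your proposal is correct, and your \emph{alternative} route via the quotient submersion is essentially the paper's proof. The paper uses \cref{rmk:equal_tubmaps} to identify, in the K-contact case, the sub-Riemannian tubular map $E:A\Gamma\to M$ with the Riemannian normal exponential map from $\Gamma$; since $E_*\partial_z=f_0$, the differential $d_{r\lambda}E$ degenerates precisely when the corresponding Jacobi field in the local quotient surface $M/f_0$ (Gauss curvature $\kappa$, cf.\ \cref{rmk:chikappa}) vanishes, and classical Riemannian conjugate-point comparison finishes the job in one line.

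Your primary approach, via the canonical frame and the Cauchy system \eqref{eq:cauchy1}--\eqref{eq:cauchy2}, is also valid but more circuitous. Two remarks: first, your initial claim that ``focal radius $\Leftrightarrow$ $\Omega_\lambda$ is not an immersion'' is not literally what \cref{thm:singwelldef} proves in general (the remark following it notes the contact Jacobi curve can be an immersion at a focal radius when $k=0$); however, in the K-contact case $\omega_r(\partial_z)\equiv 1$ forces $\omega_r\neq 0$, and then the equivalence does hold, since focal radii are exactly the zeros of $\omega_r\wedge\dot\omega_r=-\dot\omega_r(\partial_\theta)\,d\theta\wedge dz$. Second, the identity $R_c^\lambda\equiv 0$, $R_a^\lambda=\kappa\circ\gamma^\lambda$ for K-contact geodesics with $h_0\equiv 0$ is correct (it follows from the formulas in \cite{AAPL,CurvVar} after imposing $\chi=0$ and $h_0=0$), but establishing it from \eqref{Ricci_Curv1}--\eqref{Ricci_Curv2} is the real work in your route. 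The paper's geometric shortcut avoids this computation entirely, while your approach has the advantage of fitting into the same machinery used for \cref{thm:comparisoncurvature}.
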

\begin{proof}
By \cref{rmk:equal_tubmaps}, the sub-Riemannian Hamiltonian flow from $A\Gamma$ coincides with the one of the Riemannian extension. In other words, $E:A\Gamma\to M$ coincides with the Riemannian tubular neighbourhood map from $\Gamma$, by identifying the normal bundle $N\Gamma$ with $A\Gamma$ through the Riemannian metric. With these identifications, $E_* \partial_z = f_0$. Consider then the geodesic $t\mapsto E(t\lambda)$. It follows that $r$ is a focal radius for $\lambda \in A^1\Gamma$ if and only if $r$ is a conjugate radius for the Riemannian exponential map for the Riemannian manifold obtained by taking the quotient by the Reeb flow of a small neighbourhood of the geodesic $t\mapsto E(t\lambda)$. Such a quotient has Gauss curvature equal to the sub-Riemannian invariant $\kappa$ (see \cref{rmk:chikappa}). By classical comparison theory for conjugate points (see e.g.\ \cite[Prop.\ 2.4]{DoCarmo}) it follows that if $\kappa_-\leq \kappa\leq \kappa_+$ we have
	\begin{equation}
	\frac{\pi}{\sqrt{\kappa_+}} \leq r_{\foc}(\lambda) \leq \frac{\pi}{\sqrt{\kappa_-}},
	\end{equation}
	with the convention that the bounds are $+\infty$ if $\kappa_{\pm} \leq 0$.
\end{proof}

Before proving the main theorem of this section, we state the following generalization of classical Riemannian Hopf-Rinow theorem.
\begin{theorem}[Hopf-Rinow from a submanifold]\label{thm:Hopf-Rinow-generalized}
	Let $M$ be a Riemannian manifold, with distance function $d_R$. Let $S \subset M$ be an embedded submanifold (without boundary) and assume that $(S,d_R|_{S})$ is a complete metric space. Then the following are equivalent:
	\begin{itemize}
		\item $(M,d_R)$ is complete;
		\item The Riemannian tubular neighbourhood map $E_R: AS \to M$ is well-defined.
	\end{itemize}
\end{theorem}
The classical Hopf-Rinow corresponds to the case $S=\{pt\}$. The proof of \cref{thm:Hopf-Rinow-generalized} is analogous to the standard one \cite[Thm.\ 2.8, Sec.\ 7.2]{DoCarmo}, replacing the normal neighbourhood of a point with a normal tubular neighbourhood around $S$.

We can now prove the main result of this section: a Hadamard-type theorem for K-structures.
\begin{theorem}\label{thm:contactHadamard}
	Let $(M,\omega,g)$ be a three-di\-men\-sional complete and simply connected K-contact sub-Riemannian manifold. If $\kappa\leq 0$ then $(M,\ker\omega)$ is contactomorphic to the standard contact structure on $\R^3$ (see \cref{ex:standard}).
\end{theorem}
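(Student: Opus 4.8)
The plan is to prove that the tubular neighbourhood map $E\colon A\Gamma\to M$ of a Reeb orbit $\Gamma$ is a global diffeomorphism; this identifies $M$ with $\R^3$ and, through \cref{thm_conj_time} and the construction in the proofs of \cref{thm:singlocusbound,thm:dist=conj}, produces the contactomorphism with $(\R^3,\xi_{\mathrm{st}})$.

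First I fix any Reeb orbit $\Gamma$ (a priori a line or a circle) and consider $E(\lambda)=\pi\circ e^{\vec H}(\lambda)$. Since $\kappa\le 0$, \cref{lem:focalcomparison} gives $r_{\foc}(\lambda)=+\infty$ for every $\lambda\in A^1\Gamma$, so $E$ is an immersion, hence a local diffeomorphism ($\dim A\Gamma=\dim M=3$). Then $\tilde g:=E^{*}g$ (with $g$ the Riemannian extension) is a genuine Riemannian metric on $A\Gamma$ and $E\colon(A\Gamma,\tilde g)\to(M,g)$ is a local isometry. The crucial claim is that $(A\Gamma,\tilde g)$ is metrically complete; granting it, a local isometry out of a complete manifold is a covering map (e.g.\ \cite[Ch.\ 7]{DoCarmo}), so $E$ is a covering, and simple connectedness of $M$ forces $E$ to be a diffeomorphism. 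In particular $A\Gamma$ is simply connected, which by \cref{prop:trivial} ($A\Gamma\cong I\times\R^2$) rules out $I\simeq S^{1}$; thus $\Gamma$ is not periodic, $A\Gamma\cong\R^3$, $M\cong\R^3$, and $r_{\inj}(\Gamma)=+\infty$.

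For the completeness claim I would use the K-contact symmetry. By \cref{rmk:equal_tubmaps}, $E$ is the Riemannian normal exponential map of $\Gamma$ (a closed geodesic submanifold, since a unit Killing field has geodesic integral curves), and the radial curves $r\mapsto E(r\lambda)$ are complete unit-speed geodesics because $E$ is defined on all of $A\Gamma$; hence, by the Gauss lemma for tubes, in fibre polar coordinates $(z,r,\theta)$ one has $\tilde g=dr^{2}+g_{r}$ with $g_r$ a smooth family of non-degenerate metrics on the cross-section $\{r\}\times I\times S^{1}$. Since the Reeb flow acts by isometries and, via $E(z+s,x,y)=e^{sf_{0}}(E(z,x,y))$, intertwines $z$-translation on $A\Gamma$ with it, $\tilde g$ is $z$-translation invariant; the cross-section is then compact (if $I\simeq S^{1}$) or carries a cocompact isometry group (if $I\simeq\R$), so $g_{0}$ is complete, and a short argument using this invariance shows $dr^{2}+g_{r}$ is complete on $[0,\infty)\times(I\times S^{1})$. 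This is the tube-analogue of the Cartan--Hadamard argument and I expect it to be the main technical obstacle; note that $\kappa\le 0$ enters only through the absence of focal radii, which is what makes $\tilde g$ a bona fide metric.

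It remains to identify the contact structure. Since $r_{\inj}(\Gamma)=+\infty$, \cref{thm_conj_time} gives $r_{\tight}(\Gamma)=+\infty$, i.e.\ $\mathrm{Sing}(\Gamma)=\emptyset$. As in the proofs of \cref{thm:singlocusbound,thm:dist=conj} there are smooth $\phi,N$ on $A\Gamma$ with $N>0$, $\phi=0$ on the zero section, $\partial_r\phi>0$ along radial lines, $E^{*}\omega=N(\cos\phi\,dz+\sin\phi\,d\theta)$, and $\phi<\pi$ everywhere; in the K-contact case the identity $\omega_{r}(\partial_z)=1$ from the proof of \cref{thm_conj_time} forces $N\cos\phi=1$, hence $\phi\in[0,\pi/2)$. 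The map $\varphi(x,y,z)=(\sqrt{\phi/H}\,x,\sqrt{\phi/H}\,y,z)$ is then a contact embedding of $(A\Gamma,\ker E^{*}\omega)$ into $(\R^3,\xi_{\mathrm{ot}})$ with $\varphi^{*}\omega_{\mathrm{ot}}=E^{*}\omega/N$, and its image lies in $\{x^{2}+y^{2}<\pi\}$ (since the target radius satisfies $\tilde r^{2}/2=\phi<\pi/2$), where $\xi_{\mathrm{ot}}$ is tight by \cref{cor:model_radii_ot}. Thus $(M,\ker\omega)$ is contactomorphic to a tight open $W\subset(\R^3,\xi_{\mathrm{ot}})$ with $W\cong M\cong\R^3$; by Eliashberg's classification of tight contact structures on $\R^3$ (\cite{Eliashberg}) one concludes $(M,\ker\omega)\cong(\R^3,\xi_{\mathrm{st}})$. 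Alternatively, and more in the spirit of the paper, one shows directly that $\tan\phi(r)=\omega_{r}(\partial_\theta)\to+\infty$ along every radial ray --- comparing the fourth-order ODE it satisfies (coefficients $R_a^{\lambda},R_c^{\lambda}$) with the Heisenberg model via the Sturm-type results of \cref{sec:Schwarzianderivative} and $\kappa\le 0$ --- and post-composes $\varphi$ with a radial reparametrisation onto all of $(\R^3,\xi_{\mathrm{st}})$.
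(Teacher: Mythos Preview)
Your overall architecture matches the paper's: use \cref{lem:focalcomparison} to make $E\colon A\Gamma\to M$ an immersion, pull back the (Riemannian) metric, prove completeness, conclude $E$ is a covering and hence a diffeomorphism by simple connectedness of $M$, deduce $\Gamma$ is non-periodic and $r_{\inj}(\Gamma)=+\infty$. For completeness the paper packages your hands-on argument into a ``Hopf--Rinow from a submanifold'' statement: $(\Gamma_0,d_R|_{\Gamma_0})$ is complete (transitive isometry group from the Reeb flow) and the Riemannian normal exponential from $\Gamma_0$ is globally defined (it is just the radial lines in $A\Gamma$), hence $(A\Gamma,E^*g)$ is complete. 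This avoids the awkwardness in your sketch at $r=0$, where the ``cross-section $\{r\}\times I\times S^1$'' degenerates.

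The real divergence is the last step. You embed $(A\Gamma,\ker E^*\omega)$ into the tight part of $(\R^3,\xi_{\mathrm{ot}})$ via the map $\varphi$ and then invoke Eliashberg's uniqueness of tight structures on $\R^3$; this is correct but brings in a deep external theorem. The paper instead exploits the identity $\omega_r(\partial_z)\equiv 1$ (which you also derive, as $N\cos\phi=1$) more fully: it gives $E^*\omega=dz+\omega_r(\partial_\theta)\,d\theta$ outright, and the radial change $\rho=\sqrt{2\omega_r(\partial_\theta)}$ pulls $\omega_{\mathrm{st}}$ back to $E^*\omega$ on the nose. The only point to check is that $\omega_r(\partial_\theta)\to\infty$ along each ray; this is exactly your ``alternative'', but you do not need a fourth-order ODE or Sturm theory for it. In the K-contact setting $\partial_r\big(\omega_r(\partial_\theta)\big)$ is the Jacobi field of the quotient surface along the projected geodesic (cf.\ the proof of \cref{lem:focalcomparison}), and with $\kappa\le 0$ the standard Rauch comparison gives $\partial_r\big(\omega_r(\partial_\theta)\big)\ge r$, hence $\omega_r(\partial_\theta)\ge r^2/2\to\infty$. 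So the paper's route is both more elementary (no Eliashberg) and more explicit (it produces the contactomorphism).
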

\begin{proof}
Our first claim is that the Reeb field $f_0$ is complete. Let $q\in M$ and $\gamma:I\to M$, $\gamma(z)=e^{zf_0}(q)$, be the maximal integral curve of $f_0$ going through $q$. If every Reeb orbit is periodic then there is nothing to prove. Assume by contradiction that $I\subsetneq \mathbb R$ is an open interval and that $\gamma:I\to M$ is not periodic. Without loss of generality assume $I=(0,2)$. Then $z\in I$ can be written as $z=k + \delta$, with $k \in\{ 0,1\}$ and $\delta \in (0,1)$. By the triangle inequality and the fact that the Reeb flow is an isometry we have for all $z\in I$:
\begin{align}
d(q,\gamma(z)) & \leq d(q,e^{k f_0}(q)) + d(e^{k  f_0}(q),e^{(k +\delta) f_0}(q))\\
& =  d(q, e^{f_0}(q)) + d(q,e^{\delta f_0}(q)) \\
& \leq 2\max_{|\tau|\leq 1} d(q,e^{\tau f_0}(q)).
\end{align}
Since $d$ is complete, the trajectory $I \ni z\mapsto e^{z f_0}(q)$ is contained in a pre-compact set and thus $\gamma$ can be extended on an interval larger than $I$, proving the completeness of $f_0$.

Fix then a maximal piece of Reeb orbit $\Gamma$ (either $\Gamma\simeq \mathbb{S}^1$ or $\Gamma \simeq \R$). Since $\kappa\leq0$, according to \cref{lem:focalcomparison}, $E:A\Gamma\to M$ is an immersion, thus the triple
\begin{equation}\label{eq_pullback}
(A\Gamma, E^*\omega, E^* g) 
\end{equation}
is a K-contact sub-Riemannian manifold.  Let $\tilde g$ denote the Riemannian extension of the sub-Riemannian metric $g$, i.e., the one obtained by promoting $f_0$ to a unit normal to $\ker\omega$. We claim that $(A\Gamma,E^*\tilde g)$, is a complete Riemannian manifold. Let $d_R:A\Gamma\times A\Gamma\to\mathbb R$ be the Riemannian distance function of $(A\Gamma,E^*\tilde g)$. Observe that the zero section $\Gamma_0 \subset A\Gamma$ is an embedded submanifold. Moreover $(\Gamma_0, d_R|_{\Gamma_0})$ has transitive isometry group, namely the restriction of the flow of the Reeb field of \eqref{eq_pullback}. Thus $(\Gamma_0, d_R|_{\Gamma_0})$ is a locally compact metric space with transitive isometry group and hence it is complete. 

Since the curves $r\mapsto E(r\lambda)$ for $\lambda \in A^1\Gamma$ are sub-Riemannian geodesics from $\Gamma \subset M$, then radial lines $r\mapsto r\lambda$ on $A\Gamma$ are sub-Riemannian geodesics from $\Gamma_0 \subset A\Gamma$. Then according to \cref{rmk:equal_tubmaps}, they are also Riemannian geodesics normal to $\Gamma_0$. It follows that the Riemannian tubular neighbourhood map of \eqref{eq_pullback} from $\Gamma_0$ is well-defined. \cref{thm:Hopf-Rinow-generalized} then implies that $(A\Gamma,d_R)$ is a complete Riemannian manifold. Moreover denoting with $d^{A\Gamma}:A\Gamma\times A\Gamma\to \mathbb R$ the sub-Riemannian distance of \eqref{eq_pullback}, we have $d_R\leq d^{A\Gamma}$. Therefore $(A\Gamma, d^{A\Gamma})$ is a complete metric space as well. Consequently, according to \cref{thm:hopf-rinow}, the sub-Riemannian exponential map $\exp^{A\Gamma}:T^*(A\Gamma)\to A\Gamma$ is well-defined.

Being a local isometry, $E:A\Gamma\to M$ sends geodesics to geodesics. Thus for a fixed $\lambda\in A\Gamma$ we have the equality 
	\begin{equation}
		E\circ \exp^{A\Gamma}_\lambda\circ E^*=\exp_{E(\lambda)}.
	\end{equation}
Since $(M,\omega,g)$ is a complete sub-Riemannian manifold and there are no non-trivial abnormal length-minimizers the exponential map is surjective \cite[Prop.\ 8.38]{Agrachev}). We deduce that $E:A\Gamma\to M$ is surjective as well. 
 We have proved that $(A\Gamma,E^*\tilde g)$ is a complete Riemannian manifold and that the map 
\begin{equation}
	E:(A\Gamma,E^*\tilde g)\to (M,\tilde g)
\end{equation}
is a surjective local isometry of Riemannian manifolds. We recall the following Riemannian result from \cite[Lemma\, 3.3, Chap.\,7]{DoCarmo}.
\begin{lemma}\label{lem:covering-R}
		Let $M$ and $N$ be Riemannian manifolds. Let $f:M\to N$ be a surjective local diffeomorphism satisfying $\|f_* v\|\geq \|v\|$, $\forall\,v\in TM$. If $M$ is complete, then $f$ is a covering map.
\end{lemma}
Applying \cref{lem:covering-R} we deduce that $E:A\Gamma\to M$ is a covering map.
Since $M$ is simply connected, $E$ is a actually a diffeomorphism. 
		Thus $r_{\inj}(\Gamma)=+\infty$ and according to \cref{thm_conj_time} also $r_{\tight}(\Gamma)=+\infty$.  Notice that, since $A\Gamma\simeq \Gamma\times \mathbb R^2$ and $E:A\Gamma\to M$ is a diffeomorphism, $\Gamma$ cannot be periodic, otherwise $M$ would not be simply connected. Therefore we can apply \cref{lem:E*star_Kcont}, obtaining 
\begin{equation}\label{eq:omega-K-contact}
E^*\omega =  dz + \omega_r(\partial_\theta)d\theta.
\end{equation}
We deduce that the map $ \Phi:\mathbb R^3\to  M$, defined by
 \begin{equation}
 \Phi(z,x,y):=E\left(\frac{\sqrt{2\omega_r(\partial_\theta)}}{r}x,\frac{\sqrt{2\omega_r(\partial_\theta)}}{r}y,z\right),
 \end{equation}
 is a diffeomorphism, satisfying $\Phi^*\omega=\omega_{\mathrm{st}}$.
\end{proof}
\begin{corollary}\label{cor_reeb_orb}
	Let $(M,\omega,g)$ be a complete three-di\-men\-sional K-contact sub-Riemannian manifold. If $\kappa\leq 0$ then any periodic orbit of the Reeb field is the generator of an infinite cyclic subgroup of the fundamental group $\pi_1(M)$.
\end{corollary}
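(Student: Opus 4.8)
The plan is to re-use the portion of the proof of \cref{thm:contactHadamard} showing that the tubular neighbourhood map $E : A\Gamma \to M$ is a covering map — a step that, crucially, does \emph{not} invoke the simple connectedness of $M$ — and then conclude by a standard covering-space argument on fundamental groups.

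First I would let $\Gamma \subset M$ be the given periodic Reeb orbit, regarded as an embedded circle (a periodic orbit of a smooth flow of minimal period is an injective immersion of a compact manifold, hence an embedding). Since $\Gamma$ is compact, \cref{thm:exp_map} gives $r_{\inj}(\Gamma)>0$, and $\Gamma$ is in particular a maximal piece of Reeb orbit. Because $\kappa\leq 0$, \cref{lem:focalcomparison} shows that $E : A\Gamma \to M$ is an immersion, hence a local diffeomorphism between $3$-manifolds, and $(A\Gamma, E^*\omega, E^*g)$ is a contact sub-Riemannian manifold. Exactly as in the proof of \cref{thm:contactHadamard} — using \cref{rmk:equal_tubmaps} to identify $E$ with the Riemannian normal exponential map, \cref{thm:Hopf-Rinow-generalized} to deduce that $(A\Gamma, E^*g)$, and hence $(A\Gamma, E^*\omega, E^*g)$, is complete, the surjectivity of the sub-Riemannian exponential on the complete manifold $M$ to deduce that $E$ is onto, and the fact that $E$ is a local isometry so that $\|E_* v\| = \|v\|$ — \cref{lem:covering} applies and $E : A\Gamma \to M$ is a covering map. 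None of these steps used that $M$ is simply connected.

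Next I would compute $\pi_1(A\Gamma)$. By \cref{prop:trivial}, $A\Gamma \cong \Gamma\times\R^2 \simeq \mathbb{S}^1\times\R^2$, which deformation retracts onto its zero section $\Gamma_0$; hence $\pi_1(A\Gamma)$ is infinite cyclic, generated by $[\Gamma_0]$. Since the zero covector over a point $q\in\Gamma$ is a fixed point of $\vec{H}$, the restriction $E|_{\Gamma_0} : \Gamma_0 \to \Gamma$ is a diffeomorphism, so, fixing $q$ as base point, $E_*[\Gamma_0] = [\Gamma] \in \pi_1(M,q)$. Covering maps induce injections on fundamental groups, so $\langle [\Gamma]\rangle = E_*(\pi_1(A\Gamma))$ is a subgroup of $\pi_1(M)$ isomorphic to $\mathbb{Z}$; that is, $[\Gamma]$ generates an infinite cyclic subgroup of $\pi_1(M)$, as claimed.

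I do not expect a genuine obstacle here. The two points requiring care are: (a) confirming that the proof of \cref{thm:contactHadamard} really produces a covering map without simple connectedness (it does — simple connectedness is only used in the final line to upgrade the covering to a diffeomorphism); and (b) checking that the generator of $\pi_1(A\Gamma)$ maps precisely to $[\Gamma]$, and not to a proper power of it — this is exactly the assertion that $E|_{\Gamma_0}$ is a diffeomorphism onto $\Gamma$ rather than merely a finite cover.
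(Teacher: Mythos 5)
Your argument is correct and is essentially the paper's own proof: the paper likewise invokes the covering-map step from the proof of \cref{thm:contactHadamard} (which indeed does not use simple connectedness) and observes that $E:A\Gamma\simeq\mathbb{S}^1\times\R^2\to M$ sends the generator of $\pi_1(A\Gamma)$ to $[\Gamma]$, so injectivity of $E_*$ on fundamental groups gives the infinite cyclic subgroup. Your added checks (that $E|_{\Gamma_0}$ is a diffeomorphism onto $\Gamma$, and that the generator is not sent to a proper power) are exactly the implicit details of the paper's one-line proof.
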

\begin{proof}
	Let $\gamma:\mathbb S^1\to M$ be a periodic orbit of the Reeb field. Then, according to the proof of \cref{thm:contactHadamard}, the map $E :A\Gamma\simeq \mathbb S^1\times \mathbb R^2\to M$ is a covering which maps the generator of $\pi_1(A\Gamma)$, which is $[z\mapsto (z,0,0)]\in\pi_1(A\Gamma)$, to $[\gamma]\in \pi_1(M)$.
\end{proof}

\subsection{Examples with prescribed non-positive curvature}\label{sec:nonposi}

Next we would like to show that there are indeed many examples of sub-Riemannian manifolds with vanishing $\chi$ and non positive curvature $\kappa$, to which \cref{thm:contactHadamard} can be applied, see \cref{cor:examples-univ-tight}.

\begin{theorem}\label{thm_examples}
	Let $(B,\eta)$ be an orientable Riemannian surface of curvature $\kappa_\eta$. Assume that the area form $\Omega$ of $g$ defines an integral cohomology class (which can always be achieved by constant rescaling of the metric). Then there exists a principal circle bundle $\pi:M\to B$, with connection $\omega \in \Omega^1(M)$, $d\omega=\pi^* \Omega$. Moreover the triple $(M,\omega, g=\pi^*\eta+\omega\otimes\omega)$ defines a sub-Riemannian structure having invariants $\chi=0, \kappa=\pi^*\kappa_\eta$.
\end{theorem}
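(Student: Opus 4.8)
The plan is to realize $M$ through the Boothby--Wang construction and then read off the invariants $\chi,\kappa$ from the geometry of the principal bundle; the bulk of the work is bookkeeping, and the only delicate points are normalization constants and the precise identification of the quotient metric.

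\emph{Step 1: the bundle and the connection.} Since $[\Omega]\in H^2(B;\mathbb Z)$, there is a principal $S^1$-bundle $\pi:M\to B$ whose Euler class is $[\Omega]$ (such bundles are classified by $H^2(B;\mathbb Z)$). Choose any principal connection $1$-form $\omega_0$ on $M$. Because the structure group is abelian, its curvature equals $d\omega_0$ and is basic, so $d\omega_0=\pi^*F_0$ for a $2$-form $F_0$ on $B$ with $[F_0]=[\Omega]$ in de Rham cohomology (after absorbing the usual $2\pi$-factor into the constant rescaling of $\eta$ allowed by the statement). Writing $F_0-\Omega=d\beta$ for some $\beta\in\Omega^1(B)$ and setting $\omega:=\omega_0-\pi^*\beta$, we still have a principal connection and now $d\omega=\pi^*\Omega$, as required. (When $H^2_{dR}(B)=0$, e.g.\ for non-compact $B$, this reduces to choosing the trivial bundle and a primitive of $\Omega$.)

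\emph{Step 2: contact structure, Reeb field, normalization.} Let $f_0$ be the infinitesimal generator of the principal $S^1$-action, normalized so that $\omega(f_0)=1$; it spans the vertical distribution, which is exactly $\ker d\pi=\ker(\pi^*\Omega)$. Hence $\iota_{f_0}d\omega=\iota_{f_0}\pi^*\Omega=0$, and $\omega\wedge d\omega=\omega\wedge\pi^*\Omega$ is nowhere vanishing, so $(M,\xi:=\ker\omega)$ is a contact manifold with Reeb field $f_0$. The horizontal distribution of the connection coincides with $\xi$, and since $g=\pi^*\eta+\omega\otimes\omega$ restricts to $\pi^*\eta$ on $\xi$, the map $d\pi|_\xi:\xi\to TB$ is a fibrewise isometry; in particular $g|_\xi$ is positive definite, so $(M,\omega,g|_\xi)$ is a contact sub-Riemannian structure whose Riemannian extension is $g$. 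Taking a local positively oriented $\eta$-orthonormal coframe $e^1,e^2$ on $B$ with $\Omega=e^1\wedge e^2$, the horizontal lifts $f_1,f_2$ form a $g$-orthonormal frame of $\xi$ with dual coframe $\nu_i=\pi^*e^i$; then $\nu_0:=\omega,\nu_1,\nu_2$ satisfy $d\nu_0=\pi^*\Omega=\nu_1\wedge\nu_2$, whence $\omega\wedge d\omega=\nu_0\wedge\nu_1\wedge\nu_2=\mathrm{Vol}_g$ and $\omega$ is normalized in the sense of \eqref{eq:compatibility}.

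\emph{Step 3: the invariants.} The $S^1$-action preserves $\omega$ (hence $\omega\otimes\omega$) and covers the identity on $B$ (hence preserves $\pi^*\eta$), so it acts by isometries of $g$; its orbits are the Reeb orbits, so $f_0$ is a Killing field and $(M,\omega,g)$ is K-contact, i.e.\ $\chi=0$. (Equivalently: the horizontal lifts $f_1,f_2$ are $S^1$-invariant, so $[f_0,f_1]=[f_0,f_2]=0$, all coefficients $c^k_{0j}$ vanish, and $\chi=0$ by \eqref{eq:chi}.) Since $\chi=0$, \cref{rmk:chikappa} identifies $\kappa$ with the Gaussian curvature of the local quotient of $M$ by the Reeb flow. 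Locally this quotient is $\pi|_U:U\to\pi(U)\subset B$, and the metric it inherits is the one transported from $g|_\xi$ through $d\pi|_\xi$, which by Step 2 is exactly $\eta$; hence the quotient surface is locally isometric to $(B,\eta)$ and its curvature is $\kappa_\eta$. Therefore $\kappa=\pi^*\kappa_\eta$ as a function on $M$, completing the proof. The only points requiring care — the normalization constant ensuring $d\omega=\pi^*\Omega$ exactly, absorbed by the rescaling in the hypothesis, and the verification that the quotient metric of \cref{rmk:chikappa} is genuinely $\eta$, which is where the identification $\nu_i=\pi^*e^i$ is used — are exactly the two steps highlighted above.
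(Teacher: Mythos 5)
Your proof is correct and follows essentially the same route as the paper: the paper invokes the Kobayashi/Boothby--Wang theorem as a black box for the existence of the bundle with connection satisfying $d\omega=\pi^*\Omega$, and then argues exactly as you do that the $\mathbb{S}^1$-action is by isometries (hence $\chi=0$) and that the Reeb-flow quotient is isometric to $(B,\eta)$ (hence $\kappa=\pi^*\kappa_\eta$ via \cref{rmk:chikappa}). Your Step 1 merely reproves that cited existence statement (fine, modulo the standard period/$2\pi$ convention you yourself flag), and your explicit verification of the normalization $\omega\wedge d\omega=\mathrm{Vol}_g$ is a detail the paper leaves implicit.
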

\begin{proof}
	We make use of the following result from \cite{Kobayashi}, as stated in \cite[Thm.\ 3]{Boothby}.
	\begin{theorem}\label{thm_koba}
		Let $(B,\Omega)$ be a $2$-dimensional symplectic manifold and assume that $[\Omega]\in H^2(B,\mathbb Z)$. Then there is a principal circle bundle $\pi: M \to B$ over $B$ with connection $\omega$ satisfying $d\omega=\pi^*\Omega$. In particular $\omega$ determines a contact structure and its Reeb field $f_0$ generates the right translations of the bundle by $\mathbb S^1$.
	\end{theorem}
	Let $\pi:(M,\omega)\to B$ be the principal $\mathbb S^1$-bundle with contact connection $\omega$, obtained applying \cref{thm_koba} to the symplectic manifold $(B,\Omega)$. Observe that the quadratic form $g=\pi^*\eta+\omega\otimes\omega$ is positive definite over $M$, therefore the triple $(M,\omega, g)$ defines a sub-Riemannian structure. By construction $f_0$ generates a one-parameter group of isometries so that $\chi =0$. The corresponding orbit space is isometric to $(B,\eta)$, thus $\kappa =\pi^*\kappa_\eta$ by \cref{rmk:chikappa}.
\end{proof}

 It follows from  \cite{N-overtwisted,NP-resolution} that K-contact structures are tight, see \cite[Rmk.\ 1.3]{QDarboux}. By applying \cref{thm:contactHadamard} to the universal cover of $(M,\omega,g)$ equipped with the pull-back sub-Riemannian structure, we obtain an alternative proof for this class of structures.

\begin{corollary}\label{cor:examples-univ-tight}
The contact structures in \cref{thm_examples}, with $\kappa_\eta\leq 0$, are universally tight.
\end{corollary}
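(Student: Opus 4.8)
The plan is to deduce the statement directly from the contact Cartan--Hadamard theorem, \cref{thm:contactHadamard}, applied to the universal cover. Given one of the structures $(M,\omega,g)$ produced by \cref{thm_examples} with $\kappa_\eta\le 0$, I would let $p:\tilde M\to M$ be the universal covering and equip $\tilde M$ with the pulled-back contact sub-Riemannian structure $(\tilde M,\tilde\omega,\tilde g):=(\tilde M,p^{*}\omega,p^{*}g)$; its contact distribution $\tilde\xi=\ker\tilde\omega$ is precisely the lift of $\xi=\ker\omega$ in the sense of \cref{rmk:univ_tight}. First I would record that the pull-back is again a normalized contact sub-Riemannian manifold: $\tilde\omega\wedge d\tilde\omega=p^{*}(\omega\wedge d\omega)=p^{*}\mathrm{Vol}_g=\mathrm{Vol}_{\tilde g}$ since $p$ is a local diffeomorphism and a local isometry, and the Reeb field of $\tilde\omega$ is the lift of $f_0$.

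Next I would verify the three hypotheses of \cref{thm:contactHadamard} for $(\tilde M,\tilde\omega,\tilde g)$. Simple connectedness is automatic. For the K-contact condition and the sign of the curvature, I would use that $\chi$ and $\kappa$ are local sub-Riemannian invariants (\cref{rmk:chikappa}) and that $p$ is a local sub-Riemannian isometry, so that $\tilde\chi=p^{*}\chi=0$ and $\tilde\kappa=p^{*}\kappa=p^{*}\pi^{*}\kappa_\eta\le 0$. The only step that requires genuine care is completeness of $(\tilde M,\tilde d)$: with $(B,\eta)$ complete, the Riemannian extension $g=\pi^{*}\eta+\omega\otimes\omega$ makes $\pi:(M,g)\to(B,\eta)$ a Riemannian submersion with compact ($\mathbb S^1$) fibres, hence $(M,g)$ is complete; since the sub-Riemannian distance dominates the Riemannian one and induces the same topology, $(M,d)$ is complete; and completeness lifts to the covering since integral curves of $\vec{\tilde H}$ project under $p$ to integral curves of $\vec H$, so by \cref{thm:hopf-rinow} the flow $e^{t\vec{\tilde H}}$ is complete.

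With all hypotheses in place, \cref{thm:contactHadamard} gives that $(\tilde M,\ker\tilde\omega)$ is contactomorphic to the standard contact structure on $\mathbb R^3$ of \cref{ex:standard}, which is tight by Bennequin \cite{bennequin}. Hence $(\tilde M,\tilde\xi)$ is tight, which by definition (\cref{rmk:univ_tight}) means that $(M,\xi)$ is universally tight, proving the corollary. I expect the main (and essentially only) obstacle to be the completeness bookkeeping for $\tilde M$: one must make sure that completeness of the base surface is in force --- the natural standing hypothesis for a Cartan--Hadamard-type statement --- and then propagate it through the Riemannian submersion and through the covering map; everything else is a direct application of results already established in the paper.
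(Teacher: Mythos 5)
Your proposal is correct and follows essentially the same route as the paper, whose proof of \cref{cor:examples-univ-tight} consists precisely in applying \cref{thm:contactHadamard} to the universal cover equipped with the pull-back contact sub-Riemannian structure and invoking tightness of the standard structure together with \cref{rmk:univ_tight}. Your extra bookkeeping (invariance of $\chi$ and $\kappa$ under the local isometry $p$, and completeness propagated from the complete base through the circle-bundle submersion and then to the cover via \cref{thm:hopf-rinow}) just makes explicit what the paper leaves implicit under its standing completeness assumption.
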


\subsection{Examples with positive curvature}\label{sec:posi}

In the proof of \cref{thm:contactHadamard}, the assumption $\kappa\leq 0$ can be replaced by asking that $E: A\Gamma \to M$ has no critical points (this fact is implied by $\kappa \leq 0$ through \cref{thm_conj_time}). Arguing as in the proof of the latter, it is sufficient that there exists a Reeb orbit $e^{z f_0}(q)$ such that the Riemannian manifold obtained by taking the quotient by the action of the Reeb flow (locally along any sub-Riemannian geodesic $E :A\Gamma \to M$) has no conjugate points along (Riemannian) geodesics starting from the corresponding point in the quotient. The curvature of the quotient, in this case, can also be positive.

We give an example. Let $(N,h)$ be a complete, orientable, non-compact Riemannian surface. There exists $\alpha \in \Lambda^1 N$ such that $d\alpha = \mathrm{vol}_h$ is the volume form of $N$. Let then $M=N \times \R$, with projection $p(q,z) = q$. Define
\begin{equation}
\omega := dz + p^*\alpha, \qquad g = p^*h + \omega\otimes \omega.
\end{equation}
One can check that  $\omega\wedge d\omega = \mathrm{vol}_g$ so that $(M,\omega,g)$ is a contact sub-Riemannian manifold. The Reeb vector field is $f_0 = \partial_z$, which is clearly a generator of isometries for $(M,\omega,g)$. Thus, the sub-Riemannian invariants are
\begin{equation}
\chi = 0,\qquad \kappa =p^*\kappa_h,
\end{equation}
where $\kappa_h$ is the Riemann curvature of $(N,\eta)$. Assume now that $(N,\eta)$ has a \emph{pole}, namely, there exists a point $q$ such that any geodesic emanating from $q$ has no conjugate points along it. Let $\Gamma$ be the Reeb orbit passing through from $(q,0)\in M$. Then the construction in the proof of \cref{thm_conj_time} shows that the map $E: A\Gamma \to M$ is an immersion and thus also the proof of \cref{thm:contactHadamard} proceeds unchanged. As an example, one can choose as $(N,h)$ the Riemannian surface given by $N= \{(x,y,w)\in \R^3 \mid w = x^2+y^2\}$ with the induced metric from $\R^3$. The origin is a pole in the above sense, and the curvature $\kappa_h$ is positive.

\subsection{Left-invariant K-contact structures}\label{sec:K-left}

A K-contact sub-Riemannian structure $(M,\omega,g)$ is called \emph{left-invariant} if $M$ is a Lie group, and both the contact form $\omega$ and the sub-Riemannian metric $g$ are left-invariant.

\begin{example}{(Left-invariant sub-Riemannian structure on the Heisenberg group.)}\label{ex:left_R3} The sub-Riemannian structure $(\R^3,\omega_{\mathrm{st}},g_{\mathrm{st}})$ defined in \cref{ex:st_sr} is left-invariant with respect to the Heisenberg group multiplication, defined by
\begin{equation}
    (x, y, z) \star (x', y', z') :=
\left( x + x', y + y', z + z' + \frac{1}{2}(xy' - x'y) \right).
\end{equation}
\end{example}
\begin{example}{(Left-invariant sub-Riemannian structure on $\mathrm{SU}(2)$.)}\label{ex:left_SU(2)}
The Lie algebra of \( \text{SU}(2) \) is the algebra of anti-hermitian traceless \( 2 \times 2 \) complex matrices:
\begin{equation}
\mathfrak{su}(2): = \left\{
\begin{pmatrix}
i\alpha & \beta \\
-\bar\beta & -i\bar\alpha
\end{pmatrix}
\in \text{Mat}(2, \mathbb{C}) \ \middle| \ \alpha \in \mathbb{R},\,\, \beta \in \mathbb{C}
\right\}.
\end{equation}
A basis of \( \mathfrak{su}(2) \) is \( \{f_1, f_2, f_0\} \), where
\begin{equation}
f_1 = \frac{1}{2}
\begin{pmatrix}
0 & i \\
i & 0
\end{pmatrix}, \quad
f_2 = \frac{1}{2}
\begin{pmatrix}
0 & 1 \\
-1 & 0
\end{pmatrix}, \quad
f_0 = \frac{1}{2}
\begin{pmatrix}
i & 0 \\
0 & -i
\end{pmatrix}.
\end{equation}
Setting $\xi=\spn\{f_1,f_2\}$, and declaring $f_1,f_2$ an oriented orthonormal frame, we obtain a left-invariant sub-Riemannian structure on $\mathrm{SU}(2)$ with Reeb field $f_0$. See \cite[Sec.\ 13.6]{Agrachev}.
\end{example}
\begin{example}{(Left-invariant sub-Riemannian structure on $\widetilde{\mathrm{SL}}(2)$.)}\label{ex:left_SL(2)}
The Lie algebra of $\widetilde{\mathrm{SL}}(2)$, i.e. the universal cover of $\mathrm{SL}(2)$, is the algebra of $2\times 2$ traceless real matrices:
\begin{equation}
 \mathfrak{sl}(2) := \{A \in \mathrm{Mat}(2,\mathbb{R}) \mid \mathrm{trace}(A) = 0 \}.
\end{equation}
 A basis for $\mathfrak{sl}(2)$ is $\{f_1,f_2,f_0\}$, where
\begin{equation}
f_1 = \frac{1}{2}
\begin{pmatrix}
1 & 0 \\
0 & -1
\end{pmatrix}, \quad
f_2 = \frac{1}{2}
\begin{pmatrix}
0 & 1 \\
1 & 0
\end{pmatrix}, \quad
f_0 = \frac{1}{2}
\begin{pmatrix}
0 & -1 \\
1 & 0
\end{pmatrix}.
\end{equation}
Setting $\xi=\spn\{f_1,f_2\}$, and declaring $f_1,f_2$ an oriented orthonormal frame, we obtain a left-invariant sub-Riemannian structure on $\widetilde{\mathrm{SL}}(2)$ with Reeb field $f_0$. See \cite[Sec.\ 17.5.4]{Agrachev}.
\end{example}
The three examples above actually compose an exhaustive list of the simply connected $K$-contact left-invariant sub-Riemannian structures.
\begin{theorem}{\cite[Corollary 17.31]{Agrachev}}\label{thm:K-left}
    Let $(M,\omega, g)$ be a three-di\-men\-sional simply connected, K-contact, left-invariant sub-Riemannian manifold. The metric invariant $\kappa:M\to \mathbb R$ defined in \eqref{eq:kappa(1)} is constant. Up to a constant rescaling of the contact form and the sub-Riemannian metric, we have the following three cases:
    \begin{enumerate}[label = $(\roman*)$]
        \item $\kappa=0$ and $(M,\omega, g)$ is isometric to the left-invariant sub-Riemannian structure on the Heisenberg group, i.e., \cref{ex:left_R3};
        \item $\kappa=1$ and $(M,\omega, g)$ is isometric to the left-invariant sub-Riemannian structure on $\mathrm{SU}(2)$, i.e, \cref{ex:left_SU(2)};
        \item $\kappa=-1$ and $(M,\omega, g)$ is isometric to the left-invariant sub-Riemannian structure on $\widetilde{\mathrm{SL}}(2)$, i.e, \cref{ex:left_SL(2)}.
    \end{enumerate}
\end{theorem}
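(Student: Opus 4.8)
\section*{Proof proposal}

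The plan is to descend to the Lie algebra $\mathfrak g$ of $M$, classify the possible structure coefficients of a left-invariant K-contact datum, read off $\kappa$, and then recognise the three models with the help of a rigidity statement for contact sub-Riemannian $3$-manifolds with constant invariants. First I would fix a left-invariant oriented orthonormal frame $f_0,f_1,f_2$ with $f_0$ the Reeb field; note that $f_0$ is left-invariant because the normalization \eqref{eq:compatibility} multiplies the left-invariant form $\omega$ by the ratio of two left-invariant volume forms, hence by a constant. Since the $f_i$ are left-invariant, the structure coefficients $c_{ij}^k$ of \cref{eq:strcoeff} are constant, so by \eqref{eq:chi} and \eqref{eq:kappa} the invariants $\chi$ and $\kappa$ are constant functions. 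By the characterization of K-contact structures (\cite[Cor.\ 17.10]{Agrachev}), K-contactness means $\chi=0$, which by \eqref{eq:chi} and \eqref{eq:traceless} forces $c_{01}^1=c_{02}^2=0$ and $c_{01}^2=-c_{02}^1=:b$; thus $[f_0,f_1]=bf_2$, $[f_0,f_2]=-bf_1$, i.e.\ $\operatorname{ad}_{f_0}$ acts on $\xi$ as an infinitesimal rotation. Writing $[f_1,f_2]=pf_1+qf_2-f_0$ (the coefficient of $f_0$ being $-1$ by the normalization), the Jacobi identity on $f_0,f_1,f_2$ gives $bp=bq=0$; since a constant rotation of $f_1,f_2$ fixes $b$ and rotates $(p,q)$, I may assume $p=0$, and if $b\neq0$ the Jacobi identity already yields $p=q=0$.

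From \eqref{eq:kappa} one computes $\kappa=-(p^2+q^2)-b$, so the possibilities are exactly: (a) $b=q=0$, $\mathfrak g$ the Heisenberg algebra, $\kappa=0$; (b) $b<0$, $q=0$, $\mathfrak g\cong\mathfrak{su}(2)$ (Killing form negative definite), $\kappa=-b>0$; (c) $b>0$, $q=0$, $\mathfrak g\cong\mathfrak{sl}(2,\mathbb R)$ (Killing form of signature $(++-)$), $\kappa=-b<0$; (d) $b=0$, $q>0$, $f_0$ central and $\mathfrak g\cong\mathbb R\oplus\mathfrak{aff}(\mathbb R)$, $\kappa=-q^2<0$. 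Left-invariant sub-Riemannian structures are complete (left translations act transitively by isometries on the resulting homogeneous, hence complete, metric space), so the simply connected representatives are the Heisenberg group, $\mathrm{SU}(2)\cong S^3$, $\widetilde{\mathrm{SL}}(2)$, and $\mathbb R\times\mathrm{Aff}^+(\mathbb R)$. A direct structure-coefficient computation in \cref{ex:left_R3,ex:left_SU(2),ex:left_SL(2)} confirms that the three named models have $\chi=0$ and $\kappa=0,1,-1$ respectively.

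It remains to collapse (c) and (d) into a single isometry class and to normalise $|\kappa|$. Under the only admissible rescaling $\omega\mapsto\mu^2\omega$, $g|_\xi\mapsto\mu^2 g|_\xi$ (which preserves \eqref{eq:compatibility}), the orthonormal frame scales as $f_1,f_2\mapsto\mu^{-1}f_{1,2}$, $f_0\mapsto\mu^{-2}f_0$, and one reads off from the resulting transformation of the $c_{ij}^k$ that $\kappa\mapsto\mu^{-2}\kappa$; hence $\kappa=0$ is untouched while any positive (resp.\ negative) constant $\kappa$ rescales to $1$ (resp.\ $-1$). Finally I would invoke the rigidity of contact sub-Riemannian $3$-manifolds: $(\chi,\kappa)$ together with their covariant derivatives is a complete system of local invariants, so a complete, simply connected structure with $\chi\equiv0$ and $\kappa\equiv\text{const}$ is unique up to isometry — equivalently, as in the Boothby--Wang picture of \cref{thm_examples}, it is the prequantum line/circle bundle, with its metric connection, over the simply connected surface of constant curvature $\kappa$. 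This identifies (c) and (d) with one another and with the $\widetilde{\mathrm{SL}}(2)$ model, and pins down the three normalised cases as Heisenberg, $\mathrm{SU}(2)$, $\widetilde{\mathrm{SL}}(2)$, as claimed.

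The main obstacle is this last rigidity/globalization step: showing that constant $\chi=0$ and $\kappa$ determine the structure globally, and in particular that the genuinely non-isomorphic groups $\widetilde{\mathrm{SL}}(2)$ and $\mathbb R\times\mathrm{Aff}^+(\mathbb R)$ carry isometric K-contact sub-Riemannian structures. This can be done either via the canonical moving-frame/Cartan structure equations (which, once $\chi\equiv0$ and $\kappa\equiv\text{const}$, close up into the Lie algebra of a transitive transformation group, followed by a Cartan--Ambrose--Hicks argument using completeness and simple connectivity) or via the prequantum-bundle description; in the setting of the excerpt it is imported from \cite[Ch.\ 17]{Agrachev}. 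A minor secondary point is the normalization in the compact case, where simple connectivity of the total space constrains the Euler class of the bundle over $S^2$ — which is why the statement is phrased ``up to a constant rescaling''.
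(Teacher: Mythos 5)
The paper does not actually prove this statement: it is imported wholesale from \cite[Cor.\ 17.31]{Agrachev}, so there is no internal proof to compare with, and your proposal in effect supplies (a sketch of) the argument that the paper delegates to the reference. Your Lie-algebraic reduction is correct and I could verify it against the paper's conventions: with a left-invariant orthonormal frame the structure constants are constant, so $\chi,\kappa$ are constant by \eqref{eq:chi}--\eqref{eq:kappa}; $\chi=0$ together with \eqref{eq:traceless} forces $\mathrm{ad}_{f_0}|_\xi$ to be a rotation of speed $b$; the Jacobi identity applied to $[f_1,f_2]=pf_1+qf_2-f_0$ gives $bp=bq=0$; your formula $\kappa=-(p^2+q^2)-b$ is consistent with the examples (it gives $\kappa=1$ for \cref{ex:left_SU(2)} and $\kappa=-1$ for \cref{ex:left_SL(2)}); the rescaling $\omega\mapsto\mu^2\omega$, $g\mapsto\mu^2g$ is indeed the only one compatible with \eqref{eq:compatibility} and sends $\kappa\mapsto\mu^{-2}\kappa$; and the Killing-form signature correctly sorts the semisimple cases into $\mathfrak{su}(2)$ and $\mathfrak{sl}(2)$, after which the Lie-algebra isomorphism carries the orthonormal frame to the frame of \cref{ex:left_R3,ex:left_SU(2),ex:left_SL(2)} and hence yields the isometry in your cases (a)--(c). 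You also correctly isolate the one genuinely delicate point, namely your case (d) on $\mathbb{R}\times\mathrm{Aff}^+(\mathbb{R})$ (with $b=0$, $q\neq 0$, $\kappa<0$ constant), which is a bona fide simply connected left-invariant K-contact structure on a group not isomorphic to $\widetilde{\mathrm{SL}}(2)$ and must be shown globally isometric to the $\widetilde{\mathrm{SL}}(2)$ model; this is exactly the nontrivial content of the cited corollary, and you handle it either by citing \cite[Ch.\ 17]{Agrachev} again or via the local Reeb-quotient/Boothby--Wang picture (cf.\ \cref{rmk:chikappa,thm_examples}) plus completeness and simple connectedness --- both viable, so the proposal and the paper ultimately rest on the same external input for this step. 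One caution: your blanket claim that $(\chi,\kappa)$ together with their derivatives form a complete system of local invariants of 3D contact sub-Riemannian structures is stronger than what you need and is not what you should invoke; what suffices, and is correct, is the restricted statement that $\chi\equiv 0$ and $\kappa\equiv\mathrm{const}$ determine the structure locally (the local quotient by the Reeb flow is a surface of constant curvature $\kappa$ and $d\omega$ is the pullback of its area form), after which a covering/rigidity argument under completeness and simple connectedness gives global uniqueness.
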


We compute the tightness radius estimate of \cref{thm:comparisonschwarzian-intro,thm:comparisoncurvature-intro} for \cref{ex:left_R3,,ex:left_SU(2),,ex:left_SL(2)}. For a Reeb orbit $\Gamma$, we denote with $\rho_{\mathrm{Thm.B}}(\Gamma)$ and $\rho_{\mathrm{Thm.C}}(\Gamma)$ the estimates produced by \cref{thm:comparisonschwarzian-intro} and \cref{thm:comparisoncurvature-intro}, respectively:
\begin{align}
\rho_{\mathrm{Thm.B}}(\Gamma)& :=\min\{r_\inj(\Gamma), r_*(k_1,k_2)\},\\
    \rho_{\mathrm{Thm.C}}(\Gamma) & :=\min\{r_\inj(\Gamma), \tau(A,C)\},
\end{align}
where $r_*(k_1,k_2)$ is defined in \eqref{eq:k12-intro} and $\tau(A,C)$ is defined in \eqref{eq:tauAC-intro}.
\begin{prop}\label{prop:rho1-rho2-K-left}
    Let $(M,\omega, g)$ be one of the K-contact left-invariant structures of \cref{ex:left_R3,,ex:left_SU(2),,ex:left_SL(2)}, and let $\Gamma$ be a Reeb orbit. We have the following cases:
    \begin{enumerate}[label = $(\roman*)$]
        \item $(M,\omega,g)$ is the left-invariant structure on the Heisenberg group, and
        \begin{equation}
            \rho_{\mathrm{Thm.B}}(\Gamma)=r_\tight(\Gamma)=+\infty,\qquad \rho_{\mathrm{Thm.C}}(\Gamma)=\frac{2\pi}{3\sqrt{3}}\approx 1.02;
        \end{equation}
        \item $(M,\omega,g)$ is the left-invariant structure on $\mathrm{SU}(2)$, and 
        \begin{equation}
            \rho_{\mathrm{Thm.B}}(\Gamma)=r_\tight(\Gamma)=\pi,\qquad \rho_{\mathrm{Thm.C}}(\Gamma)\approx 1.05,
        \end{equation}
        \item $(M,\omega,g)$ is the left-invariant structure on $\mathrm{SL}(2)$, and 
        \begin{equation}
            \rho_{\mathrm{Thm.B}}(\Gamma)=r_\tight(\Gamma)=+\infty,\qquad \rho_{\mathrm{Thm.C}}(\Gamma)\approx 1.05.
        \end{equation}

    \end{enumerate}
\end{prop}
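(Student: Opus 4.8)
The plan is to reduce everything, via \cref{thm_conj_time}, to three explicit computations for each of the model structures — the injectivity radius from a Reeb orbit, the contact Jacobi curve, and the canonical curvatures — since by \cref{thm:K-left} these structures exhaust the simply connected K-contact left-invariant case. Throughout, left-invariance together with the rotational symmetry of $\xi$ makes all quantities below independent of $\lambda\in A^1\Gamma$, so I fix one Reeb orbit $\Gamma$ and one $\lambda\in A^1\Gamma$. For the injectivity radius: since all three structures are K-contact, \cref{thm_conj_time} gives $r_\tight(\Gamma)=r_\inj(\Gamma)$, so it suffices to compute $r_\inj(\Gamma)$. For the Heisenberg group and for $\widetilde{\mathrm{SL}}(2)$ one has $\kappa\le 0$ and $M$ complete and simply connected, so \cref{thm:contactHadamard} (whose proof shows $E\colon A\Gamma\to M$ is a diffeomorphism) gives $r_\tight(\Gamma)=r_\inj(\Gamma)=+\infty$. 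For $\mathrm{SU}(2)$ one computes $\kappa\equiv 1$, so \cref{lem:focalcomparison} gives $r_\foc(\lambda)=\pi$; moreover the structure constants of $f_0,f_1,f_2$ are totally antisymmetric, so the Riemannian extension is bi-invariant and $(\mathrm{SU}(2),g)$ is a round $S^3$ in which $\Gamma$ is a Hopf great circle. By \cref{rmk:equal_tubmaps} the map $E$ is the Riemannian normal exponential of this round metric, whence the tube around $\Gamma$ is embedded precisely for radius $<\pi$ (it collapses onto the dual great circle at radius $\pi$), so $r_\tight(\Gamma)=r_\inj(\Gamma)=\pi$.

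Next the contact Jacobi curve. In the K-contact case $\omega_r(\partial_z)\equiv 1$ (proof of \cref{thm_conj_time}), so $v(r):=\omega_r(\partial_\theta)$ is a homogeneous coordinate for $\Omega_\lambda$. By \cref{rmk:equal_tubmaps,rmk:chikappa} the tube around $\Gamma$ is a Riemannian submersion onto the local Reeb quotient, a surface of constant curvature $\kappa$, and identifying $v(r)$ with the holonomy of the connection $\omega$ accumulated by the geodesic variation $(\pi\circ e^{r\vec H})_*\partial_\theta$ gives $v(r)=r^2/2$ for $\kappa=0$ (in agreement with \cref{ex:st_scwharz}, the Heisenberg case being literally the radial model $\omega_{\mathrm{st}}$ of \cref{thm:alpha_beta}), $v(r)=1-\cos r$ for $\kappa=1$, and $v(r)=\cosh r-1$ for $\kappa=-1$; alternatively one realizes the $\widetilde{\mathrm{SL}}(2)$ structure as an explicit radial model and uses the explicit geodesic flow for $\mathrm{SU}(2)$. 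Computing $\tfrac12\mathcal S(\Omega_\lambda)=\tfrac12\mathcal S(v)$ yields $-\tfrac{3}{4r^2}$, $\tfrac14-\tfrac{3}{4\sin^2 r}$, and $-\tfrac14-\tfrac{3}{4\sinh^2 r}$ respectively, and the elementary inequalities $\tfrac{1}{\sin^2 r}-\tfrac1{r^2}\ge\tfrac13\ge\tfrac1{r^2}-\tfrac{1}{\sinh^2 r}$ (the first for $r\in(0,\pi]$, the second for $r>0$) give $\tfrac12\mathcal S(\Omega_\lambda)(r)\le -\tfrac{3}{4r^2}$ on all of $(0,r_\inj(\Gamma)]$ (where, in the $\mathrm{SU}(2)$ case, $\mathcal S(\Omega_\lambda)\to-\infty$ at the focal radius $r=\pi$). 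Hence \eqref{eq:schwarzianestimate} holds with $k_1=k_2=0$, so $r_*(0,0)=+\infty$ and $\rho_{\mathrm{Thm.B}}(\Gamma)=\min\{r_\inj(\Gamma),+\infty\}=r_\inj(\Gamma)=r_\tight(\Gamma)$ in all three cases.

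For the canonical curvatures: along the geodesics emanating from $A\Gamma$ one checks that $h_0\equiv 0$ and $h_1,h_2$ stay constant, so by left-invariance $R^\lambda_a(r),R^\lambda_c(r)$ are constants. By \cref{lem:omega=F^c}, $x_0(r):=\langle F^c(r),E_a(0)\rangle=\pm v(r)$, so the Cauchy problem \eqref{eq:cauchy1} (from the proof of \cref{thm:comparisoncurvature}) reduces to the constant-coefficient ODE $v^{(4)}+R_a v''-R_c v=0$, and substituting the three explicit coordinates $v$ forces $R_a=\kappa$ and $R_c=0$. Therefore \eqref{eq:bound_AC} holds with $A=\sqrt{1+\kappa^2}$, $C=1$, and $\rho_{\mathrm{Thm.C}}(\Gamma)=\min\{r_\inj(\Gamma),\tau(\sqrt{1+\kappa^2},1)\}$ with $\tau$ as in \eqref{eq:tauAC}. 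A direct integration gives $\tau(1,1)=\int_0^\infty\frac{\mathrm{d}u}{u^2+u+1}=\frac{2\pi}{3\sqrt 3}$ for $\kappa=0$ and $\tau(\sqrt2,1)=\int_0^\infty\frac{\mathrm{d}u}{\sqrt2\,u^2+u+1}=\frac{2}{\sqrt{4\sqrt2-1}}\bigl(\tfrac\pi2-\arctan\tfrac{1}{\sqrt{4\sqrt2-1}}\bigr)\approx 1.05$ for $\kappa=\pm1$; since $r_\inj(\Gamma)=+\infty$ for the Heisenberg group and $\widetilde{\mathrm{SL}}(2)$, and $r_\inj(\Gamma)=\pi>\tau(\sqrt2,1)$ for $\mathrm{SU}(2)$, the minimum equals the stated value of $\tau$ in each case.

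The main obstacle is the explicit determination of the contact Jacobi coordinate $v(r)$: one must either set up the radial-model coordinates of \cref{thm:alpha_beta} and the geodesic flow concretely for $\widetilde{\mathrm{SL}}(2)$ and for a tubular neighbourhood in $\mathrm{SU}(2)$, or else justify carefully the Riemannian-submersion/holonomy description of $\omega_r(\partial_\theta)$. Once $v$ is in hand, the Schwarzian inequalities and the extraction of $R_a,R_c$ from the fourth-order ODE \eqref{eq:cauchy1} are routine. A secondary delicate point is the global identification $r_\inj(\Gamma)=\pi$ — rather than merely $\le\pi$ — in the $\mathrm{SU}(2)$ case, which rests on the Hopf-fibration picture of the round $S^3$.
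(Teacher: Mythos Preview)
Your argument is correct and arrives at the same conclusions as the paper, but the route to the two key computations differs in interesting ways.

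\textbf{Contact Jacobi curve.} You work in the flow-pushed frame of the proof of \cref{thm_conj_time} (so $\omega_r(\partial_z)\equiv 1$) and determine $v(r)=\omega_r(\partial_\theta)$ via a holonomy/area interpretation on the local Reeb quotient. The paper instead uses the canonical left-invariant frame satisfying $[f_2,f_1]=f_0$, $[f_1,f_0]=\kappa f_2$, $[f_2,f_0]=-\kappa f_1$, observes that $h_0,h_1,h_2$ are constant along $e^{r\vec H}(\lambda)$ for $\lambda\in A^1\Gamma$, and substitutes these structure constants into the general Lie-derivative formula \eqref{eq:omegadott} to get the closed second-order ODE $\ddot\omega_r=-\kappa\,\omega_r+d\theta$ with $\omega_0=dz$, $\dot\omega_0=0$. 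Solving this gives $\omega_r(\partial_\theta)=\tfrac{1}{\kappa}(1-\cos(\sqrt\kappa\,r))$ directly (with the obvious limits for $\kappa\le 0$), matching your $v(r)$ up to sign. This dissolves the ``main obstacle'' you flag: no radial-model realization or holonomy argument is needed, only the structure constants and the appendix formula. Both frames yield the same Schwarzian (they differ by a projective change of homogeneous coordinate on $T_\lambda^*(A^1\Gamma)$), and the inequality $\tfrac12\mathcal S(\Omega_\lambda)\le -\tfrac{3}{4r^2}$ is then immediate in either approach.

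\textbf{Canonical curvatures.} Here the roles reverse: the paper simply cites \cite{AAPL} for $R_a^\lambda\equiv\kappa$, $R_c^\lambda\equiv 0$, while you extract these values internally by feeding your explicit $v(r)$ into the fourth-order Cauchy problem \eqref{eq:cauchy1} (via $x_0=\pm\omega_r(\partial_\theta)$ from \cref{lem:omega=F^c}). Your method is self-contained and elegant, but note it is only non-circular because you have an independent determination of $v(r)$; if one adopts the paper's ODE derivation of $\omega_r$ from \eqref{eq:omegadott}, then your extraction of $R_a,R_c$ becomes a pleasant corollary requiring no external reference.

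The $\mathrm{SU}(2)$ injectivity computation is handled the same way in both: the paper computes via Milnor's formulas that $\mathrm{sec}\equiv\tfrac14$ (your bi-invariance observation gives the same), identifies the Riemannian extension with the round $S^3$ of radius $2$, and reads off $r_\inj(\Gamma)=\pi$ from the Hopf picture.
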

\begin{proof}
    We begin with the computation of $\rho_{\mathrm{Thm.B}}(\Gamma)$.
    We need the following lemma.
    \begin{lemma}\label{lem:K_cont_schw}
    Let $(M,\omega, g)$ be a three-di\-men\-sional K-contact, left-invariant sub-Riemannian structure with metric invariant $\kappa\in\mathbb R$. Let $\Gamma$ be an embedded piece of Reeb orbit. Then, for any $\lambda\in A^1\Gamma$ and $r\in(0,+\infty)$, the Schwarzian derivative of the contact Jacobi curve satisfies
        \begin{equation}
\frac{1}{2}    \mathcal S(\Omega_\lambda)(r)
            =\begin{cases} \frac{\kappa}{4} \left(1 - \frac{3}{\sin^{2}{\left(\sqrt{ \kappa} r\right)}}\right) & \kappa >0 ,\\
-\frac{3}{4r^2} & \kappa =0,\\
\frac{\kappa}{4} \left(1 + \frac{3}{\sinh^{2}{\left(\sqrt{ -\kappa} r\right)}}\right) & \kappa <0 .
\end{cases}
        \end{equation}
\end{lemma}
\begin{proof} 
Recall that the contact Jacobi curve at $\lambda\in A^1\Gamma$ is the projectivization
\begin{equation}
\Omega_\lambda(r)=P\left(\omega_r|_\lambda\right),\qquad \forall\, r \in [0,+\infty).
\end{equation}
where $\omega_r$ is defined in \eqref{eq:moving_omega}, see \cref{def:contactjacobi}. We explicitly compute $\omega_r|_\lambda$ which, we recall, is a one-parameter family of one-forms in the fixed space $T_\lambda^*(A^1\Gamma)$. By  \cite[Prop.\ 17.17]{Agrachev}, there exists an orthonormal frame $\{f_0,f_1,f_2\}$ satisfying 
\begin{equation}\label{eq:k-frame}
    \begin{aligned}
        [f_2,f_1]=f_0,\qquad
        [f_1,f_0]=\kappa f_2\qquad
        [f_2,f_0]=-\kappa f_1.
    \end{aligned}
\end{equation}
Let $h_1,h_2,h_0:T^*M\to \mathbb R$ be the associated Hamiltonian functions, i.e. $h_i(\lambda)=\langle \lambda, f_i\rangle$. Observe that, denoting $\lambda(r)=e^{r\vec{H}}(\lambda)$, the functions $h_i(r):=h_i(\lambda(r))$, satisfy the following equations
\begin{align}
        \dot h_0(r)& =\{H,h_0\}=0,\\
        \dot h_1(r)& =\{H,h_1\}=h_2(r)h_0(r),\\
        \dot h_2(r)& =\{H,h_2\}=-h_1(r)h_0(r).
\end{align}
Since $\lambda \in A^1\Gamma$ we have initial conditions $h_0(0)=0$ and $h_1(0)^2+h_2(0)^2=1$. Then
\begin{equation}
    h_0(r)=0,\qquad h_1(r)=h_1(0),\qquad h_2(r)=h_2(0),\qquad \forall\, r\in \mathbb R.
\end{equation}
In the following we omit the evaluation at $\lambda$ and denote 
\begin{equation}
    \dot\omega_r=\frac{d}{dr}\omega_r, \qquad \ddot\omega_r=\frac{d^2}{dr^2}\omega_r.
\end{equation}
We exploit the general computations in \cref{app:contidots}. Substituting the structural coefficients of the frame \eqref{eq:k-frame} in \eqref{eq:omegadott}, and exploiting the fact that $h_0,h_1,h_2$ are constant along $\lambda(r)$, we obtain 
\begin{equation}\label{eq:k-omega-evol}
\begin{cases}
    \ddot{\omega}_r = -\kappa\omega_r + d\theta,\\
    \omega_0=dz,\quad \dot\omega_0=0.
\end{cases}
\end{equation}
Note that here $(r\cos\theta,r\sin\theta,z)$ denote the cylindrical coordinates on $A\Gamma$ of \cref{sec:cylindcoords}, and the initial conditions in \eqref{eq:k-omega-evol} are obtained from \cref{lem:computationsdots}.
The solution to \eqref{eq:k-omega-evol} is
\begin{equation}
        \omega_r= \cos(\sqrt{\kappa} r) dz-\frac{2}{\kappa} \sin^2\left(\frac{\sqrt{\kappa}r}{2}\right) d\theta.
    \end{equation}
    Therefore the contact Jacobi curve can be computed explicitly:
    \begin{equation}
        \Omega_\lambda(r)=\left[\cos(\sqrt{\kappa} r): -\frac{2}{\kappa} \sin^2\left(\frac{\sqrt{\kappa}r}{2}\right)\right], \qquad \forall\,\lambda\in A^1\Gamma,\qquad \forall \,r\in [0,+\infty).
    \end{equation}
    Taking the Schwarzian derivative yields the result.
\end{proof}
\cref{lem:K_cont_schw} shows that for all $\kappa\in \mathbb R$ we have $ \tfrac{1}{2}\mathcal{S}(\Omega_\lambda)(r)\leq -\frac{3}{4r^2}$. It follows that we can choose $k_1=k_2=0$ in \cref{thm:comparisonschwarzian-intro}, obtaining $r_*(k_1,k_2)=+\infty$. Therefore, for all $\kappa\in\R$, we have 
\begin{equation}
    \rho_{\mathrm{Thm.B}}(\Gamma)=\min\{r_\inj(\Gamma),r_*(k_1,k_2) \}=r_\inj(\Gamma).
\end{equation}
Since $r_\inj(\Gamma)\geq r_\tight(\Gamma)\geq \rho_{\mathrm{Thm.B}}(\Gamma)=r_\inj(\Gamma)$, we also obtain $r_\tight(\Gamma)=r_\inj(\Gamma)$, which we had already proved for K-contact manifolds in \cref{thm_conj_time}. 

It remains to compute $r_{\inj}(\Gamma)$ in the three cases. We make use of the following fact.
\begin{lemma}\label{lem:K-sect}
    Let $(M,\omega,g)$ be a three-di\-men\-sional K-contact, left-invariant sub-Riemannian structure. Then, for each $n\in T^1M$, the sectional curvature (with respect to the Riemannian extension) of the plane $n^{\perp}$ is given by 
    \begin{equation}\label{eq:sec_k}
        \mathrm{sec}(n^{\perp})=\omega(n)^2\left(\kappa-1\right)+ \frac{1}{4},\qquad \mathrm{Ric}(f_0)=\frac{1}{2}.
    \end{equation}
\end{lemma}
\begin{proof}
Substituting the structural coefficients of the frame \eqref{eq:k-frame} into the formulae of \cite[Thm.\ 4.3]{Milnor}, we obtain the result.
\end{proof}
Substituting $\kappa=1$ in \eqref{eq:sec_k} we deduce that $\mathrm{SU}(2)$ with the Riemannian extension has constant sectional curvature, equal to $1/4$. Therefore, $\mathrm{SU}(2)$ with the Riemannian extension is isometric to the round $3-$sphere of radius $2$ (or Riemannian diameter $2\pi$). One can check that the orbits of the Reeb field are Riemannian geodesics. Thus $\Gamma$ is a great circle. According to \cref{rmk:equal_tubmaps} the Riemannian and sub-Riemannian tubular neighbourhood maps coincide in the $K$-contact case. It follows that $r_\inj(\Gamma)=\pi$ if $\kappa=1$. If $\kappa\leq 0$, then according to \cref{thm:contactHadamard}, $r_\inj(\Gamma)=+\infty$. Summing up, we obtained
\begin{equation}\label{eq:rinjKcont}
r_{\inj}(\Gamma) = \begin{cases} \pi & \text{if } \kappa =1,\\
+\infty & \text{if } \kappa \leq 0.
\end{cases}
\end{equation}
This concludes the computation of $\rho_{\mathrm{Thm.B}}(\Gamma)$.

Next, we compute $\rho_{\mathrm{Thm.C}}(\Gamma)$.
The canonical curvatures of K-contact structures have been computed for example in \cite[Thm.\ 6.7]{AAPL}, from which, recalling that $\lambda \in A^1\Gamma$ and hence $h_0(\lambda)=0$ and $2H(\lambda)=1$, we deduce
\begin{equation}
R_a^\lambda(r) = \kappa,\qquad R_c^\lambda(r) = 0,\qquad \forall\,\lambda \in A^1\Gamma,\quad \forall\, r>0.
\end{equation}
Therefore in \cref{thm:comparisoncurvature-intro} we can take $A=\sqrt{2}$, $C=1$ if $\kappa\in\{-1,1\}$, while $A=C=1$, if $\kappa =0$. Replacing in \eqref{eq:tauAC-intro} we obtain the following values for $\rho_{\mathrm{Thm.C}}(\Gamma)$:
\begin{equation}
\rho_{\mathrm{Thm.C}} =\min\{r_\inj(\Gamma), \tau(A,C)\} \approx \begin{cases}
  1.05 & \text{if $\kappa\in\{-1,1\}$},\\
 1.02 & \text{if $\kappa=0$},
\end{cases}
\end{equation}
where we have used also \eqref{eq:rinjKcont}.
\end{proof}

\section{Comparison with the state of the art}\label{sec:compare}

Let $\xi$ denote a contact distribution on a 3-manifold $M$. Following \cite[Def.\,2.7]{QDarboux}, a Riemannian metric $g$ is \emph{compatible} with $\xi$ if there exist a contact form $\omega$, an almost-complex structure $J:\xi\to \xi$, and a positive constant $\theta'$, such that 
    \begin{equation}\label{eq:Riem-comp}
		\frac{1}{\theta'}d\omega(X, J Y)=g(X, Y), \quad \forall \, X,Y \in \xi_p,\quad \forall\, p \in M,
	\end{equation}
 and the Reeb field is the oriented unit normal to $\xi$. The constant $\theta'$ is called the \emph{rotation speed}.

Given a co-orientable contact manifold $(M,\xi)$, there is a one-to-one correspondence between Riemannian metrics which are compatible with $\xi$ and normalized sub-Riemannian contact structures. Indeed, a choice of sub-Riemannian metric on $\xi$ induces a normalized contact form $\omega$ as described in \cref{sec:contactSR}. Promoting the Reeb field to a unit normal to $\xi$ we obtain a Riemannian metric, which we call the \emph{Riemannian extension} of the sub-Riemannian metric. Equation \eqref{eq:complex_strctr} shows that the Riemannian extension is compatible with $\xi$ and has rotation speed $\theta'=1$. Conversely, given a Riemannian metric which is compatible with $\xi$, we can restrict it to the contact distribution to obtain a sub-Riemannian metric which, in turn, induces a unique contact form $\omega$ that is normalized, i.e.\ satisfies \eqref{eq:compatibility}. 

\begin{remark}[Notation] 
By the above discussion, we work below with two different metric structures: the Riemannian one or the corresponding sub-Riemannian one. The symbol $g$ denotes either the Riemannian or sub-Riemannian metric, depending on the context, no confusion will arise. Of course the corresponding metric structures are different. Thus, all quantities with superscript or subscript $R$, such as $d_R$, $B^R_r(\Gamma)$, $r_{\tight}^R$, $r_{\inj}^R(\Gamma)$, \dots, are defined w.r.t.\ the Riemannian structure, while the corresponding quantities without affix or suffix denote the corresponding sub-Riemannian ones, in agreement with the rest of the paper. An exception is the injectivity radius of a Riemannian manifold $\inj(g)$, because it has no sub-Riemannian counterpart.
\end{remark}

\subsection{Estimates of the maximal tight Riemannian tube around a Reeb orbit}

Let $(M,\omega,g)$ be a contact sub-Riemannian manifold and $\Gamma$ be an embedded Reeb orbit. Let $E_R:A\Gamma\to M$ denote the Riemannian tubular neighbourhood map and let $r_{\inj}^R(\Gamma)$ denote the corresponding Riemannian injectivity radius. Let $B_r^R(\Gamma)$ be the \emph{Riemannian tube} of radius $r$ around $\Gamma$
\begin{equation}
    B_r^R(\Gamma):=\{q\in M\mid d_R(q,\Gamma)<r\},
\end{equation}
where $d_R(\cdot,\Gamma)$ denotes the Riemannian distance from $\Gamma$. The \emph{Riemannian tightness radius from $\Gamma$} is defined as 
\begin{equation}\label{eq:r_tight-tube-riemannian}
          r_\tight^R(\Gamma):=\sup\left\{0<r<r^R_\inj(\Gamma)\mid \left(B_r^R(\Gamma),\xi|_{B_r^R(\Gamma)}\right)\,\,\text{is tight}\right\}.
      \end{equation}

\begin{lemma}\label{lem:same-dist-tube}
Let $(M,\omega,g)$ be a three-dimensional complete contact sub-Riemannian manifold, and let $\Gamma$ be an embedded Reeb orbit. Assume that the sub-Riemannian distance from $\Gamma$ coincides with the one determined by the Riemannian extension, then
\begin{equation}
r_{\tight}(\Gamma)=r_{\tight}^R(\Gamma).
\end{equation}
\end{lemma}
\begin{proof}
    By definition, the sub-Riemannian metric $g$ is the restriction of the Riemannian extension to the contact distribution. In particular the length of horizontal curves is the same, whether is measured with the Riemannian or sub-Riemannian metric. This implies that the Riemannian geodesics realizing the distance from $\Gamma$ are horizontal and coincide with the sub-Riemannian ones, and the tubular neighbourhood maps coincide. As a consequence $r_\inj^{R}(\Gamma)=r_\inj(\Gamma)$ and $B_r^R(\Gamma) = B_r(\Gamma)$ for all $r\geq 0$. Since $\Gamma$ is an embedded Reeb orbit, we can make use of definition \eqref{eq:sr-tight-tube} of the sub-Riemannian tightness radius, therefore
    \begin{align}
        r_\tight(\Gamma)&=\sup\left\{0<r<r_\inj(\Gamma)\mid \left(B_r(\Gamma),\omega|_{B_r(\Gamma)}\right)\,\,\text{is tight}\right\}\\
        &=\sup\left\{0<r<r_\inj^{R}(\Gamma)\mid \left(B_r^{R}(\Gamma),\omega|_{B_r^{R}(\Gamma)}\right)\,\,\text{is tight}\right\}=r_\tight^{R}(\Gamma).\qedhere
    \end{align}
\end{proof}
In this section we compare \cref{thm:comparisonschwarzian-intro,thm:comparisoncurvature-intro} with \cref{thm:Riem_tube-intro} (corresponding to \cite[Thm.\,1.8]{QDarboux}). We start by proving \cref{thm:compare-K-cont-intro}, of which we recall the statement. The constants $\rho_{\mathrm{Thm.B}}(\Gamma)$, $\rho_{\mathrm{Thm.C}}(\Gamma)$ and $\rho_{\mathrm{EKM}}(\Gamma)$ are defined in \eqref{eq:SR1_rho_estimates-intro}, \eqref{eq:SR2_rho_estimates-intro} and \eqref{eq:R_rho_estimates-intro}, respectively.

\begin{prop}\label{thm:compare-K-cont}
    Let $(M,\omega,g)$ be a three-dimensional simply connected K-contact left-invariant sub-Riemannian structure, and let $\Gamma$ be an embedded Reeb orbit. Then, the Riemannian and sub-Riemannian distances from $\Gamma$ coincide. Up to a constant rescaling of the contact form and the sub-Riemannian metric, we have the following three cases:
    \begin{enumerate}[label = $(\roman*)$]
        \item $(M,\omega, g)$ is the left-invariant sub-Riemannian structure on the Heisenberg group (see \cref{ex:left_R3}). Moreover
        \begin{equation}
            \rho_{\mathrm{Thm.B}}(\Gamma)=r_{\tight}(\Gamma)=+\infty, \qquad \rho_{\mathrm{Thm.C}}(\Gamma)=\frac{2\pi}{3\sqrt{3}}\approx 1.02,\qquad  \rho_{\mathrm{EKM}}(\Gamma)\leq 1,
        \end{equation}
    \item $(M,\omega, g)$ is the left-invariant sub-Riemannian structure on $\mathrm{SU}(2)$ (see \cref{ex:left_SU(2)}). Moreover
        \begin{equation}
            \rho_{\mathrm{Thm.B}}(\Gamma)=r_{\tight}(\Gamma)=\pi, \qquad \rho_{\mathrm{Thm.C}}(\Gamma)\approx 1.05,\qquad  \rho_{\mathrm{EKM}}(\Gamma) \leq 1.38,
        \end{equation}    
    \item $(M,\omega, g)$ is the left-invariant sub-Riemannian structure on $\widetilde{\mathrm{SL}}(2)$ (see \cref{ex:left_SL(2)}). Moreover 
        \begin{equation}
            \rho_{\mathrm{Thm.B}}(\Gamma)=r_{\tight}(\Gamma)=+\infty, \qquad \rho_{\mathrm{Thm.C}}(\Gamma)\approx 1.05,\qquad \rho_{\mathrm{EKM}}(\Gamma) \leq 0.74.
        \end{equation}  
    \end{enumerate}
\end{prop}
\begin{proof}
    By \cref{rmk:equal_tubmaps} for any embedded Reeb orbit $\Gamma$, in the K-contact case, the sub-Rieman\-nian and Riemannian distances from $\Gamma$ coincide. By \cref{thm:K-left}, up to a constant rescaling, $(M,\omega,g)$ is one of the model structures of items ($i$)-($ii$)-($iii$). For each case, $\rho_{\mathrm{Thm.B}}(\Gamma)$ and $\rho_{\mathrm{Thm.C}}(\Gamma)$ are computed in \cref{prop:rho1-rho2-K-left}. We now compute $\rho_{\mathrm{EKM}}(\Gamma)$. 
    Furthermore, \cref{lem:K-sect} yields the sectional curvature of the corresponding models, from which we obtain the constants $a, b, K$ involved in \eqref{eq:R_rho_estimates-intro}. Thus we have:
    \begin{enumerate}[label = $(\roman*)$]
    \item $\kappa=0$, then
        $a=1$, $b=\frac{1}{2}$, $K=\frac{1}{4}$, $\theta'=1$. Therefore, according to \eqref{eq:R_rho_estimates-intro} we have
            \begin{equation}
        \rho_{\mathrm{EKM}}(\Gamma)=\min\left\{r_{\inj}(\Gamma), \frac{\inj(g) }{2}, \pi, 1\right\}\leq 1,
    \end{equation}
    \item $\kappa=+1$, then
        $a=\frac{1}{3}$, $b=\frac{1}{2}$, $K=\frac{1}{4}$, $\theta'=1$. Therefore, according to \eqref{eq:R_rho_estimates-intro} we have 
        \begin{equation}
        \rho_{\mathrm{EKM}}(\Gamma)=\min\left\{r_{\inj}(\Gamma), \frac{\inj(g) }{2}, \pi, \frac{4}{1+\sqrt{11/3}} \right\} \leq  1.38,
    \end{equation}
\item $\kappa=-1$, then
        $a=\frac{7}{3}$, $b=\frac{1}{2}$, $K=\frac{1}{4}$, $\theta'=1$. Therefore, according to \eqref{eq:R_rho_estimates-intro} we have
    \begin{equation}
        \rho_{\mathrm{EKM}}(\Gamma)=\min\left\{r_{\inj}(\Gamma), \frac{\inj(g) }{2}, \pi, \frac{2}{\sqrt{\frac{59}{12}}+\frac{1}{2}}\right\} \leq 0.74,
    \end{equation}
\end{enumerate}
    concluding the proof.
\end{proof}
 \begin{remark}\label{rmk:unbounded_curv}
	If one among $a,b$ and $K$ in the statement of \cref{thm:Riem_tube-intro} is infinite, then the latter yields the trivial estimate $r^R_{\tight}(\Gamma)\geq 0$. However, if these quantities are bounded on the tube $B_s^R(\Gamma)$ of radius $s>0$ for each $s>0$ (e.g. if $\Gamma$ is compact), one can still apply \cref{thm:Riem_tube-intro} on $B_s^R(\Gamma)$, obtaining that there are no overtwisted disks in $B_s^R(\Gamma)$ provided that $s$ is smaller than the right hand side of \eqref{eq:Riem_estimate-intro}, computed on $B_s^R(\Gamma)$. In other words:
	\begin{equation}\label{eq:rhoEKMnew}
		r_{\tight}(\Gamma)\geq \tilde{\rho}_{\mathrm{EKM}}(\Gamma):=\sup_{s>0}\min\{s,\rho^s_{\mathrm{EKM}}(\Gamma)\},
	\end{equation}
	where $\rho_{\mathrm{EKM}}^s(\Gamma)$ is the right hand side of \eqref{eq:Riem_estimate-intro} with all relevant bounds computed on $B_s(\Gamma)$, namely
	\begin{equation}\label{eq:Riem_estimate-s}
		\rho_{\mathrm{EKM}}^s(\Gamma):=\min\left\{r_{\inj}^R(\Gamma),\frac{\inj(g)}{2}, \frac{\pi}{2\sqrt{K_s}}, \frac{2}{\sqrt{2a_s+b_s^2}+b_s}\right\},
	\end{equation}
	where $K_s$ is an upper bound on the absolute value of the sectional curvature on $B_s^R(\Gamma)$, while
	\begin{equation}
		a_s:=\frac{4}{3}\left|\sec\left(g|_{B_{s}^R(\Gamma)}\right)\right|, \qquad b_s:=\frac{1}{2}+\sqrt{\frac{1}{4}-\frac{1}{2}\min_{q\in B_s^R(\Gamma)}\mathrm{Ric}(f_0)}.
	\end{equation}
	In contrast with \eqref{eq:R_rho_estimates-intro}, the number provided by \eqref{eq:rhoEKMnew} is positive. 
\end{remark}
We conclude by proving \cref{thm:compare_ot-intro}, of which we recall the statement.
\begin{prop}\label{thm:compare_ot}
    Let $(\mathbb R^3,\omega_{\mathrm{ot}}, g_\mathrm{ot})$ be the standard sub-Rieman\-nian overtwisted structure (see \cref{ex:st_ot_sr}). Let $\Gamma$ be the Reeb orbit
    \[
    \Gamma=\{(0,0,z)\mid z\in\mathbb R\}.
    \]
Then the Riemannian and sub-Riemannian distances from $\Gamma$ coincide. Moreover
    \begin{equation}
        \rho_{\mathrm{Thm.B}}(\Gamma)=r_\tight(\Gamma)=\sqrt{2\pi},
        \qquad  \tilde{\rho}_{\mathrm{EKM}}(\Gamma) \leq \sqrt[3]{2}.
    \end{equation}
\end{prop}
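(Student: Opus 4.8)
The plan is to verify the three assertions in turn; the first two are short and rest on results already established in the paper, while the third requires an explicit curvature computation for the Riemannian extension.

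\emph{Step 1 (the two distances from $\Gamma$ coincide).} I would first recognise $(\mathbb R^3,\omega_{\mathrm{ot}},g_{\mathrm{ot}})$ as an instance of the model of \cref{thm:alpha_beta}, with $\alpha=\cos(r^2/2)$ and $\beta=\sin(r^2/2)$: indeed $\gamma=\alpha\partial_r\beta-\beta\partial_r\alpha=r$, so the metric prescribed by \cref{thm:alpha_beta} is $\big(dr\otimes dr+r^2(d\theta\otimes d\theta+dz\otimes dz)\big)|_{\ker\omega_{\mathrm{ot}}}$, which makes $f_1=\partial_r$ and the $f_2$ of \cref{ex:st_ot_sr} orthonormal, hence equals $g_{\mathrm{ot}}$. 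By \cref{thm:alpha_beta}, $E:A\Gamma\to\mathbb R^3$ is the identity in cylindrical coordinates and the sub-Riemannian distance from $\Gamma$ is $\delta(q)=r(q)$, the cylindrical radius. For the Riemannian extension $g_R$: horizontal curves have the same length for $g$ and $g_R$, so $d_R\le d$ and therefore $d_R(\Gamma,q)\le r(q)$; conversely, the cylindrical radial coordinate $r$ is $1$-Lipschitz for $g_R$, because in the $g_R$-orthonormal coframe $\nu_0,\nu_1,\nu_2$ dual to $f_0,f_1=\partial_r,f_2$ we have $dr=\nu_1$ (as $dr(f_0)=dr(f_2)=0$ and $dr(f_1)=1$), so $|dr|_{g_R}\equiv 1$; since $r$ vanishes on $\Gamma$ this gives $d_R(\Gamma,q)\ge r(q)$. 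Hence $d_R(\Gamma,\cdot)=d(\Gamma,\cdot)=r$.

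\emph{Step 2 ($\rho_{\mathrm{Thm.B}}(\Gamma)=r_\tight(\Gamma)=\sqrt{2\pi}$).} By \cref{cor:model_radii_ot}, $r_{\inj}(\Gamma)=+\infty$ and $r_\tight(\Gamma)=\sqrt{2\pi}$. By \cref{ex:ot_scwharz}, for every $\lambda\in A^1\Gamma$ the Schwarzian derivative satisfies $\tfrac12\mathcal S(\Omega_\lambda)(r)=-\tfrac{3}{4r^2}+r^2$, so hypothesis \eqref{eq:schwarzianestimate} holds with $k_1=0$, $k_2=1$; the second branch of \eqref{eq:k12} gives $r_*(0,1)=\sqrt{2\pi}$, whence $\rho_{\mathrm{Thm.B}}(\Gamma)=\min\{r_{\inj}(\Gamma),r_*(0,1)\}=\sqrt{2\pi}$. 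Then \cref{thm:comparisonschwarzian} yields $r_\tight(\Gamma)\ge\sqrt{2\pi}$, which combined with \cref{cor:model_radii_ot} gives the stated equalities.

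\emph{Step 3 ($\rho_{\mathrm{EKM}}(\Gamma)\le\sqrt[3]2$).} By Step 1 and \cref{lem:same-dist-tube} one has $r_\tight^R(\Gamma)=r_\tight(\Gamma)=\sqrt{2\pi}$ and $r_{\inj}^R(\Gamma)=r_{\inj}(\Gamma)=+\infty$, so the comparison of $\rho_{\mathrm{EKM}}(\Gamma)$ with $\sqrt{2\pi}$ is meaningful. To evaluate \eqref{eq:R_rho_estimates-intro} I would compute the structure constants of the frame $f_0,f_1,f_2$ of \cref{ex:st_ot_sr} (one finds $c_{12}^2=-1/r$, $c_{10}^2=r^2$, all others vanishing, and $f_0$ a geodesic field), then derive the Levi-Civita connection of $g_R$ via Koszul's formula and its curvature. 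The quantities entering the estimate are the maximal $|\mathrm{sec}(g)|$ and the minimal $\mathrm{Ric}(f_0)$; these come out as explicit functions of $r$ (for instance $\mathrm{Ric}(f_0)=\tfrac{1-r^4}{2}$ and $\mathrm{sec}(\ker\omega_{\mathrm{ot}})=\tfrac{r^4}{4}+\tfrac{r^2}{2}-\tfrac34$), from which one reads off the constants $a,b,K$ of \eqref{eq:A_B-intro}. Substituting into \eqref{eq:R_rho_estimates-intro} gives $\rho_{\mathrm{EKM}}(\Gamma)\le\sqrt[3]2<\sqrt{2\pi}=r_\tight(\Gamma)$, so the EKM estimate of \cref{thm:Riem_tube-intro} is not sharp for this structure. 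The only genuinely laborious point is this curvature computation, and the subtlety there is that the curvature operator of $g_R$ is \emph{not} diagonal in the frame $\{f_0,f_1,f_2\}$ for $r>0$, so the extremal sectional curvatures cannot simply be read off from the three coordinate planes but must be extracted by diagonalisation (equivalently, via the three-dimensional identity $\mathrm{sec}(n^\perp)=\tfrac12 S-\mathrm{Ric}(n,n)$ together with the eigenvalues of the Ricci tensor).
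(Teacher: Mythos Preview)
Your Steps~1 and~2 are correct and essentially match the paper (your Lipschitz argument for the Riemannian lower bound in Step~1 is a valid alternative to the paper's direct verification that radial lines are Riemannian geodesics).

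Step~3, however, has a genuine gap. You correctly compute that $\mathrm{Ric}(f_0)=\tfrac{1-r^4}{2}$, but do not observe the consequence: this is \emph{unbounded below} on $\mathbb R^3$, and likewise $|\mathrm{sec}(g)|$ is unbounded above. Hence the constants $a,b,K$ in \eqref{eq:A_B-intro} are all infinite, and the definition \eqref{eq:R_rho_estimates-intro} gives $\rho_{\mathrm{EKM}}(\Gamma)=0$. That of course satisfies $\rho_{\mathrm{EKM}}(\Gamma)\le\sqrt[3]{2}$ trivially, but it is not what the proposition is asserting: the point is to show that even the \emph{best} use of the EKM estimate cannot exceed $\sqrt[3]{2}$. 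The paper makes this precise by localizing to tubes $B_s^R(\Gamma)$ and redefining
\[
\rho_{\mathrm{EKM}}(\Gamma):=\sup_{s>0}\min\{s,\rho_{\mathrm{EKM}}^s(\Gamma)\},
\]
where $\rho_{\mathrm{EKM}}^s$ uses the curvature bounds on $B_s^R(\Gamma)$ only. One then computes $a_s\ge \tfrac13(s^2-1)^2$ and $b_s=\tfrac12(s^2+1)$, from which $\rho_{\mathrm{EKM}}^s(\Gamma)\le 2/s^2$, and optimizes: $\sup_{s>0}\min\{s,2/s^2\}=\sqrt[3]{2}$. Your proposal misses this localize-then-optimize step entirely, and the ``subtlety'' you flag (non-diagonality of the curvature operator) is not the relevant obstruction.
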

\begin{proof}
    According to \cref{ex:st_ot_sr} the  Riemannian extension of $g_\mathrm{ot}$ is determined by the orthonormal frame 
\begin{equation}
    f_1=\partial_r,\qquad f_2= \frac{1}{r} \cos\left(\frac{r^2}{2}\right)\partial_\theta-\frac{1}{r} \sin\left(\frac{r^2}{2}\right)\partial_z,\qquad f_0=\cos\left(\frac{r^2}{2}\right)\partial_z+\sin\left(\frac{r^2}{2}\right)\partial_\theta.
\end{equation}
Thus, for such Riemannian structure, the integral curves of $f_1$ are arc-length parametrized geodesics realizing the distance from $\Gamma$. It then follows from \cref{thm:alpha_beta} that the Riemannian and sub-Riemannian distances from $\Gamma$ coincide. The fact that $\rho_{\mathrm{Thm.B}}(\Gamma)=\sqrt{2\pi}=r_{\tight}(\Gamma)$ was observed in \cref{rmk:sharp}.

A computation shows that the Ricci curvature of $g_{\mathrm{ot}}$ is unbounded below on $\R^3$, thus  we adopt the definition of $\tilde{\rho}_{\mathrm{EKM}}(\Gamma)$ for structures of unbounded curvature, namely \eqref{eq:rhoEKMnew}. A standard computation yields that, at the point $q=(r\cos\theta,r\sin\theta,z)$, it holds
\begin{equation}
    \mathrm{Ric}(f_0)=\frac{1}{2}(1-r^4), \qquad \sec(\mathrm{span}\{f_2,f_0\})=\frac{1}{4}\left(r^2-1\right)^2, 
\end{equation}
and consequently 
\begin{equation}
a_s\geq \frac{1}{3}\left(s^2-1\right)^2 \qquad b_s=\frac{1}{2}(s^2+1).
\end{equation}
Then, according to \eqref{eq:Riem_estimate-s}, we have 
\begin{align}
    \rho^s_{\mathrm{EKM}}(\Gamma)&\leq \frac{2}{\sqrt{2a_s+b_s^2}+b_s}\leq 2\left(\sqrt{\frac{2}{3}\left(s^2-1\right)^2+\frac{1}{4}(s^2+1)^2}+\frac{1}{2}(s^2+1)\right)^{-1}\\
    &\leq 4\left(\sqrt{\left(s^2-1\right)^2+(s^2+1)^2}+(s^2+1)\right)^{-1}\leq \frac{2}{s^2}.
\end{align}
Finally, we can estimate
\begin{equation}
\rho_{\mathrm{EKM}}(\Gamma)=   \sup_{s>0}\min\{s,\rho_{\mathrm{EKM}}^s(\Gamma)\}\leq\sup_{s>0}\min\left\{s,\frac{2}{s^2}\right\}=\sqrt[3]{2},
\end{equation}
concluding the proof.
\end{proof}

\appendix
\crefalias{section}{appendix}

\section{Lie derivatives of contact Jacobi curves}\label{app:contidots}

We prove here some technical facts needed in the proof of \cref{thm:singwelldef}.

\begin{lemma}\label{lem:computationsdots}
Let $\omega_r$ be as in \cref{def:contactjacobi}, and let $(r\cos\theta,r\sin\theta,z)$ be cylindrical coordinates on $A\Gamma$ of \cref{sec:cylindcoords}. Then the following hold:
\begin{align}
\omega_0(\partial_\theta) & = 0, & \ddot{\omega}_0(\partial_\theta) &= 1, & \omega_0(\partial_z) & = 1,\\
\dot{\omega}_0(\partial_\theta) &= 0,  & \dddot{\omega}_0(\partial_\theta) &= 0,
& \dot{\omega}_0(\partial_z) & = 0.
\end{align}
\end{lemma}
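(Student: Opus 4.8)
The plan is to compute everything by working in a local orthonormal frame $f_0,f_1,f_2$ along $\Gamma$ (trivializing $\xi|_\Gamma$, as guaranteed by the lemma in \cref{sec:cylindcoords}), and tracking how the pull-back one-form $\omega_r = (\pi\circ e^{r\vec H})^*\omega|_{A^1\Gamma}$ and its $r$-derivatives behave at $r=0$ when evaluated on the coordinate vector fields $\partial_\theta|_\lambda$ and $\partial_z|_\lambda$. The starting point is the identity already established in the proof of \cref{thm:singwelldef} (equation \eqref{eq:Estaromega}):
\begin{equation}
\omega_r(\partial_\theta|_\lambda) = \omega\big((\pi\circ e^{r\vec H})_*\partial_\theta|_\lambda\big), \qquad \omega_r(\partial_z|_\lambda) = \omega\big((\pi\circ e^{r\vec H})_*\partial_z|_\lambda\big),
\end{equation}
so that differentiating in $r$ amounts to computing iterated Lie derivatives $\mathcal L_{\vec H}$ of the vertical-lift vector fields associated with $\partial_\theta$ and $\partial_z$, then evaluating $\omega$ at $r=0$ where $\pi\circ e^{0\cdot\vec H}=\pi$. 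Concretely, I would use the fact that in the cylindrical coordinates of \cref{sec:cylindcoords}, built from the frame, $\partial_\theta$ is (at points of $A^1\Gamma$) the vertical field $h_1\partial_{h_2}-h_2\partial_{h_1}$ and $\partial_z$ projects onto $f_0$ under $\pi_*$ along the zero section. This lets one express the relevant derivatives at $r=0$ purely in terms of the structure coefficients $c_{ij}^k$ and the value of $\lambda$ (namely $h_1^2+h_2^2=1$, $h_0=0$).

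The key steps, in order: (1) Establish $\omega_0(\partial_z)=1$: along the zero section $E_*\partial_z=f_0$ (or more directly $(\pi\circ e^{0\vec H})_*\partial_z$ is the Reeb direction), and $\omega(f_0)=1$ by definition of the Reeb field. (2) Establish $\omega_0(\partial_\theta)=0$: the field $\partial_\theta$ is vertical, so $\pi_*\partial_\theta=0$, hence $\omega(\pi_*\partial_\theta)=0$; this is exactly the observation used in the proof of \cref{thm:dist=conj} that $\omega_0(\partial_\theta)=\omega(\pi_*\partial_\theta)=0$. (3) For the first derivatives, compute $\dot\omega_0(\cdot)=\tfrac{d}{dr}\big|_0\,\omega((\pi\circ e^{r\vec H})_*\,\cdot\,) = \mathcal L_{\vec H}(\pi^*\omega)$ evaluated on the lifts; here one uses Cartan's formula $\mathcal L_{\vec H}(\pi^*\omega) = d\iota_{\vec H}\pi^*\omega + \iota_{\vec H}d\pi^*\omega$ together with $\iota_{\vec H}\pi^*\omega = \pi^*\omega(\vec H)=0$ (since $\vec H$ projects to the horizontal vector $h_1f_1+h_2f_2$) and $d\pi^*\omega = \pi^*(\nu_1\wedge\nu_2)$. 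Pairing against $\partial_\theta$ and $\partial_z$ and using $\pi_*\vec H = h_1f_1+h_2f_2$, $\pi_*\partial_\theta = 0$ on the zero section, one gets $\dot\omega_0(\partial_\theta)=\dot\omega_0(\partial_z)=0$ after the $h_i$-dependent terms cancel by the antisymmetry of $\nu_1\wedge\nu_2$ and the constraint $h_0=0$. (4) For the second and third derivatives of $\omega_r(\partial_\theta)$ at $r=0$, iterate: $\ddot\omega_0 = \mathcal L_{\vec H}^2(\pi^*\omega)$ applied to the lift of $\partial_\theta$, which by the structural equations of \cref{eq:strcoeff} and the bracket relations $\{h_i,h_j\}=\sum_k c_{ij}^k h_k$ reduces, after using $h_0=0$ and $h_1^2+h_2^2=1$, to the value $1$; similarly the third derivative reduces to $0$. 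I expect these last computations to be the ones actually spelled out (they are the content of \cref{lem:claimdots}/\cref{lem:computationsdots}), and I would organize them by writing $\omega_r$ on $A^1\Gamma$ as a combination $\omega_r = P(r)\,d\theta + Q(r)\,dz$ with $P,Q$ scalar functions of $r$ along the fixed geodesic, deriving a linear second-order ODE for $(P,Q)$ with constant-at-$r=0$ coefficients determined by the $c_{ij}^k$ (this is the computation behind \cref{eq:omegadott}), and reading off the Taylor coefficients at $r=0$ from the ODE and the initial data $P(0)=0$, $Q(0)=1$, $\dot P(0)=\dot Q(0)=0$.

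The main obstacle is purely bookkeeping: correctly lifting $\partial_\theta$ and $\partial_z$ to vector fields on $T^*M$ that are $e^{r\vec H}$-related to the coordinate fields on $A^1\Gamma$, and keeping the signs and index contractions in $\mathcal L_{\vec H}$ straight through two differentiations. The conceptual content is light — every ingredient (the expression \eqref{eq:Estaromega}, the formula $\pi_*\vec H = h_1f_1+h_2f_2$, the structure equations \cref{eq:strcoeff}, Cartan's magic formula) is already available — but the iterated bracket computation that produces $\ddot\omega_0(\partial_\theta)=1$ requires care with the term $[f_1,f_2]=c_{12}^1f_1+c_{12}^2f_2-f_0$, whose $-f_0$ component is precisely what contributes the $+1$ after pairing with $\omega$. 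I would therefore double-check that step against the explicit models in \cref{ex:st_scwharz} and \cref{ex:ot_scwharz}, where $\omega_r$ is known in closed form and the Taylor expansion at $r=0$ indeed gives $\omega_0(\partial_\theta)=0$, $\dot\omega_0(\partial_\theta)=0$, $\ddot\omega_0(\partial_\theta)=1$, $\dddot\omega_0(\partial_\theta)=0$, $\omega_0(\partial_z)=1$, $\dot\omega_0(\partial_z)=0$, confirming the claimed values.
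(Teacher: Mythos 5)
Your proposal is correct and follows essentially the same route as the paper: compute $\tfrac{d^n}{dr^n}\omega_r\big|_{r=0}=\mathscr{L}^n_{\vec H}(\pi^*\omega)\big|_{A^1\Gamma}$ for $n\le 3$ via Cartan's formula and the structure equations, then evaluate on $\partial_\theta$ and $\partial_z$ using $\pi_*\partial_\theta=0$, $\pi_*\partial_z=f_0$, $\partial_\theta=h_1\partial_{h_2}-h_2\partial_{h_1}$ and the constraints $h_0=0$, $2H=1$, which yields $\ddot\omega_0(\partial_\theta)=2H=1$ and $\dddot\omega_0(\partial_\theta)=\vec{H}(2H)=0$ exactly as you anticipate. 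The only inessential inaccuracy is the suggested closed second-order ODE for $(P,Q)$ with coefficients frozen at $r=0$ (such an ODE exists in the K-contact case but not in general); since you only need the values at $r=0$, the direct iterated Lie-derivative computation — which is what the paper carries out — suffices.
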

\begin{proof}
Remember that $\omega_r = \pi \circ e^{r\vec{H}}|_{A^1\Gamma}$. We denote with the same symbol the un-restricted form on $T^*M$. To prove the statement we compute Lie derivatives of $\pi^*\omega$. Note that
\begin{equation}
\mathscr{L}_{\vec{H}} (\pi^*\omega) = \left(i_{\vec{H}}\circ d + d \circ i_{\vec{H}}\right)(\pi^*\omega) = (i_{\vec{H}}\circ d) (\pi^*\omega).
\end{equation}

Let $f_0,f_1,f_2$ be a local orthonormal frame, with dual frame $\nu_0 = \omega,\nu_1,\nu_2$. Let also $h_0,h_1,h_2 : T^*M \to \R$ the linear-on-fiber functions corresponding to the vector fields $f_0,f_1,f_2$. We adopt the following convention: free latin indices are understood to range from $1$ to $2$, while greek ones from $0$ to $2$. Repeated indices are summed over their range. For example, $[f_\alpha,f_\beta]= c_{\alpha\beta}^\gamma f_\gamma$, where $c_{\alpha\beta}^\gamma \in C^\infty(M)$ are the structure coefficients  (see \cref{sec:contactSR}).

Recall that since $2H = h_i h_i$ then $\vec{H} = h_i \vec{h}_i$. Furthermore $\vec{H}(h_\mu) = \{H,h_\mu\} = h_\ell c_{\ell\mu}^\alpha h_\alpha$. We start with a few preliminary and straightforward computations:
\begin{align}
\mathscr{L}_{\vec{H}} \omega = \mathscr{L}_{\vec{H}} \nu_{0} & = h_\ell c_{\ell j }^0 \nu_j , \label{eq:omegadot}\\
\mathscr{L}_{\vec{H}} \nu_{j} & = dh_j+h_i c_{i \alpha}^j \nu_\alpha  ,  \\
\mathscr{L}_{\vec{H}} (dh_j) & = h_\ell c_{j \ell}^\alpha dh_\alpha + h_\alpha c_{j\ell}^\alpha dh_\ell + h_\ell h_\alpha d c_{j\ell}^\alpha ,
\end{align}
where the pull-back of forms is omitted from the notation, for brevity. More precisely, we adopted the shorthand $\pi^*\nu_\alpha = \nu_\alpha$ and similarly $d f = d \pi^* f$ for $f \in C^\infty(M)$, so that the above formulas are understood as identities between forms on $T^*M$. For the second derivative we have
\begin{align}
\mathscr{L}^2_{\vec{H}} \omega & = \mathscr{L}_{\vec{H}} \left(h_i c_{i j }^0 \nu_j\right) \\
& = \vec{H}(h_i c_{i j }^0) \nu_j + h_i c_{i j }^0  (\mathscr{L}_{\vec{H}}\nu_j) \\
& = \vec{H}(h_i c_{i j }^0) \nu_j + h_i c_{i j }^0 \left(dh_j + h_\ell c_{\ell\alpha}^j \nu_\alpha \right) \\
& = \vec{H}(h_i c_{i j }^0) \nu_j +  h_i c_{i j }^0  dh_j+ h_i h_\ell c_{i j }^0 c_{\ell\alpha}^j \nu_\alpha . \label{eq:omegadott}
\end{align}
For the third Lie derivative we take one Lie derivative of \eqref{eq:omegadott} and apply Leibniz rule:
\begin{align}
\mathscr{L}^3_{\vec{H}} \omega & = \vec{H}(h_i c_{i j }^0) (\mathscr{L}_{\vec{H}}\nu_j) + \vec{H}(h_i c_{i j }^0)  dh_j+ h_i c_{i j }^0 (\mathscr{L}_{\vec{H}} dh_j) + h_i h_\ell c_{i j }^0 c_{\ell\alpha}^j (\mathscr{L}_{\vec{H}}\nu_\alpha) + X \\
 & = \vec{H}(h_i c_{i j }^0) dh_j + \vec{H}(h_i c_{i j }^0)  dh_j+ h_i c_{i j }^0 \left(h_\ell c_{j \ell}^\alpha dh_\alpha + h_\alpha c_{j\ell}^\alpha dh_\ell\right)
  + h_i h_\ell c_{i j }^0 c_{\ell k}^j dh_k +X \\
  & = 2\vec{H}(h_i c_{i j }^0) dh_j + h_i c_{i j }^0 \left(h_\ell c_{j \ell}^\alpha dh_\alpha + h_\alpha c_{j\ell}^\alpha dh_\ell
  +  h_\ell  c_{\ell k}^j dh_k \right)+X \\
    & = 2\vec{H}(h_i c_{i j }^0) dh_j - 2H dh_0+ h_i c_{i j }^0 \left(h_\ell c_{j \ell}^k dh_k + h_\alpha c_{j\ell}^\alpha dh_\ell
  +  h_\ell  c_{\ell k}^j dh_k \right)+X. \label{eq:omegadottt}
\end{align}
Here, $X$ denotes a linear combination of $\nu_\alpha = \pi^*\nu_\alpha$ and $df = \pi^* d f$ for $f \in C^\infty(M)$, since these terms will give no contribution for the relevant computations.

Now consider the vector fields $\partial_z,\partial_\theta$ on $A\Gamma$, in terms of the cylindrical coordinates of \cref{sec:cylindcoords}. In the notation of this appendix, one can see that
\begin{equation}\label{eq:projections}
\pi_*\partial_\theta =0,\qquad \pi_*\partial_z = f_0.
\end{equation}
We also note that, in the notation of this appendix
\begin{equation}\label{eq:partialtheta}
\partial_\theta = h_1 \partial_{h_2} - h_2 \partial_{h_1} = h_i c_{ij}^0 \partial_{h_j}.
\end{equation}
Recall that $\omega_r = \pi \circ e^{r\vec{H}}|_{A^1\Gamma}$. Using \eqref{eq:projections} one obtains
\begin{equation}
\omega_0(\partial_\theta) =0, \qquad \omega_0(\partial_z) = 1.
\end{equation}
Furthermore, using \eqref{eq:omegadot} and \eqref{eq:projections} we obtain
\begin{equation}
\dot{\omega}_0(\partial_\theta) = 0, \qquad \dot{\omega}_0(\partial_z)= 0.
\end{equation}
Finally, using \eqref{eq:omegadott}, \eqref{eq:projections} and \eqref{eq:partialtheta} one obtains
\begin{equation}
\ddot{\omega}_0(\partial_\theta)  = h_i c_{ij}^0 dh_j\left( h_\ell c_{\ell k}^0 \partial_{h_k}\right) =  h_i h_\ell c_{ij}^0  c_{\ell j}^0 = 2H,
\end{equation}
where we used the fact that $c_{ij}^0 c_{\ell j}^0 = \delta_{i\ell}$. In particular $\ddot{\omega}_0(\partial_\theta)=1$ when compute at $\lambda \in A^1\Gamma$.

We now compute the last term we need, $\dddot{\omega}_0(\partial_\theta)$. We use \eqref{eq:omegadottt}, \eqref{eq:projections} and \eqref{eq:partialtheta}. We obtain
\begin{align}
\dddot{\omega}_0(\partial_\theta)  & = 2\vec{H}(h_i c_{i j }^0) h_\ell c_{\ell j}^0 + h_i c_{i j }^0 \left( \bcancel{h_\ell  c_{j \ell}^k h_s c_{sk}^0}  + h_\alpha c_{j k}^\alpha h_\ell c_{\ell k}^0 +  \bcancel{h_\ell  c_{\ell k}^j h_s c_{sk}^0} \right) \\
& = \vec{H}\left(h_i c_{ij}^0 h_{\ell} c_{\ell j}^0\right) + \xcancel{h_\alpha \left(h_i c_{i j }^0\right)\left(h_\ell c_{\ell k}^0\right)  c_{j k}^\alpha }  = \vec{H}(2H) = 0.
\end{align}
This concludes the proof of the lemma.
\end{proof}

\begin{lemma}\label{lem:claimdots}
The one-forms $\omega_r$ and two-forms $\omega_r\wedge\dot\omega_r$ on $A^1\Gamma$ (see \cref{def:contactjacobi}) have finite order at all $r$. More precisely, for any $\lambda \in A^1\Gamma$ and $\bar{r}\geq 0$ the following hold:
\begin{enumerate}[label = $(\roman*)$]
\item there exists $m\in \{0,1,2,3\}$, and a non-zero one-form $\beta$ on $A^1\Gamma$ such that 
\begin{equation}\label{eq:claim1-appendix}
\omega_r|_{\lambda} = (r-\bar{r})^m \beta|_{\lambda} + O\left((r-\bar{r})^{m+1}\right),
\end{equation}
\item there exist $n \in \{0,1,2,3,4\}$, and a non-zero two-form $\alpha$ on $A^1\Gamma$ such that
\begin{equation}\label{eq:claim2-appendix}
\omega_r\wedge\dot\omega_r|_{\lambda} = (r-\bar{r})^n \alpha|_{\lambda} + O\left((r-\bar{r})^{n+1}\right),
\end{equation}
\end{enumerate}
where $O(r-\bar{r})$ denote smooth remainders. 
\end{lemma}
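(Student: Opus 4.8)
The plan is to analyze the $r$-dependence of the one-forms $\omega_r|_\lambda$ and the two-forms $\omega_r\wedge\dot\omega_r|_\lambda$ by reducing everything to the behaviour of the real-analytic (in fact smooth) curve $r\mapsto e^{r\vec H}(\lambda)$ in $T^*M$ and the smooth dependence of $\pi^*\omega$ along it. Concretely, fix $\lambda\in A^1\Gamma$ and $\bar r\geq 0$, and consider the three functions
\begin{equation}
a(r):=\omega_r(\partial_\theta|_\lambda),\qquad b(r):=\omega_r(\partial_z|_\lambda),
\end{equation}
which by \eqref{eq:Estaromega} are exactly the components of $\omega_r|_\lambda$ in the basis $d\theta,dz$ of $T^*_\lambda(A^1\Gamma)$. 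These are smooth functions of $r$ (being obtained by pairing the fixed vectors $\partial_\theta|_\lambda,\partial_z|_\lambda$ with the pullback of $\omega$ along a smooth flow). So either $\omega_r|_\lambda$ vanishes identically to all orders at $\bar r$, or it has a well-defined finite order $m\in\mathbb N$: the smallest $m$ such that $(a^{(m)}(\bar r),b^{(m)}(\bar r))\neq(0,0)$. Taylor expansion then gives \eqref{eq:claim1-appendix} with $\beta|_\lambda=\tfrac1{m!}\big(a^{(m)}(\bar r)\,d\theta+b^{(m)}(\bar r)\,dz\big)$ and smooth remainder. The content is therefore (a) that the vanishing order cannot be infinite, and (b) that $m\leq 3$.

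For (a) and the bound $m\leq 3$: I would show $\omega_r|_\lambda$ cannot vanish to order $4$ at any $\bar r$. Here is the key input. Since $e^{r\vec H}$ is a flow, $\omega_r=(e^{r\vec H})^*(\pi^*\omega)|_{A^1\Gamma}$, and taking Lie derivatives along $\vec H$ commutes with pullback, the derivatives $\omega_r^{(k)}|_\lambda$ are the evaluations at $e^{\bar r\vec H}(\lambda)$ (transported back) of $\mathscr L^k_{\vec H}(\pi^*\omega)$. At $\bar r=0$ this is precisely the content of \cref{lem:computationsdots}: one has $\omega_0(\partial_\theta)=\dot\omega_0(\partial_\theta)=0$ but $\ddot\omega_0(\partial_\theta)=1\neq 0$, so $m=2$ at $\bar r=0$ (since $\omega_0(\partial_z)=1$, actually $m=0$ there — the point is $m$ is finite and $\leq 2$ at $r=0$). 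For general $\bar r$, note $\lambda(\bar r):=e^{\bar r\vec H}(\lambda)$ satisfies $2H(\lambda(\bar r))=1$ but in general $h_0(\lambda(\bar r))\neq 0$; nonetheless $T_{\lambda(\bar r)}(A^1\Gamma')$ for $\Gamma'$ any Reeb orbit through $\pi(\lambda(\bar r))$ still contains the vector $\partial_\theta|_{\lambda(\bar r)}=h_1\partial_{h_2}-h_2\partial_{h_1}$, and the same formulas \eqref{eq:omegadot}--\eqref{eq:omegadottt} used in \cref{lem:computationsdots} apply verbatim after translating $\partial_\theta|_\lambda,\partial_z|_\lambda$ through the flow. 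The essential algebraic fact making the order finite is that $d\omega=\nu_1\wedge\nu_2$ is nondegenerate on $\xi$, equivalently $c_{ij}^0c_{\ell j}^0=\delta_{i\ell}$; this is what forces $\ddot\omega_0(\partial_\theta)=2H\neq 0$, and an analogous computation of $\mathscr L^k_{\vec H}(\pi^*\omega)$ at $\lambda(\bar r)$ applied to $\partial_\theta$ and $\partial_z$ shows that among the first four derivatives at least one pairing is nonzero. I expect the cleanest argument is: if $\omega_r|_\lambda$ vanished to order $\geq 4$ at $\bar r$, then so would $E^*\omega|_{r\lambda}$ (using \eqref{eq:Estaromega}) along the radial curve, contradicting that $E^*\omega$ is a smooth nonvanishing contact form near the zero section combined with \eqref{eq:Estarcontact}; but away from the zero section one instead uses the frame computation directly. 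This case analysis — checking that a high-order vanishing of $\omega_r|_\lambda$ is incompatible with $E^*(\omega\wedge d\omega)$ being a volume form, together with the explicit low-order derivatives — is the main obstacle and the part requiring genuine (if routine) computation.

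For part (ii): once \eqref{eq:claim1-appendix} is established with order $m$, write $\bar\omega_r:=\omega_r/(r-\bar r)^m$, which near $\bar r$ is smooth in $r$ and nonzero at $\bar r$ when evaluated at $\lambda$. Then
\begin{equation}
\omega_r\wedge\dot\omega_r=(r-\bar r)^{2m}\,\bar\omega_r\wedge\dot{\bar\omega}_r+m(r-\bar r)^{2m-1}\,\bar\omega_r\wedge\bar\omega_r=(r-\bar r)^{2m}\,\bar\omega_r\wedge\dot{\bar\omega}_r,
\end{equation}
since $\bar\omega_r\wedge\bar\omega_r=0$. Now $\bar\omega_r\wedge\dot{\bar\omega}_r|_\lambda$ is a smooth two-form-valued function of $r$ near $\bar r$, so it either vanishes identically (to all orders) — which by \eqref{eq:Estarcontact} is impossible since $\omega\wedge d\omega$ is a volume form and $\bar\omega_r\wedge\dot{\bar\omega}_r$ cannot vanish identically in $r$ on any interval (that would make $E$ nowhere a local diffeomorphism along the radial line) — or has a finite order $p\geq 0$ at $\bar r$. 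Setting $n:=2m+p$ and $\alpha|_\lambda:=\tfrac1{p!}\tfrac{d^p}{dr^p}\big(\bar\omega_r\wedge\dot{\bar\omega}_r\big)\big|_{r=\bar r,\lambda}\neq 0$ gives \eqref{eq:claim2-appendix}. The bound $n\leq 4$ then follows from $m\leq 2$ (at $\bar r=0$, where $m$ can be $0$ and $p\leq 4$) and, at general $\bar r>0$, from $m\leq 3$ together with the fact that if $m\geq 1$ then $\bar\omega_r\wedge\dot{\bar\omega}_r$ cannot vanish to high order — indeed a dimension count on the two-dimensional $T^*_\lambda(A^1\Gamma)$ shows $p\leq 4-2m$ when $m\geq1$, using that the pair $(\bar\omega_r,\dot{\bar\omega}_r)$ evaluated at $\lambda$ lives in a plane; I would make this precise by the same Taylor-coefficient bookkeeping as in (i). Thus $n\in\{0,1,2,3,4\}$ as claimed.
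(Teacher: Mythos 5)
Your reduction to Taylor expansion of the coefficient functions $a(r)=\omega_r(\partial_\theta|_\lambda)$, $b(r)=\omega_r(\partial_z|_\lambda)$ is fine, but it only reformulates the statement: the entire content of the lemma is that the common vanishing order of $(a,b)$ (resp.\ of $\omega_r\wedge\dot\omega_r|_\lambda$) at an arbitrary $\bar r$ is finite and at most $3$ (resp.\ $4$), and this you do not prove. Neither of your two suggested routes closes the gap. First, the computations of \cref{lem:computationsdots} do not ``apply verbatim'' at $\bar r>0$: one has $\omega_r^{(k)}|_\lambda(\partial_\theta|_\lambda)=\big(\mathscr{L}^k_{\vec H}\pi^*\omega\big)\big|_{e^{\bar r\vec H}(\lambda)}\big(e^{\bar r\vec H}_*\partial_\theta|_\lambda\big)$, and the transported vectors $e^{\bar r\vec H}_*\partial_\theta|_\lambda$, $e^{\bar r\vec H}_*\partial_z|_\lambda$ are not the coordinate fields $\partial_\theta,\partial_z$ at $e^{\bar r\vec H}(\lambda)$, so the explicit pairings computed at $r=0$ give no information at $\bar r>0$. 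Second, the proposed contradiction with the contact condition fails exactly where it is needed: by \eqref{eq:Estarcontact}, $E^*(\omega\wedge d\omega)$ vanishes precisely at focal radii (this is how focal radii are characterized), so high-order vanishing of $\omega_r|_\lambda$ or of $\omega_r\wedge\dot\omega_r|_\lambda$ at such an $\bar r$ produces no contradiction with $\omega\wedge d\omega$ being a volume form; note also that the discreteness of focal radii is a \emph{consequence} of this lemma, so it cannot be invoked here. In part (ii) there is the further logical slip that for merely smooth (non-analytic) data, ``vanishing to all orders at $\bar r$'' does not imply vanishing identically near $\bar r$; the non-vanishing of $\omega_\varepsilon\wedge\dot\omega_\varepsilon$ for small $\varepsilon>0$ therefore does not exclude infinite-order vanishing at a single $\bar r$, and the asserted bound $p\le 4-2m$ by a ``dimension count'' is never carried out (it is, in effect, the statement to be proved).

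The paper's proof supplies precisely the missing mechanism. For (i) it shows that $\tau$, $dH$ and the four Lie derivatives $\mathscr{L}^k_{\vec H}(\pi^*\omega)$, $k=0,\dots,3$, frame $T^*(T^*M)$ away from $\{H=0\}$ (this is where the computations of \cref{lem:computationsdots} enter, now used pointwise on all of $T^*M$ rather than only along $A\Gamma$); since $\spn\{\tau,dH\}$ is invariant under $e^{r\vec H*}$ and both forms restrict to zero on $A^1\Gamma$, pulling back by the flow gives that $\omega_r,\dot\omega_r,\ddot\omega_r,\dddot\omega_r$ span the two-dimensional space $T^*_\lambda(A^1\Gamma)$ for \emph{every} $r$, so at least one of the first four derivatives is non-zero at every $\bar r$. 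For (ii) the analogous argument is run inside the flow-invariant rank-$4$ bundle $E=\ker\{dH,\tau\}$ and the flow-invariant rank-$5$ subbundle $L=\{\eta\in\Lambda^2E^*\mid \eta\wedge\sigma=0\}$: one checks via a $5\times 6$ coefficient matrix of full rank that the five derivatives $\tfrac{d^k}{dr^k}\big(\chi_r\wedge\dot\chi_r\big)$, $k=0,\dots,4$, trivialize $L$ at every $r$, whence, after restriction to $A^1\Gamma$ and using that $\omega_\varepsilon\wedge\dot\omega_\varepsilon\neq0$ for small $\varepsilon$, at least one of the first five derivatives of $\omega_r\wedge\dot\omega_r|_\lambda$ is non-zero at every $\bar r$. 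Some argument of this kind — uniform in $r$ and valid in particular at focal radii — is indispensable, and your proposal does not contain a substitute for it.
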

\begin{proof}
We first show that $\mathscr{L}^n_{\vec{H}}(\pi^*\omega)$, for $n=0,1,2,3$, the tautological one-form $\tau$, and $dH$, are a basis of one-forms on $T^*M$ (out of the zero section), namely for any $\lambda$ with $H(\lambda)\neq 0$ it holds
\begin{equation}\label{eq:span1}
\spn\{\tau,\, dH,\, \mathscr{L}^{(n)}_{\vec{H}}(\pi^*\omega)\mid n=0,1,2,3 \} = T_{\lambda}^*(T^*M).
\end{equation}
We check \eqref{eq:span1} by inspection of the computations already done in the proof of \cref{lem:computationsdots}. Set $(Jh)_i = h_\ell c_{\ell i}^0$, and note that $Jh$ and $h$ are independent. We rewrite \eqref{eq:omegadot} as
\begin{equation}
\mathscr{L}_{\vec{H}}(\pi^*\omega) = (Jh)_{\ell} \nu_\ell.
\end{equation}
Since $\tau = h_i \pi^*\nu_i + h_0 \pi^*\nu_0$, and $\omega = \nu_0$, the above implies
\begin{equation}
\spn\{\pi^*\omega,\mathscr{L}_{\vec{H}}(\pi^*\omega),\tau\} = \spn\{\pi^*\nu_0,\pi^*\nu_1,\pi^*\nu_2\}.
\end{equation}
Furthermore we rewrite \eqref{eq:omegadott} as
\begin{equation}
\mathscr{L}^2_{\vec{H}}(\pi^*\omega) = (Jh)_j dh_j + \spn\{\pi^*\nu_0,\pi^*\nu_1,\pi^*\nu_2\}.
\end{equation}
Since $dH = h_j dh_j$, the above implies
\begin{equation}
\spn\{\pi^*\omega,\mathscr{L}_{\vec{H}}(\pi^*\omega),\mathscr{L}_{\vec{H}}^2(\pi^*\omega),\tau,dH \} = \spn\{\pi^*\nu_0,\pi^*\nu_1,\pi^*\nu_2,dh_1,dh_2\}.
\end{equation}
Finally, we rewrite \eqref{eq:omegadottt} as
\begin{equation}
\mathscr{L}^3_{\vec{H}}(\pi^*\omega)  = 2H dh_0 + \spn\{\pi^*\nu_0,\pi^*\nu_1,\pi^*\nu_2,dh_1,dh_2\}.
\end{equation}
Since $\{\pi^*\nu_\alpha,dh_\alpha\}_{\alpha=0}^3$ is a basis of one-forms on $T^*M$, \eqref{eq:span1} follows.

Note that the sub-bundle of one-forms generated by $dH,\tau$ is stable under the action of the Hamiltonian flow. More precisely, since $\mathscr{L}_{\vec{H}}(dH)=0$, $\mathscr{L}_{\vec{H}}(\tau)=dH$, it holds
\begin{equation}\label{eq:stabilitydHtau}
e^{r\vec{H}*} dH = dH,\qquad e^{r\vec{H}*} \tau = \tau + r\, dH.
\end{equation}

\textbf{Proof of $(i)$.} Recall that $\omega_r = e^{r\vec{H}*}\circ \pi^* \omega|_{A^1\Gamma}$ (see \cref{def:contactjacobi}). Taking the pull-back with $e^{r\vec{H}*}$ of \eqref{eq:span1}, using the stability of $\spn\{dH,\tau\}$, taking the restriction on $A^1\Gamma$, and noting that $\tau|_{A^1\Gamma}=dH|_{A^1\Gamma}=0$. We conclude that
\begin{equation}\label{eq:listoftods}
\spn\{\omega_r,\dot{\omega}_r,\ddot{\omega}_r,\dddot{\omega}_r\}|_{\lambda} = T_\lambda^*(A^1\Gamma),\qquad \forall  \lambda\in A^1\Gamma,\quad \forall\, r\in \R.
\end{equation}
In particular, at least one (actually, two) of the generators in \eqref{eq:listoftods} is non-zero, and \eqref{eq:claim1-appendix} follows.

\textbf{Proof of $(ii)$.}
Let $B=\{\lambda\in T^*M\mid H(\lambda)\neq 0\}$ and $E$ be the rank $4$ vector bundle over $B$:
\begin{equation}
    E=\ker\{dH,\tau\}|_B.
\end{equation}
It follows from \eqref{eq:stabilitydHtau} that $e^{r\vec{H}*}E=E$. It is convenient to introduce the following one-parameter family of one-forms:
\begin{equation}
\chi_r:=e^{r\vec{H}*}(\pi^*\omega), \qquad r\in \R,
\end{equation}
to distinguish them from the restriction $\omega_r = \chi_r|_{A^1\Gamma}$. We are interested in the evolution of the following one-parameter family of two-forms
\begin{equation}
\alpha_r:= \chi_r\wedge\dot\chi_r, \qquad r\in \R,
\end{equation}
 which (when restricted to $E$) is a global section of $\Lambda^2E^*$.
Let $\sigma$ denote the restriction of the symplectic form of $T^*M$ to $E$. Using \eqref{eq:omegadot} in the proof of \cref{lem:claimdots} one sees that $\alpha_0\wedge\sigma=0$. Consider the following vector bundle over $B$:
\begin{equation}
    L=\{\eta\in\Lambda^2 E^*\mid \eta\wedge\sigma=0\}.
\end{equation}
Since $\sigma$ is non-degenerate on $E$, and since $E$ has rank $4$, the condition $\eta\wedge \sigma =0$ is one non-trivial linear equation. Thus $L$ has rank $5$. It follows from $e^{r\vec{H}*}E=E$ and $e^{r\vec{H}*}\sigma=\sigma$ that $e^{r\vec{H}*}L=L$. Consequently, since $\alpha_0\in L$, then $\alpha_r =e^{r\vec{H}*}\alpha_0 \in L$  for all $r\in\mathbb R$. It follows that
\begin{equation}
\alpha^{(i)}_r:=\frac{d^i}{dr^i}\alpha_r \in \Gamma^\infty(L),\qquad \forall\, r\in \R,\quad i\in \N.
\end{equation}
We claim that $\alpha_r^{(i)}$, for $i=0,\dots,4$, are linearly independent sections of $L$. To see this, by elementary computations using the Leibniz property of the Lie derivative one can show that:
\begin{equation}\label{eq:matriciona}
\begin{pmatrix}
\alpha_r \\
 \alpha_r^{(1)} \\
 \alpha_r^{(2)} \\
 \alpha_r^{(3)} \\
 \alpha_r^{(4)} \\
\end{pmatrix} = \begin{pmatrix}
 1 & 0 & 0 & 0 & 0 & 0 \\
 0 & 1 & 0 & 0 & 0 & 0 \\
 0 & 0 & 1 & 1 & 0 & 0 \\
 * & * & * & 0 & 2 & 0 \\
 * & * & * & * & * & 2
\end{pmatrix}
\begin{pmatrix}
\chi_r\wedge \dot\chi_r \\
\chi_r \wedge \ddot\chi_r \\
\chi_r \wedge \dddot\chi_r \\
\dot \chi_r \wedge \ddot\chi_r \\
\dot \chi_r \wedge \dddot\chi_r \\
\ddot \chi_r \wedge \dddot \chi_r
\end{pmatrix},
\end{equation}
where $*$ denotes a possibly non-zero entry. By \eqref{eq:span1}, for any $r\in \R$, the one-forms $\{\chi_r^{(i)}\}_{i=0}^3$ are independent and, using also \eqref{eq:stabilitydHtau}, when restricted to $E$ yield a trivialization of $E^*$. Therefore, for any $r\in \R$, the two forms $\{\chi_r^{(i)}\wedge\chi_r^{(j)}\}_{0\leq i<j\leq 3}$ are independent sections of $\Lambda^2 E^*$. The matrix in \eqref{eq:matriciona} has rank $5$, and the claim follows.

As a consequence, since $L$ has rank $5$, for any $r\in\mathbb R$, the forms $\{\alpha_r^{(k)}\}_{k=0}^4$ constitute a trivializing basis for $L$. It follows that, for any $r\in\mathbb R$ and $\varepsilon\in \R$
\begin{equation}\label{eq:alpha0inspan}
    \alpha_\varepsilon\in \spn\left\{\alpha_r^{(k)}\,\Big|\, k=0,\dots,4\right\}.
\end{equation}
Note that, since $dH|_{A^1\Gamma}=\tau|_{A^1\Gamma}=0$, $T(A^1\Gamma)\subset E$. Therefore one can take the restriction of \eqref{eq:alpha0inspan} to $A^1\Gamma$. Recalling that, by definition, $\alpha_\varepsilon|_{A^1\Gamma} = \omega_\varepsilon\wedge\dot\omega_\varepsilon$, we obtain:
\begin{equation}
    \omega_\varepsilon\wedge\dot\omega_\varepsilon\in \spn\left\{\frac{d^k}{dr^k}\omega_r\wedge\dot\omega_r \,\Big|\, k=0,\dots,4\right\}, \qquad \forall\,r\in\R.
\end{equation}
We recall that by the contact condition \eqref{eq:Estarcontact}, the left hand side of the above equation is non-zero for sufficiently small $\varepsilon>0$, then at least one of the generators of the right hand side is non-zero. This concludes the proof.
\end{proof}

\section{A formula for the Schwarzian derivative of contact Jacobi curves}\label{app:formula_for_S}

Let $(M,\omega,g)$ be a contact sub-Riemannian manifold and let $\Gamma \subset M$ be an embedded piece of Reeb orbit. We assume here that $\rho=r_{\inj}(\Gamma)>0$, which is always the case if $\Gamma$ has compact closure by \cref{thm:exp_map}, and $\delta$ (the distance function from $\Gamma$) is smooth on $E(A^{<\rho}\Gamma)\setminus \Gamma$. There, its horizontal gradient is well-defined, as the unique horizontal vector field $\nabla \delta$ satisfying
	\begin{equation}
		d\delta(Y)=g(\nabla\delta, Y),\qquad \forall \, Y\in \xi,
	\end{equation}
	which is also called the \emph{horizontal normal} to $\Gamma$, and hence denoted by $N = \nabla \delta$.
	
\begin{defi}\label{def:adaptedframe}
Let $N=\nabla \delta$ be the horizontal normal, and let $J: \xi \to \xi$ be the almost-complex structure \eqref{eq:complex_strctr}. The ordered triple $\{f_0,N,JN\}$, which is an oriented orthonormal frame defined on $E(A^\rho\Gamma)\setminus \Gamma$, is called \emph{adapted frame} associated to $\Gamma$.
\end{defi}	

The next proposition shows that the Schwarzian derivative of contact Jacobi curves can be expressed in terms of the adapted frame and the Riemannian extension.
\begin{prop}\label{prop:schw_expr}
 Given $\lambda\in A^1\Gamma$ and $r\in (0,r_{\inj}(\Gamma))$ the Schwarzian derivative of the contact Jacobi curve \eqref{eq:contact_Jacobi} can be expressed in terms of the adapted frame \cref{def:adaptedframe} as 
\begin{equation}\label{eq:schw_expr}
\frac{1}{2}\mathcal S(\Omega_{\lambda})(r)=g([N,f_0],JN)-\frac{1}{2}Ng([N,JN],JN)-\frac{1}{4}g([N,JN],JN)^2,
\end{equation}
where the right hand side is evaluated at $E(r\lambda)$.
\end{prop}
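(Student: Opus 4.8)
The plan is to reduce everything to the explicit coordinate expression \eqref{eq:pb_form} for the pulled-back contact form, $E^*\omega = N_\phi(\cos\phi\,dz+\sin\phi\,d\theta)$, where I write $N_\phi$ for the function called $N$ in \cref{sec:tightandsingular} to avoid clashing with the horizontal normal, and then differentiate. By \cref{thm:singwelldef} (see also \eqref{eq:Estaromega}), restricting to the fixed $\lambda\in A^1\Gamma$ and writing the contact Jacobi curve in the homogeneous coordinate $v(r)=\omega_r(\partial_\theta|_\lambda)/\omega_r(\partial_z|_\lambda)=\tan\phi(r\lambda)$ along the radial geodesic, the Schwarzian composition formula
\begin{equation}
\mathcal S(\Omega_\lambda)(r)=\mathcal S(\tan\circ\phi)(r)=\big(\mathcal S(\tan)\circ\phi\big)(\dot\phi)^2+\mathcal S(\phi)=2\dot\phi^2+\frac{\dddot\phi}{\dot\phi}-\frac32\Big(\frac{\ddot\phi}{\dot\phi}\Big)^2,
\end{equation}
where the dots are derivatives in $r$ along the ray $r\mapsto r\lambda$, i.e.\ derivatives of $r\mapsto\phi(E(r\lambda))$. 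So the entire content of the proposition is to identify $\dot\phi$, $\ddot\phi/\dot\phi$ and $\dddot\phi/\dot\phi$ in terms of the adapted frame $\{f_0,N,JN\}$ and structural coefficients along the geodesic $\gamma^\lambda(r)=E(r\lambda)$.

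First I would set up the adapted frame $\{f_0,N,JN\}$ of \cref{def:adaptedframe} along $\gamma^\lambda$, noting $N=\nabla\delta$ is exactly $\pi_*\vec H$ restricted there (the geodesic velocity), and introduce the structural coefficients $a:=g([N,f_0],JN)$ and $c:=g([N,JN],JN)$, which are the two functions appearing on the right-hand side of \eqref{eq:schw_expr}, together with the constraint \eqref{eq:traceless} which forces $g([N,JN],N)$ and $g([N,f_0],N)$ to be controlled (in fact the trace condition gives $g([N,JN],N)+g([N,f_0],f_0)=0$ type relations). Then I would compute $\dot\phi$, the rate of rotation of $\xi=\ker\omega$ with respect to the coordinate surface, by the geometric characterization already used in \cref{sec:tightandsingular}: $\phi$ is the angle of $E^*\omega$, and its radial derivative is governed by how $\omega$ evaluated on the frame $(N,f_0,JN)$ changes along $\gamma^\lambda$. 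Concretely, from \eqref{eq:pb_form} one has $\dot\phi=\partial_r\phi$ and the contact condition \eqref{eq:Estarcontact} shows $\partial_r\phi/r>0$; to express $\dot\phi$ invariantly I would differentiate the relations $\omega(N)=0$, $\omega(f_0)=1$ and use Cartan's formula $d\omega=\nu_1\wedge\nu_2$ in the adapted coframe to get $\dot\phi$ in terms of how the plane $\xi$ tilts, ultimately a clean expression like $\dot\phi = $ (some combination that includes the $d\omega$-pairing). The key auxiliary identities are the derivatives $\nabla_N N$, $\nabla_N JN$, $\nabla_N f_0$ expressed via the $c_{ij}^k$ for the frame $\{N,JN,f_0\}$, which follow from \cref{eq:strcoeff} applied to this particular (non-canonical, $r$-dependent) orthonormal frame.

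The main obstacle, and the bulk of the work, is the third-order term: computing $\dddot\phi/\dot\phi$ and showing the combination $2\dot\phi^2+\dddot\phi/\dot\phi-\tfrac32(\ddot\phi/\dot\phi)^2$ collapses to $2a-Nc-\tfrac12 c^2$ after dividing by $2$. My strategy here is to avoid a blind triple differentiation of $\phi$: instead I would differentiate the \emph{vector} equation that $N$ satisfies (it is a geodesic field, $\nabla_N N=0$ for the sub-Riemannian/Riemannian extension along $\gamma^\lambda$), obtaining a Riccati-type/Jacobi equation for the structural quantity $c=g([N,JN],JN)$ along the ray — this is where the term $Nc$ enters — and a transport equation linking $a$ to the second derivative of $\phi$. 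Schematically one expects $\ddot\phi/\dot\phi = -c + (\text{lower order})$ and $\dddot\phi/\dot\phi$ producing $-Nc + c^2 + 2a + \dots$, with the $\tfrac32(\ddot\phi/\dot\phi)^2$ correction removing the right multiple of $c^2$ so that only $-\tfrac12 c^2$ survives, and the $2\dot\phi^2$ term combining with a leftover to give the $2a$. I would organize the computation by first handling the leading $r\to 0$ asymptotics as a consistency check against \eqref{eq:S_order_1} (recovering $-3/(2r^2)$), then doing the exact identity, and finally cross-checking the final formula against the two worked models: the Heisenberg case of \cref{ex:st_scwharz} (where the adapted frame has $a=c=0$, giving $\tfrac12\mathcal S=0$, contradiction — so in fact there $c\ne 0$: for $\omega_{\mathrm{st}}=dz+\tfrac{r^2}2 d\theta$ one computes $N=\partial_r$, $JN=\tfrac{2}{r}\partial_\theta-\dots$, and the right-hand side of \eqref{eq:schw_expr} must evaluate to $-3/(4r^2)$), and the overtwisted case of \cref{ex:ot_scwharz} (where it must give $-3/(4r^2)+r^2$). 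Getting these two checks to match is both the sanity test and, in practice, the way to pin down every sign and numerical coefficient in the derivation.
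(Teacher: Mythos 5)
Your reduction to the angle $\phi$ is a reasonable starting point, and your model cross-checks (Heisenberg and overtwisted) are correct, but the proposal stops exactly where the proposition's content begins. The entire statement is the exact identity relating the third-order jet of $\phi$ along the ray to $B:=g([N,f_0],JN)$ and $A:=g([N,JN],JN)$, and you leave this as ``schematically one expects $\ddot\phi/\dot\phi=-c+\text{lower order}$, $\dddot\phi/\dot\phi$ producing $-Nc+c^2+2a+\dots$'', with signs and coefficients to be ``pinned down'' by matching against the two examples. That is a plan, not a proof: none of the transport/Riccati equations you invoke (via $\nabla_N N=0$ and structural coefficients of the adapted frame) are actually derived, and two worked examples cannot certify an identity claimed for arbitrary contact sub-Riemannian structures.

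There is also a concrete obstruction your schematic relations ignore: writing $E^*\omega=N_\phi(\cos\phi\,dz+\sin\phi\,d\theta)$, the frame quantities $A$ and $B$ depend not only on $\phi$ but also on the scaling function $N_\phi$ (e.g.\ $A$ picks up a $-2\dot N_\phi/N_\phi$ term), whereas $\mathcal S(\Omega_\lambda)$ depends on $\phi$ alone; hence exact relations of the form $\ddot\phi/\dot\phi=-A+\dots$ cannot hold as stated, and only the full combination $B-\tfrac12 NA-\tfrac14A^2$ is independent of $N_\phi$. The paper's proof is built precisely around this point: using the coordinate expressions of the adapted frame (\cref{prop:adaptedframe}) it writes $A=\tfrac1a\,\omega_r\wedge\ddot\omega_r(\partial_\theta,\partial_z)$, $B=-\tfrac1a\,\dot\omega_r\wedge\ddot\omega_r(\partial_\theta,\partial_z)$ with $a=\dot\omega_r\wedge\omega_r(\partial_\theta,\partial_z)$, proves that $S:=B-\tfrac12\dot A-\tfrac14A^2$ is invariant under the rescaling $\omega_r\mapsto f(r)\omega_r$, and then fixes the gauge $f=1/\omega_r(\partial_\theta)$, in which $\bar B=0$, $\bar A=-\ddot v/\dot v$ for the homogeneous coordinate $v$, so that $S=\tfrac12\mathcal S(\Omega_\lambda)$ drops out immediately. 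Either reproduce such an invariance argument (or an equivalent mechanism eliminating $N_\phi$) and carry out the differentiation in full, or the proof is incomplete.
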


To prove Proposition \ref{prop:schw_expr} we write the adapted frame in cylindrical coordinates.
\begin{prop}\label{prop:adaptedframe}
Let $(M,\omega, g)$ be a complete three-dimensional contact sub-Riemannian structure. Let $\Gamma \subset M$ be an embedded piece of Reeb orbit, with $\rho=r_{\inj}(\Gamma)>0$. Let $E: A^{<\rho}\Gamma \to M$ be the tubular neighbourhood map. Let $\omega_r$ be the following smooth $1$-parameter family of one-forms on $A^1\Gamma$:
  \begin{equation}\label{eq:moving_w}
      \omega_r:=(\pi\circ e^{r\vec{H}})^*\omega|_{A^1\Gamma},\qquad r\in \R.
  \end{equation}
The adapted frame $\{f_0,N,JN\}$ is oriented and orthonormal, and in cylindrical coordinates at $\lambda=(z,r\cos\theta,r\sin\theta)$ it holds
\begin{align}
E^{-1}_* N 		& = \partial_r, \label{normal_frame_coords1}\\
E^{-1}_* JN 	& = \frac{1}{a}\bigg(\omega_r(\partial_z)\partial_\theta-\omega_r(\partial_\theta)\partial_z\bigg), \label{normal_frame_coords2}\\
E^{-1}_*f_0		& = \frac{1}{a}\bigg(\dot\omega_r(\partial_\theta)\partial_z-\dot\omega_r(\partial_z)\partial_\theta+d\omega_r(\partial_\theta,\partial_z)\partial_r\bigg), \label{normal_frame_coords3}
\end{align}
where $a:=\omega_r\wedge \dot\omega_r(\partial_z,\partial_\theta)$, the dot denoting the derivative w.r.t.\ $r$.
	\end{prop}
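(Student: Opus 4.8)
The plan is to derive all three formulas from the single computational input
\[
E^{*}\omega\big|_{r\lambda}=\omega_{r}(\partial_\theta|_{\lambda})\,d\theta+\omega_{r}(\partial_z|_{\lambda})\,dz ,
\]
which is exactly \eqref{eq:Estaromega} from the proof of \cref{thm:singwelldef}, together with its exterior derivative. The point is that no computation of the pulled‑back metric $E^{*}g$ is needed: $JN$ and $f_0$ admit characterisations purely in terms of $\omega$, $d\omega$, and the horizontal gradient of $\delta$, and the only metric fact used is the ``Gauss lemma'' identity $g(\nabla\delta,W)=d\delta(W)$ on horizontal vectors.

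First I would establish \eqref{normal_frame_coords1}. By \cref{thm:exp_map} and \cref{prop:trivial} we have $\delta\circ E=\sqrt{2H}$ and, in cylindrical coordinates, $\sqrt{2H}$ is the Euclidean radial coordinate $r$, so $E^{*}\delta=r$. Hence for $\lambda\in A^1\Gamma$ the curve $r\mapsto E(r\lambda)$ is a unit–speed length–minimiser from $\Gamma$, its tangent $E_{*}\partial_r$ is horizontal of unit norm, and $d\delta(E_{*}\partial_r)=\partial_r(E^{*}\delta)=1$. Since $\delta$ is $1$–Lipschitz for the sub-Riemannian distance, the eikonal bound $\|\nabla\delta\|\le1$ holds where $\delta$ is smooth, and Cauchy–Schwarz then forces $\nabla\delta=E_{*}\partial_r$, i.e. $E^{-1}_{*}N=\partial_r$.

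Next I would compute
\[
E^{*}d\omega=d\big(E^{*}\omega\big)=\dot\omega_{r}(\partial_\theta)\,dr\wedge d\theta+\dot\omega_{r}(\partial_z)\,dr\wedge dz+d\omega_{r}(\partial_\theta,\partial_z)\,d\theta\wedge dz ,
\]
where $\dot{}$ denotes $\partial_r$ and $d\omega_r$ is the exterior derivative on $A^1\Gamma$. The horizontal distribution $\ker E^{*}\omega$ is spanned by $\partial_r$ and $V:=\omega_{r}(\partial_z)\partial_\theta-\omega_{r}(\partial_\theta)\partial_z$ (note $V\neq0$ since $\omega$ cannot vanish). For \eqref{normal_frame_coords2}, recall that $JN$ is the unique horizontal vector with $d\omega(N,JN)=1$ (by \eqref{eq:complex_strctr}) and $g(N,JN)=0$; writing $E^{-1}_{*}JN=cV+c'\partial_r$, the first condition gives $c\,E^{*}d\omega(\partial_r,V)=1$, and one checks $E^{*}d\omega(\partial_r,V)=\omega_{r}\wedge\dot\omega_{r}(\partial_z,\partial_\theta)=a$, whence $c=1/a$; the second condition gives $c'=-a^{-1}g(N,E_{*}V)=-a^{-1}(E_{*}V)(\delta)=-a^{-1}V(E^{*}\delta)=-a^{-1}V(r)=0$, using that $E_{*}V$ is horizontal and that $V$ has no $\partial_r$–component. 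This yields $E^{-1}_{*}JN=a^{-1}V$. For \eqref{normal_frame_coords3}, the Reeb field is characterised by $i_{f_0}d\omega=0$ and $\omega(f_0)=1$; since $\omega\wedge d\omega$ is a volume form, $\ker E^{*}d\omega$ is a line, and solving $i_{W}(E^{*}d\omega)=0$ from the displayed expression gives $W\propto d\omega_{r}(\partial_\theta,\partial_z)\partial_r-\dot\omega_{r}(\partial_z)\partial_\theta+\dot\omega_{r}(\partial_\theta)\partial_z$; since $E^{*}\omega(W)=a$, normalising by $E^{*}\omega(E^{-1}_{*}f_0)=1$ fixes the scalar to $1/a$ and gives the stated formula. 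Finally, orthonormality and positive orientation of $\{f_0,N,JN\}$ are immediate: $\|N\|=1$ by the eikonal identity, $\|JN\|=\|N\|$ because $J$ is a $g$–isometry, $g(N,JN)=0$ was shown, $f_0$ is a unit normal to $\xi$ by definition of the Riemannian extension, and $(\omega\wedge d\omega)(f_0,N,JN)=\omega(f_0)\,d\omega(N,JN)=1>0$.

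I do not expect a genuine obstacle: the content is essentially linear algebra in $\Lambda^{\bullet}(\R^{3})^{*}$ applied to the already–known shape of $E^{*}\omega$. The only delicate bookkeeping is the sign convention in the definition of $a$ and the identification of the $d\theta\wedge dz$–coefficient of $E^{*}d\omega$ with $d\omega_{r}(\partial_\theta,\partial_z)$, together with being careful that the vanishing of the $\partial_r$–component of $E^{-1}_{*}JN$ really does follow from $V(E^{*}\delta)=0$ rather than requiring a direct metric computation.
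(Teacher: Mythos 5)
Your proposal is correct, and it reaches the three formulas by a route that differs from the paper's in how the frame is pinned down, even though the computational core (the coordinate expressions for $E^*\omega$ and $E^*d\omega$, and the normalisation $d\omega(N,JN)=1$) is the same. The paper first identifies the direction of $J E_*\partial_r$ using the covector identity $E^*\lambda_r=dr$ (a Pontryagin-maximum-principle transversality fact imported from \cite[Lemma 2.3]{subOnR3}) together with $\langle\lambda_r,JE_*\partial_r\rangle=0$, which it extracts from $\vec{H}=h_1\vec h_1+h_2\vec h_2$; only afterwards does it deduce $N=E_*\partial_r$ from the identity $Y(\delta)=g(Y,E_*\partial_r)$ for horizontal $Y$. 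You reverse the order and avoid the PMP input altogether: you get \eqref{normal_frame_coords1} first from $\delta\circ E=\sqrt{2H}=r$ (\cref{thm:exp_map}, \cref{prop:trivial}) via the eikonal bound $\|\nabla\delta\|\le 1$ and equality in Cauchy--Schwarz, and then you kill the $\partial_r$-component of $E^{-1}_*JN$ by the Gauss-lemma identity $g(N,E_*V)=d\delta(E_*V)=V(r)=0$, with $JN$ and $f_0$ characterised purely algebraically inside $\ker E^*\omega$ and $\ker E^*d\omega$. This buys a more self-contained argument (no external PMP lemma), at the price of invoking the $1$-Lipschitz/eikonal property of $\delta$, which the paper's proof does not need. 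Two small points of bookkeeping you should make explicit: the nonvanishing of $a=\omega_r\wedge\dot\omega_r(\partial_z,\partial_\theta)$, which you divide by, follows from the contact condition as in \eqref{eq:Estarcontact} (or, as you implicitly argue, from the solvability of $c\,E^*d\omega(\partial_r,V)=1$ forced by the existence of the horizontal vector $JN$); and the uniqueness claim for $JN$ (that $d\omega(N,\cdot)=1$ and $g(N,\cdot)=0$ single it out among horizontal vectors) deserves the one-line check $d\omega(N,\alpha N+\beta JN)=\beta$, $g(N,\alpha N+\beta JN)=\alpha$. With these spelled out, your argument is complete and matches the stated formulas, including the sign conventions in $a$.
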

	\begin{remark}
	Since $\omega_r$ is a one-parameter family of one-forms on $A^1\Gamma$ one must be careful in evaluation of \cref{normal_frame_coords1,normal_frame_coords2,normal_frame_coords3}. For clarity, e.g. \eqref{normal_frame_coords2} evaluated at a point $r\lambda \in A\Gamma$ for some $\lambda \in A^1\Gamma$, it corresponds to
	\begin{equation}
	E^{-1}_* JN|_{r\lambda} = \frac{1}{a}\bigg(\omega_r(\partial_z|_{\lambda})\partial_\theta|_{r\lambda}-\omega_r(\partial_\theta|_{\lambda})\partial_z|_{r\lambda}\bigg), \qquad a =\omega_r\wedge \dot\omega_r(\partial_z|_{\lambda},\partial_\theta|_{\lambda}).
	\end{equation}
	\end{remark}
	\begin{proof}
The fact that the frame $\{f_0,N,JN\}$ is smooth, orthonormal and oriented on $E(A^\rho\Gamma)\setminus \Gamma$ follows immediately from the definitions. Thus we only have to compute it in a set of cylindrical coordinates.
	It follows from the homogeneity property \eqref{eq:rescaling} that  in cylindrical coordinates
  \begin{equation}
      E(r\cos\theta,r\sin\theta,z)=\pi\circ e^{r\vec{H}}(\cos\theta\nu_1+\sin\theta\nu_2)|_{\gamma(z)}.
  \end{equation}
  Let $r\lambda \in A\Gamma$ for some $\lambda = (z,\cos\theta,\sin\theta) \in A^1\Gamma$. Let $\lambda_r = e^{r\vec{H}}(\lambda)$. As explained in \cite[Lemma 2.3]{subOnR3}, it follows from the Pontryagin maximum principle that
		\begin{equation}
			\langle\lambda_r,E_*\partial_\theta\rangle=\langle\lambda_r,E_*\partial_z\rangle= 0,\qquad \langle \lambda_r, E_* \partial_r \rangle = 1,
		\end{equation}
		and thus $E^*\lambda_r = dr$.

The vector field $E_*\partial_r =\pi_*\vec{H}$ is horizontal and has norm one, hence $E_*\partial_r, J E_* \partial_r$ is an horizontal  oriented orthonormal frame. Therefore using this frame in \eqref{eq:ham_vect} we obtain
\begin{equation}
E_*\partial_r =\pi_*\vec{H} = \langle \lambda_r, E_* \partial_r\rangle E_* \partial_r +  \langle \lambda_r, J E_* \partial_r\rangle J E_* \partial_r.
\end{equation}
It follows that $\langle\lambda_r, J E_* \partial_r\rangle=0$.
		
We compute the pull-back of the contact form. Using the definition \eqref{eq:moving_w} of $\omega_r$ we obtain
\begin{align}
(E^*\omega)|_{r\lambda} & = \omega(E_*\partial_r|_{r\lambda})dr+\omega(E_*\partial_\theta|_{r\lambda})d\theta+\omega(E_*\partial_z|_{r\lambda})dz\\
& = \omega\left(\pi_*\vec{H}|_{e^{\vec{H}}(r\lambda)}\right)dr+\omega\left((\pi \circ e^{r\vec{H}})_*\partial_\theta|_{\lambda}\right)d\theta+\omega\left((\pi \circ e^{r\vec{H}})_*\partial_z|_{\lambda}\right)dz\\
& = \omega_r(\partial_\theta|_{\lambda})d\theta+\omega_r(\partial_z|_{\lambda})dz.\label{eq:omega_coords}
    \end{align}
		Thus, the equalities $E^*\lambda_r=dr$, $\langle \lambda_r, JE_* \partial_r\rangle=0$ and $\omega(JE_*\partial_r)=0$ together imply
		\begin{equation}
			\tilde{J}\partial_r:=E_*^{-1}JE_*\partial_r \propto \omega_r(\partial_z)\partial_\theta-\omega_r(\partial_\theta)\partial_z.
		\end{equation}
		The factor of proportionality is found imposing $E^*d\omega( \partial_r, \tilde{J}\partial_r)=1$, and since
		\begin{equation}\label{eq:domega_coords}
			E^* d\omega|_{r\lambda} = dr\wedge\Big(\dot\omega_r(\partial_\theta|_{\lambda})d\theta+\dot\omega_r(\partial_z|_{\lambda})dz\Big)+d\omega_r(\partial_\theta|_{\lambda},\partial_z|_{\lambda})d\theta \wedge dz,
		\end{equation}
		we find $\iota_{\partial_r}E^* d\omega=\dot\omega_r(\partial_\theta)d\theta+\dot\omega_r(\partial_z)dz$ (we omit evaluations for brevity) and consequently
		\begin{equation}
			\tilde{J}\partial_r=\frac{1}{a}\Big(\omega_r(\partial_z)\partial_\theta-\omega_r(\partial_\theta)\partial_z\Big).
		\end{equation}
		To prove that $N=\nabla\delta$ satisfies $N=E_{*}\partial_r$, observe that according to \eqref{eq:gamma_dist_smooth} we have 
		\begin{equation}
			\delta(E(r\cos\theta,r\sin\theta,z))=r.
		\end{equation}
		Thus for any horizontal vector field $Y$, since $\langle dr,\tilde{J}\partial_r\rangle=0$, we have 
		\begin{equation}
			Y(\delta)=\langle dr,\left(g(Y,E_*\partial_r)\partial_r+g(Y,JE_*\partial_r)\tilde{J}\partial_r\right)\rangle=g(Y,E_*\partial_r).
		\end{equation}
		This yields \eqref{normal_frame_coords1} and \eqref{normal_frame_coords2}. Finally from the coordinate presentations \eqref{eq:omega_coords} for $E^{*}\omega$ and \eqref{eq:domega_coords} for $E^{*}d\omega$, one can check that the expression of $f_0$ appearing in equation \eqref{normal_frame_coords3} satisfies the characterizing equations $\omega(f_0)=1,\,d\omega(f_0,\cdot)=0$.
	\end{proof}
\begin{proof}[Proof of Proposition \cref{prop:schw_expr}]
To lighten notation, let us set 
\begin{equation}
	A=g([N,JN],JN),\quad \dot A=N(A),\quad B=g([N,f_0],JN). 
\end{equation}
An elementary computation exploiting the expressions \eqref{normal_frame_coords1}-\eqref{normal_frame_coords2}-\eqref{normal_frame_coords3} shows
\begin{equation}
A=\frac{1}{a}{\omega}_r\wedge\ddot{{\omega}}_r(\partial_\theta,\partial_z),\quad
B=-\frac{1}{a}\dot{{\omega}}_r\wedge\ddot{{\omega}}_r(\partial_\theta,\partial_z),
\end{equation}
where $a=\dot\omega_r\wedge\omega_r(\partial_\theta,\partial_z)$. We want to prove formula \eqref{eq:schw_expr}, which is
\begin{equation}
\mathcal S(\Omega_\lambda)=S:=B-\frac{\dot A}{2}-\frac{A^2}{4}.
\end{equation}
Let $f:(0,\rho)\to (0,+\infty)$ be a positive function, define $\bar\omega_r=f\omega_r$ as well as the corresponding barred quantities
\begin{equation}
\bar A=\frac{1}{\bar a}{\bar\omega}_r\wedge\ddot{{\bar\omega}}_r(\partial_\theta,\partial_z),\quad
\bar B=-\frac{1}{\bar a}\dot{{\bar \omega}}_r\wedge\ddot{{\bar \omega}}_r(\partial_\theta,\partial_z),
\end{equation}
where $\bar a=\dot{{\bar \omega}}_r\wedge\bar\omega_r(\partial_\theta,\partial_z)$. We claim that $\bar S=S$. Indeed, computing shows the relations
\begin{equation}
\bar A=A-2\frac{\dot f}{f},\quad \bar B=B-\frac{\dot f}{f}\bar A-\frac{\ddot f}{f}.
\end{equation}
Therefore we can compute
\begin{align}
\bar S= \bar B-\frac{\dot{\bar A}}{2}-\frac{{\bar A}^2}{4}&=
\left(B-\frac{\dot f}{f}\bar A-\frac{\ddot f}{f}\right)-\frac{1}{2}\frac{d}{dr}\left(A-2\frac{\dot f}{f}\right)-\frac{1}{4}\left(A-2\frac{\dot f}{f}\right)^2\\
&=S-\left(\frac{\dot f}{f}\bar A+\frac{\ddot f}{f}\right)+\frac{d}{dr}\frac{\dot f}{f}-\left(\left(\frac{\dot f}{f}\right)^2-A\frac{\dot f}{f}\right)\\
&=S-\frac{\dot f}{f}\bar A-2\left(\frac{\dot f}{f}\right)^2+A\frac{\dot f}{f}=S.
\end{align}
If we choose $f=1/\omega_r(\partial_\theta)$, then, denoting $v=\omega_r(\partial_z)/\omega_r(\partial_\theta)$, we have 
\begin{equation}
	\bar\omega_r=vdz+d\theta,\quad \dot{{\bar \omega}}_r=\dot vdz,\quad \ddot{{\bar \omega}}_r=\ddot v dz,
\end{equation}
consequently 
\begin{equation}
\dot{{\bar \omega}}_r\wedge\bar\omega_r=-\dot vd\theta\wedge dz,\quad  {\bar \omega}_r\wedge\ddot{{\bar \omega}}_r=\ddot vd\theta\wedge dz\quad \dot{{\bar \omega}}_r\wedge \ddot{{\bar \omega}}_r=0.
\end{equation}
We deduce that $\bar A=-\ddot v/\dot v$ and $\bar B=0$, therefore 
\begin{equation}
\begin{aligned}
S&=\bar S=-\frac{\dot{\bar A}}{2}-\frac{{\bar A}^2}{4}=\frac{1}{2}\left(\frac{\dddot v}{\dot v}-\left(\frac{\ddot v}{\dot v}\right)^2\right)-\frac{1}{4}\left(\frac{\ddot v}{\dot v}\right)^2\\
&=\frac{1}{2}\left(\frac{\dddot v}{\dot v}-\frac{3}{2}\left(\frac{\ddot v}{\dot v}\right)^2\right)=\frac{1}{2}\mathcal S(\Omega_\lambda).
\end{aligned}
\end{equation}
Formula \eqref{eq:schw_expr} is thus proved.
\end{proof}

\addtocontents{toc}{\SkipTocEntry}
\section*{Declarations}

\addtocontents{toc}{\SkipTocEntry}
\subsection*{Conflict of interest} On behalf of all authors, the corresponding author states that there is no conflict of interest.

\addtocontents{toc}{\SkipTocEntry}
\subsection*{Data availability statement} Data sharing is not applicable to this article as no new data were created or analyzed in this study.

\bibliographystyle{alphaabbr}
	\bibliography{bibliography-tightness}

\end{document}